\documentclass[10pt,reqno]{amsart}
\usepackage[usenames,dvipsnames]{xcolor}
\usepackage[a4paper, total={0cm, 0cm}]{geometry}
\usepackage{amssymb,amsthm,amsfonts}
\usepackage{caption}
\usepackage{subcaption}
\usepackage{bbm}
\usepackage{graphicx}
\usepackage{comment}
\usepackage{a4wide, fancyhdr}
\usepackage{tikz}
\usepackage[textsize=small]{todonotes}
\usepackage[shortlabels]{enumitem}

\usepackage{mathtools}

\usepackage{xifthen}

\definecolor{MyDarkblue}{rgb}{0,0.08,0.50}
\definecolor{Brickred}{rgb}{0.65,0.08,0}

\usepackage{hyperref,upref}
\usepackage{cleveref}

\hypersetup{
	colorlinks=true,       
	linkcolor=MyDarkblue,          
	citecolor=Brickred,        
	filecolor=red,      
	urlcolor=cyan           
}
\usepackage{cleveref}

\makeatletter
\def\subsubsection{\@startsection{subsubsection}{3}%
  \z@{.5\linespacing\@plus.7\linespacing}{-.5em}%
  {\normalfont\bfseries}}
\makeatother

\newtheorem*{theorem*}{Theorem}
\newtheorem{theorem}{Theorem}[section]
\newtheorem{lemma}[theorem]{Lemma}
\newtheorem{proposition}[theorem]{Proposition}
\newtheorem{corollary}[theorem]{Corollary}

\newtheorem{claim}[theorem]{Claim}

\newtheorem{conjecture}[theorem]{Conjecture}

\theoremstyle{definition}
\newtheorem{definition}[theorem]{Definition}

\newtheorem{remark}[theorem]{Remark}

\newtheorem{question}[theorem]{Question}

\allowdisplaybreaks

\renewcommand{\P}{\mathbb{P}}
\newcommand{\prob}{\mathbb{P}}
\newcommand{\DWT}{\mathrm{DWT}}
\newcommand{\UH}{\mathrm{UH}}
\newcommand{\eps}{\varepsilon}
\newcommand{\duh}{d_{\mathrm{UH}}}

\newcommand{\cB}{\mathcal{B}}
\newcommand{\cC}{\mathcal{C}}
\newcommand{\cD}{\mathcal{D}}
\newcommand{\cE}{\mathcal{E}}
\newcommand{\cF}{\mathcal{F}}
\newcommand{\cG}{\mathcal{G}}
\newcommand{\cH}{\mathcal{H}}
\newcommand{\cI}{\mathcal{I}}

\newcommand{\cO}{\mathcal{O}}

\newcommand{\cS}{\mathcal{S}}
\newcommand{\cT}{\mathcal{T}}

\newcommand{\e}{{\mathrm e}}

\newcommand{\N}{\mathbb{N}}

\newcommand{\dd}{\mathrm{d}}

\newcommand{\condP}[2]{\mathbb{P}\left(\left.#1\,\right\vert#2\right)}
\newcommand{\condE}[2]{\mathbb{E}\left[\left.#1\,\right\vert#2\right]}

\newcommand*{\wt}{\widetilde}

\newcommand*{\be}{\begin{equation}}
	\newcommand*{\ee}{\end{equation}}
\newcommand*{\ba}{\begin{aligned}}
	\newcommand*{\ea}{\end{aligned}}
\newcommand*{\barr}{\begin{array}{c}}
	\newcommand*{\earr}{\end{array}}
\def \toinp    {\buildrel {\prob}\over{\longrightarrow}}

\def \toas     {\buildrel {\mathrm{a.s.}}\over{\longrightarrow}}

\renewcommand{\P}[1]{\mathbb{P}\!\left(#1\right)}
\newcommand{\E}[1]{\mathbb{E}\left[#1\right]}

\numberwithin{equation}{section}

\newcommand{\invisible}[2]{%
    \ifthenelse{\isempty{#1}}
    {}
    {#2}
}

\tikzstyle{vertex}=[circle,fill=orange!60,minimum size=10pt,inner sep=0pt]
\tikzstyle{tedge} = [draw,ultra thick,->,>=stealth, orange]
\tikzstyle{esq}=[circle,fill=white,minimum size=10pt,inner sep=0pt]
\tikzstyle{up}=[<-,>=stealth]

\title{On the depth of depth-weighted trees}

\author{Lyuben Lichev}
\address[Lichev]{Institute of Statistics and Mathematical Methods in Economics, TU Wien, A-1040 Vienna, Austria}
\email{lyuben.lichev@tuwien.ac.at}

\author{Amitai Linker}
\address[Linker]{Departamento de Matem\'aticas, Facultad de Ciencias Exactas, Universidad Andr\'es Bello,
Sazi\'e 2212, Santiago, Chile}
\email{amitai.linker@unab.cl} 

\author{Bas Lodewijks}
\address[Lodewijks]{School of Mathematical and Physical Sciences, University of Sheffield, Hicks Building, England}
\email{bas.lodewijks@sheffield.ac.uk} 

\author{Dieter Mitsche}
\address[Mitsche]{IMC, Pontif\'{i}cia Univ. Cat\'{o}lica, Avda. Vicu\~na Mackenna 4860, Santiago, Chile}
\email{dmitsche@gmail.com}

\date{\today}

\begin{document} 

\begin{abstract}

The depth-weighted tree DWT($f$) with weight function $f:\{0,1,2,\ldots,\}\to (0,\infty)$ is a dynamic random tree grown from a root $r$ where vertices arrive consecutively and every new vertex attaches to a parent $u$ with probability proportional to $f$(distance between $u$~and~$r$).
This work is dedicated to a systematic analysis of the depth of DWT($f$). Namely, we provide precise analytic expressions of the typical depth of DWT($f$) for convergent, periodic, slowly growing, and (super-)exponentially growing weight functions.
Furthermore, for bounded or exponentially growing $f$, we determine the typical depth up to a multiplicative constant, thus confirming and strengthening a conjecture of Leckey, Mitsche and Wormald.
\end{abstract}

\maketitle

\begin{figure}[h]
    \centering
    \includegraphics[width=0.8\linewidth]{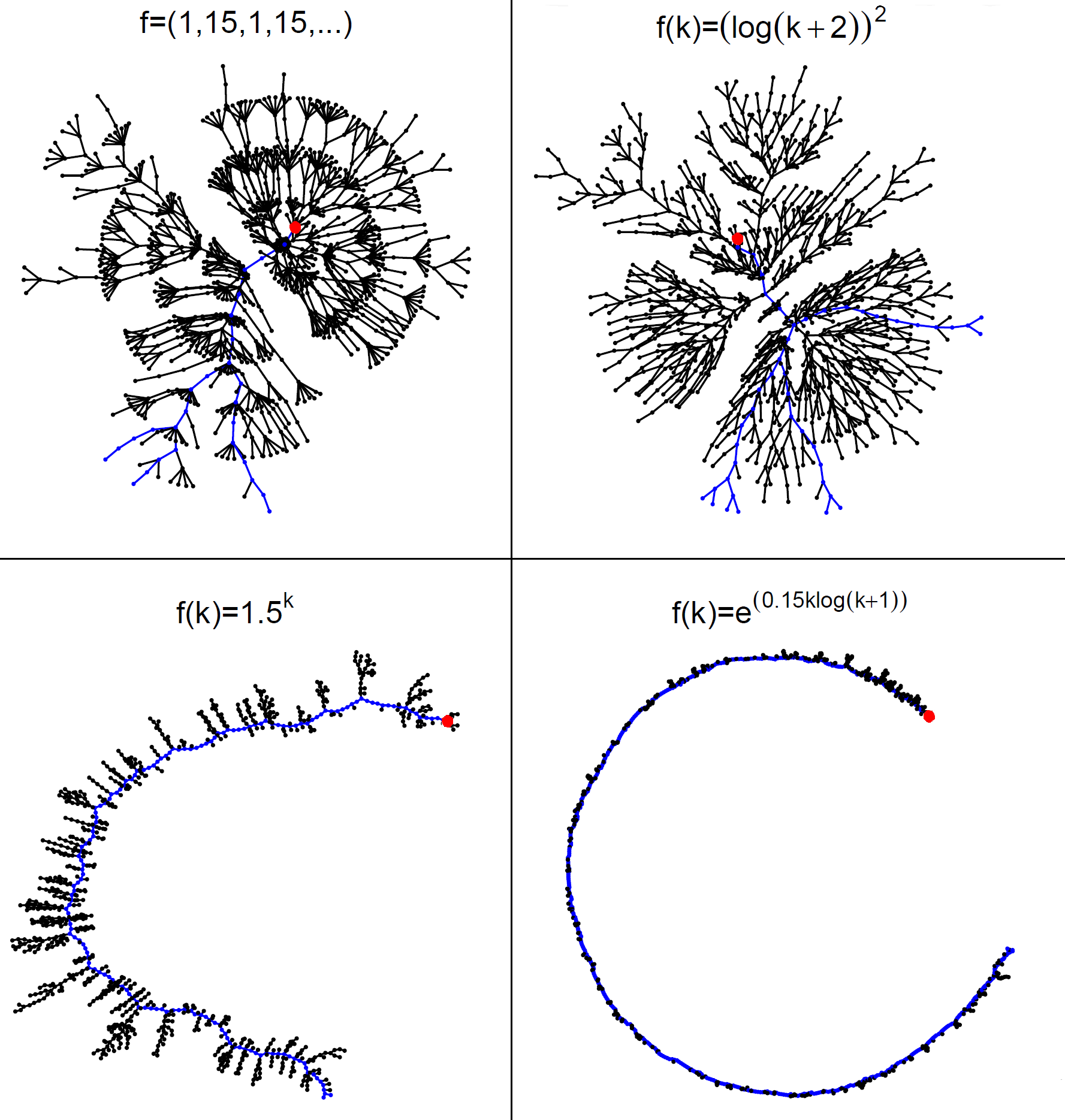}
    \caption{Simulations of DWT($f$) for different weight functions $f$. The big red dot denotes the root and the blue paths realise the depth of the tree, that is, the length of the longest path from the root to a leaf.}
    \label{fig:trees}
\end{figure}

\section{Introduction}
	
Random graphs have been a central topic in probability theory and network science for decades. 
Their applicability as models for real-world networks and the intrinsic mathematical interest of many classic models has led to the development of a general theoretical framework, see e.g.\ the books of Van der Hofstad~\cite{vdH17,vdH24}.
Some of the most intriguing mathematical questions in the field concern the evolution of growing random graphs and networks; here, we restrict our attention to the most widely studied setting of dynamic random (rooted) trees.

The variety of growth mechanisms for random trees which have been proposed and studied in the literature can be classified into two categories according to the power of a fixed vertex to attract new neighbours.
The first category consists of models where this power evolves with the dynamic construction of the tree.
One classic example is given by \emph{preferential attachment} models where the probability that a vertex receives a new child at a given step is proportional to a growing function of its current degree; see the books~\cite{Gol19,NBW11,vdH24} for a summary of the vast literature on the topic. We do mention a very recent paper on preferential attachment trees that studies monotonicity of the depth constant~\cite{Mon26}, which relates to the statistics of interest in this paper. Another classic model is the \emph{online nearest-neighbour tree} where vertices are embedded consecutively and randomly in a compact metric space and every new vertex sends an edge to the nearest vertex already embedded~\cite{Ald18,BBCS23,BMMS24,Cas25,LM24,PW08,Tra25,Wad09}. 

In the second category of models, the power of a vertex to attract neighbours is determined by a \emph{weight} attributed at birth: namely, the probability that a vertex receives a child at a given step is always proportional to its weight. 
The most famous example here is the \emph{random recursive tree} studied in a long line of research~\cite{BB-RKK23a,BB-RKK23b,Drm09,FM10,FMP08,Fuc08,Lod22} where all vertices receive equal weights. 
Another natural connection rule giving rise to the \emph{depth-weighted tree} where the weight attributed to a vertex is a function of its distance to the root. 
Here, the choice of a function may serve to privilege connections closer to the root (e.g.\ when it is the primary source of a signal) or to the leaves instead (e.g.\ in the mathematics genealogy tree), or also adapt to a more complex hierarchical structure over different generations.
Perhaps surprisingly, unlike the remaining models described above, depth-weighted trees were introduced by Leckey, Mitsche and Wormald~\cite{LeckMitWor20} only a few years ago (see also~\cite{Omeretal} for a very recent and somewhat related model). 

\begin{definition}[Depth-weighted tree (DWT)]\label{def:DWT}
Fix a function $f:\{0,1,2,\ldots\}\to (0, \infty)$. An (inclusion-wise) increasing sequence of labelled rooted trees $(T_n)_{n\in\N} = (T_n(f))_{n\in \mathbb N}$ with depth functions $(d_n)_{n\in\N}$ with $d_n:\{1,\ldots,n\}\to \{0,\ldots,n-1\}$ is constructed as follows:
\begin{itemize}
    \item The tree $T_1$ contains a single vertex labelled $1$ (the root) on depth $d_1(1)=0$.
    \item For every $n\in \mathbb N$, the tree $T_{n+1}$ is constructed conditionally on $T_n$ by adding a vertex labelled $n+1$ to $T_n$ and connecting it to a single vertex, which is $i\in [n]$ with probability
    \begin{equation}\label{eq:connprob}\frac{f(d_n(i))}{\sum_{j=1}^n f(d_n(j))}.
    \end{equation}
    On this event, we say that $n+1$ \emph{attaches} to vertex $i$ and to depth $d_n(i)$ (and thus vertex $n+1$ has depth $d_{n+1}(n+1) = d_n(i)+1$).
\end{itemize} 
In the described context, $f$ is called a \emph{weight function} and $T_n = T_n(f)$ is called the \emph{depth-weighted tree on $n$ vertices with weight function $f$}. 
Moreover, we define $T_{\infty}$ to be the infinite tree obtained as union of the trees $(T_n)_{n\in\N}$.
\end{definition}

For a rooted tree $T$, the \emph{depth of the tree} is denoted $d(T)$ and denotes the maximum over the depths of the vertices in $T$.
A description of the depth of DWTs was the main focus of~\cite{LeckMitWor20} where the following bounds on $\mathbb E[d(T_n)]$ for different weight functions were provided. 

\begin{theorem}[Theorem~2.3 in~\cite{LeckMitWor20}]\label{thm:LMW}
Fix a weight function $f:\{0,1,2,\ldots\}\to (0,\infty)$. 
\begin{enumerate}
\item[\emph{(a)}] If $0<\inf f \le \sup f < \infty$, then $\mathbb E[d(T_n)] = \Theta(\log n)$.
\item[\emph{(b)}] If $f: k\mapsto (k+1)^{\alpha}$ for some $\alpha \ge 0$, then $\mathbb E[d(T_n)] = \Theta(\log n)$.
\item[\emph{(c)}] If $f: k\mapsto \exp(k^{\beta})$ for some $\beta\in (0,1)$, then $\mathbb E[d(T_n)] = O((\log n)^{1/(1-\beta)})$.
\item[\emph{(d)}] If $f: k\mapsto c^k$ for some $c > 2$, then $\mathbb E[d(T_n)] = \Omega(n/\log n)$.
\item[\emph{(e)}] If $f: k\mapsto \exp(ak\log k)$ for some $a > 1$, then $\mathbb E[d(T_n)] = \Theta(n)$.
\end{enumerate}
\end{theorem}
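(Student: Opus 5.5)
We treat the five parts through one common tool: a comparison of the depth process with a \emph{level-count} (occupancy) description. Write $N_k(n)$ for the number of vertices on depth $k$ in $T_n$, and $W_n=\sum_k f(k)N_k(n)$ for the total weight. A new vertex lands on depth $k+1$ with probability $f(k)N_k(n)/W_n$. For the upper bounds we use a union bound over root-to-vertex paths. For the lower bounds we track the deepest level, or use an embedding into a branching or continuous-time process.

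\textbf{Parts (a) and (b).} In continuous time, give each vertex at depth $k$ an exponential clock of rate $f(k)$ at which it produces children. This Crump--Mode--Jagers process, observed at the moment it has $n$ vertices, has the law of $T_n$.

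When $f$ is bounded above and below, the process dominates and is dominated by Yule processes with rates $\inf f$ and $\sup f$. Its population at time $t$ is $e^{\Theta(t)}$, so $T_n$ is reached at time $\Theta(\log n)$ w.h.p. and in $L^1$.

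For the upper bound on depth we use a first-moment count. The expected number of depth-$k$ vertices by time $t$ is at most $(\sup f\cdot t)^k/k!$. This is $o(1)$ once $k\ge Ct$, which gives $d(T_n)=O(\log n)$. The lower bound follows by following a single lineage, or simply from $d(T_n)\ge \log_2$ of anything, via the standard Yule depth $\Theta(\log n)$ comparison.

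For (b) with $f(k)=(k+1)^\alpha$ we use the fact that vertices at depth $k$ reproduce at rate $(k+1)^\alpha$. The population still grows like $e^{\Theta(t)}$ with rate governed by a Malthusian parameter, but depths only reach order $t$ up to polynomial factors. We redo the first-moment count: the expected number of depth-$k$ vertices is $\prod_{j<k}(j+1)^\alpha\, t^k/k! = (k!)^{\alpha-1}t^k$, while the population is $\ge e^{ct}$. Equating gives $k=\Theta(t)$, so again $\Theta(\log n)$.

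\textbf{Part (c).} The same count with $\prod_{j<k}f(j)=\exp(\sum_{j<k} j^\beta)\approx \exp(k^{1+\beta}/(1+\beta))$ gives a depth-$k$ vertex expectation of roughly $\exp(k^{1+\beta}/(1+\beta))\,t^k/k!$. We also need the time to reach $n$ vertices. The key observation is that once depth $k$ is present the population grows at rate at least $e^{k^\beta}$, so $\log n$ is attained once $\int e^{d(s)^\beta}\,ds$ is about $\log n$.

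We set $k=C(\log n)^{1/(1-\beta)}$ and show the expected number of paths of length $k$ built before $n$ arrivals is $o(1)$. The cleanest way is in discrete time: a fixed sequence of $k$ successive attachments at steps $i_1<\dots<i_k$ has probability at most $\prod_j f(j-1)/W_{i_j}$. We lower bound $W_i\ge i$ times the typical weight, and $W_i \ge f(j-1)$ times something, and optimise. This balancing, where total weight must be at least comparable to $f$ at the current depth, is the main obstacle; we would handle it by using $W_i\ge \max(i\min f,\ f(d_i))$.

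\textbf{Parts (d) and (e).} Here the lower bound comes from showing that the deepest vertex is attached to with constant, or $1-1/\mathrm{polylog}$, probability. If the deepest level $D$ carries a proportion $p_n$ of the total weight, then $D$ increases at rate $p_n$ per step.

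For $f=c^k$ with $c>2$, suppose the levels have roughly geometric occupancy. Then the weight $c^D N_D$ compares with $\sum_{k<D}c^kN_k$. We prove by a potential-function or drift argument that $p_n\ge c'/\log n$ on average; this is where $c>2$ enters, since each new deepest vertex doubles the previous top weight. That gives $\mathbb E d(T_n)=\Omega(n/\log n)$.

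For $f=\exp(ak\log k)$ with $a>1$, the ratio $f(D+1)/\sum_{k\le D}f(k)N_k$ tends to infinity as long as the level sizes are $O(D)$. The top vertex therefore attracts almost everything, and a linear fraction of steps increase $D$, so $d(T_n)=\Theta(n)$. The upper bound $d\le n$ is trivial.
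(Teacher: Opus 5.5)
This statement is quoted from \cite{LeckMitWor20} and is not reproved in the paper. The paper instead strengthens (a), (d) and (e) in \Cref{prop:bddH}, \Cref{thm:main_exp}(i) and \Cref{thrm:supexp}(i).

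\textbf{Part (a)} is sound, and cheaper than the Paley--Zygmund argument of \Cref{prop:bddH}. Three ingredients suffice: the Yule sandwich for $\tau_n$, the bound $\mathbb{E}[N_k(t)]\le (t\sup f)^k/k!$, and the single-lineage bound $\tau_{1,k}\le\sum_{j<k}E_j$ with independent $E_j\sim\mathrm{Exp}(f(j))$. Drop the ``$\log_2$ of anything'' remark. The other parts have genuine gaps.

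\textbf{Part (b).} The claim that the population grows like $e^{\Theta(t)}$ is false for every $\alpha>0$, as your own formula $\mathbb{E}[N_k(t)]=(k!)^{\alpha-1}t^k$ shows.
For $\alpha<1$, $\log\mathbb{E}[N(t)]\asymp t^{1/(1-\alpha)}$, and the first-moment depth scale is $k\asymp t^{1/(1-\alpha)}$, not $\Theta(t)$.
For $\alpha\ge 1$ the expected population is infinite: for all $t>0$ if $\alpha>1$, and for $t\ge 1$ if $\alpha=1$. For $\alpha>1$ the process even explodes in finite time, because $\tau_{1,\infty}\le\sum_j E_j$ has finite mean $\sum_j (j+1)^{-\alpha}$.
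So ``equating gives $k=\Theta(t)$'' is wrong. For $\alpha<1$ you would need two-sided estimates showing that both the depth and $\log N(t)$ are of order $t^{1/(1-\alpha)}$. For $\alpha\ge 1$ a comparison at time $t$ is meaningless, and a different (e.g.\ discrete-time) argument is needed.

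\textbf{Part (c).} The path union bound you propose cannot work. Using $W_i\ge\max\{i,f(d_i)\}$ and $d_{i_j-1}\ge j-1$, it bounds the expected number of depth-$k$ vertices by $\sum_{i_1<\cdots<i_k\le n}\prod_{j=1}^k\min\{f(j-1)/(i_j-1),1\}$. Restrict $i_j$ to the $j$-th of $k$ consecutive blocks of $[n]$. This shows the sum is at least of order $\prod_{j\le k}\min\{f(j-1)/j,\,n/k\}$. That product tends to infinity for $k=C(\log n)^{1/(1-\beta)}$, since $\sum_{j<k}j^{\beta}\gg k\log k$ and $n/k\to\infty$. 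No deterministic lower bound on $W$ of this type suffices. You have to use that most of the weight sits at depths where $f$ is large, and that is exactly the ``main obstacle'' your proposal leaves unresolved.

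\textbf{Part (d)} rests entirely on the claim that the deepest level carries weight fraction $\gtrsim 1/\log n$ ``by a potential-function or drift argument'', and none is given. The heuristic that a new deepest vertex doubles the top weight does not control how many vertices pile up on the levels just below the top, which is the whole difficulty. For instance, the natural potential $W/c^{D}$ has positive drift, close to $1/c$, when $N_D=1$ and the remaining mass sits on level $D-1$.

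The paper supplies the missing idea in \Cref{prop:small_tower_ex}. The event that at some time level $r$ holds at least $s$ vertices while levels $r-1,r+1,\dots,r+\Psi(r)$ hold at most $s$ has probability $O(e^{-\delta s})$, uniformly in $r$. This is proved by concentrating the Cox processes of levels $r,r+1,\dots$ around deterministic trajectories driven by the stopped level $r-1$, using the exponential growth of $f$ on the window. Corollary~\ref{cor:small levels_ex} then shows all levels have at most $(2\log n)/\delta$ vertices w.h.p. Hence $d(T_n)\ge \delta n/(2\log n)-1$ for every $c>1$, not only $c>2$. The covering algorithm of \Cref{lem:cover_ex} then upgrades this to linear depth.

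\textbf{Part (e)} is circular as written. You assume level sizes $O(D)$ to conclude that the top attracts almost everything, but that is precisely what keeps the levels small. The fix is as follows. When $T_n$ is a path of depth $D$, the next vertex misses the top with probability $\sum_{k<D}f(k)/\sum_{k\le D}f(k)=O(f(D-1)/f(D))=O(D^{-a})$, which is summable for $a>1$. Hence with positive probability $p$ the tree is a path forever, and $\mathbb{E}[d(T_n)]\ge p(n-1)$. Alternatively, $I_n$ converges for this $f$, so \Cref{thrm:supexp}(i) gives $n-d(T_n)\to K<\infty$ almost surely.
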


We extend the analysis of DWTs and significantly strengthen parts (a), (d) and (e) of Theorem~\ref{thm:LMW} by determining the typical growth of $d(T_n)$ (sometimes up to lower order terms) for bounded, slowly growing, exponential, and super-exponential weight functions.
The next subsection is dedicated to a discussion of our main results  and open problems for further study followed by an outline of the proof ideas.

\subsection{Our results}

We separately analyse $d(T_n)$ in  each of the four previously described regimes. 

\subsection*{Bounded weight functions.} 
We first study the depth of DWTs with weight functions $f$ bounded away from 0 and infinity, that is, $0 < \inf f \le \sup f < \infty$.
This regime is fundamental as it consists of models which may be considered as perturbations of the random recursive tree. Indeed, beyond the apparent similarity, our first result shows that, in this setting, the depth of DWTs behaves similarly to that of the random recursive tree. 

\begin{proposition}\label{prop:bddH}
For every weight function $f$ bounded away from $0$ and infinity, there exist constants $\ell=\ell(\inf f,\sup f),L=L(\inf f,\sup f)>0$ such that almost surely
        \be 
        \ell\le \liminf_{n\to\infty} \frac{d(T_n)}{\log n}\le \limsup_{n\to\infty} \frac{d(T_n)}{\log n}\le L. 
        \ee 
\end{proposition}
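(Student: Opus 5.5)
Write $m=\inf f$ and $M=\sup f$. Both bounds come from comparing the DWT with the random recursive tree, where every attachment probability is within a factor $M/m$ of uniform. Indeed, for every $n$ and every $i\in[n]$ the probability \eqref{eq:connprob} lies in $[\,m/(Mn),\,M/(mn)\,]$. I will control the depth of a single vertex for the lower bound and use a first-moment bound over paths for the upper bound. In both cases the bound is first obtained along the geometric subsequence $n_k=2^k$, and then transferred to all $n$ using that $d(T_n)$ is nondecreasing in $n$ and $\log n_{k+1}/\log n_k\to1$.

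\emph{Upper bound.} For a fixed sequence of labels $1=i_0<i_1<\dots<i_h\le n$, the probability that $i_{j}$ is the parent of $i_{j+1}$ for all $j$ is at most $\prod_{j=0}^{h-1} \frac{M}{m\,(i_{j+1}-1)}$. The bound applies at each step regardless of the past, so it is valid for the product by successive conditioning. Summing over all increasing sequences gives
\[
\mathbb E\bigl[\#\{\text{vertices at depth } h \text{ in } T_n\}\bigr]\le \frac{(M/m)^h}{h!}\Bigl(\sum_{i=1}^{n-1}\frac1i\Bigr)^h\le \Bigl(\frac{e(M/m)(1+\log n)}{h}\Bigr)^h .
\]
Choose $h=L\log n$ with $L$ large, for example $L=e^2M/m$. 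The right-hand side is then at most $n^{-2}$ for large $n$, so $\P{d(T_n)\ge L\log n}\le n^{-2}$. Applying this at $n_k=2^k$ and using Borel--Cantelli gives $d(T_{n_k})\le L\log n_k$ eventually. For $n_k\le n<n_{k+1}$ we have $d(T_n)\le d(T_{n_{k+1}})$, so $\limsup_n d(T_n)/\log n\le L$.

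\emph{Lower bound.} Follow a single vertex, namely the deepest one, through a greedy chain. Let $D_n$ be the maximal depth in $T_n$, and let $\tau$ be a time at which a vertex $v$ of depth $D$ exists. Each later vertex $n+1$ attaches to $v$ with probability at least $m/(Mn)$. So the waiting time until $v$ receives a child, which has depth $D+1$, is stochastically dominated: from time $t$, the probability of failing until time $ct$ is at most $\prod_{n=t}^{ct}(1-\frac{m}{Mn})\approx c^{-m/M}$. One could build a renewal argument from this, but a cleaner route is to look at the path from vertex $n$ back to the root. The parent of vertex $n+1$ has label uniform-like: it is $\le x$ with probability at most $\frac{M x}{m n}$. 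Hence the label sequence along the ancestral line shrinks by factors that are stochastically bounded below by $(m/M)\cdot U$ with $U$ uniform. Then $\log$ of the labels decreases by steps whose expectation is at most $\log(M/m)+1$, which is a bounded constant $\mu$. A Chernoff bound for this dominating random walk shows that the depth of vertex $n$ is at least $\ell\log n$ with probability $1-n^{-2}$, for any $\ell<1/\mu$ (exponential moments exist since $-\log U$ has an exponential tail). Borel--Cantelli along $n_k=2^k$ plus monotonicity of $d(T_n)$ then gives the liminf.

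\emph{Main obstacle.} The delicate step is making the ancestral-line domination rigorous. The parent distribution depends on the whole tree, so I would couple each step with an independent uniform variable, truncated at $m/M$ from the bottom. This means showing that, conditionally on everything so far, the ratio $(\text{parent label})/(\text{current label})$ stochastically dominates $(m/M)U$ up to a $+1/n$ rounding error. The rounding error must be absorbed; this is harmless as long as labels stay larger than some constant, which costs only $O(1)$ steps at the end.
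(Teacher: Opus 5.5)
\paragraph{Upper bound.} This half is fine: the path-counting first-moment estimate, with the product bound justified by conditioning forward in time, gives $\mathbb P(d(T_n)\ge L\log n)\le n^{-2}$ for $L=\e^2M/m$.

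\paragraph{The gap.} The problem is in the lower bound, exactly at the step you flag, and your proposed fix does not close it. The one-step estimate $\mathbb P(\mathrm{par}(a)\le x\mid T_{a-1})\le Mx/(m(a-1))$ is about the forward filtration. When you explore the ancestral line of $n$ backwards, ``everything so far'' is the event $\{\mathrm{par}(a_j)=a_{j+1},\ j<k\}$, which concerns attachment choices made after step $a_k$. Conditioning on it tilts the law of $\mathrm{par}(a_k)$. If $\mathrm{par}(a_k)=q$, the revealed vertices $a_k,\dots,a_1$ lie at depths $d(q)+1,\dots,d(q)+k$. So the weight of candidate $q$ is multiplied by $\prod_{i=1}^{k}f(d(q)+i)$, up to the effect of $q$ on the normalising sums, which is negligible when $a_k$ is large. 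Take $f=M$ on $[0,D]$ and $f=m$ beyond. For $k>D$, the root is then favoured over every candidate of depth at least $D$ by a factor $(M/m)^D$. Hence the label ratio is not dominated by $(m/M)U$, nor by anything depending only on $m,M$.

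Coupling with i.i.d.\ uniforms does not help either. Inverse-CDF sampling gives $\mathrm{par}(a)\ge\frac mM(a-1)U_a$ for all $a$ simultaneously. But along the line the $U_{a_j}$ are read at random indices, and $\{a_1=b\}$ depends on $U_b$ through $d(b)$ and hence $f(d(b))$. So they are not i.i.d.\ uniform.

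Separately, your claimed failure probability $n^{-2}$ cannot hold, since $\mathbb P(\mathrm{par}(n)=1)\ge m/(M(n-1))$. Even the idealised random walk only yields $n^{-c}$ with $c<1$.

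\paragraph{Repair.} Avoid conditioning and reuse your own path count. By the same forward successive conditioning,
\[
\mathbb P(\text{depth of } n=j)\le\sum_{n=a_0>a_1>\dots>a_j=1}\ \prod_{i=0}^{j-1}\frac{M}{m(a_i-1)}\le\frac{(M/m)^j}{n-1}\cdot\frac{(1+\log n)^{j-1}}{(j-1)!}.
\]
Summing over $j\le\ell\log n$ gives
\[
\mathbb P(\text{depth of } n\le\ell\log n)\le n^{-1+\ell\log(\e M/(m\ell))+o(1)}\le n^{-1/2}
\]
for $\ell=\ell(M/m)$ small and $n$ large. This is summable along $n_k=2^k$, which is why the geometric subsequence is genuinely needed, and your monotonicity transfer then finishes the proof.

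\paragraph{Comparison with the paper.} Once repaired, your argument is a purely discrete, first-moment proof of both bounds. The paper instead works in the continuous-time embedding. It needs second-moment (Paley--Zygmund) estimates, boosted through many independent subtrees rooted at depth $2$, to get both $\tau_{1,k}=O(k)$ and $\tau_{n^3}=O(\log n)$.
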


Under additional regularity assumptions on the weight function $f$, it is possible to strengthen Proposition~\ref{prop:bddH} to a strong law of large numbers, mirroring precisely the behaviour of $d(T_n)$ for random recursive trees.

\begin{theorem}\label{thrm:bddH}
For every convergent or periodic weight function bounded away from $0$,
\begin{equation}\label{eq:1.3}
\frac{d(T_n)}{\log n}\toas\e.
\end{equation}
\end{theorem}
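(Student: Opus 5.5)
We reduce to a branching random walk (BRW) via continuous-time embedding, as for the random recursive tree (Pittel's argument). Each vertex at depth $k$ reproduces at the times of a Poisson process of rate $f(k)$, independently of everything else. Call the resulting continuous-time tree $\mathcal T(t)$. The discrete DWT is then $T_n=\mathcal T(\tau_n)$, where $\tau_n$ is the time the $n$-th vertex is born; this holds by the memoryless property and rate proportionality. The proof then splits into two parts.

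\textbf{Step 1: the growth rate.} We show $\tau_n \sim \log n/\lambda$ almost surely for an explicit Malthusian parameter $\lambda$.

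In the convergent case $f(k)\to c>0$, the total population behaves like a Yule process of rate $c$ after a finite number of generations, so $\lambda=c$. To make this rigorous, couple with processes whose rates are $c\pm\eps$ beyond depth $K$. Sandwich the population using the fact that only $O(\log)$-many ancestors lie above depth $K$ with non-negligible mass. Then apply the standard martingale limit $e^{-ct}|\mathcal T(t)|\to W>0$ for each comparison process.

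In the periodic case with period $p$, group $p$ generations into a multi-type BRW with types $0,\dots,p-1$. The Malthusian parameter $\lambda$ solves
\[
\prod_{j=0}^{p-1}\frac{f(j)}{\lambda}=1,
\]
i.e.\ $\lambda$ is the geometric mean of $f(0),\dots,f(p-1)$. This follows from the Laplace transform of the offspring point process: a type-$j$ individual has reproduction measure $f(j)\,\dd t$, whose transform at $\theta$ is $f(j)/\theta$. Multitype Nerman/Jagers theory then gives $e^{-\lambda t}|\mathcal T(t)|\to W>0$ a.s., and hence $\tau_n=(\log n+O(1))/\lambda$.

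\textbf{Step 2: the maximal depth at time $t$.} A vertex at generation $m$ is born at time $\sum_{i<m}E_i/f(i)$, where the $E_i$ are i.i.d.\ $\mathrm{Exp}(1)$. The expected number of generation-$m$ vertices born by time $t$ is
\[
\mathbb E\,N_m(t)=\P{\textstyle\sum_{i<m}E_i/f(i)\le t}\prod_{i<m}f(i)\cdot\frac{1}{\prod_{i<m} f(i)}\cdots
\]
More cleanly, it equals the $m$-fold convolution of the intensities $f(i)\,\dd t$. Its exponential rate is given by the Legendre transform
\[
\frac1t\log\mathbb E N_{at}(t)\to \inf_{\theta}\Bigl(\theta - a\log\tfrac{\theta}{\bar f}\Bigr)=a-a\log\tfrac{a}{\bar f},
\]
where $\log\bar f$ is the Cesàro average of $\log f(i)$, which exists in both the convergent and periodic cases. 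Setting this to zero gives the maximal speed $a^*=e\bar f$.

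By Biggins' theorem for BRWs (the periodic case is handled as a multitype BRW over blocks of $p$ generations, and the convergent case by $\eps$-sandwiching as in Step 1), we get $d(\mathcal T(t))/t\to e\bar f$ a.s. The upper bound is a first-moment calculation plus Borel–Cantelli over integer times, using monotonicity in $t$. The lower bound is Biggins' positivity of the additive martingale, which gives, on survival, a.s.\ growth of $N_{(a^*-\eps)t}(t)$.

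\textbf{Combining.} Since $\lambda=\bar f$ in both cases,
\[
d(T_n)=d(\mathcal T(\tau_n))\sim e\bar f\cdot\frac{\log n}{\bar f}=e\log n \quad\text{a.s.}
\]
This uses that $\tau_n\to\infty$ monotonically, so the a.s.\ limit along $t$ transfers to the limit along $n$.

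\textbf{Main obstacle.} The hardest part is the a.s.\ lower bound in the convergent but non-constant case. Biggins' theorem needs homogeneity, so I would first apply it to a comparison BRW that is homogeneous from depth $K$ on. I would then argue monotonicity: the depth process can be coupled so that lowering the rates below depth $K$ only delays the process by an a.s.\ finite time. Checking that this coupling really dominates depth and not just population is the delicate point.
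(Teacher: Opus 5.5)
Your argument is correct in outline and has the same skeleton as the paper's proof. The paper also proves separately that $\tau_{1,k}/k\toas 1/\e$ (Lemma~\ref{lem:taukconv}) and that $\tau_n/\log n\toas 1$ (Propositions~\ref{prop:taun} and~\ref{prop:taun_periodic}, after normalising $f$ so that your $\bar f$ equals $1$), and then combines them exactly as you do. What differs is how the two ingredients are obtained.

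For the population, the paper uses no CMJ limit theory. The upper bound is $\mathbb E[N(t)]=\sum_k (t\ell_k)^k/k!\le \e^{(1+2\eps)t}$ together with Markov and Borel--Cantelli. The lower bound is an elementary two-phase estimate: Chernoff bounds populate depths up to $r_0$ by time $\eps t$, and then a discretised exponential growth argument follows, in the periodic case through the recursion for $Q_{i,j}$. You instead import the a.s.\ convergence of $\e^{-\lambda t}|\cT(t)|$. In the periodic case this is legitimate, but you must either cite a multitype theorem and verify its hypotheses, or, more cleanly, treat the vertices on depths divisible by $p$ as a single-type CMJ process and apply Nerman's theorem with a characteristic counting the intermediate depths.

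For the depth, both routes use Biggins in the periodic case. In the convergent case the paper runs a Kingman-type block argument (the independent times $\beta_{m,k}$ and a strong law), and your comparison is shorter. Your route buys brevity and a transparent explanation of the constant: speed divided by Malthusian parameter equals $\e$ for every normalisation. The paper's route buys self-containedness, and its two-phase estimate is reused later for slowly growing weights.

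A few repairs are needed.
\begin{enumerate}
\item The displayed expression for $\mathbb E[N_m(t)]$ is garbled, and $\sum_{i<m}E_i/f(i)$ is the birth time only along the first-child line. The correct value is $t^m\prod_{i<m}f(i)/m!$ by Lemma~\ref{lemma:gensize}, from which your Legendre computation follows.
\item The upper half of your sandwich in Step~1 does not apply off the shelf. A comparison process with large rates below depth $K$ and rate $c+\eps$ beyond is not homogeneous, so there is no standard martingale limit to quote. Replace it by the first-moment bound $\mathbb E[N(t)]\le \e^{(c+2\eps)t}$ for large $t$, with Markov and Borel--Cantelli over integer times.
\item What you call the main obstacle is not delicate. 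Choose $K$ with $f\ge c-\eps$ on $[K,\infty)$, let $v$ be the first vertex born on depth $K$, and independently thin each Poisson clock in the subtree of $v$ down to rate $c-\eps$. The retained vertices form a rate-$(c-\eps)$ Yule tree rooted at $v$ with unchanged birth times. It is therefore a genuine subtree of $\cT(\tau_{1,K}+\cdot)$, so its depth and size are both dominated simultaneously. Since $\tau_{1,K}<\infty$ a.s., Kingman's (or Pittel's) a.s.\ result for the Yule tree and the positivity of the Yule martingale limit give $\liminf_t d(\cT(t))/t\ge \e(c-\eps)$ and $\liminf_t \log N(t)/t\ge c-\eps$ at once. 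This is the same embedding the paper uses in the proof of Proposition~\ref{prop:taunsv}.
\end{enumerate}
With these changes, the convergent case needs no Biggins-type input beyond the homogeneous Yule tree.
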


While our proof of \Cref{thrm:bddH} uses the assumed regularity of the weight function in an essential way, we believe that the conclusion of \eqref{eq:1.3} may hold more generally.

\begin{question}
Does~\eqref{eq:1.3} hold for every weight function $f$ bounded away from $0$ and infinity?
\end{question}

\subsection*{Slowly growing weight functions.}
A weight function $f$ is called \emph{slowly growing} if it is non-decreasing, tends to infinity and satisfies
\be \tag{WSV}\label{eq:widesv}
\lim_{k\to\infty} \frac{f(k/f(k))}{f(k)}=\lim_{k\to\infty} \frac{f(kf(k))}{f(k)}=1.
\ee 
Simple examples of slowly growing functions include $n\mapsto \log(n+1)$ or $n\mapsto \exp((\log(n+1))^{\alpha})$ with $\alpha < 1/2$.
Such weight functions introduce a weak bias towards larger depths, and it is a priori unclear if this bias makes the tree significantly deeper.
Our next theorem shows that this is not the case and that~\eqref{eq:1.3} also holds for slowly growing functions.

\begin{theorem}\label{thrm:svf}
For every slowly growing weight function,
\be 
\frac{d(T_n)}{\log n}\toas \e.
\ee 
\end{theorem}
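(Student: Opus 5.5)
We prove the lower and upper bounds $\liminf d(T_n)/\log n\ge \e$ and $\limsup d(T_n)/\log n\le \e$ separately, by comparing the depth process with a branching random walk, as is classical for the random recursive tree. The device is the continuous-time embedding: give each vertex at depth $k$ an exponential clock of rate $f(k)$ and attach a child whenever it rings. At every step the discrete chain then attaches according to \eqref{eq:connprob}. The children of a vertex at depth $k$ are born according to a Poisson process of rate $f(k)$. Consequently a vertex at depth $m$ is born at a time distributed as a sum of independent $\mathrm{Exp}(f(0)),\dots,\mathrm{Exp}(f(m-1))$ variables, and the expected number of depth-$m$ vertices born by time $t$ equals $\prod_{k<m}f(k)$ times the volume of the simplex-type region $\{s_0+\dots+s_{m-1}\le t\}$.

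\textbf{Relating continuous time to $n$.} Write $W(t)=\sum_v f(d(v))$ for the total weight and $N(t)$ for the number of vertices at time $t$. The key step is to show that $\log N(t)$ and $\int_0^t W(s)/N(s)\,\dd s$ grow at comparable rates, with effective rate $\bar f(t)\approx f(\text{typical depth})$. A typical vertex has depth about $\log n$ (indeed $\Theta(\log n)$ by Proposition~\ref{prop:bddH}-type arguments, which adapt since $f$ grows slowly). Condition \eqref{eq:widesv} then guarantees that on the depth window $[\log n/f(\log n),\, f(\log n)\log n]$, which contains every relevant depth, $f$ equals $f(\log n)(1+o(1))$. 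So, to first order, the tree in that window behaves like a random recursive tree whose clocks are all sped up by the common factor $f(\log n)$. This speed-up cancels in the embedded discrete chain.

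\textbf{Upper bound.} Take $m=(\e+\eps)\log n$ and apply a first-moment bound. The expected number of vertices at depth $m$ at the stopping time $\tau_n$ at which $N=n$ is at most $\prod_{k<m}(f(k)t)/m!$, with $t$ chosen so that $\int^{t}\bar f\approx \log n$. Ratios $f(k)/f(\log n)$ are at most $1+o(1)$ for $k\le m$, by monotonicity and \eqref{eq:widesv}. The bound therefore becomes $(\log n(1+o(1)))^m/m!$, which is $n^{-\delta}$. Some care is needed because $\tau_n$ is random; I would handle this with a Doob-martingale concentration of $W/N$. Borel--Cantelli along $n=2^j$, together with monotonicity of $d(T_n)$, upgrades this to an almost-sure bound.

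\textbf{Lower bound.} This is the main obstacle. Restrict attention to depths $k\ge \log n/f(\log n)$, where $f(k)\ge(1-\eps)f(\log n)$. Starting from a vertex at depth $k_0=o(\log n)$, reached quickly since $d(T_{\sqrt n})\to\infty$, embed a supercritical branching random walk. Its offspring displacement is $\mathrm{Exp}$ with rate at least $(1-\eps)f(\log n)$, and it should reach depth $(\e-\eps)\log n$ before time $\tau_n$, by the standard Biggins/Hammersley first-birth result for the Yule tree. The difficulty is that rates grow with depth, so the walk is not homogeneous. I would truncate rates to $\min(f(k),(1+\eps)f(\log n))$ and use a coupling in which higher rates only accelerate births (monotonicity in the rates). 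The remaining task is to show that the rescaled time $\tau_n f(\log n)$ is at least $(1-\eps)\log n$ a.s. Since vertices of depth much larger than $\log n$ are rare by the upper bound, this is where \eqref{eq:widesv} is used a second time.
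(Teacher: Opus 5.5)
\textbf{Upper bound: the growth estimate is missing.} Your upper-bound skeleton is the paper's. You use a first-moment bound $\mathbb E[N_m(t)]\le (tf(m))^m/m!$ at $m=(\e+\eps)\log n$, with $f(m)=(1+o(1))f(\log n)$ by \eqref{eq:widesv}. You combine it with $\tau_n\le t$ for $t\approx(1+\eps')\log n/f(\log n)$.

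The gap is this second ingredient. An upper bound on $\tau_n$, equivalently $\log N(t)\ge(1-\eps)tf(t)$, is the nontrivial part of the theorem (it is \Cref{prop:taunsv}), and you only assert it. Your mechanism is $\log N(t)\approx\int_0^t W(s)/N(s)\,\dd s$ with $W/N\approx f(\log n)$, because a typical vertex has depth $\Theta(\log n)$ ``by \Cref{prop:bddH}-type arguments''. This does not go through:
\begin{itemize}
\item \Cref{prop:bddH} controls the \emph{maximal} depth, and its constants depend on $\inf f,\sup f$, so they degenerate for unbounded $f$.
\item What you need is a \emph{lower} bound on the average weight: all but a vanishing fraction of the vertices alive at time $s$ must sit at depths where $f\ge(1-o(1))f(\log N(s))$, for most $s\in[0,t]$.
\end{itemize}
That is a statement about the bulk of the depth profile. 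Nothing in your proposal provides it, and a Doob-martingale concentration of $W/N$ does not provide it by itself.

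The missing idea is that no profile information is needed:
\begin{enumerate}
\item Normalise $f(0)=1$. Each increment $\tau_{1,j}-\tau_{1,j-1}$ is dominated by an independent $\mathrm{Exp}(f(j-1))$ variable, hence by an $\mathrm{Exp}(1)$ variable. By \Cref{lem:Ber}, a vertex at depth $K=\eps t/4$ is then born before time $\eps t/2$ with overwhelming probability.
\item By monotonicity of $f$, the subtree of that vertex dominates a Yule process of rate $f(K)$. Its size after a further time $(1-\eps/2)t$ is geometric with parameter $\e^{-(1-\eps/2)tf(K)}$.
\item Since $\eps t/4\ge t/f(t)$ eventually, \eqref{eq:widesv} gives $f(\eps t/4)=(1+o(1))f(t)$.
\item Inverting with the monotonicity of $x\mapsto xf(x)$ and \eqref{eq:widesv} once more gives $\tau_n\le(1+\eps)\log n/f(\log n)$, which is exactly what your first-moment step needs.
\end{enumerate}

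\textbf{Lower bound: the difficulty is reversed.} The paper gets this direction for free (\Cref{rem:lowerslow}). It uses the stochastic domination of the depth of $T_n(f)$, for non-decreasing $f$, over that of the random recursive tree \cite[Proposition~2.7]{LeckMitWor20}, together with Pittel's almost sure result \cite{Pit94}.

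Your branching-random-walk route is a heavier, self-contained alternative. It is plausible in outline: capping all rates before $\tau_n$ by $f((\e+\eps)\log n)=(1+o(1))f(\log n)$ gives a lower bound on $\tau_n$ via a Yule comparison. However:
\begin{itemize}
\item As written, it relies on your upper bound, so it inherits the gap above.
\item It needs a quantitative tail for the Yule first-birth time, not just the almost sure limit, because both the rate and the starting vertex change with $n$.
\item The step ``reached quickly since $d(T_{\sqrt n})\to\infty$'' is miscalibrated. $\tau_{\sqrt n}\approx\tfrac12\log n/f(\log n)$ is half the time budget, after which a rate-$(1+o(1))f(\log n)$ Yule tree only gains about $\tfrac{\e}{2}\log n$ further levels. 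The starting vertex must instead be born within time $o(\log n/f(\log n))$. This does hold for $k_0=\log n/f(\log n)$, where $f(k_0)=(1+o(1))f(\log n)$ by \eqref{eq:widesv} and $\sum_{j<k_0}1/f(j)=o(k_0)$ because $f\to\infty$.
\end{itemize}
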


\begin{remark}\label{rem:lowerslow}
We note that the lower bound in \Cref{thrm:svf} can be derived by combining~\cite[Proposition~2.7]{LeckMitWor20} showing that the depth of $T_n(f)$ for any non-decreasing weight function $f$ stochastically dominates the depth of an $n$-vertex random recursive tree, and~\cite[Theorem $1$]{Pit94} showing that the depth of the latter rescaled by $\log n$ almost surely converges to $\e$. Thus, our contribution is the proof of the upper bound.
\end{remark}

\subsection*{Exponential weight functions.}
Next, we focus on functions for which there is $c \ge 1$ with
\be \tag{EG}\label{eq:ExpG} 
\lim_{k\to\infty} \frac{f(k+1)}{f(k)}=c. 
\ee
When $c>1$, these are exactly the functions exhibiting exponential growth. 
Unlike previous regimes, the strong bias towards lower depths ensures that vertices typically attach to a parent at (nearly) maximal distance from the root, causing the depth of $T_n$ to grow much faster. Our next result formalises this intuition.

\begin{theorem}\label{thm:main_exp}
Fix a weight function satisfying assumption~\eqref{eq:ExpG} for some $c\geq 1$.  
\begin{enumerate}
    \item[\emph{(i)}] If $c>1$, there is $\gamma = \gamma(c) > 0$ such that, with high probability\footnote{That is, with probability tending to 1 as $n\to \infty$.}, $d(T_n)\ge \gamma n$.
    \item[\emph{(ii)}] If $c=1$, for every $\eps > 0$, $d(T_n)\le \eps n$ with probability $1-\e^{-\omega(n)}$. Thus, $d(T_n)/n\toas 0$.
\end{enumerate}
\end{theorem}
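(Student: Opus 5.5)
For part (i), we compare the process with a simpler one in which every new vertex attaches at the current maximal depth with probability bounded below. Since $f(k+1)/f(k)\to c>1$, fix $c'\in(1,c)$ and $k_0$ such that $f(k+1)\ge c' f(k)$ for all $k\ge k_0$. Let $D_n=d(T_n)$ and write $N_n(k)$ for the number of vertices of depth $k$ in $T_n$. The probability that vertex $n+1$ attaches at depth $D_n$ is
\[
p_n \ge \frac{f(D_n)}{\sum_{k\le D_n} N_n(k) f(k)}.
\]
The difficulty is that shallow levels may contain many vertices. The key observation is that levels below $D_n$ carry weight $f(k)\le f(D_n)(c')^{-(D_n-k)}$ up to a constant depending on $k_0$. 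So a level at distance $j$ below the top matters only if it contains at least about $(c')^{j}$ vertices. We therefore control the profile via a potential. Let
\[
\Phi_n=\sum_k N_n(k)\,(c')^{-(D_n-k)},
\]
which approximates the total weight divided by $f(D_n)$, up to a bounded factor coming from the finitely many depths below $k_0$. Then $p_n\gtrsim 1/\Phi_n$, and whenever the depth increases, $\Phi$ is multiplied by $1/c'<1$ before the new vertex contributes $1$.

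The argument then runs in three steps. First, show that $\Phi_n$ stays bounded by a constant $M$ for a positive fraction of times. Each step increases $\Phi$ by at most $1$, while each depth increase shrinks it to $\Phi/c'+1$. A drift argument works here: when $\Phi_n\ge M$, the chance of attaching at the top is at least $1/(C\Phi_n)$, and the shrinking effect $(1-1/c')\Phi_n\cdot 1/(C\Phi_n)$ is a constant. This alone does not dominate the $+1$ increase, however. To fix this, use a blocked version: attachments at depth $D_n-j$ with small $j$ also happen with probability bounded below and create new near-top levels, which become top levels with positive probability. Second, reduce to a cleaner statement: in each window of $K$ consecutive steps, with probability at least $q>0$ uniformly in the past, the depth increases by at least $1$. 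Obtain this by showing that in $K$ steps either the top level gains a child, or the weight is concentrated near the top so that the top is hit with constant probability. Third, apply Azuma--Hoeffding, or domination by a binomial over $n/K$ blocks, to conclude $D_n\ge \gamma n$ with $\gamma<q/K$ with high probability. The main obstacle is the uniform lower bound in the second step when the profile has a huge bulk at shallow levels. I would handle it by noting that the bulk's weight relative to the top decays geometrically each time the depth grows. I would then run a two-scale argument: once the depth has grown by $m$ with $(c')^m>n$, the bulk is negligible forever, and that initial growth of $m=O(\log n)$ levels takes only $o(n)$ steps by part (a)-type estimates.

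For part (ii), we use a first-moment bound on chains. Fix $\eps>0$ and choose $\delta>0$ small, with $K$ large such that $f(k+1)\le (1+\delta)f(k)$ for $k\ge K$. For $d(T_n)\ge \eps n$, there must be a path of vertices $v_1<v_2<\dots<v_m$ with $m=\eps n$, each attaching to the previous. At the time $v_{i+1}$ arrives, the total weight $W$ is at least $\sum_{u}f(d(u))$. We bound the probability that vertex $t$ attaches to a specific vertex $u$ at depth $k$ by $f(k)/W_t$. Since the $v_i$ on the path have depths $k_0+i$, this equals $f(k_0+i)/W_t$. We compare $W_t$ with $t\cdot\min$ of $f$ over the relevant depths, and here slow variation enters: the depths lie in $[0,\eps n]$, while the path vertices' weights dominate the others by at most $(1+\delta)^{j}$. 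Summing over choices of $v_1<\dots<v_m$ gives
\[
\binom{n}{m}\prod \frac{(1+\delta)^{O(m)}}{\text{(typical } t)},
\]
which by the standard random-recursive-tree computation is at most $\bigl((\e/\eps)(1+\delta)^{O(1)}\bigr)^{m}\,(m!)^{-1}$-type quantities. These are $\e^{-\omega(n)}$ once $\delta$ is small relative to $\eps$, because $\e\log(n)\ll \eps n$. The delicate point is bounding $W_t$ from below uniformly. For this I would use $W_t\ge t\min_{k\le\eps n}f(k)$ together with $f(k)\ge f(d)(1+\delta)^{-(d-k)}C^{-1}$, absorbing the ratio into a factor $(1+\delta)^{\eps n}$ per step. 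That is too costly, so instead I would sum depth-by-depth, using $\prod_i (1+\delta)$ over only the consecutive increments. The almost sure statement follows by Borel--Cantelli, since the error probabilities are summable.
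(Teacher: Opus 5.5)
\textbf{Part (i).} There is a genuine gap here: the estimate the argument rests on is never established. You concede that the one-step drift of $\Phi_n$ is not negative, and the ``blocked version'' is not specified. The reduction in your second step is also false, because no $q>0$ works uniformly in the past. Suppose the top depth $D$ carries one vertex and depth $D-1$ carries at least $s$ vertices. As long as depth $D$ holds at most $K+1$ vertices, each new vertex attaches to depth $D$ (the only move that increases the depth) with probability at most $(K+1)f(D)/(sf(D-1))\approx c(K+1)/s$. So progress within $K$ steps has probability $O(K^2/s)$.

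Such a configuration is reachable with positive probability from any state: make one attachment to the current top, then $s$ attachments two levels below the new top. It can therefore arise at every stage of the growth, not only initially. Your two-scale remedy only disposes of mass sitting at shallow depths, so it does not address this.

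What is missing is a quantitative statement that accumulations of size $s$ are exponentially rare in $s$ and only weakly dependent across depths. This is the content of Proposition~\ref{prop:small_tower_ex}. Uniformly in $r$, and even conditionally on depths $0,\dots,r-2$, the probability that depth $r$ ever holds at least $s$ vertices while depth $r-1$ and the next $\Psi(r)$ depths hold at most $s$ is at most $6(\max\Psi)\e^{-\delta s}$. The proof works in the Cox-process embedding. Capping depth $r-1$ at $s$ caps the growth rate of depth $r$ at $sf(r-1)$. Hence depth $r$ must already carry mass of order $s$ during a window of length of order $1/f(r-1)$ before it can reach $s$. In that window, Poisson concentration pushes one of the exponentially faster depths $r+1,\dots,r+\Psi(r)$ above $s$.

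The paper then proceeds in three steps:
\begin{enumerate}
\item it dominates the number of such local maxima of each size by a binomial, using depths spaced $\max\Psi+3$ apart (Lemma~\ref{lem:conc Y_s_ex});
\item it charges every depth to a nearby local maximum of at least the same size via a covering walk (Lemma~\ref{lem:cover_ex});
\item it sums, to show that depths $1,\dots,\gamma n$ hold fewer than $n$ vertices with high probability.
\end{enumerate}
Your first step would have to supply an estimate of this strength.

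\textbf{Part (ii).} A first-moment count over root paths is the right kind of argument, but your lower bound on the total weight fails, and ``summing depth-by-depth'' is not an argument. Comparing with $t\min f$ loses a factor $f(i)/\min f$ per step. For $f(k)=\e^{\sqrt k}$, which satisfies~\eqref{eq:ExpG} with $c=1$, these factors multiply to $\e^{\Theta(n^{3/2})}$ along a path of length $\eps n$. That is far more than the $\e^{-\Theta(n\log n)}$ gained from $1/m!$.

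No $1/t$ factor is needed. Take $v_1$ to be the root. When $v_{i+1}$ arrives, its ancestors $v_1,\dots,v_i$ occupy depths $0,\dots,i-1$. So the conditional probability that it attaches to $v_i$ is at most $g(i-1)$, where $g(k)=f(k)/\sum_{j\le k}f(j)$. Moreover $g(k)\to0$ when $c=1$, since the last few summands are each comparable to $f(k)$. Sequential conditioning then gives
\[
\mathbb P\big(d(T_n)\ge\eps n\big)\le\binom{n}{\eps n}\prod_{j<\eps n}g(j)\le 2^n\Big(\max_{j\ge\eps n/2}g(j)\Big)^{\eps n/2}=\e^{-\omega(n)}.
\]
This is essentially the paper's estimate. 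The paper phrases it through the increments $\zeta_{k+1}-\zeta_k$: it bounds the chance of reaching depth $k+1$ at the $i$-th step after $\zeta_k$ by $i\,g(k)$, and counts the at least $\eps n/4$ increments that must be at most $5/\eps$.
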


\begin{remark}
For every $\eta>0$, at the cost of allowing $\gamma$ in \Cref{thm:main_exp}(i) to depend on $\eta$, our proof shows that \Cref{thm:main_exp}(i) holds with probability at least $1-n^{-\eta}$.
\end{remark}

The result in Theorem~\ref{thm:main_exp}(i) is a significant strengthening compared to part~(d) in Theorem~\ref{thm:LMW} in~\cite{LeckMitWor20}, as we provide the correct asymptotic order of the depth for all constants $c>1$ and for a wider range of functions $f$, not just the purely exponential case $k\mapsto c^k$. The second part of Conjecture~$2.4$ from~\cite{LeckMitWor20} (suggesting that the expected depth of $T_n$ is linear under the assumptions of part~(d) of \Cref{thm:LMW}) follows as a trivial corollary of Theorem~\ref{thm:main_exp}(i).

\begin{remark}\label{rem:subexp}
The techniques from the proof of Theorem~\ref{thm:main_exp} can be extended to functions satisfying a slightly weaker condition compared to assumption~\eqref{eq:ExpG}. 
For example, consider the weight function $k\mapsto \e^{k/(\log(k+2))^{\alpha}}$ with $0<\alpha<1$. 
For this slightly sub-exponential function, the methods in \Cref{sec:exp_weight_fns} can be adapted to show that there is $\nu>0$ such that, with high probability,
\begin{equation}\label{eq:weakbound}
d(T_n)\ge n\e^{-\nu(\log n)^{\alpha}(\log\log n)^2}.
\end{equation}
Notice that this lower bound is sublinear but still asymptotically larger than any bound of the form $n^{1-\varepsilon}$. Interestingly, this bound becomes trivial at $\alpha=1$ where~\eqref{eq:weakbound} is of order $o(1)$, which hints at a possible phase transition of the model for height functions of the form $k\mapsto \e^{ck/\log(k+2)}$. See also Figures~\ref{fig:polynomial1} and~\ref{fig:polynomial2} for numerical simulations, and the remark at the end of Section~\ref{sec:exp_weight_fns} for further comments on the proof.
\end{remark}

\begin{question}\label{Q:subexp}
For the weight function $k\mapsto \e^{ck/\log(k+2)}$ with $c>0$, is there $\beta=\beta(c)\in (0,1)$ such that $d(T_n)=n^{\beta+o(1)}$ with high probability?
\end{question}

\begin{figure}[h]
     \centering

    \includegraphics[width=0.5\textwidth]{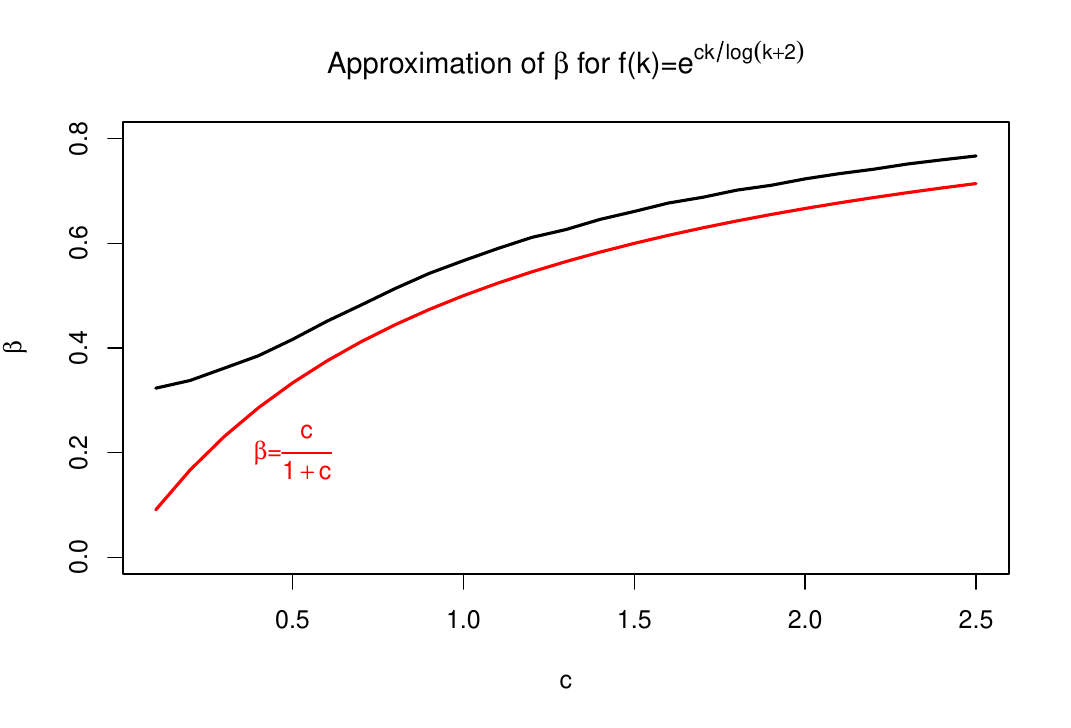}
         \caption{A numerical approximation of the constant $\beta$ predicted in Question~\ref{Q:subexp} (in black) for the tree with weight function $f(k)=\e^{ck/\log(k+2)}$ and $n=20.000$. Here, $c$ ranges between $0.1$ and $2.5$ with increments of $0.1$, where we take an average over $50$ samples. The red line serves as a comparison.}\label{fig:polynomial1}
    \end{figure}
    
     \begin{figure}[h]
         \centering
         \includegraphics[width=0.5\textwidth]{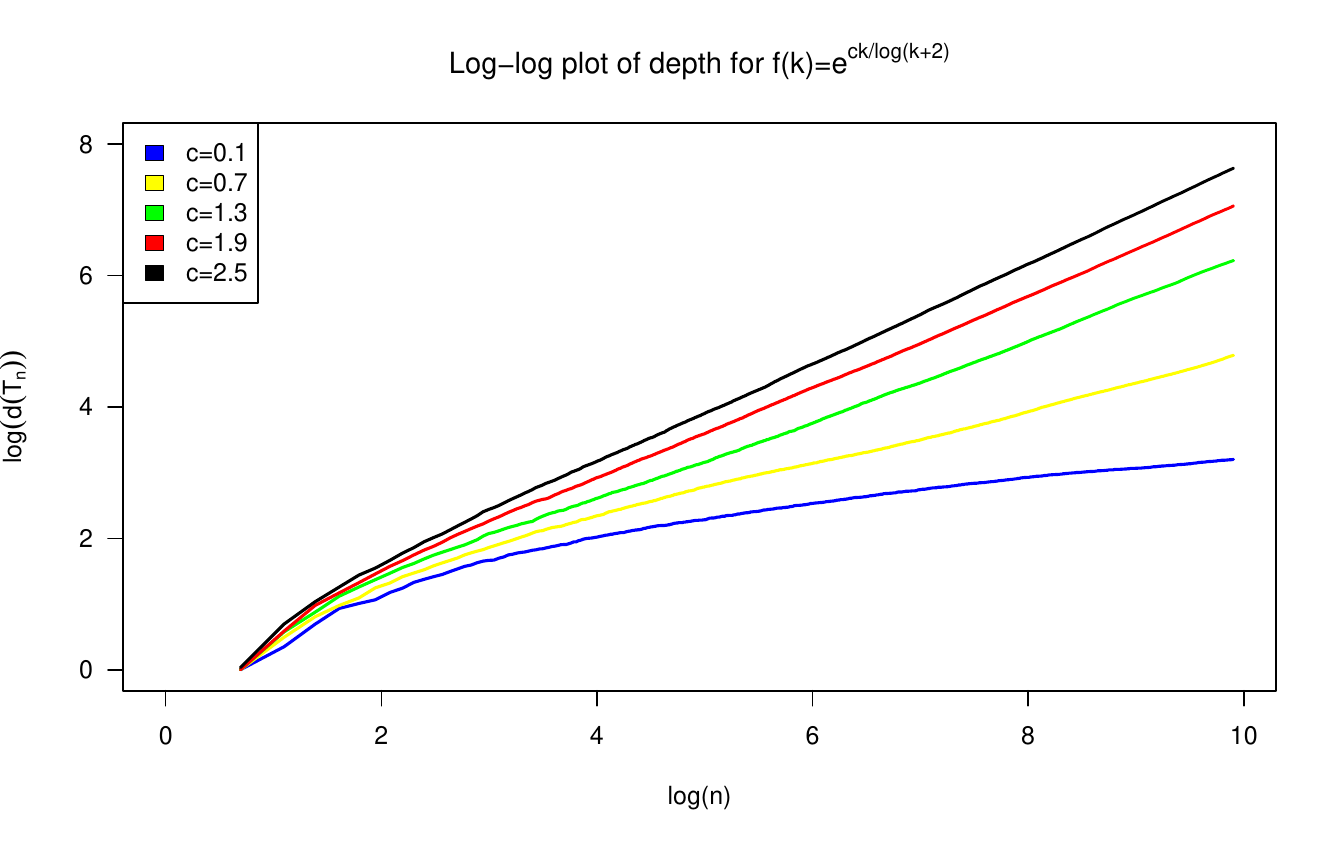}
         \caption{A log-log plot of the growth of the depth of the tree with weight function $f(k)=\e^{ck/\log(k+2)}$. We have taken an average over 50 samples for each value of $c$. }\label{fig:polynomial2}
     \end{figure}

{Under the conditions of Theorem~\ref{thm:main_exp}, $d(T_n)/n$ appears to converge in computational experiments, which opens the door to the possibility of having a.s.\ convergence of this ratio, see \Cref{fig:expsim}. We leave this as a conjecture:

\begin{conjecture}\label{conj:exp}
For every weight function $f$ satisfying assumption~\eqref{eq:ExpG} for some $c > 1$, there is $\nu=\nu(c)\in(0,1)$ such that 
\be 
\frac{d(T_n)}{n}\toas \nu.
\ee 
\end{conjecture}

\begin{figure}[h]
    \centering
    \includegraphics[width=0.5\linewidth]{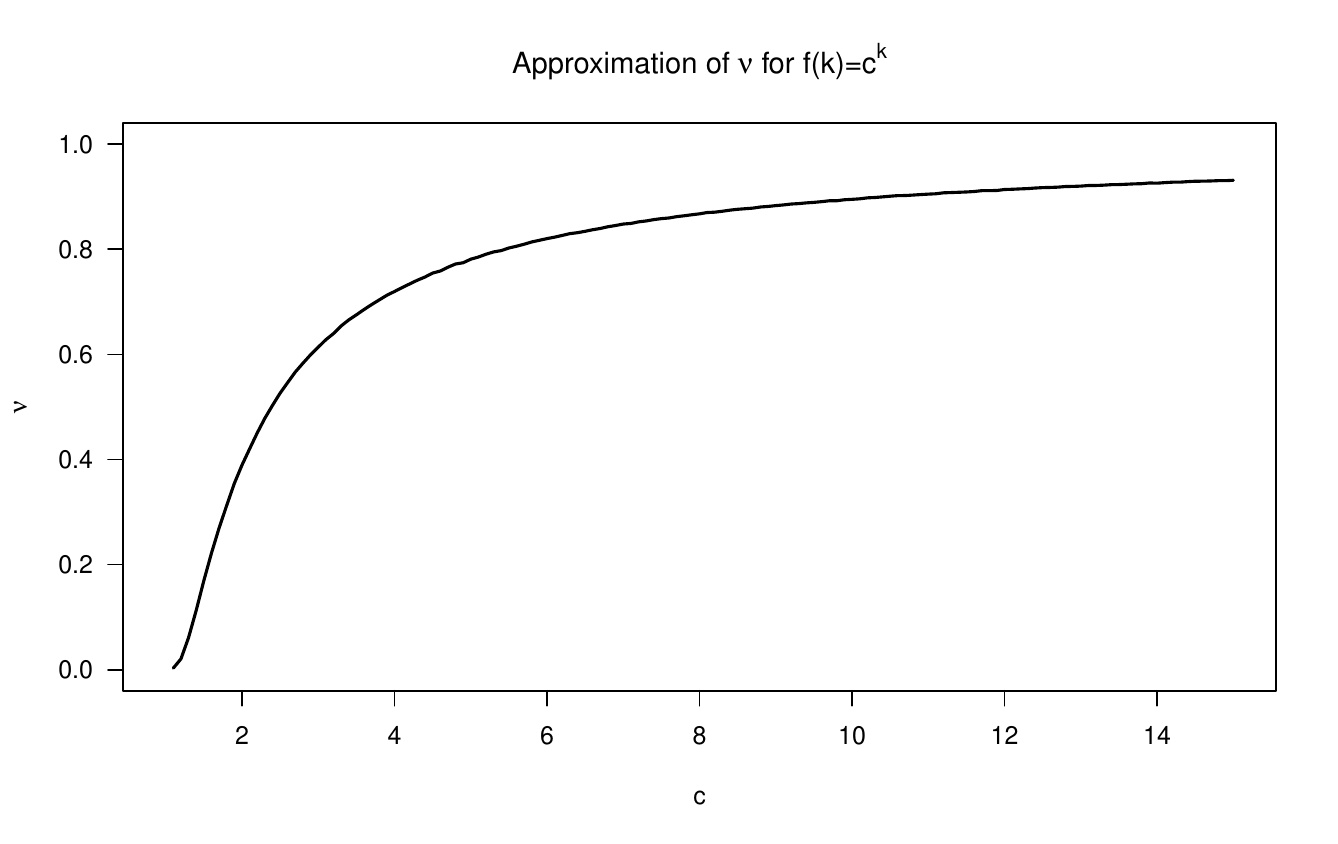}
    \caption{A numerical approximation of the predicted constant $\nu$ in Conjecture~\ref{conj:exp}. The figure shows the average value of $d(T_n)/n$ for $n=15.000$ and weight function $f(k)=c^k$. The plot is obtained by taking an average over $50$ independent samples for each $c$ between $1.1$ and $15$ with increments of $0.1$.}
    \label{fig:expsim}
\end{figure}

\subsection*{Super-exponential weight functions.}

Finally, we present several results concerning certain weight functions $f$ growing \emph{faster} than exponentially. Namely, we assume that
        \be\tag{D} \label{ass:D}
        \lim_{j\to\infty}\frac{f(j+1)}{f(j)}=\infty
    \ee 	 
and define the quantity
    \be \label{eq:In}
    I_n\coloneq\sum_{j=0}^n \frac{f(j)}{f(j+1)}. 
    \ee 
Note that, by the first part of Assumption~\eqref{ass:D}, it follows that $I_n=o(n)$. As implied by the following theorem, when $I_n$ is large, it determines precisely the typical number of vertices outside a longest path starting from the root of $T_n$.

Our main theorem for super-exponentially growing functions is then the following:
\begin{theorem}\label{thrm:supexp}
Fix a weight function satisfying Assumption~\eqref{ass:D}.
\begin{enumerate}
\item[\emph{(i)}] If $I_n$ converges, then there is an integer-valued random variable $K$ such that
\[n-d(T_n)\toas K.\]
\item[\emph{(ii)}] If $I_n = \omega(1)$, then 
\begin{equation*}
\frac{n-d(T_n)}{I_n}\toinp 1.
\end{equation*}
\end{enumerate}
\end{theorem}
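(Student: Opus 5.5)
We track the process through the \emph{deficit} $D_n \coloneq n - 1 - d(T_n)$, the number of steps so far in which the new vertex did \emph{not} attach at the current maximal depth. Then $n-d(T_n) = 1 + D_n$, so it suffices to study $D_n$. Set $h = d(T_n)$ and split the total weight $W_n=\sum_j f(d_n(j))$ as $W_n = N_h f(h) + R_n$, where $N_h$ is the number of vertices at depth $h$ and $R_n$ is the weight of all shallower vertices. The depth increases at step $n+1$ with probability $N_hf(h)/W_n$. Since every non-deepest vertex lies at depth at most $h-1$, we get $R_n \le (n - N_h) f(h-1)$. Hence the failure probability $p_n$ at step $n+1$ satisfies
\[p_n \le \frac{n f(h-1)}{N_h f(h)} \le \frac{n f(h-1)}{f(h)}.\]
A matching lower bound comes from the parent of the deepest vertex, which alone contributes weight $f(h-1)$:
\[p_n \ge \frac{f(h-1)}{W_n}.\]

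The key observation is that under \eqref{ass:D}, once the tree is essentially a path, the structure is rigid. If the last $m$ steps all increased the depth, then $N_h=1$ and all depths $h-1,h-2,\dots$ hold essentially one vertex each, except for a bounded number of "side" vertices. Consequently $W_n = f(h)(1+o(1))$ and $R_n = f(h-1)(1+o(1))$. To make this precise, I would show that the number of vertices at depth $h-k$ stays bounded by a quantity whose total weight contribution $\sum_k (\text{count})\, f(h-k)$ is dominated by $f(h-1)(1+o(1))$. This follows from ratios $f(j)/f(j+1)\to 0$, provided the counts do not grow super-exponentially; that can be controlled because the deficit itself is only $O(I_n)=o(n)$.

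With this in hand, conditionally on the past, the deficit increments are Bernoulli with success probability $q_h = \frac{f(h-1)}{f(h)}(1+o(1))$ in the typical configuration. A failure creates a new leaf at depth $\le h$. If it is a sibling of the deepest vertex, the deepest level then holds two vertices, and it relaxes back after the next step. This changes $p$ only by a constant factor for one step, an effect that is negligible in sum.

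For the two parts, the plan is as follows.

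For (i), the bound $\sum_n \P{\text{failure at step } n} \lesssim \sum_h f(h-1)/f(h) < \infty$ lets us apply Borel--Cantelli (conditional version, via Lévy's extension). This shows that only finitely many failures occur almost surely, so $D_n$ is eventually constant. Here one needs to start: early on the depth $h\approx n-D_n$ with $D_n$ finite. More precisely, once $f(h-1)/f(h)$ is small, the failure probability is at most $C n f(h-1)/f(h)$ only crudely. Since $h \ge n - D_n$ and $D_n$ is a.s. eventually small relative to the decay, we bootstrap to get $p_n \le C f(h-1)/f(h)$.

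For (ii), write $D_n = \sum_{m<n} \xi_m$ with $\E{\xi_m \mid \cF_m} = p_m$. The martingale $D_n - \sum p_m$ has quadratic variation at most $\sum p_m$, so $D_n / \sum p_m \to 1$ in probability when $\sum p_m\to\infty$. It remains to show $\sum_{m<n} p_m = I_n(1+o(1))$. Because $h$ runs through values $n - D_n$ with $D_n = o(n)$, the sum of $f(h-1)/f(h)$ over steps equals the sum over depth values $0,\dots,d(T_n)$ of $f(j-1)/f(j)$. Each depth is visited for one step plus an extra step after each failure, giving a $(1+o(1))$ factor since $p\to0$. This sum is $I_{d(T_n)} = I_n - O(D_n\cdot \max_{j>n-D_n} f(j)/f(j+1)) = I_n(1+o(1))$.

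The main obstacle is justifying the rigidity of the structure: that $W_n$ and $R_n$ are asymptotically $f(h)$ and $f(h-1)$ along the whole evolution, including the bootstrap from a possibly messy initial phase. My first attempt would use a crude bound showing $D_n \le \eps n$ w.h.p. Then the accumulated side vertices near the top levels are few, and $\sum_{k\ge2} (\#\text{level } h-k)\, f(h-k)/f(h-1) \to 0$ because the ratios decay faster than any geometric rate.
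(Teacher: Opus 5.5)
Reducing (ii) to the compensator $A_n=\sum_{m<n}p_m$ of $D_n$ is fine; the martingale and Chebyshev step works. The gap is the ``rigidity'' step that carries the whole argument: $R_m=(1+o(1))f(h-1)$ and $W_m=(1+o(1))f(h)$, or equivalently $A_n=(1+o(1))I_n$. You assert it, and the justification you sketch fails.

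A bound on the \emph{total} number of side vertices ($D_n\le\eps n$, or even $D_n=O(I_n)$) says nothing about how many of them sit at depths $h-1$ and $h-2$, and those are the only depths that matter. The super-exponential decay of $f(h-k)/f(h-1)$ in $k$ only controls $k\ge3$ relative to $k=2$, whereas $f(h-2)/f(h-1)$ may tend to $0$ arbitrarily slowly. For $f(j)=(j!)^{1/3}$ one has $I_n\sim\frac32 n^{2/3}$ and $f(h-1)/f(h-2)\asymp n^{1/3}$. Bounding $N_{h-2}$ by $1+D_n$ then makes the $k=2$ term of your sum as large as order $n^{1/3}$ rather than $o(1)$. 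Treating $N_{h-1}$ the same way gives only $R_m\lesssim n^{2/3}f(h-1)$, which is not even $o(f(h))$.

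What you actually need is a \emph{per-depth} estimate such as $\mathbb{E}[N_k(\tau_\infty)-1]\le Cf(k-1)/f(k)$. For the variance in (ii) you also need asymptotic decorrelation of the excesses at different depths. This is the real content of the theorem, and it is delicate: the number of steps during which depth $k-1$ can spawn extra vertices at depth $k$ grows with the very failures being counted, so exposure time and the population at depth $k-1$ are positively correlated. The paper breaks this correlation in continuous time. \Cref{cl:couple} gives independent exponentials $E_k\ge\tau_{1,k+1}-\tau_{1,k}$, with $F_k=\sum_{i\ge k}E_i$ independent of depths $\le k$. This yields $N_k(\tau_\infty)-1\le H_k$ and the recursion $\mathbb{E}[H_k\mid\cF_k^\infty]\le\mathbb{E}[G_k(\mathbb{E}[H_{k-1}\mid\cF_{k-1}^\infty]+1)\mid\cF_k^\infty]$. \Cref{lem:tausv2} then bounds the resulting products $\prod_j G_j^{a_j}$ uniformly in the number of factors.

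Part (i) is circular as written. Lévy's Borel--Cantelli lemma needs $\sum_n p_n<\infty$ almost surely. By your own estimates this is available only once the side vertices are known to be finitely many and away from the frontier, which is the conclusion; ``$D_n$ is a.s.\ eventually small'' assumes it. Stopping when $D_n$ first exceeds $K$ does not close the loop: on $\{D_m\le K\}$ your bound on $p_m$ is of order $Kf(h-1)/f(h)$, which only gives $\mathbb{P}(D_\infty>K)=O(1)$. In the paper, the per-depth bound summed over $k$ is exactly what shows $\lim_n(n-d(T_n))$ has finite mean.

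The half $n-d(T_n)\ge(1-\eps)I_n$ of (ii) needs no rigidity. Once depth $k$ is first reached, the first later attachment to depth $k-1$ or $k$ goes to $k-1$ with conditional probability at least $f(k-1)/(f(k-1)+f(k))$. So the indicators $J_k$ dominate independent Bernoulli variables, and Chernoff applies. Routing this direction through $W_m\le(1+o(1))f(h)$, as you do, imports the same missing local control.
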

\noindent
Observe that, when $I_n$ converges, part (i) of Theorem~\ref{thrm:supexp} shows that all but a finite (random) number of vertices belong to a unique infinite path from the root of the tree.

Theorem~\ref{thrm:supexp} significantly strengthens part~(e) in Theorem~\ref{thm:LMW} in~\cite{LeckMitWor20}. That result concerns a special case of (ii), and its proof is based on the fact that the probability of the event that $T_n$ is a path has a non-zero limit, so that the expected value of the depth of $T_n$ is linear. We extend this to more general functions, also including case (ii), and obtain much stronger convergence results.

\subsection{Outline of the proof ideas.} Important prerequisites for our proofs are two continuous-time processes presented in \Cref{sec:Poisson,sec:Cox}. 
The first of them is a continuous-time branching process version of the DWT which was also used in \cite{LeckMitWor20}. Here, each vertex in the process independently produces offspring according to a Poisson process with intensity depending on its depth in the tree. 
The second process simplifies the structure of the previously described branching process by recursively tracking only the profile of the process (the number of vertices on each depth).
It presents a convenient framework used in each of the proofs of our main results and is among the main technical novelties of this work.

The proofs of \Cref{prop:bddH}, \Cref{thrm:bddH,thrm:svf} follow a similar theme. 
The main point is to estimate the stopping times $\tau_n$ (when the $n$-th vertex is born) and $\tau_{1,k}$ (when the first vertex on depth $k$ is born) in the continuous-time version sufficiently precisely: then, any function $k(n)$ such that $\tau_n\approx \tau_{1,k(n)}$ actually asymptotically characterises the typical depth of $T_n$.
The short proof of \Cref{prop:bddH} makes use of the second-moment method to show rough (although simple and sufficient) estimates of $\tau_n$ and $\tau_{1,k}$.
Achieving the more precise conclusions of \Cref{thrm:bddH,thrm:svf} requires a refined understanding of the growth of the DWT. 
Estimating $\tau_{1,k}$ is the comparatively simpler part: this requires standard first moment computations (for convergent and slowly growing functions), a subadditivity argument (for convergent functions) and an old result of Biggins~\cite{Big76} (for periodic functions).
The main complication in each of these proofs is the estimation of $\tau_n$, which requires finding a convenient way to circumvent the potential irregularities among the small values of $f$. Note that, ince vertices produce offspring at different rates (depending on their depths), these results cannot be obtained directly from the well-known theory on Crump--Mode--Jagers branching processes. To estimate $\tau_n$, we analyse the second continuous-time process over an interval $[0,t]$ in two phases.
During the short first phase taking place over the interval $[0,\eps t]$ with $\eps$ small, the process typically produces a vertex on a sufficiently large depth where $f$ starts to behave regularly.
The second phase analyses the process over the interval $[\eps t,t]$ and uses the regularity of $f$ to show convenient lower bounds for the process $N(\cdot)$ which keeps track of the total number of vertices.

We turn to the proof of \Cref{thm:main_exp}. Part (ii) relies on a short renewal-process-type of argument: it uses that the time needed to increase the depth of the tree from $k$ to $k+1$ is stochastically minimised when all unsuccessful attempts result in a new vertex on depth $k$.
The proof of \Cref{thm:main_exp}(i) is much harder and uses the framework defined in \Cref{sec:Cox} in a different and novel way. 
In a nutshell, our approach estimates the profile of the depths in $T_n$ in two steps. 
First, we provide an exponential concentration estimate on the event that some depth is a sort of non-strict local maximum (in a suitably chosen constant-size window) containing exactly $s$ vertices. 
This step is the most technical part of the proof: it consists in a recursive analysis of the continuous-time trajectories of blocks of consecutive depths, and relies on several concentration bounds, stochastic comparisons between processes and a concise case analysis.
The second step analyses a deterministic algorithm showing that every depth $r$ is within a small distance from a local maximum $r'$, and one can reach $r'$ from $r$ by making jumps of constant length towards depths containing (strictly) more vertices.
Finally, to show \Cref{thm:main_exp}(i), we show that typically the interval $[0,cn]$ of depths contains fewer than $n$ vertices of $T_n$: here, the first step serves to control the number of vertices on depths which are local maxima while the second step controls the vertices on all remaining depths.

Finally, the proof of \Cref{thrm:supexp} relies on a simple but crucial coupling result and a rather delicate first-second moment argument.
The coupling result states that, in the framework of \Cref{sec:Cox}, there exists a sequence of independent exponential random variables $(E_k)_{k=0}^{\infty}$ with suitable rates satisfying the following two properties: 
first, $E_k$ almost surely dominates the difference between the birth times of the first vertices on depths $k$ and $k+1$ and, second, for every $k$, the family $E_k,E_{k+1},\ldots$ is independent of the process on depths $0,\ldots,k$.
The sequence $(E_k)_{k=0}^{\infty}$ is important for two related reasons: the first property above allows us to suitably approximate the explosion time of the continuous-time process which, 
combined with the independence encoded in the sequence, makes it possible to precisely bound the joint moments of the number of vertices on different depths at the explosion time. 
The latter task is achieved via delicate recursive computations.

\vspace{1em}
\noindent
\textbf{Structure of the paper.} In Section~\ref{sec:prelims}, we gather notation and present preliminary results. 
Then, Section~\ref{sec:bdd} is dedicated to proving the results for bounded and slowly growing weight functions, namely Proposition~\ref{prop:bddH} and Theorems~\ref{thrm:bddH} and~\ref{thrm:svf}. 
In Section~\ref{sec:exp_weight_fns}, we prove Theorem~\ref{thm:main_exp} regarding exponential weight functions. 
Finally, we deal with super-exponential weight functions in Section~\ref{sec:supexp} where we show Theorem~\ref{thrm:supexp}.

\section{Preliminaries}\label{sec:prelims}

In this section we present general notation and several concentration results that we use throughout. 
Furthermore, we introduce two continuous-time versions of the DWT model, serving as a basis of several of our proofs, in Section~\ref{sec:contembed}.

\subsection{General notation and concentration results}
We start with some standard notation. We write $\mathbb N$ for the set of positive integers and $\mathbb N_0 = \mathbb N\cup \{0\}$.
Moreover, for a positive integer $n$, we define $[n] = \{1,\ldots,n\}$. Rounding of positive expressions to the nearest integer is often ignored if irrelevant for the context.
Furthermore, we adhere the convention that empty sums are equal to 0 while empty products are equal to 1. We use the standard asymptotic notations $o$, $O$, $\omega$ and $\Omega$ (where $f = \Omega(g)$ if $g = O(f)$ and $f = \omega(g)$ if $g = o(f)$). 
By default, the implicit constants in $O(\cdot)$ and $\Omega(\cdot)$ are absolute.

\subsection*{Concentration inequalities}

We begin with the well-known Paley-Zygmund inequality for concentration of non-negative random variables with bounded second moment.

\begin{lemma}[Paley-Zygmund inequality, see~\cite{Ste04}]\label{lem:PZ}
For every $\theta\in [0,1]$ and every non-negative random variable $Z$ with finite variance, $\mathbb P(Z\ge \theta \mathbb E[Z])\ge (1-\theta)^2 \mathbb E[Z]^2/\mathbb E[Z^2]$.
\end{lemma}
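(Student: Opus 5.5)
The plan is to use the second moment method directly. Write $\mu = \mathbb E[Z]$. We may assume $\mu>0$. If $\mu=0$, then $Z=0$ almost surely because $Z\ge 0$. In that case $\mathbb E[Z^2]=0$ and the right-hand side should be read as $0$, or the statement is vacuous. Note also that $\mathbb E[Z^2]\ge \mu^2>0$ whenever $\mu>0$, so the ratio on the right is well defined.

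The key step is to split $Z$ according to whether it exceeds the threshold $\theta\mu$. We write $Z = Z\mathbf 1_{\{Z<\theta\mu\}} + Z\mathbf 1_{\{Z\ge\theta\mu\}}$ and take expectations. The first term is pointwise at most $\theta\mu$, so its expectation is at most $\theta\mu$. This gives
\[
\mu \le \theta\mu + \mathbb E\bigl[Z\mathbf 1_{\{Z\ge \theta\mu\}}\bigr],
\]
and hence
\[
(1-\theta)\mu \le \mathbb E\bigl[Z\mathbf 1_{\{Z\ge\theta\mu\}}\bigr].
\]

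Next, we bound the right-hand side by the Cauchy--Schwarz inequality:
\[
\mathbb E\bigl[Z\mathbf 1_{\{Z\ge\theta\mu\}}\bigr] \le \mathbb E[Z^2]^{1/2}\, \mathbb P(Z\ge\theta\mu)^{1/2}.
\]
Since $\theta\le 1$, the left-hand side $(1-\theta)\mu$ of the previous display is non-negative. We may therefore square both inequalities and combine them, which gives $(1-\theta)^2\mu^2\le \mathbb E[Z^2]\,\mathbb P(Z\ge\theta\mu)$. Dividing by $\mathbb E[Z^2]$ yields the claim.

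There is no genuine obstacle here. The only points needing care are the degenerate case $\mu=0$ and checking that $1-\theta\ge 0$ before squaring. Finite variance is used only to guarantee that $\mathbb E[Z^2]<\infty$, so that Cauchy--Schwarz gives a non-trivial bound.
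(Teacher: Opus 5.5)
Your proof is correct: it is the standard truncation-plus-Cauchy--Schwarz argument for the Paley--Zygmund inequality. The paper does not prove the lemma itself but cites Steele's note, where essentially this same argument is given. One small addendum: finite variance also guarantees $\mathbb E[Z]<\infty$, which you implicitly use when subtracting $\theta\mu$ from both sides, so the hypothesis is used for slightly more than making the Cauchy--Schwarz bound non-trivial.
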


Next, we state standard Chernoff bounds for binomial and Poisson random variables, see e.g.\ \cite[Theorems A.1.12--A.1.15]{AS16} (stated for $\eps = t/\mathbb E[X]$) and \cite[Theorem~2.1]{JLR11}.

\begin{lemma}[Chernoff bounds]\label{lem:chernoff}
For every binomial or Poisson random variable $X$ and $t>0$,
\[\mathbb P(X-\mathbb E[X]\le -t) \le \exp(-t^2/2\mathbb E[X])\]
and
\[\mathbb P(X-\mathbb E[X]\ge t) \le \exp\bigg(-\frac{t^2}{2(\mathbb E[X]+t/3)}\bigg).\]
\end{lemma}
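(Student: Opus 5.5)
The plan is the exponential moment (Cramér--Chernoff) method, organised so that the binomial case reduces to the Poisson case. Write $\mu=\mathbb E[X]$. For $X\sim\mathrm{Bin}(m,p)$ we have $\mathbb E[\e^{\lambda X}]=(1+p(\e^{\lambda}-1))^m\le \exp(\mu(\e^{\lambda}-1))$ for every real $\lambda$, using $1+x\le \e^x$ and $\mu=mp$. For $X\sim\mathrm{Po}(\mu)$ this is an equality. Hence in both cases
\[
\log \mathbb E\big[\e^{\lambda(X-\mu)}\big]\le \mu(\e^{\lambda}-1-\lambda)\qquad\text{for all }\lambda\in\mathbb R,
\]
and from here on only this bound on the moment generating function is used.

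\emph{Upper tail.} For $\lambda>0$, Markov's inequality applied to $\e^{\lambda(X-\mu)}$ gives $\mathbb P(X-\mu\ge t)\le \exp\big(\mu(\e^\lambda-1-\lambda)-\lambda t\big)$. The choice $\lambda=\log(1+t/\mu)$ minimises the exponent and yields
\[
\mathbb P(X-\mu\ge t)\le \exp(-\mu\,\varphi(t/\mu)),\qquad \varphi(x)=(1+x)\log(1+x)-x .
\]
It then remains to show the elementary inequality $\varphi(x)\ge \frac{x^2}{2(1+x/3)}$ for $x\ge0$. Substituting $x=t/\mu$ turns $\mu\cdot\frac{x^2}{2(1+x/3)}$ into exactly $\frac{t^2}{2(\mu+t/3)}$. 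To prove the inequality, set $g(x)=\varphi(x)-\frac{x^2}{2(1+x/3)}$. Then $g(0)=g'(0)=0$, so it suffices to check $g''\ge0$. We have $\varphi''(x)=1/(1+x)$, while the second derivative of $\frac{x^2}{2(1+x/3)}=\frac{3x^2}{2(3+x)}$ equals $27/(3+x)^3$. So the claim reduces to $(3+x)^3\ge 27(1+x)$, which holds since expanding gives $27+27x+9x^2+x^3$.

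\emph{Lower tail.} Apply Markov's inequality to $\e^{-\lambda X}$ with $\lambda>0$. This gives $\mathbb P(X-\mu\le -t)\le \exp\big(\mu(\e^{-\lambda}-1+\lambda)-\lambda t\big)$. If $t>\mu$, the probability is zero because $X\ge0$, so assume $t\le\mu$. Choosing $\lambda=-\log(1-t/\mu)$ gives the bound $\exp(-\mu\,\varphi(-t/\mu))$, with the same $\varphi$ as above. It remains to show $\varphi(-x)=(1-x)\log(1-x)+x\ge x^2/2$ on $[0,1]$. Here $h(x)=\varphi(-x)-x^2/2$ satisfies $h(0)=h'(0)=0$ and $h''(x)=\frac1{1-x}-1\ge0$; the endpoint $x=1$ follows by continuity. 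This produces $\exp(-t^2/(2\mu))$.

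The only step requiring any care is the calculus comparison for $\varphi$ in the upper tail, where the specific constant $1/3$ has to be matched. The second-derivative comparison above settles it. Everything else is routine optimisation of the exponent.
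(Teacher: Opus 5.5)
Your argument is correct and is the standard Cram\'er--Chernoff proof, which is also how the source the paper cites proves it (see \cite[Theorem~2.1]{JLR11}; the paper itself gives no proof): the bound $\mathbb E[\e^{\lambda X}]\le\exp(\mu(\e^{\lambda}-1))$, optimisation in $\lambda$ to get $\exp(-\mu\varphi(\pm t/\mu))$, and then the two elementary inequalities for $\varphi$. The only cosmetic point is the endpoint $t=\mu$ in the lower tail, where your choice gives $\lambda=\infty$, which is not admissible; letting $\lambda\to\infty$, or noting directly that $\mathbb P(X=0)\le \e^{-\mu}\le \e^{-\mu/2}$, settles it.
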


Next, we state an exponential concentration inequality due to Bernstein for sums of independent exponential random variables; a more general version appears as \cite[Corollary~2.9.2]{Ver18}.

\begin{lemma}[see Corollary~2.9.2 in \cite{Ver18}]\label{lem:Ber}
Fix independent exponential random variables $(X_i)_{i=1}^n$ of rate $1$ and sum $X$. Then, there are constants $c_1,c_2$ such that, for every $t\ge 0$,
\[\mathbb P(|X-n|\ge t)\le 2\exp\bigg(-\min\bigg\{\frac{t^2}{c_1n},\frac{t}{c_2}\bigg\}\bigg).\]
\end{lemma}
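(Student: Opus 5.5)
We prove the bound directly by the exponential moment (Chernoff--Cram\'er) method, without going through the general sub-exponential theory in \cite{Ver18}. Each centred summand $Y_i = X_i - 1$ has an explicit moment generating function: for every $\lambda<1$,
\[\mathbb E\big[\e^{\lambda Y_i}\big] = \frac{\e^{-\lambda}}{1-\lambda}, \qquad\text{so}\qquad \psi(\lambda) \coloneq \log \mathbb E\big[\e^{\lambda Y_i}\big] = -\lambda - \log(1-\lambda).\]
The key step is a quadratic bound on $\psi$ near $0$. Expanding, $\psi(\lambda) = \sum_{k\ge 2}\lambda^k/k$. For $|\lambda|\le 1/2$ this gives $\psi(\lambda)\le \frac{\lambda^2}{2}\sum_{m\ge 0}|\lambda|^m \le \lambda^2$. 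For $\lambda\le 0$ we even have $\psi(\lambda)\le \lambda^2/2$ on the whole half-line, since the series then alternates with decreasing terms; this extra range is not needed.

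For the upper tail, independence and Markov's inequality give, for $0<\lambda\le 1/2$,
\[\mathbb P(X-n\ge t)\le \exp\big(-\lambda t + n\psi(\lambda)\big)\le \exp\big(-\lambda t+n\lambda^2\big).\]
We choose $\lambda=\min\{t/(2n),1/2\}$ and split into two cases.
\begin{itemize}
\item If $t\le n$, then $\lambda = t/(2n)$ and the exponent is $-t^2/(4n)$.
\item If $t>n$, then $\lambda = 1/2$ and the exponent is $-t/2+n/4\le -t/4$.
\end{itemize}
In both cases $\mathbb P(X-n\ge t)\le \exp(-\min\{t^2/(4n), t/4\})$. The lower tail is handled the same way with $-\lambda$ in place of $\lambda$, using the bound on $\psi$ for negative arguments, and yields the same estimate. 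A union bound over the two tails produces the factor $2$, so the lemma holds with $c_1=c_2=4$.

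There is no real obstacle here. The only point needing care is that the moment generating function of an exponential blows up at $\lambda=1$. This is exactly why the optimisation must be capped at $\lambda=1/2$, and the cap is what produces the linear (sub-exponential) regime $t/c_2$ in place of a purely Gaussian tail $t^2/(c_1 n)$.
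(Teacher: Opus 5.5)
Your proof is correct. The cumulant generating function $\psi(\lambda)=\sum_{k\ge2}\lambda^k/k$, the bound $\psi(\lambda)\le\lambda^2$ for $|\lambda|\le 1/2$, the choice $\lambda=\min\{t/(2n),1/2\}$ and the two-case computation all check out, giving the lemma with $c_1=c_2=4$.

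The paper does not prove this statement. It quotes Bernstein's inequality for sums of independent centred sub-exponential variables from Vershynin, which bounds $\mathbb P(|\sum_i Y_i|\ge t)$ by $2\exp(-c\min\{t^2/\sum_i\|Y_i\|_{\psi_1}^2,\, t/\max_i\|Y_i\|_{\psi_1}\})$. That bound specialises to the stated form because $\|X_i-1\|_{\psi_1}$ is an absolute constant.

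You replace this black box with a direct Cram\'er--Chernoff computation using the explicit moment generating function of the exponential law. Your route is self-contained and produces explicit constants. The cited result is more general (any independent sub-exponential summands, with constants governed by their $\psi_1$-norms) but leaves $c_1,c_2$ unspecified, which is all the paper needs.

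One small inaccuracy is in your aside. For $\lambda<-1$ the series $\sum_{k\ge2}\lambda^k/k$ diverges, so the alternating-series argument only covers $-1\le\lambda\le 0$. The inequality $\psi(-\mu)=\mu-\log(1+\mu)\le\mu^2/2$ does hold for all $\mu\ge0$, by differentiating. As you say, this range is never used, so the proof is unaffected.
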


We finish this section with slightly more advanced concentration results for Poisson processes.
Fix a measure space \((S, \mathcal{F},\Lambda)\). 
A \emph{Poisson point process $N$ with intensity measure $\Lambda$} is a family of random variables \(\{N(A) : A \in \mathcal{F}\}\) with values in $\N_0$ such that the following properties hold:
\begin{itemize}
    \item for each $A\in\mathcal{F}$, $N(A)$ is a Poisson random variable with parameter $\Lambda(A)$, and
    \item for each family $(A_i)$ of pairwise disjoint measurable sets, $(N(A_i))$ are jointly independent random variables.
\end{itemize}
Furthermore, if the intensity measure $\Lambda$ varies according to a stochastic process on the set of measures on $(S,\cF)$, $N$ is instead called a \textit{Cox process}; concrete examples of such processes will be constructed in \Cref{sec:contembed}. 

When the set $S$ in the previous definition is the real line, the function $t\to N([0,t])$ defines a c\'adl\'ag stochastic process $(N(t))$. 
In the scenario where $\Lambda$ is not random, it is well known that the centred process $(N(t)-\Lambda[0,t])$ is a martingale. 
Among several exponential inequalities available in this setting (see for instance \cite{Kroll,Rey06,ReynaudBouret2003AdaptiveEO,LeGuvel2021}), we make extensive use of the following one due to Le Gu\'evel~\cite{LeGuvel2021}.

\begin{theorem}[Theorem 5 in \cite{LeGuvel2021}]\label{theo: LeGuvel}
Consider a Poisson point process on the real line $(N(t))$ with intensity measure $\Lambda$ such that $\Lambda[0,t]<\infty$ for all $t\ge 0$. 
Also, define $I:z\mapsto (1+z)\log(1+z)-z$. Then, for all $t>0$ and $x>0$,
\[\P{\sup_{u\in [0,t]}|N(u)-\Lambda[0,u]|\geq x}\,\leq\,2\exp{\bigg(-\Lambda[0,t]\cdot I\bigg(\frac{x}{\Lambda[0,t]}\bigg)\bigg)}.\]
\end{theorem}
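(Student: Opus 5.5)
The plan is the standard exponential-martingale (Chernoff-type) argument for the centred process, combined with Doob's maximal inequality. Each tail is treated separately, and a union bound produces the factor $2$.

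Write $\Lambda_u=\Lambda[0,u]$ and $X_u=N(u)-\Lambda_u$.

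\emph{Laplace transforms and the submartingale.} Since $N$ has independent Poisson increments, for every $\lambda\in\mathbb R$ and $u\ge 0$,
\[
\mathbb E[\e^{\lambda X_u}]=\exp\big(\Lambda_u(\e^{\lambda}-1-\lambda)\big).
\]
In particular $\e^{\lambda X_u}$ is integrable. The process $(X_u)$ is a martingale with respect to the natural filtration of $N$, because increments are independent with mean equal to the increment of $\Lambda$. This holds whether or not $\Lambda$ has atoms. By conditional Jensen, $(\e^{\lambda X_u})_{u\ge0}$ is then a nonnegative c\`adl\`ag submartingale. Doob's maximal inequality (applied over finite sets of times and extended to $[0,t]$ by right-continuity) gives, for $\lambda>0$,
\[
\P{\sup_{u\le t}X_u\ge x}\le \e^{-\lambda x}\,\mathbb E[\e^{\lambda X_t}]=\exp\big(-\lambda x+\Lambda_t(\e^{\lambda}-1-\lambda)\big).
\]
Choosing $\lambda=\log(1+x/\Lambda_t)$ turns the exponent into exactly $-\Lambda_t I(x/\Lambda_t)$. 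This proves the upper-tail half of the bound.

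\emph{Lower tail.} If $x>\Lambda_t$, then $X_u\ge -\Lambda_u\ge-\Lambda_t>-x$ for all $u\le t$, since $N\ge 0$. So the event $\sup_{u\le t}(-X_u)\ge x$ is empty. The degenerate case $x=\Lambda_t$ is also harmless: there the event requires $N(t)=0$, whose probability $\e^{-\Lambda_t}$ equals $\exp(-\Lambda_t I(-1))$.

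For $x<\Lambda_t$, apply the same Doob argument to $\e^{-\lambda X_u}$ with $\lambda=-\log(1-x/\Lambda_t)>0$. This gives the bound $\exp(-\Lambda_t I(-x/\Lambda_t))$.

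It remains to show $I(-z)\ge I(z)$ for $z\in[0,1)$. Set $g(z)=I(-z)-I(z)$. Then $g(0)=0$ and
\[
g'(z)=-\log(1-z)-\log(1+z)=-\log(1-z^2)\ge 0,
\]
so $g\ge 0$ on $[0,1)$. Hence the lower tail is also bounded by $\exp(-\Lambda_t I(x/\Lambda_t))$.

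\emph{Conclusion and main care point.} Adding the two tails yields the stated bound with the factor $2$. The only step needing care is the maximal inequality over the continuum $[0,t]$ for a general, possibly atomic, intensity measure. I would handle it by applying Doob on finite sets of times $D_m\uparrow D$, with $D$ a countable dense subset of $[0,t]$ containing $t$, and then passing to the limit using the c\`adl\`ag property of $X$. This works because $\Lambda_u$ is right-continuous, as $\Lambda_t<\infty$.
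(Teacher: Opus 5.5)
The paper gives no proof of this statement. It quotes the bound from Le Guével and uses it as a black box, always conditionally on the previous depth, so that the intensity ($\dd F_r$, resp.\ $\dd F_{r+1}$) is deterministic. Your proposal is a correct, self-contained proof of exactly this deterministic-intensity statement, and it works for possibly atomic $\Lambda$. The upper tail goes through the exponential submartingale $\e^{\lambda X_u}$, Doob's inequality over a countable dense set via right-continuity, and the choice $\lambda=\log(1+x/\Lambda_t)$; this gives exactly the exponent $-\Lambda_t I(x/\Lambda_t)$. The lower tail satisfies the stronger bound $\exp(-\Lambda_t I(-x/\Lambda_t))$, and your computation $g'(z)=-\log(1-z^2)\ge 0$ correctly reduces it to the stated form, so the factor $2$ is just the union bound. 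Citing buys the paper brevity. Your argument buys an elementary derivation (Poisson Laplace transform plus Doob) and shows that the lower deviation is better than what is used.

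Two small points should be closed.

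\emph{The case $\Lambda_t=0$.} Set it aside explicitly. There $N\equiv 0$ on $[0,t]$ almost surely, the probability is $0$, and the right-hand side must be read as its limit $0$.

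\emph{The boundary case $x=\Lambda_t$.} The event coincides with $\{N(t)=0\}$ only up to a null event, namely that $N$ has a point $s\le t$ with $\Lambda[s,t]=0$. You then also need $I(-1)=1\ge 2\log 2-1=I(1)$. This follows from $g\ge 0$ by continuity at $z=1$, not from the statement on $[0,1)$ alone. Alternatively, let $x'\uparrow\Lambda_t$ in the case $x'<\Lambda_t$ and use continuity of $I$.
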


\subsection{Continuous embeddings}\label{sec:contembed} In this subsection we describe two continuous-time interpretations of the DWT model.

\subsubsection{First interpretation: via a continuous-time branching process}\label{sec:Poisson}
This continuous-time version of the model appears in the original paper of Leckey, Mitsche and Wormald~\cite{LeckMitWor20}. It has the advantage that the discrete model can be completely recovered from it (which is not the case with the second interpretation presented in \Cref{sec:Cox}).

A central object in this construction is the \emph{Ulam-Harris tree} with vertex set
\be
\UH\coloneq\{\varnothing\}\cup \bigg(\bigcup_{n\in\N} \mathbb N^n\bigg).
\ee 
This tree grows away from a root $\varnothing$ and the children of each vertex $v\in \UH$ are $v1,v2,\ldots$.
Moreover, we denote by $\duh\colon\UH\to\N_0$ the depth function for the Ulam-Harris tree; namely, $\duh(\varnothing)=0$ and, for every vertex $v=v_1v_2\cdots v_k$ with $k,v_1,\ldots,v_k\in \mathbb N$, $\duh(v)=k$. 
We also define a \emph{ray} to be an infinite sequence of positive integers and denote by $\mathbb N^{\infty}$ the set of rays.

To every $v\in \UH$, we then assign a Poisson process $\xi^{v}$ of rate $f(\duh(v))$ where $f$ is the weight function associated with the depth-weighted tree. 
The points $\sigma^{v}_1, \sigma^{v}_2, \ldots$ in the Poisson process $\xi^{v}$ correspond to the difference between the birth-times of the children $v1,v2,\ldots$ of $v$ and the birth-time of $v$ itself.
We can thus recursively construct the \emph{absolute} birth times $(\mathfrak{b}(v))_{v\in\UH}$ as
\be \label{eq:birthtime}
\mathfrak{b}(\varnothing)\coloneq0 \qquad\text{and}\qquad\ \mathfrak{b}(vj)\coloneq\mathfrak{b}(v)+\sigma^{v}_j \qquad\text{for every }v\in \UH\text{ and } j\in\N. 
\ee 
Finally, we construct the continuous-time branching process $(\cT(t))_{t\geq 0}$ by defining
\be 
\cT(t)\coloneq\{v\in \UH: \mathfrak{b}(v)\leq t\} \qquad \text{for all }t\geq 0.
\ee 
In particular, $d(\cT(t))$ stands for the depth of the branching process $\cT$ at time $t$. To justify the correspondence between the continuous-time branching process $\cT$ and the depth-weighted tree process, we introduce the stopping times $(\tau_n)_{n\in\N}$ where 
\be 
\tau_n\coloneq\inf\{t\geq 0: |\cT(t)|=n\} \qquad \text{for all }n\in\N. 
\ee 
It is then not hard to observe (and follows from~\cite[Lemma~3.1]{LeckMitWor20}) that
\be \label{eq:taun}
\{T_n: n\in\N\}\overset d=\{\cT(\tau_n): n\in\N\}. 
\ee 

\subsubsection{Second interpretation: via a Cox processes}\label{sec:Cox}

Sometimes the exact structure of the tree $T_n$ is irrelevant and only the number of vertices at different depths matters. 
The continuous-time construction from \Cref{sec:Poisson} carries more information than needed in such a case, which can complicate the analysis. 
To this end, we introduce the following simplified interpretation.

\begin{itemize}
\item First, we define a Poisson point process $N_{1}(t)$ with intensity $\Lambda_{1}(\dd t)=f(0)\dd t$ (counting the vertices on depth 1).
\item Recursively, for every $r\geq 2$, we define a Cox process $N_{r}(t)$ with intensity $\Lambda_r(\dd t)=f(r-1)N_{r-1}(t)\dd t$ (counting the vertices on depth $r$).
\end{itemize}

\noindent
For every $r\in \mathbb N$, we call $\Lambda_r$ the \emph{directing measure} of $N_r$ and additionally denote $N_0(t)=1$ for all $t\geq 0$ to account for the root on depth zero. It will be useful to further define the stopping times
\begin{equation}\label{eq:taus}
\tau_{j,r} = \inf\{t\ge 0: N_r(t) \ge j\}.
\end{equation}
Note that $\tau_{j,r}$ can be interpreted as the first time when the tree reaches $j$ vertices on depth $r$. 
Further, for every $t\ge 0$, we define 
\be \label{eq:Nt}
N(t) \coloneq\sum_{r=0}^\infty N_r(t)= 1+\sum_{r=1}^{\infty} N_r(t),
\ee 
interpreted as the total number of individuals at time $t$, that is, $N(t)=|\cT(t)|$. This motivates the alternative definition of the stopping times $\tau_n$ as
\begin{equation}\label{eq:taus+}
\tau_n\coloneq\inf\{t\geq 0: N(t)=n\}.
\end{equation}
The following result formalises the connection with the original model.

\begin{lemma}\label{lem:equiv}
For every weight function $f$, the distribution of the sequences $(N_r(\tau_n))_{r\in \mathbb N}$ and $(|\{v\in T_n: d_n(v) = r\}|)_{r\in \mathbb N}$ coincide.
\end{lemma}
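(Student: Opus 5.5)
The plan is to show that the profile process $(N_r(t))_{r\ge0}$ is a continuous-time Markov chain on finitely supported sequences in $\mathbb N_0^{\mathbb N_0}$. Its embedded jump chain will turn out to have exactly the transition probabilities of the depth profile of the DWT. Since $\tau_n$ is the time of the $(n-1)$-th jump of $N(\cdot)$, the value $(N_r(\tau_n))_r$ is the state of the jump chain after $n-1$ steps, and the claim follows.

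\textbf{Step 1 (Markov description of the Cox system).} For every $r\ge1$, $N_r$ is a Cox process directed by $f(r-1)N_{r-1}(t)\,\dd t$, built recursively from $N_{r-1}$. The concrete realisation I would use fixes independent unit-rate Poisson processes $(\Pi_r)_{r\ge1}$ and sets
\[N_r(t)=\Pi_r\Big(\int_0^t f(r-1)N_{r-1}(s)\,\dd s\Big),\]
a time-changed Poisson process. From the memorylessness of the $\Pi_r$ and the independence of the $\Pi_r$ across $r$, I would conclude the following. Given $\cF_t=\sigma(N_r(s):s\le t,\ r\ge1)$, each coordinate $r\ge1$ jumps by $+1$ at instantaneous rate $f(r-1)N_{r-1}(t)$, independently of the others. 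Only finitely many coordinates are nonzero at any time, and up to $\tau_n$ the total rate is finite. The standard construction then identifies the process as a pure-jump Markov chain with these rates, at least up to the explosion time $\sup_n \tau_n$; the lemma only concerns finite $n$, so explosion causes no problem. The main obstacle is making this step rigorous, i.e.\ justifying the Markov property from the Cox definition. I would handle it by working with the explicit time-change representation and the strong Markov property of each $\Pi_r$ at the stopping times $\tau_n$. Alternatively, one can check inductively that the joint law of the jump times and the types of the jumps has the product-of-exponential-clocks density.

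\textbf{Step 2 (Jump chain).} From a state $(x_r)$ with $x_0=1$, the total rate is $\sum_{r\ge1} f(r-1)x_{r-1}=\sum_{k\ge0} f(k)x_k$. The next jump occurs in coordinate $r$ with probability
\[\frac{f(r-1)x_{r-1}}{\sum_k f(k)x_k}.\]
By the competing-exponentials property, this choice is independent of the holding time.

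\textbf{Step 3 (Matching with the DWT).} In the DWT, let $x_k$ be the number of vertices of $T_n$ on depth $k$. By \eqref{eq:connprob}, vertex $n+1$ attaches to depth $k$ with probability
\[\frac{\sum_{i:d_n(i)=k} f(k)}{\sum_j f(d_n(j))}=\frac{f(k)x_k}{\sum_{k'} f(k')x_{k'}},\]
which depends on $T_n$ only through its profile. Hence the profile sequence of $(T_n)$ is itself a Markov chain, and its transition kernel coincides with that of the jump chain from Step~2. Both chains start from the profile $(1,0,0,\ldots)$, namely $T_1$ and $N(0)=1$ (at time $0$ only the root is present).

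\textbf{Step 4 (Conclusion).} Equal initial laws and equal transition kernels give equal laws for the whole sequences $(N_r(\tau_n))_{r}$, $n\in\N$. Marginalising at a fixed $n$ yields the lemma.
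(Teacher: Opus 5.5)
Your proposal is correct and follows essentially the paper's argument. The paper also proceeds inductively in $n$, using competing exponential clocks at time $\tau_{n-1}$ to show that the next vertex lands on depth $r$ with probability $f(r-1)N_{r-1}(\tau_{n-1})/\sum_{i\ge1} f(i-1)N_{i-1}(\tau_{n-1})$, which is exactly the DWT attachment rule. Your jump-chain formulation makes the underlying Markov property explicit, which the paper glosses over when it calls $\tau_{s_r,r}-\tau_{n-1}$ independent exponentials; only their minimum and argmin actually have the stated law. Your version also gives equality of the joint laws over all $n$.
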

\begin{proof}
We proceed by recursively constructing a coupling of the two sequences that identifies them almost surely.
The coupling of the roots is immediate.
Suppose that, for some $n\ge 2$, a coupling with the above description has been constructed so that $N_r(\tau_{n-1}) = |\{v\in T_{n-1}: d(v) = r\}|$ for every $r\in \mathbb N$, and define $s_r = s_r(n) \coloneq N_r(\tau_{n-1})+1$.
Then, for every $r\in \mathbb N$, $(\tau_{s_r,r} - \tau_{n-1})_{r\in \mathbb N}$ is a family of independent exponential random variables with rates $f(r-1) N_{r-1}(\tau_{n-1})$, respectively. Thus, since $N_r(\tau_n) = s_r$ holds only for the index $r$ of the minimum among $(\tau_{s_r,r} - \tau_{n-1})_{r\in \mathbb N}$,
\[\mathbb P(N_r(\tau_n) = s_r) = \frac{f(r-1) N_{r-1}(\tau_{n-1})}{\sum_{i=1}^{\infty} f(i-1) N_{i-1}(\tau_{n-1})}.\]
However, this is precisely the probability that the $n$-th vertex attaches to a vertex on depth $r-1$ in the discrete model, which concludes the coupling until time $\tau_n$ and the proof.
\end{proof}

\subsection{(Non)-explosion of DWTs via infinite paths}

We end with a minor  result concerning the embedding of DWTs in the continuous-time branching process discussed in Section~\ref{sec:Poisson}. Define
    \be \label{eq:tauinfty}
    \tau_\infty\coloneq\lim_{n\to\infty}\tau_n , \qquad\text{and}\qquad\ \tau_{1,\infty}\coloneq\lim_{k\to\infty} \tau_{1,k} 
    \ee 
    with $\tau_{1,k}$ introduced in~\eqref{eq:taus}. While clearly $\tau_{\infty}\le \tau_{1,\infty}$ almost surely, the following lemma shows that equality holds as well; for a proof, we direct the reader to the more general \cite[Lemma 1.3]{Kom16}.
    
	\begin{lemma}\label{lemma:explray}
		Almost surely $\tau_\infty=\tau_{1,\infty}$.
	\end{lemma}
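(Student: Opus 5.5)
The inequality $\tau_\infty\le\tau_{1,\infty}$ is immediate: before $\tau_{1,k}$ the tree has at most $k$ levels, and each level is finite at all finite times. So we must show $\tau_{1,\infty}\le\tau_\infty$ almost surely. On $\{\tau_\infty=\infty\}$ there is nothing to prove. It therefore suffices to work on the explosion event $\{\tau_\infty<\infty\}$ and show that, at time $\tau_\infty$, the branching process of \Cref{sec:Poisson} contains vertices of arbitrarily large depth.

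The plan is a K\"onig-type argument on the Ulam--Harris tree. Call $v\in\UH$ \emph{explosive} if the subtree process rooted at $v$ explodes before time $\tau_\infty$, i.e.\ infinitely many descendants of $v$ are born by time $\tau_\infty$. On $\{\tau_\infty<\infty\}$, the root is explosive.

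The key step is to show that almost surely every explosive vertex $v$ has an explosive child. The descendants of $v$ born by time $\tau_\infty$ are infinite in number, so either some child $vj$ has infinitely many such descendants, or $v$ has infinitely many children born before $\tau_\infty$. In the first case $vj$ is explosive. The second case would require the Poisson process $\xi^v$, of finite rate $f(\duh(v))$, to have infinitely many points in the bounded interval $[0,\tau_\infty-\mathfrak b(v)]$. This has probability zero. To make this rigorous, I would use that $\tau_\infty$ is bounded by some random finite $T$ on the event, and that for every rational $q$ and every $v$ (a countable set), $\xi^v([0,q])<\infty$ almost surely. A countable union of null events then handles all $v$ simultaneously.

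Iterating this step from the root produces a ray $v_0=\varnothing,v_1,v_2,\ldots$ of explosive vertices with $\mathfrak b(v_k)\le\tau_\infty$. For each $k$ the vertex $v_k$ has depth $k$, so $\tau_{1,k}\le\mathfrak b(v_k)\le\tau_\infty$. Letting $k\to\infty$ gives $\tau_{1,\infty}\le\tau_\infty$. The same reasoning transfers to the Cox-process interpretation, since $N_r(t)$ counts the depth-$r$ vertices of $\cT(t)$.

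The only real subtlety is the boundary at $\tau_\infty$ itself: whether births at times strictly less than $\tau_\infty$ accumulate, and whether "born by time $\tau_\infty$" should use $<$ or $\le$. I would define explosiveness via births in $[0,\tau_\infty)$. Finiteness of each Poisson process on bounded intervals then gives the dichotomy above without further issue. This is in line with the general statement in \cite[Lemma 1.3]{Kom16}.
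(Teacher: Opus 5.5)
Your ray argument for $\tau_{1,\infty}\le\tau_\infty$ is correct. The flaw is in the first paragraph, where your justification of the ``immediate'' inequality $\tau_\infty\le\tau_{1,\infty}$ points the wrong way. The observation that before $\tau_{1,k}$ the tree occupies only depths $0,\ldots,k-1$, each finite, shows that $N(t)<\infty$ for every $t<\tau_{1,\infty}$. That gives $\tau_\infty>t$ for all such $t$, i.e.\ $\tau_\infty\ge\tau_{1,\infty}$, which is the direction you then prove a second time.

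The inequality $\tau_\infty\le\tau_{1,\infty}$ needs a different, equally short observation. At time $\tau_{1,k}$ the ancestral line of the first depth-$k$ vertex supplies a vertex on each of the depths $0,\ldots,k$. Hence $N(\tau_{1,k})\ge k+1$, so $\tau_{k+1}\le\tau_{1,k}\le\tau_{1,\infty}$, and letting $k\to\infty$ gives the claim. As written, this half of the lemma is not proved, although the fix is one line.

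Once your first paragraph is attached to the right inequality, it is already a complete proof of the nontrivial direction. You establish almost-sure local finiteness of every $\xi^v$ (via rationals and countability of $\UH$) anyway. By induction on $r$, this gives $N_r(t)<\infty$ for all $r$ and all $t$ simultaneously, and then $t<\tau_{1,\infty}$ forces $N(t)<\infty$. So the K\"onig construction is not needed for the lemma.

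What the construction does buy is the stronger fact that explosion happens along a single ray. You have $\tau_{1,k}\le\mathfrak b(v_k)<\tau_\infty$, and together with $\tau_\infty\le\tau_{1,\infty}$ this gives $\mathfrak b(v_k)\uparrow\tau_\infty$. The paper gives no argument of its own and simply cites the more general \cite[Lemma 1.3]{Kom16} for Crump--Mode--Jagers processes. Either version of your self-contained argument suffices here, once the first inequality is justified correctly.
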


\section{Bounded and slowly-varying weight functions}\label{sec:bdd}

This section is dedicated to the proofs of \Cref{prop:bddH} and Theorem~\ref{thrm:bddH} regarding bounded weight functions, and Theorem~\ref{thrm:svf} regarding slowly-varying weight functions. 
These two settings are handled within a common framework using the continuous-time formulation described in Section~\ref{sec:Cox}.
We recall that the number of vertices at each depth $k$ is represented by a continuous-time jump process $N_k(t)$, and $\tau_{1,n}$ and $\tau_n$ correspond to the first time the tree attains depth and size $n$, respectively. 
Our proofs relate to the analysis of the $\DWT$ with constant weight function $f\equiv 1$ which, when interpreted through the continuous-time formulation described in Section~\ref{sec:Cox}, becomes a Yule process, where individuals produce children in an i.i.d.\ fashion according to a rate-one Poisson point process. This is a special  case of a Crump–Mode–Jagers (CMJ) branching processes. Both Kingman~\cite{King75} and Biggins~\cite{Big76} study the growth rate of $\tau_{1,n}$ for such processes, and the growth rate of $\tau_n$ follows from general work of Nerman on CMJ branching processes~\cite{Ner81}.

In the case of the Yule process,  the following properties, described in our terminology, hold almost surely: 
\begin{itemize}
    \item $\log(N(t))/t\toas 1$ (exponential growth of the size, see~\cite[Section $2.5$]{Nor98}), and
    \item $\lim_{n\to\infty}\tau_{1,n}/n=1/\e$ (linear growth of the depth, see~\cite[Theorem $5$]{King75}).
\end{itemize}
Our proofs rely on extending these properties to the more general class of functions $f$ that are bounded but not constant. 
We end this preamble by giving a closed expression for $\E{N_k(t)}$, which plays a central role in our computations. Recall the convention that an empty product equals one.

\begin{lemma}\label{lemma:gensize} 
For every $k\in\N_0$ and $t\geq 0$,
\begin{equation}\label{eq:gensize}
\E{N_k(t)}=\frac{(t\ell_k)^k}{k!}
\end{equation}
with
\be \label{eq:hk}
	\ell_k\coloneq \bigg(\prod_{j=0}^{k-1}f(j)\bigg)^{1/k}.
	\ee
\end{lemma}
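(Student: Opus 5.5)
We argue by induction on $k$, using the Cox-process construction of \Cref{sec:Cox}. Write $m_k(t)\coloneq\E{N_k(t)}$. For $k=0$ we have $N_0(t)=1$, and \eqref{eq:gensize} gives $(t\ell_0)^0/0!=1$ (the empty product convention makes $\ell_0^0=1$), so the base case holds. The case $k=1$ also follows directly: $N_1$ is a Poisson process of rate $f(0)$, so $m_1(t)=f(0)t=t\ell_1$.

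For the inductive step, fix $k\ge 1$ and assume $m_{k-1}(s)=(s\ell_{k-1})^{k-1}/(k-1)!$ for all $s\ge0$. The process $N_k$ is a Cox process directed by $\Lambda_k(\dd s)=f(k-1)N_{k-1}(s)\,\dd s$. Conditioning on $N_{k-1}$, the variable $N_k(t)$ is Poisson with mean $f(k-1)\int_0^t N_{k-1}(s)\,\dd s$. Taking expectations and applying Tonelli (everything is nonnegative) gives
\[
m_k(t)=f(k-1)\int_0^t m_{k-1}(s)\,\dd s=f(k-1)\,\frac{\ell_{k-1}^{k-1}}{(k-1)!}\int_0^t s^{k-1}\,\dd s=\frac{f(k-1)\,\ell_{k-1}^{k-1}\,t^k}{k!}.
\]
Since $\ell_{k-1}^{k-1}f(k-1)=\prod_{j=0}^{k-1}f(j)=\ell_k^k$, this equals $(t\ell_k)^k/k!$, which completes the induction.

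The only point needing care is finiteness, so that the conditional-expectation step is legitimate. The identity $\E{N_k(t)}=\E{\E{N_k(t)\mid N_{k-1}}}$ holds in $[0,\infty]$ by Tonelli regardless of finiteness. The inductive hypothesis then shows the right-hand side is finite, so no explosion issue arises at any fixed level $k$.
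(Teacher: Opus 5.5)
Your proposal is correct and follows essentially the same route as the paper: induction on $k$, conditioning on the trajectory of the previous level so that $N_k(t)$ is Poisson with mean $f(k-1)\int_0^t N_{k-1}(s)\,\dd s$, exchanging expectation and integral (you via Tonelli, the paper via Fubini), and then using $\ell_{k-1}^{k-1}f(k-1)=\ell_k^k$. Your separate $k=1$ check and your remark on finiteness are harmless extras, not needed beyond what the paper does.
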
 
\begin{proof} 
We prove the result by induction over $k$. The result holds for $k=0$ as $N_0(t)=1$ for all $t\geq0$. Assume~\eqref{eq:gensize} for some $k\in\N_0$. 
Then,
        \begin{equation*}
        \E{N_{k+1}(t)}=\E{\E{N_{k+1}(t)\,\big |\, (N_k(s))_{s\in[0,t]}}}=\E{\int_0^t f(k)  N_k( s)\,\dd s}.
        \end{equation*} 
        After switching expectation and integration by Fubini's theorem, the induction hypothesis gives 
        \be\label{eq:integrateNk}
        \E{N_{k+1}(t)} = \int_0^tf(k)\E{N_k(s)}\,\dd s=\frac{1}{k!}\prod_{i=0}^k f(i)\int_0^t s^k\,\dd s=\frac{(t\ell_k)^{k+1}}{(k+1)!},  
        \ee 
        as desired. 
        \end{proof} 

\begin{remark}
    By combining Markov's inequality,~\eqref{eq:gensize} and the bound $k!\ge (k/\e)^k$ for all $k\ge 0$, we obtain the useful bound \begin{equation}\label{eq:rel_tau_N}
        \P{\tau_{1,k}\leq t}=\P{N_k(t)\geq 1}\leq \E{N_k(t)}=\frac{(t\ell_k)^k}{k!}\le \bigg(\frac{\e t\ell_k}{k}\bigg)^k.
        \end{equation}
\end{remark}

\subsection{\texorpdfstring{Weight functions bounded away from $0$ and infinity}{Weight functions bounded away from 0 and infinity}} 
In this section, we prove \Cref{prop:bddH} for functions $f$ for which $0 < \inf f < \sup f < \infty$; observe that then $0 < \inf \ell_k < \sup \ell_k < \infty$ for $\ell_k$ as defined in Lemma~\ref{lemma:gensize}.

To prove Proposition~\ref{prop:bddH}, we make use of the continuous-time formulation of the model from \Cref{sec:Cox}.
More precisely, Proposition~\ref{prop:bddH} holds if there are constants $\lambda=\lambda(\sup f)>0$ and $\Lambda = \Lambda(\inf f)>0$ such that almost surely
\be\label{eq:main_prop1.3}
\lambda\le \min\bigg\{\liminf_{k\to\infty} \frac{\tau_{k^3}}{\log k}, \liminf_{k\to\infty} \frac{\tau_{1,k}}{k}\bigg\}\le \max\bigg\{\limsup_{k\to\infty} \frac{\tau_{k^3}}{\log k}, \limsup_{k\to\infty} \frac{\tau_{1,k}}{k}\bigg\}\le \Lambda.
\ee
Indeed, if \eqref{eq:main_prop1.3} holds, then there is almost surely some $k_0$ such that, for all $k\geq k_0$, 
\[\tau_{1,\,\lambda(\log k)/2\Lambda}<\tau_{k^3}\quad\text{ and }\quad \tau_{k^3}<\tau_{1,\,2\Lambda(\log k)/\lambda}.\]
As a result, for every large $m\ge 1$ and integer $n$ with $(n-1)^3\le m < n^3$,
\begin{align*}
\begin{aligned}
\frac{\lambda}{7\Lambda}\le \frac{\lambda\log(n-1)}{2\Lambda\cdot 3\log n}
&\le \frac{\max\{k: \tau_{1,k}\le \tau_{(n-1)^3}\}}{3\log n}\\
&= \frac{d(\cT(\tau_{(n-1)^3}))}{3\log n}\le \frac{d(\cT(\tau_{m}))}{\log m}\le \frac{d(\cT(\tau_{n^3}))}{3\log(n-1)}\\
&\hspace{13.05em}\le \frac{\min\{k: \tau_{1,k}\ge \tau_{n^3}\}}{3\log(n-1)}\le \frac{2\Lambda \log n}{\lambda \cdot 3\log(n-1)}\le \frac{\Lambda}{\lambda},
\end{aligned}
\end{align*}
thus implying Proposition~\ref{prop:bddH}. 
The rest of the proof consists in finding the upper and lower bounds in \eqref{eq:main_prop1.3}. Our proof is based on the second moment method for $N_k(t)$.

\begin{proof}[Proof of \Cref{prop:bddH}] 
Denote $c=\inf f$ and $C=\sup f$.

\vspace{0.5em}
\noindent
\textbf{Lower bounds in~\eqref{eq:main_prop1.3}.}
First, by using~\eqref{eq:rel_tau_N} with $t=k/2\e \ell_k$, we have
$\mathbb P(\tau_{1,k}\le k/2\e \ell_k) \le 2^{-k}$. Thus, by Borel-Cantelli's lemma, almost surely 
\begin{equation}\label{eq:tauk_LB}
\liminf_{k\to\infty} \frac{\tau_{1,k}}{k/\ell_k} \ge \frac{1}{2\e}\Rightarrow \liminf_{k\to\infty} \frac{\tau_{1,k}}{k} \ge \frac{1}{2\e C}.
\end{equation}
On the other hand, using Lemma~\ref{lemma:gensize},
\begin{equation}\label{eq:totalsize}
\mathbb E[N(t)]=\sum_{k=0}^{\infty} \frac{(t\ell_k)^k}{k!}\le \exp(Ct),
\end{equation}
implying that
\begin{equation*}
        \P{\tau_{n^3}\leq t}=\mathbb P(N(t)\geq n^3)\leq \frac{\E{N(t)}}{n^3}\le \frac{\exp(Ct)}{n^3}.
\end{equation*} 
Thus, for all $n\ge 1$, $\mathbb P(\tau_{n^3}\le (\log n)/C) \le n^{-2}$. By Borel-Cantelli's lemma, almost surely 
\begin{equation}\label{eq:tau_LB}
\liminf_{n\to\infty} \frac{\tau_{n^3}}{\log n} \ge \frac{1}{C}.
\end{equation}

\noindent
\textbf{Upper bounds in~\eqref{eq:main_prop1.3}.}
We continue by inductively estimating the second moment of $N_k(t)$. 
Using that a Poisson random variable with parameter $\lambda > 0$ has second moment $\lambda^2+\lambda$, we have
\begin{align*}
\mathbb E[N_{k+1}(t)^2] = \E{\E{N_{k+1}(t)^2\,\big |\, (N_k(s))_{s\in[0,t]}}}
&=\E{\bigg(\int_0^t f(k)  N_k(s)\,\dd s\bigg)^2+\int_0^t f(k)  N_k(s)\,\dd s}\\
&=\E{\bigg(\int_0^t f(k)  N_k(s)\,\dd s\bigg)^2} + \mathbb E[N_{k+1}(t)].
\end{align*}
By the Cauchy-Schwarz inequality, the first summand above is dominated by
\[f(k)^2 \E{\bigg(\int_0^t \E{N_k(s)} \dd s\bigg) \bigg(\int_0^t \frac{N_k(s)^2}{\E{N_k(s)}} \dd s\bigg)} = f(k)^2 \bigg(\int_0^t \E{N_k(s)} \dd s\bigg) \bigg(\int_0^t \frac{\E{N_k(s)^2}}{\E{N_k(s)}} \dd s\bigg).\]
Then, by combining the latter bound with the first equality in~\eqref{eq:integrateNk}, we obtain
\[\mathbb E[N_{k+1}(t)^2]\le f(k) \mathbb E[N_{k+1}(t)] \int_0^t \frac{\E{N_k(s)^2}}{\E{N_k(s)}} \dd s + \mathbb E[N_{k+1}(t)].\]
Dividing both sides by $\mathbb E[N_{k+1}(t)] \prod_{i=0}^k f(i)$ implies that
\[\frac{\mathbb E[N_{k+1}(t)^2]}{\mathbb E[N_{k+1}(t)]\prod_{i=0}^k f(i)}\le \int_0^t \frac{\E{N_k(s)^2}}{\E{N_k(s)} \prod_{i=0}^{k-1} f(i)} \dd s + \frac{1}{\prod_{i=0}^k f(i)}.\]
By iterating the last bound and using that $N_0\equiv 1$, we arrive at
\[\frac{\mathbb E[N_k(t)^2]}{\mathbb E[N_k(t)]\prod_{i=0}^{k-1} f(i)}\le \sum_{j=0}^k \frac{t^j}{j!\prod_{i=0}^{k-j-1} f(i)}.\]
Moreover, by dividing by $t^k/k!$ and using Lemma~\ref{lemma:gensize} finally yields
\[\frac{\mathbb E[N_k(t)^2]}{\mathbb E[N_k(t)]^2}\le \sum_{j=0}^k \frac{k!}{j!\prod_{i=0}^{k-j-1} tf(i)} = \sum_{r=0}^k \binom{k}{r} \frac{r!}{(t\ell_r)^r},\]
where the latter equality follows from the change of variable $r=k-j$. Thus, by setting
\[t_k \coloneq \frac{2k}{c} \ge 2\max_{r\in [k]} \bigg(\binom{k}{r} \frac{r!}{\ell_r^r}\bigg)^{1/r},\]
we obtain that $\mathbb E[N_k(t_k)^2]\le \mathbb E[N_k(t_k)]^2\sum_{r=0}^k 2^{-r}\le 2\E{N_k(t_k)}^2$.
Thus, by the Paley-Zygmund inequality (\Cref{lem:PZ}), for every $k\in \N$,
\begin{equation}\label{eq:LB_1/8}
\mathbb P(N_k(t_k)\ge \mathbb E[N_k(t_k)]/2)\ge \frac{\mathbb E[N_k(t_k)]^2}{4\mathbb E[N_k(t_k)^2]}\ge \frac{1}{8}.
\end{equation}
Further, let $\cE_1\coloneq \{N_1(t_k)< t_kf(0)/2\}$ and $\cE_2\coloneq\{N_2(2t_k)< t_k^2f(0)f(1)/4\}$. 
By Chernoff's bound for Poisson random variables (\Cref{lem:chernoff}), 
\[\mathbb P(\cE_1) = \mathbb P(\mathrm{Poi}(t_kf(0))\le t_kf(0)/2) \le \exp\bigg(-\frac{t_kf(0)}{8}\bigg) \le \e^{-k/8}.\]
On the other hand, on the event $\cE_1^c$, there are at least $t_kf(0)/2$ vertices on depth $1$ at time $t_k$. 
Hence, analysing the number of vertices born on depth $2$ throughout $[t_k,2t_k]$ gives
\[\mathbb P(\cE_2\mid \cE_1^c) \le \mathbb P(\mathrm{Poi}(t_k^2f(0)f(1)/2)\le t_k^2f(0)f(1)/4) \le \exp\bigg(-\frac{t_k^2f(0)f(1)}{32}\bigg)\leq \e^{-k/8}.\]
Furthermore, note that, conditionally on the event $\cE_2^c$, there are at least $t_k^2f(0)f(1)/4\ge k^2$ vertices on depth $2$ at time $2t_k$, and the subtrees rooted at these vertices evolve independently during the time interval $[2t_k,3t_k]$. 
Call $\widetilde{N}^i_k(3t_k)$ the number of vertices in the $i$-th subtree born on depth $k$ before time $3t_k$ (which are at distance $k-2$ from the root of the subtree). Note that each $\widetilde{N}^i_k(3t_k)$ stochastically dominates the random variable $\widetilde{N}_k(3t_k)$ given by the number of descendants of a vertex born at time $2t_k$ on depth 2, which themselves are born in the interval $[2t_k,3t_k]$ on depth $k$.
Moreover, since a subtree rooted on depth $2$ is a shifted version of the branching process $\cT$ and $t_k\ge t_{k-2}$, $\widetilde{N}_k(3t_k)$ also satisfies \eqref{eq:LB_1/8}. Thus,
\begin{equation}
\begin{split}\label{eq:E1E2}
\mathbb P(N_k(3t_k) < \mathbb E[\widetilde N_k&(3t_k)]/2)
\le \mathbb P(\cE_1)+\mathbb P(\cE_2\mid \cE_1^c)+\mathbb P\big(N_k(3t_k) < \mathbb E[\widetilde{N}_k(3t_k)]/2\mid \cE_2^c\big)\\[4pt]
&\le 2\e^{-k/8} +\mathbb P\big(\widetilde N_k(3t_k) < \mathbb E[\widetilde{N}_k(3t_k)]/2\,\text{ for all }\, i\leq t_k^2f(0)f(1)/4\big)\\
&\le 2\e^{-k/8} + (7/8)^{t_k^2f(0)f(1)/4}\le 3\e^{-k/8}.
\end{split}
\end{equation}
As a consequence, $\mathbb P(\tau_{1,k} > 3t_k)=\mathbb P(N_k(3t_k)=0)\le 3\e^{-k/8}$.
Next, we apply \eqref{eq:E1E2} to obtain a bound for $\mathbb P(\tau_{n^3} > 3t_{k_0})$ for some $k_0 = k_0(n)$ to be determined. 
Indeed, by applying~\eqref{eq:gensize} to $\widetilde{N}_k(3t_k)$ and the shifted weight function $k\mapsto f(k+2)$, we obtain that
\[\mathbb E[\widetilde N_k(3t_k)] = \frac{t_k^{k-2}}{(k-2)!} \prod_{j=2}^{k-1} f(j)\ge \frac{(2k)^{k-2}}{(k-2)!}\ge 2^{k-2}.\]
Define $k_0=k_0(n)\coloneq 16\log n$ so that $\mathbb E[\widetilde{N}_{k_0}(3t_{k_0})]\geq 2n^3$ and note that the event $\tau_{n^3} > 3t_{k_0}$ implies that there are fewer than $n^3$ vertices on depth $k_0$ at time $3t_{k_0}$. Then,~\eqref{eq:E1E2} yields
\begin{equation}\label{eq:tauk_UB}
\mathbb P(\tau_{n^3} > 3t_{k_0})\le \mathbb P(N_{k_0}(3t_{k_0})\le \mathbb E[\widetilde{N}_{k_0}(3t_{k_0})]/2)\le 3\e^{-k_0/8}\le 3n^{-2}.
\end{equation}
Two applications of Borel-Cantelli's lemma imply that, almost surely,
\[\limsup_{k\to\infty} \frac{\tau_{1,k}}{3t_k}\le 1\Rightarrow \limsup_{k\to\infty} \frac{\tau_{1,k}}{k}\le \frac{6}{c}\qquad \text{and}\qquad \limsup_{k\to\infty} \frac{\tau_{n^3}}{3t_{k_0(n)}}\le 1\Rightarrow \limsup_{k\to\infty} \frac{\tau_{n^3}}{\log n}\le \frac{96}{c},\]
which together with~\eqref{eq:tauk_LB} and~\eqref{eq:tau_LB} implies that~\eqref{eq:main_prop1.3} holds almost surely, finishing the proof.
\end{proof}
    
We now focus on the proof of Theorem~\ref{thrm:bddH}. 
We begin with three simple but important observations. First, for convergent functions $f$ bounded away from 0, $(\ell_k)_{k\ge 1}$ converges to the limit $\ell \coloneq \lim_{k\to \infty} f(k)$.
Second, as rescaling the weight function by a constant factor does not modify the distribution of the resulting depth-weighted tree, we henceforth assume without loss of generality that $\ell=1$. 
Third, for the case of $r-$periodic functions $f$, the same rescaling argument allows us to assume without loss of generality that $\prod_{i=0}^{r-1}f(i)=1$.

\vspace{0.5em}

We next show that, in the setting of converging weight functions, the quantity $\tau_{1,k}/k$ converges almost surely, thus improving on the bounds in~\eqref{eq:main_prop1.3}.
Our lower bound on $\tau_{1,k}/k$ comes from a first moment argument while the upper bound is more involved and adapts ideas of Kingman~\cite{King75}.

\begin{lemma}\label{lem:taukconv}
For every weight function $f(k)\to 1$ as $k\to\infty$,
\begin{equation}\label{eq:convtau1k}
\frac{\tau_{1,k}}{k}\toas \frac{1}{\e}.
\end{equation} 
\end{lemma}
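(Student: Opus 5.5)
Two matching bounds are needed, $\liminf_k \tau_{1,k}/k\ge 1/\e$ and $\limsup_k \tau_{1,k}/k\le 1/\e$ almost surely. The lower bound comes from the first moment and the upper bound from a Kingman-type subadditive comparison with the Yule case.

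\textbf{Lower bound.} Since $f(k)\to 1$, also $\ell_k\to 1$. Fix $\delta>0$ and set $t=(1-\delta)k/\e$. Then~\eqref{eq:rel_tau_N} gives
\[
\mathbb P\big(\tau_{1,k}\le (1-\delta)k/\e\big)\le \big((1-\delta)\ell_k\big)^k\le (1-\delta/2)^k
\]
for all large $k$. This is summable in $k$, so Borel--Cantelli yields $\liminf_k \tau_{1,k}/k\ge (1-\delta)/\e$ almost surely. Letting $\delta\downarrow 0$ along a countable sequence finishes this half.

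\textbf{Upper bound.} The idea is to reduce to the i.i.d.\ situation treated by Kingman~\cite{King75}. Fix $\delta>0$ and choose $k_0$ such that $f(j)\ge 1-\delta$ for all $j\ge k_0$. By Proposition~\ref{prop:bddH} (more precisely~\eqref{eq:main_prop1.3}), $\tau_{1,k_0}<\infty$ almost surely, and this term is negligible after dividing by $k$. From the first vertex at depth $k_0$, the subtree it spawns is a DWT with shifted weight function $j\mapsto f(k_0+j)\ge 1-\delta$. The first point is monotone coupling: rates can be lowered by thinning, so this subtree stochastically dominates, in depth at every time, a Yule tree with constant rate $1-\delta$.

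For that Yule tree, the first-birth times $\tau'_{1,m}$ at generation $m$ satisfy $\tau'_{1,m}/m\to 1/(\e(1-\delta))$ almost surely. This is the Kingman/Biggins result for the branching random walk with exponential displacements~\cite{King75,Big76}, quoted in the preamble after time rescaling. If one prefers to prove it, the route is the subadditive ergodic theorem. The family $\tau'_{1,m+n}\le \tau'_{1,m}+\tilde\tau_{1,n}$, where $\tilde\tau$ is computed in the subtree of the first generation-$m$ vertex, is subadditive. The limit constant is then identified by the first-moment bound together with a second-moment/Paley--Zygmund argument as in the proof of Proposition~\ref{prop:bddH}, boosted by Chernoff bounds over many independent subtrees.

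Combining the two steps,
\[
\tau_{1,k}\le \tau_{1,k_0}+\tau'_{1,k-k_0},
\]
so $\limsup_k \tau_{1,k}/k\le 1/(\e(1-\delta))$. Letting $\delta\downarrow 0$ gives the claim.

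\textbf{Main obstacle.} The delicate step is the upper bound, specifically justifying the stochastic domination cleanly and in a way compatible with almost sure statements. The difficulty is that the subtree started at a random time must be independent of the past. I would handle this with the Ulam--Harris construction of Section~\ref{sec:Poisson}: the Poisson processes attached to the descendants of a fixed vertex are independent of everything else. Thinning each rate-$f(k_0+j)$ process to rate $1-\delta$ then gives an almost-sure coupling, and the strong Markov property at $\tau_{1,k_0}$ provides the required independence.
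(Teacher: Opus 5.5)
Your proof is correct. The lower bound is the paper's first-moment/Borel--Cantelli argument, but your upper bound takes a genuinely different and shorter route.

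The paper does not compare with a slower Yule tree. Instead it chains first descendants: $\beta_{1,k}=\tau_{1,k}$, and $\beta_{m,k}$ is the extra time the previously selected depth-$(m-1)k$ vertex needs to produce a descendant $k$ generations below it. These blocks are independent, converge in distribution to $\overline\tau_{1,k}$, and have uniformly bounded fourth moments. A strong law then gives $\limsup_m \tau_{1,mk}/m\le \mathbb E[\overline\tau_{1,k}]$. The paper needs only the \emph{mean} asymptotics $\mathbb E[\overline\tau_{1,k}]/k\to 1/\e$ from Kingman, followed by interpolation between multiples of $k$.

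Your thinning coupling below the first depth-$k_0$ vertex gives $\tau_{1,k}\le\tau_{1,k_0}+\tau'_{1,k-k_0}$ for all $k$ at once. The identity of that vertex is measurable with respect to the processes attached to depths $<k_0$, so the independence you worry about is automatic. This avoids both the strong law for non-identically distributed blocks and the interpolation step. The price is that you need the almost-sure form of Kingman's theorem for the Yule tree rather than just its mean version. That is legitimate here, since the paper itself quotes it at the start of Section~\ref{sec:bdd}. As a bonus, your argument yields $\limsup_k\tau_{1,k}/k\le 1/(\e\liminf_k f(k))$ whenever $\liminf_k f(k)>0$.

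One caveat concerns your optional sketch for proving the Yule statement yourself. The Paley--Zygmund argument from the proof of Proposition~\ref{prop:bddH}, applied to $N_k(t)$, cannot identify the constant $1/\e$. With $f\equiv1$ and $t=xk/\e$, the bound $\sum_{r=0}^k\binom{k}{r}r!/t^r$ on $\mathbb E[N_k(t)^2]/\mathbb E[N_k(t)]^2$ grows like $\exp\bigl(k(x/\e-\log x)\bigr)$. This is exponentially large for every $x<\e$, so it only controls $\tau_{1,k}$ at times of order $k$. What does work is the classical embedded Galton--Watson argument. It uses blocks of fixed length $k$ with offspring mean $\mathbb E[N_k(ak)]>1$ for $a>1/\e$ and $k$ large, and the second moment is taken across blocks rather than within one. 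It is simplest to keep this step as a citation.
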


\begin{remark}\label{rem:biggins}
For periodic weight functions, \eqref{eq:convtau1k} is implied by Theorem~2 in~\cite{Big76}. 
This more general result concerns Crump--Mode--Jagers branching processes satisfying the following assumptions:
\begin{itemize}
    \item supercriticality: with positive probability, $\tau_{1,k}<\infty$ for every $k\in \N$,
    \item finite number of types: the type of a vertex $v$ is given by the depth of $v$ modulo $r$
    \item irreducibility: for every pair of types $i,j\in [r]$, a vertex of type $i$ has a descendant of type $j$ with positive probability.
\end{itemize}
Under these assumptions (which are clearly satisfied in our case), Theorem~2 in~\cite{Big76} shows that the sequence $(\tau_{1,k}/k)_k$ almost surely converges to a limit $\gamma$.
The fact that $\gamma=1/\e$ in our case requires a combination of displays (2.5), (3.2), the equality for $\gamma_k/k$ after display (3.3) and Lemma~3.1 in~\cite{Big76}.
\end{remark}

\begin{proof}[Proof of~\Cref{lem:taukconv}]
For the lower bound on $\tau_{1,k}/k$, fix $k\in\N$, $x\in(0,1)$ and $t_k\coloneq xk/\e$. By~\eqref{eq:rel_tau_N}, 
\be 
\P{\tau_{1,k}\leq xk/\e}\leq (x\ell_k)^k. 
\ee 
Since we assume that $f(k)$ converges to $\ell=1$, it follows that $\ell_k$ converges to $1$ as well. As a result, there exists $\wt x\in(x,1)$ such that $x\ell_k<\wt x<1$ for all large $k$. Then, by Borel-Cantelli's lemma, almost surely
\begin{equation}\label{lowerboundconst}
\liminf_{k\to\infty} \frac{\tau_{1,k}}{k}\geq \frac{x}{\e}.
\end{equation}
As $x$ is an arbitrary real number in $(0,1)$, the lower bound follows.

We turn to the upper bound on $\tau_{1,k}/k$. Fix $k\in\N$ and call $v_{1,k}$ the first vertex on depth $k$ born at time $\beta_{1,k}\coloneq\tau_{1,k}$. 
Further, define $v_{2,k}$ as the first descendant of $v_{1,k}$ on depth $2k$ (so the distance between $v_{1,k}$ and $v_{2,k}$ is $k$), and write $\beta_{2,k}$ for the difference between the birth time of $v_{2,k}$ and $\beta_{1,k}$. 
Similarly, we define a sequence of independent random variables $\{\beta_{m,k}\}_{m \ge 0}$, see \Cref{fig:placeholder}.
    \begin{figure}[h!]
        \centering
        \includegraphics[width=0.7\linewidth]{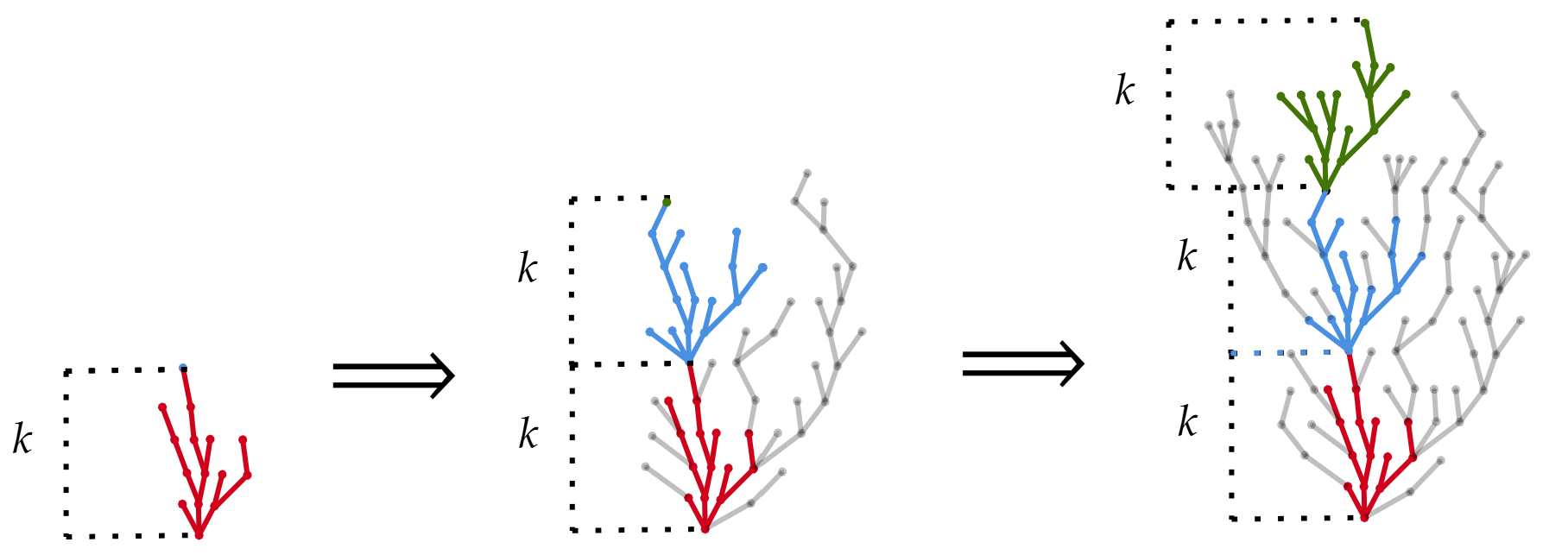}
        \caption{The construction of the times $\beta_{m,k}$. In the first iteration, the process runs for $\beta_{1,k}$ units of time, at which point the tree (in red) reaches depth $k$. In the next iteration, we let the whole tree grow for $\beta_{2,k}$ units of time, at which point the subtree (in blue) reaches depth $k$. Notice that by time $\beta_{2,k}$ the whole tree may have already reached a greater depth.}
        \label{fig:placeholder}
    \end{figure}

Next, for every $k\in \N$, denote by $\overline\tau_{1,k}$ the birth time of the first individual on depth $k$ in the original Crump--Mode--Jagers process (that is, $\cT$ with $f\equiv 1$).

\begin{claim}\label{cl:SLLN}
For every $k\in \N$, the sequence $(\beta_{m,k})_{m\ge 1}$ converges in distribution to $\overline\tau_{1,k}$. Moreover, for every $k\in \N$, $\sup_{m\ge 1} \mathbb E[\beta_{m,k}^4] < \infty$.
\end{claim}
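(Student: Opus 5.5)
The plan is to identify $\beta_{m,k}$ with the first-passage time to relative depth $k$ in a shifted branching process. Let $v_{m-1,k}$ be the vertex at depth $(m-1)k$ that starts the $m$-th iteration (with $v_{0,k}$ the root). By the strong Markov property of the construction in \Cref{sec:Poisson}, the subtree rooted at $v_{m-1,k}$, observed from its birth time, is distributed as the branching process $\cT$ built with the shifted weight function $f_m(j)\coloneq f(j+(m-1)k)$. Hence $\beta_{m,k}$ has the law of $\tau^{(m)}_{1,k}$, the first time this shifted process reaches depth $k$. The independence across $m$ holds because these subtrees use disjoint families of Poisson processes $\xi^v$.

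\textbf{Convergence in distribution.} The variable $\tau^{(m)}_{1,k}$ is a functional of only finitely many depths, namely the Cox processes $N_1,\dots,N_k$ of \Cref{sec:Cox} with rates $f_m(0),\dots,f_m(k-1)$. Since $f(j)\to1$, all of these rates tend to $1$ as $m\to\infty$, with $k$ fixed.

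I would couple the processes through time changes. Realise depth $r$ as a Cox process driven by a unit-rate Poisson process $P_r$ evaluated at $\int_0^t f_m(r-1)N_{r-1}(s)\,\dd s$, with the same $P_1,\dots,P_k$ used for every $m$. On any bounded time horizon, finitely many jumps occur almost surely. As the rates tend to $1$, each jump time converges almost surely to the corresponding jump time in the Yule process ($f\equiv1$); this follows by induction over $r\le k$ and over the finitely many jumps. Therefore $\tau^{(m)}_{1,k}\to\overline\tau_{1,k}$ almost surely along the coupling, which gives convergence in distribution.

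A simpler alternative avoids the coupling. For large $m$, all rates lie in $[1-\eps,1+\eps]$, and the first-passage time is monotone in the rates: using the Ulam–Harris construction, one can thin or superpose Poisson processes. This sandwiches $\tau^{(m)}_{1,k}$ between the Yule first-passage times with constant rates $1\pm\eps$, which equal $\overline\tau_{1,k}/(1\pm\eps)$ by time scaling. Letting $\eps\to0$ gives the convergence directly. I will use this sandwich argument.

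\textbf{Uniform fourth moment.} The same monotonicity supplies the domination. Since $f\ge c\coloneq\inf f>0$, each $\beta_{m,k}$ is stochastically dominated by the first-passage time to depth $k$ with all rates equal to $c$. That passage time is in turn at most the time to follow a single path: each step waits for the first child of the current vertex. This is a sum of $k$ independent $\mathrm{Exp}(c)$ variables, i.e. a Gamma$(k,c)$ variable. Hence
\[
\sup_m\mathbb E[\beta_{m,k}^4]\le \mathbb E[\mathrm{Gamma}(k,c)^4]<\infty .
\]

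\textbf{Main obstacle.} The step needing the most care is justifying the monotone coupling in the rates and the Markov identification of the shifted subtree. Both are handled by building all processes from common unit-rate Poisson processes, with thinning used for the comparison.
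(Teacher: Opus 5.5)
Your proposal is correct and follows essentially the same route as the paper: you sandwich $\beta_{m,k}$ between time-rescaled copies of $\overline\tau_{1,k}$ once $f$ is within $\eps$ of $1$ on the depths $(m-1)k,\ldots,mk-1$, and you get the uniform fourth moment by dominating $\beta_{m,k}$ with a sum of $k$ independent exponentials of rate $\inf f$. Your scaling $\overline\tau_{1,k}/(1\pm\eps)$ is the accurate form of the sandwich; the paper writes it slightly loosely as $(1\pm\eps)\overline\tau_{1,k}$.
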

\begin{proof}[Proof of \Cref{cl:SLLN}]
For the first point, for every $m\in \N$ such that 
\begin{equation}\label{eq:ineq3.5}
1-\eps\le \inf\{f(n): n\ge mk\}\le \sup\{f(n):n\ge mk\}\le 1+\eps,
\end{equation} 
note that $(1+\eps) \overline\tau_{1,k}$ stochastically dominates $\beta_{m,k}$, and $\beta_{m,k}$ stochastically dominates $(1-\eps) \overline\tau_{1,k}$.
Since $f$ converges to 1, \eqref{eq:ineq3.5} holds for every $\eps > 0$ and sufficiently large $m$ and the first statement follows.
For the second statement, it suffices to observe that, for every $m\ge 1$, $\beta_{m,k}$ is stochastically dominated by the sum of $k$ independent exponential random variables with rate $\inf f$.
\end{proof}

By combining \Cref{cl:SLLN} with the strong law of large numbers, and observing that the tree might have reached depth $mk$ before time $\beta_{1,k}+\beta_{2,k}+\cdots+\beta_{mk}$, we obtain that almost surely
\[\limsup_{m\to\infty} \frac{\tau_{1,mk}}{m}\le \limsup_{m\to\infty} \frac{\beta_{1,k}+\ldots+\beta_{m,k}}{m} = \mathbb E[\overline\tau_{1,k}].\]
Moreover, using that $\lim_{k\to\infty} \mathbb E[\overline\tau_{1,k}/k]\to \e^{-1}$ by~\cite[Theorems~4 and~5]{King75}, almost surely
\begin{equation}\label{eq:partial_conv}
\limsup_{k\to\infty} \limsup_{m\to\infty} \frac{\tau_{1,mk}}{mk}\le \frac{1}{\e}.
\end{equation}
To deduce the desired upper bound assuming \eqref{eq:partial_conv}, fix any $\eps>0$ and large enough $k$ such that $\limsup_{m\to\infty} \tau_{1,mk}/mk\le \e^{-1}+\eps$.
Then, for every large enough $n$ (in terms of $\eps$ and $k$), write $n=mk+r$ for integers $m$ and $r\in [0,k-1]$ and note that
\[\frac{\tau_{1,n}}{n}\le \frac{\tau_{1,(m+1)k}}{mk}\le \frac{m+1}{m} \bigg(\frac{1}{\e}+\eps\bigg)\le \frac{1}{\e}+2\eps\qquad\Rightarrow\qquad \limsup_{n\to\infty} \frac{\tau_{1,n}}{n}\le \frac{1}{\e},\]
which finishes the proof.
\end{proof}

The second step in the proof of \Cref{thrm:bddH} consists in estimating the time $\tau_n$ when the branching process reaches $n$ vertices. 
The necessary control on $\tau_n$ is given by the next propositions.

\begin{proposition}\label{prop:taun}
For every weight function $f(k)\to 1$ as $k\to\infty$,
    \be 
    \frac{\tau_n}{\log n}\toas 1.
    \ee 
\end{proposition}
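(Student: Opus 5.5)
We prove matching almost sure lower and upper bounds on $\tau_n/\log n$. Throughout, $f$ is bounded away from $0$ and infinity because it is positive and converges to $1$. By monotonicity of $n\mapsto \tau_n$ it suffices to work along the subsequence $n_j=\lceil (1+\delta)^j\rceil$ and then let $\delta\downarrow 0$; the same sandwiching trick is used before the proof of \Cref{prop:bddH}.

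\textbf{Lower bound.} We use the first moment, as in \eqref{eq:totalsize}. Fix $\eps>0$ and choose $K$ such that $f(k)\le 1+\eps$ for all $k\ge K$. By \Cref{lemma:gensize}, $\ell_k^k=\prod_{j<k} f(j)\le C_K(1+\eps)^k$ with $C_K=\prod_{j<K}\max(f(j),1)$. Hence
\[\E{N(t)}\le C_K\e^{(1+\eps)t}.\]
Markov's inequality then gives
\[\P{\tau_{n_j}\le (1-2\eps)\log n_j}\le C_K n_j^{-\eps+O(\eps^2)}.\]
This is summable along the geometric sequence $n_j$. Borel--Cantelli yields $\liminf \tau_n/\log n\ge 1$ almost surely.

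\textbf{Upper bound.} We use the two-phase strategy described in the outline. Fix $\eps>0$ and $K$ with $f(k)\ge 1-\eps$ for $k\ge K$. In phase one, by the upper bound in \eqref{eq:main_prop1.3} (or simply a Chernoff argument on the first $K$ depths), with probability $1-\e^{-\Omega(\log n)}$ there is, by time $\eps\log n$, a vertex at depth $K$. Better still, there are at least $(\log n)^2$ such vertices, as in the events $\cE_1,\cE_2$ of the proof of \Cref{prop:bddH}. In phase two, the subtree rooted at each such vertex stochastically dominates a Yule process of rate $1-\eps$. This is a time change of the $f\equiv1$ process, since all its depths are $\ge K$. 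Call the size of a Yule process $Y(s)$. It is known that $\e^{-s}Y(s)\to W$, where $W\sim\mathrm{Exp}(1)$; more directly, $Y(s)$ is geometric with parameter $\e^{-s}$. Thus
\[\P{Y(s)<\e^{(1-\eps)s}}\le \e^{-\eps s}.\]
Using $(\log n)^2$ independent copies run for time $s=(1+3\eps)\log n/(1-\eps)$, the probability that all copies stay below $n$ is at most $n^{-\eps(\log n)^2}$, which is summable. Therefore $\tau_n\le \eps\log n+(1+O(\eps))\log n$ eventually, almost surely. Letting $\eps\downarrow0$ gives $\limsup\tau_n/\log n\le1$.

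\textbf{Main obstacle.} The delicate point is phase one: producing enough vertices at depth $\ge K$ within time $\eps\log n$ with summable failure probability, despite arbitrary values of $f$ on $\{0,\dots,K-1\}$. I would handle it by noting that $\inf f>0$, so each of the first $K$ generations can be treated by Poisson Chernoff bounds over time windows of length $\eps\log n/K$. This produces $n^{\Omega(\eps)}$ vertices at depth $K$ with failure probability $n^{-\omega(1)}$, or at least $O(n^{-2})$. The domination of a depth-$K$ subtree by a rate-$(1-\eps)$ Yule process also needs care, because the coupling must be monotone in rates. This follows from the Ulam--Harris construction of \Cref{sec:Poisson} by thinning the Poisson clocks $\xi^v$.
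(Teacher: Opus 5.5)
Your argument is correct. For the lower bound on $\tau_n$ it is essentially the paper's: a first moment via \Cref{lemma:gensize}, then Markov. The paper runs Borel--Cantelli on a time grid for $N(t)$, you run it on the geometric subsequence $n_j$, and the two are equivalent.

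The upper bound on $\tau_n$ is where you genuinely differ. Phase one is shared: a Chernoff cascade through the first $K$ depths on $[0,\eps t]$, as in the events $\cD_i$. For phase two, the paper never uses the explicit law of a Yule process. It cuts $[\eps t,t]$ into steps of length $\alpha\approx\eps^2$. It then shows by Poisson Chernoff bounds that $N^+_{r_0}$, the number of vertices at depth at least $r_0$, grows by a factor $\e^{(1-\eps)\alpha}$ per step (the events $\cE_i$).

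You instead thin the clocks $\xi^v$ to embed a rate-$(1-\eps)$ Yule process in each depth-$K$ subtree. You then use that $Y(s)$ is geometric, together with the independence of these subtrees given the clocks at depths $<K$. This is shorter, and it is exactly the device the paper itself uses for slowly growing $f$ in \Cref{prop:taunsv}.

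The paper's version buys robustness. It lives entirely in the profile process and transfers almost verbatim to the periodic case (\Cref{prop:taun_periodic}). There $f$ is not eventually close to a constant, so a one-type Yule domination would only give growth rate $\min f<1$. The paper handles this by tracking residues modulo $r$ through $\widehat N_j$ and $Q_{i,j}$.

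Two side claims in your last paragraph are false, although nothing depends on them.
\begin{itemize}
\item For small $\eps$, phase one cannot fail with probability $n^{-\omega(1)}$ or $O(n^{-2})$, since $\P{N_1(\eps\log n)=0}=n^{-\eps f(0)}$ exactly.
\item There are not $n^{\Omega(\eps)}$ vertices at depth exactly $K$: $\E{N_K(\eps\log n)}=(\eps\ell_K\log n)^K/K!$ is only polylogarithmic.
\end{itemize}
Your subsequence reduction covers both. Any failure probability $n_j^{-c}$ with $c>0$ is summable, and even a single depth-$K$ vertex gives phase-two failure probability at most $n^{-3\eps}$.

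Two smaller points. Getting $(\log n)^2$ vertices at depth $K$ needs $K\ge 3$, which you may assume. Citing \eqref{eq:main_prop1.3} for phase one is not the right tool, since it is asymptotic in the depth. Your Chernoff cascade, or dominating $\tau_{1,K}$ by a sum of $K$ independent exponentials, does the job.
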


\begin{proposition}\label{prop:taun_periodic}
For every $r\in \N$ and $r$-periodic weight function $f$ with $f(0)\cdots f(r-1)=1$,
    \be 
    \frac{\tau_n}{\log n}\toas 1.
    \ee 
\end{proposition}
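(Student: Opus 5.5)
The approach is to build an exact exponential martingale out of the depth profile. Since $f$ is $r$-periodic, we can choose weights $c_0,\dots,c_{r-1}>0$, extended $r$-periodically, such that
\[
M(t)\coloneq \e^{-t}\sum_{k\ge 0} c_{k}\,N_k(t)
\]
is a martingale. The weights must cancel the drift. A vertex on depth $k$ creates children on depth $k+1$ at rate $f(k)$, so the generator applied to $\sum_k c_kN_k$ equals $\sum_k c_{k+1}f(k)N_k$. This must equal $\sum_k c_kN_k$, which forces $c_{k+1}=c_k/f(k)$. Iterating over one period gives $c_{k+r}=c_k/\prod_{i=0}^{r-1}f(i)=c_k$ by the normalisation $f(0)\cdots f(r-1)=1$. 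The periodic choice is therefore consistent and satisfies $0<\min c\le \max c<\infty$. Consequently $\e^{-t}N(t)$ and $M(t)$ are comparable up to the constants $\min c,\max c$. The statement then follows once we show that $M(t)\to W$ almost surely with $W\in(0,\infty)$. Indeed, this gives $\log N(t)/t\toas 1$, and the identity $N(\tau_n)=n$ together with monotonicity of $N$ and $\tau_n\to\infty$ turns this into $\tau_n/\log n\toas 1$.

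\textbf{Convergence.} $M$ is a nonnegative martingale, so it converges almost surely. To identify the limit in $L^2$, we bound the predictable quadratic variation. Each birth on depth $k$ at time $s$ creates a jump of size $c_k\e^{-s}$, so
\[
\mathbb E\langle M\rangle_\infty\le (\max c)^2(\max f)\int_0^\infty \e^{-2s}\,\mathbb E[N(s)]\,\dd s .
\]
By Lemma~\ref{lemma:gensize}, $\mathbb E[N(s)]=\sum_k s^k\prod_{j<k}f(j)/k!$. Periodicity and the normalisation give $\prod_{j<k}f(j)\le C$ for all $k$, so $\mathbb E[N(s)]\le C\e^{s}$ and the integral is finite. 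Hence $M$ is bounded in $L^2$, $M(t)\to W$ in $L^2$, and $\mathbb E[W]=c_0>0$. In particular $W<\infty$ almost surely, which already yields the lower bound $\liminf_n \tau_n/\log n\ge 1$.

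\textbf{Main obstacle: $W>0$ almost surely.} This positivity is what gives the upper bound on $\tau_n$, and I expect it to be the hardest step. For a type $i\in\{0,\dots,r-1\}$, let $q_i$ be the probability that the analogous limit vanishes for the process started from a single vertex on a depth congruent to $i$ modulo $r$, i.e.\ run with the shifted weight function $f(\cdot+i)$. By the previous step, each $q_i<1$. Fix a time $s$. The limit $W$ equals $\e^{-s}$ times the sum, over vertices $v$ present at time $s$ (born at time $\mathfrak b(v)\le s$), of $c_{d(v)}\e^{-(s-\mathfrak b(v))}$ times the independent limit for the subtree rooted at $v$, which makes sense by the branching property of the construction in \Cref{sec:Poisson}. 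Hence $W=0$ forces all of these independent subtree limits to vanish, and
\[
\P{W=0}\le \mathbb E\Big[(\max_i q_i)^{N(s)}\Big].
\]
Since $\inf f>0$, $N(s)\to\infty$ almost surely as $s\to\infty$ (by comparison with a Yule process of rate $\inf f$). Dominated convergence then gives $\P{W=0}=0$. Combining this with the convergence step completes the proof.
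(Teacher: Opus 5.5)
Your argument is correct in substance, but it takes a genuinely different route from the paper.

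The paper uses no martingale. It bounds $\log N(t)/t$ from above via $\mathbb E[N(t)]\le C\e^t$, Markov's inequality and Borel--Cantelli. For the lower bound it runs a two-phase Chernoff argument:
\begin{itemize}
\item by time $\eps t$, each of the depths $9,\dots,r+8$ holds at least $t^4$ vertices;
\item afterwards, the residue-class counts $\widehat N_j$ are tracked along a time grid of mesh about $\eps^2$, against deterministic envelopes $Q_{i,j}$ solving a discretised linear recursion.
\end{itemize}
This gives $N(t)\ge \e^{(1-2\eps)^2t}$ with probability $1-\e^{-\Omega(\eps t)}$.

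You instead use the exact harmonic weights $c_k=c_0/\prod_{j<k}f(j)$ of the mean operator. The normalisation $f(0)\cdots f(r-1)=1$ is precisely what makes these weights periodic, hence bounded above and below. You then carry out, by hand, a Kesten--Stigum-type argument for the $r$-type branching process whose types are depths modulo $r$.

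Your route buys considerably more: $\e^{-t}\sum_k c_kN_k(t)\to W\in(0,\infty)$ almost surely, so in fact $\tau_n=\log n+O(1)$ almost surely. The paper's approach is cruder, but it relies only on Chernoff bounds and needs no exact eigenfunction. That is why the same scheme also proves Proposition~\ref{prop:taun} for convergent $f$. There $\prod_{j<k}f(j)$ can be unbounded (e.g.\ $f(k)=1+1/(k+1)$ gives $k+1$), so your martingale is no longer comparable to $\e^{-t}N(t)$ up to constants.

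One step needs to be restated: the decomposition of $W$ in the positivity argument. As written, you sum limits of the subtrees rooted at \emph{all} $v\in\cT(s)$. These subtrees are nested (the root's subtree is the whole tree), so the sum overcounts, the summands are not independent, and the factor $\e^{-(s-\mathfrak b(v))}$ is unjustified.

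The correct branching property at time $s$ comes from memorylessness of the Poisson processes $\xi^v$. For each $v\in\cT(s)$, take $v$ together with its children born after time $s$ and all their descendants. This forms a fresh copy of the process, started at time $s$ from a single vertex on depth $d(v)$. These copies are conditionally independent given $\cT(s)$, and they partition the vertex set of $\cT(t)$ for every $t\ge s$. Hence $W=\e^{-s}\sum_{v\in\cT(s)}W^{(v)}$, with $\mathbb P(W^{(v)}=0\mid\cT(s))=q_{d(v)\bmod r}$.

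With this, your bound $\mathbb P(W=0)\le\mathbb E[(\max_i q_i)^{N(s)}]$ and the rest of your argument go through unchanged.
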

    
Before proving \Cref{prop:taun,prop:taun_periodic}, we first complete the proof of \Cref{thrm:bddH}. 

\begin{proof}[Proof of Theorem~\ref{thrm:bddH} assuming \Cref{prop:taun,prop:taun_periodic}]
By combining \Cref{lem:taukconv} and \Cref{prop:taun,prop:taun_periodic}, almost surely, for every $\eps > 0$ and sufficiently large $n$, we have that 
\[\frac{\tau_{1,(\e-2\eps)\log n}}{\log n}\le \frac{\e-\eps}{\e}\le \frac{\tau_n}{\log n} \le \frac{\e+\eps}{\e}\le \frac{\tau_{1,(\e+2\eps)\log n}}{\log n},\]
implying that $\tau_n\in [\tau_{1,(\e-2\eps)\log n}, \tau_{1,(\e+2\eps)\log n}]$.
Since $\cT(\tau_n)$ and $T_n$ have the same distribution by~\eqref{eq:taun}, \Cref{thrm:bddH} follows.
\end{proof} 

We finish this section by proving \Cref{prop:taun,prop:taun_periodic}.

\begin{proof}[Proof of \Cref{prop:taun}] 
To prove the result, it is enough to show that
        \be\label{eq:conv_N}
        \frac{\log(N(t))}{t}\toas 1. 
        \ee 
    Indeed, as $\tau_n\to\infty$ with $n$, and $N(\tau_n)=n$ for every $n\in \N$, \eqref{eq:conv_N} implies the desired result. 

\vspace{0.5em}
\noindent
\textbf{Upper bound.}
Recall that as $f(k)\to 1$, we also have $\ell_k\to 1$ with $\ell_k$ defined in~\eqref{eq:hk}. Then, for every $\eps > 0$, $\ell_k\le 1+\eps$ for all but finitely many $k$.
Combining this with Lemma~\ref{lemma:gensize}, we deduce that, for every suitably large $t$, we have that 
\begin{equation}\label{eq:expN-estimate}
\mathbb E[N(t)]=\sum_{k=0}^{\infty} \frac{(t\ell_k)^k}{k!}\le t^{\max\{i: \ell_i > 1+\eps\}+1}+\sum_{k=0}^{\infty} \frac{((1+\eps)t)^k}{k!}\le \e^{(1+2\eps)t}.
\end{equation}
As a consequence, for every such $t$, Markov's inequality implies that
\begin{equation*}
\P{\frac{\log(N(t))}{t}\geq 1+3\eps} = \mathbb P(N(t)\ge \e^{(1+3\eps)t})\leq \e^{-\eps t}.
\end{equation*}
By the monotonicity of $(N(t))_{t\ge 0}$, a union bound and the last inequality, for every large enough~$t$,
\begin{align}
\P{\sup_{u\ge t}\frac{\log(N(u))}{u}\ge 1+4\eps}
&= \P{\sup_{k\ge 1}\sup_{u\in [t+\eps (k-1), t+\eps k]}\frac{\log(N(u))}{u}\ge 1+4\eps}\nonumber\\
&\le \P{\sup_{k\ge 1}\frac{\log(N(t+\eps k))}{t+\eps(k-1)}\ge 1+4\eps}\label{eq:LBN_l2}\\
&\le \P{\sup_{k\ge 1}\frac{\log(N(t+\eps k))}{t+\eps k}\ge 1+3\eps}\le \sum_{k=1}^{\infty} \e^{-\eps t - \eps^2 k} \le \frac{\exp(-\eps t)}{1-\exp(-\eps^2)}.\nonumber
\end{align}
Thus, by Borel-Cantelli's lemma for the events in \eqref{eq:LBN_l2} with $t\in \N$, for every $\eps > 0$, almost surely
\begin{equation}\label{eq:UBtau_n1}
\limsup_{t\to\infty}\frac{\log(N(t))}{t} \leq 1+4\eps.
\end{equation}

\vspace{0.5em}
\noindent
\textbf{Lower bound.} To obtain lower bounds for $N(t)$ with large $t$, our approach consists in finding a depth $r_0$ such that, from that depth onwards, the weight function $f$ is sufficiently close to $1$. 
We then follow the growth of $(N(s))_{s\in [0,\eps t]}$ for some small $\eps>0$ and show that, by time $\varepsilon t$, there is a large number of vertices on depth $r_0$. 
From this time onwards, we use concentration bounds to show that the evolution of the tree (from depth $r_0$ onwards) is close to that of a tree with weight function $f \equiv 1$. To formalise this idea, fix $\varepsilon \in (0,1/2)$ small and let $r_0 = r_0(\eps)$ be the smallest integer in the interval $[8,\infty)$ such that $f(r)\ge 1-\eps/4$ for every $r\ge r_0$. We then choose $t>0$ large enough so that
\begin{equation}\label{eq:tlargelowerbound}\frac{\varepsilon t\min_{k\leq r_0}f(k)}{r_0}\geq 2t^{1-\eps}.\end{equation}
We split the interval $[0,t]$ into $[0,\varepsilon t]$ and $[\varepsilon t,t]$ and analyse the process on these time intervals separately.

\vspace{0.5em}

For the growth of $N$ throughout $[0,\varepsilon t]$, define a sequence of times of the form $s_i=i\varepsilon t/r_0$ for $i\in \{0,\ldots,r_0\}$. Notice that $s_{r_0}=\varepsilon t$ and that, by our assumption on $t$, $s_{i}-s_{i-1}$ is large. 
Then, we define the events $\cD_i\coloneq\{N_i(s_i)\ge t^{(1-\eps)i}\}$; observe that, at each time $s_i$, $\cD_i$ zooms on a different depth $i$. 
Since $N_1(s_1)$ is a Poisson random variable with mean $\eps tf(0)/r_0\geq 2t^{1-\varepsilon}$ by \eqref{eq:tlargelowerbound}, Chernoff's bound for Poisson random variables (\Cref{lem:chernoff}) yields
\[\P{\cD_1^{c}}\leq \P{N_1(s_1)\leq \E{N_1(s_1)}/2}\leq \exp\left(-\frac{\eps t\min_{k\leq r_0}f(k)}{8r_0}\right).\]
Furthermore, on the event $\mathcal{D}_j$, there are at least $t^{(1-\eps)j}$ vertices on depth $j$ at time $s_j$. 
The offspring produced by these vertices throughout $[s_j,s_{j+1}]$ is a Poisson random variable with mean $(\eps tf(j)/r_0)t^{(1-\varepsilon)j}\geq 2t^{(1-\varepsilon)(j+1)}$. 
Thus, by repeating the latter argument, we obtain
\[\condP{\cD_{j+1}^{c}}{\cD_j}\leq \exp\left(-\frac{\eps t\min_{k\leq r_0}f(k)}{8r_0}\right),\]
and hence conclude that
\begin{equation}\label{eq:firsthalflbound}\mathbb P(\cD_{r_0}^c)\leq \bigg(\sum_{i=1}^{r_0-1} \mathbb P(\cD_{i+1}^c\mid \cD_i)\bigg)+\mathbb P(\cD_1^c)\le r_0\exp\bigg(-\frac{\eps t \min_{k\leq r_0}f(k)}{8r_0}\bigg).
\end{equation}
For the growth of $N$ throughout $[\eps t,t]$ define a new sequence of times of the form $s'_i=\eps t+i\alpha$ where $0\leq i\leq L\coloneq \lceil (1-\eps)t/\eps^2\rceil$ and $\alpha \coloneq (1-\eps)t/L\approx \varepsilon^2$. In particular, observe that $s_0'=\eps t$ and $s_L'=t$. Let $N_{r_0}^+(u)$ denote the number of vertices on depth at least $r_0$ at time $u$ and, for every $i\in \{0,\ldots,L\}$, define the event
\[\cE_i\coloneq\{N_{r_0}^+(s_i')\ge \e^{i(1-\eps)\alpha} t^4\}.\]
Two key observations about these events are:
\begin{itemize}
    \item $\cD_{r_0}\subseteq\cE_0$, since $t^{(1-\eps)r_0}\ge t^4$, from our assumptions that $\eps\in (0,1/2)$ and $r_0\geq 8$.
    \item $\cE_L$ implies $\log(N(t))\geq (1-2\eps)t$: indeed, by definition of $\alpha$, $\e^{L(1-\eps)\alpha}t^4\geq \e^{(1-2\eps)t}$.
\end{itemize}
In words, the event $\mathcal{E}_i$ means that there are at least $\e^{i(1-\varepsilon)\alpha} t^4$ vertices on depth at least $r_0$ at time $s_i'$. Call $\cO_i$ the number of children on depth at least $r_0$ at time $s_i'$ produced in the interval $[s_i',s'_{i+1}]$. Conditionally on the event $\cE_i$, this is a Poisson random variable with mean $\E{\cO_i\,|\, \cE_i}\geq\alpha(1 - \eps/4) \e^{i(1-\varepsilon)\alpha} t^4$, where we used that $f(i)\geq 1-\eps/4$ for all $i\geq r_0$. 
Since the vertices at time $s_i'$ are also counted at time $s_{i+1}'$, we obtain 
\begin{align*}
\condP{\cE_{i+1}^{c}}{\cE_{i}}&\leq \P{\cO_i\leq \e^{(i+1)(1-\eps)\alpha} t^4-\e^{i(1-\eps)\alpha} t^4\,\Big|\, \cE_i}\\&\leq \P{\cO_i\leq\frac{\e^{(1-\varepsilon)\alpha}-1}{(1-\eps/4)\alpha}\E{\cO_i\,| \, \cE_i}\,\Big|\, \cE_i}\leq \P{\cO_i\leq(1-\eps/2)\E{\cO_i\, |\, \cE_i}\,|\, \cE_i}.
\end{align*}
By Chernoff's bound for Poisson random variables (\Cref{lem:chernoff}), we have that
\[\condP{\cE_{i+1}^{c}}{\cE_{i}}\leq \e^{-\eps^2\E{\cO_i}/8}\leq\exp\big(-(\eps^2/8)\alpha(1 - \eps/4) \e^{i(1-\varepsilon)\alpha} t^4\big)\leq \e^{-(\eps t/2)^4}.\]
By the last two displays, the inclusion $\cE_0^c\subseteq \cD_{r_0}^c$ and \eqref{eq:firsthalflbound}, we have
\begin{align*}
\P{\frac{\log(N(t))}{t}\le 1-2\eps}&\leq \mathbb P(\cE_L^c) 
\le \bigg(\sum_{i=0}^{L-1} \mathbb P(\cE_{i+1}^c\mid \cE_i)\bigg)+\mathbb P(\cE_0^c)\\&\le L\e^{-(\eps t/2)^4}+r_0\exp\bigg(-\frac{\eps t \min_{k\leq r_0}f(k)}{8r_0}\bigg).
\end{align*}
Similar argument to \eqref{eq:LBN_l2} allows to strengthen this bound to $\mathbb P\big(\inf_{u\geq t}\frac{\log(N(u))}{u}\le 1-3\eps\big)=\e^{-\eps't}$ for some $\eps'=\eps'(\eps,f) > 0$. By Borel-Cantelli's lemma for $t\in \N$, for every sufficiently small $\eps\in (0,1/2)$, almost surely
\begin{equation}\label{eq:LBtau_n}
\liminf_{t\to\infty}\frac{\log(N(t))}{t} \geq 1-3\eps.
\end{equation}
The fact that each of~\eqref{eq:UBtau_n1} and~\eqref{eq:LBtau_n} holds for every $\eps > 0$ completes the proof.
\end{proof}

\begin{proof}[Proof of \Cref{prop:taun_periodic}]
Recall our assumption that $f$ is $r$-periodic with $\prod_{i=0}^{r-1}f(i)=1$ and extend $f$ periodically to the entire $\mathbb Z$. 
Our proof of \Cref{prop:taun_periodic} is very similar to that of \Cref{prop:taun}: we show that $\log (N(t))/t \toas 1$ by obtaining upper and lower bounds. 
For the upper bound, the argument remains unchanged except that~\eqref{eq:expN-estimate} is replaced by
\[\mathbb E[N(t)]=\sum_{k=0}^{\infty} \frac{(t\ell_k)^k}{k!}\le \bigg(\max_{i\in [r]} \prod_{j=0}^{i-1} f(j)\bigg)\sum_{k=0}^{\infty} \frac{t^k}{k!} = \bigg(\max_{i\in [r]} \prod_{j=0}^{i-1} f(j)\bigg)\e^t.\]
For the lower bound, we again analyse separately the growth of $N$ on two intervals, $[0,\varepsilon t]$ and $[\varepsilon t, t]$, where $\varepsilon\in (0,1/2)$ is a fixed small value and $t > 0$ is large enough to satisfy \eqref{eq:tlargelowerbound} with $r+8$ in place of $r_0$. 
For the growth of the process on $[0,\eps t]$ and for every $i\in \{0,\ldots,r+8\}$, define the time $s_i = \frac{i \varepsilon t}{r+8}$ and the event $\mathcal{D}_i = \{ N_i(s_i) \ge t^{(1-\varepsilon)i} \}$ as in the previous proof. 
Observe that, for each $i \in [r+8]$, we have $N_i(s_i) \le N_i(\eps t)$ and that, for each $i \ge 8$, we have $t^{(1-\varepsilon)i} \ge t^4$. Setting $c_1 = (\min_{k \ge 0} f(k)) / (8r+64)$ (which neither depends on $\eps$ nor on $t$), 
we can repeat verbatim the argument from the previous proof to dominate each $\P{\cD_i^c}$ as in \eqref{eq:firsthalflbound}, and thus obtain
\begin{equation}\label{eq:prob_periodic}
\P{\min_{i\in [9,r+8]} N_i(\eps t)\ge  t^4}\ge 1-(r+8)\e^{-c_1\eps t}.
\end{equation}
Next, we study the growth of $N$ over the interval $[\varepsilon t, t]$ by again considering the sequence of times $s_i' = \varepsilon t + i\alpha$ where $0 \le i \le L \coloneq \lceil (1 - \varepsilon)t / \varepsilon^2 \rceil$ and $\alpha \coloneq (1 - \varepsilon)t / L \approx \varepsilon^2$. For every $j \in [r]$, let $\widehat{N}_j(u)$ denote the number of vertices on depths congruent to $j$ modulo $r$ at time $u$, that is,
\[
\widehat{N}_j(u) = \sum_{k \in \mathbb{N}_0} N_{j + kr}(u).
\]
Using this notation, for every $i\in \{0,\ldots,L\}$ and $j\in [r]$, we define the events
\[
\mathcal{E}_{i,j} = \bigg\{ \widehat{N}_j(s_i') \ge Q_{i,j} t^4 \bigg\}
\qquad \text{and} \qquad
\mathcal{E}_i = \bigcap_{j=1}^r \mathcal{E}_{i,j}
\] 
where the quantities $Q_{i,j}$ are given by
\[
Q_{i,j} \coloneq \sum_{k=0}^{i} \binom{i}{k} ((1 - \varepsilon)\alpha)^k \prod_{\ell=0}^{k-1} f(j - 1 - \ell)
\]
(recall the convention that empty products are equal to $1$). 
Three key observations about these quantities are:
\begin{itemize}
    \item For all $i\in \N$, the identity $\binom{i-1}{k-1}+\binom{i-1}{k}=\binom{i}{k}$ and a simple change of variable give
    \begin{equation}\label{eq:recQ}
    Q_{i,j}=Q_{i-1,j}+(1-\varepsilon)\alpha f(j-1)Q_{i-1,j-1}.
    \end{equation}
    \item For all $j\in [r]$, we have $Q_{0,j}= 1$. Combined with \eqref{eq:recQ}, thus yields $Q_{i,j}\geq 1$ for all $i\geq0$.
    \item Since $\prod_{i=0}^{r-1}f(i)=1$, there is some $\hat{c}>0$ such that $\prod_{\ell=0}^{k-1} f(j - 1 - \ell)\geq\hat{c}$ for all $k\geq 0$ and $j\in [r]$. Consequently, for all $i\geq 0$ and $j\in[r]$,
    \begin{equation}\label{eq:lowerQ}
    Q_{i,j} \geq \hat{c}\sum_{k=0}^{i} \binom{i}{k} ((1 - \varepsilon)\alpha)^k =\hat{c}(1+(1-\eps)\alpha)^i\geq \hat{c}e^{(1-2\eps)\alpha i}
    \end{equation}
    provided $\eps$ (and thus also $\alpha$) is sufficiently small. 
    In particular, using the definition of $\alpha$ and $L$, we obtain $Q_{L,j}\geq \hat{c}\e^{(1-2\eps)(1-\eps)t}.$
\end{itemize}
As in the previous proof, we observe that the event in \eqref{eq:prob_periodic} already implies $\cE_0$, and our strategy again focusses on bounding $\condP{\cE_{i+1}^c}{\cE_i}$. 
To do so, fix some $j\in [r]$ and observe that, on the event $\cE_{i}$, there are at least $Q_{i,j-1}t^4$ vertices born by time $s_i'$ on levels congruent to $j-1$ modulo $r$. 
Thus, on the event $\cE_i$, the number of children of such vertices throughout the interval $[s'_i,s'_{i+1}]$ is stochastically dominates a Poisson random variable $\cO_{i,j-1}$ with mean $\alpha f(j-1)Q_{i,j-1}t^4$. 
Observing that $\widehat{N}_j(s'_{i+1})\geq \widehat{N}_j(s'_i)+\cO_{i,j-1}$ and using~\eqref{eq:recQ} and Chernoff's bound (\Cref{lem:chernoff}), we obtain
\begin{align*}
    \condP{\widehat{N}_j(s'_{i+1})<Q_{i+1,j}t^4}{\cE_{i}}&\leq\condP{\widehat{N}_j(s'_i)+\cO_{i,j-1}<Q_{i,j}t^4+(1-\eps)\alpha f(j-1)Q_{i,j-1}t^4}{\cE_{i}}\\&\hspace{-5em}\leq\condP{\cO_{i,j-1}<(1-\eps)\alpha f(j-1)Q_{i,j-1}t^4}{\cE_{i}}\leq \e^{-\eps^2\alpha f(j-1)Q_{i,j-1}t^4/8}\leq \e^{-\eps^5 t^4}.
\end{align*}
It follows as before that
\[\P{\cE^c_L}\leq \P{\cE_0^c}+ \sum_{i=1}^L\sum_{j\in [r]}\condP{\cE_{i,j}^c}{\cE_{i-1}}\leq (r+8)\e^{-c_1\eps t}+rL\e^{-\eps^5t^4}.\]
From our observation~\eqref{eq:lowerQ} applied to $i=L$, we observe that $\cE_L$ implies
\[N(t)=N(s_L')\geq r\hat{c}\e^{(1-2\eps)(1-\eps)t}t^4\geq \e^{(1-2\eps)^2t}\]
provided $t$ is sufficiently large. This implies that, for every $\eps\in (0,1/2)$ small,
\[\P{\frac{\log(N(t))}{t}\le (1-2\eps)^2} = \mathbb P(N(t)\le \e^{(1-2\eps)^2t})\le (r+8)\e^{-c_1\eps t}+rL\e^{-\eps^5t^4},\]
which is enough to conclude the proof of the lower bound as in the proof of \Cref{prop:taun}.
\end{proof}

\medskip

\subsection{Slowly growing weight functions} 
We now turn to the proof of~\Cref{thrm:svf} for weight functions satisfying~\eqref{eq:widesv}. In the light of Remark~\ref{rem:lowerslow}, it suffices to show that
\begin{equation}\label{eq:limsupslow}
  \limsup_{n\to\infty}\frac{d(T_n)}{\log n}\leq\e  
\end{equation}
almost surely. As in the case of bounded weight functions, our strategy consists in controlling the growth of $\tau_{1,k}$ and $\tau_n$, from which we draw the desired conclusion.

\begin{lemma}\label{lem:Bksvf}
For every weight function $f$ satisfying~\eqref{eq:widesv}, almost surely
\be 
\liminf_{k\to\infty}\frac{\tau_{1,k}}{k/ f(k)}\geq \frac{1}{\e}.
\ee 
\end{lemma}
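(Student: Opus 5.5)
The plan is a first moment argument based on \eqref{eq:rel_tau_N}, mirroring the lower bound in \Cref{lem:taukconv}. The quantity $\ell_k$ from \eqref{eq:hk} is the geometric mean of $f(0),\dots,f(k-1)$. Since $f$ is non-decreasing, $\ell_k\le f(k-1)\le f(k)$. Fix $x\in(0,1)$ and set $t_k\coloneq xk/(\e f(k))$. Then \eqref{eq:rel_tau_N} gives
\[\P{\tau_{1,k}\le t_k}\le \bigg(\frac{\e t_k\ell_k}{k}\bigg)^k = \bigg(\frac{x\ell_k}{f(k)}\bigg)^k\le x^k.\]
This is summable in $k$. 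By Borel--Cantelli, almost surely $\tau_{1,k}>xk/(\e f(k))$ for all large $k$, so $\liminf_{k\to\infty}\tau_{1,k}f(k)/k\ge x/\e$. Letting $x\uparrow 1$ along a countable sequence yields the claim.

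Monotonicity of $f$ alone appears to suffice here; \eqref{eq:widesv} is not needed for this direction. The only step needing care is the bound $\ell_k\le f(k)$, which is immediate from monotonicity: each factor in the product $\prod_{j<k}f(j)$ is at most $f(k)$.

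If one preferred a sharper statement with $\ell_k$ in place of $f(k)$, the same argument gives $\liminf_k \tau_{1,k}\ell_k/k\ge 1/\e$, and the stated version follows from $\ell_k\le f(k)$. So I do not expect a genuine obstacle. The substantive difficulty in this section lies in the matching upper bound on $\tau_n$, where \eqref{eq:widesv} is used.
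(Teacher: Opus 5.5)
Your proposal is correct and follows essentially the same argument as the paper: a first-moment bound via \eqref{eq:rel_tau_N} at $t_k=(1-\eps)k/(\e f(k))$, using monotonicity of $f$ to get $\ell_k\le f(k)$ and hence the summable bound $(1-\eps)^k$, followed by Borel--Cantelli and $\eps\downarrow 0$. Your remark that only monotonicity, and not \eqref{eq:widesv}, is needed for this direction is consistent with the paper's proof, which likewise invokes only monotonicity here.
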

\begin{proof} 
Fix $\eps > 0$ and $t_k \coloneq (1-\eps)k/(\e f(k))$. 
Using the monotonicity of $f$ and~\eqref{eq:rel_tau_N}, we obtain
\be\label{eq:liminf_grow}
\P{\tau_{1,k}\leq t_k}\leq \Big(\frac{\e t_k\ell_k}{k}\Big)^k\leq (1-\eps)^k. 
\ee 
Hence, by Borel-Cantelli's lemma, almost surely $\liminf_{k\to\infty} \tau_{1,k}/(k/f(k))\geq(1-\eps)/\e$. As the latter holds for every $\eps > 0$, the desired conclusion follows.
\end{proof} 

Next, we state a one-sided analogue of \Cref{prop:taun,prop:taun_periodic} regarding the growth rate of $\tau_n$ for slowly growing weight functions. 
For the purposes of the following proposition, we extend $f$ to the interval $[0,\infty)$ by interpolating linearly between consecutive integers. 
We slightly abuse notation and keep denoting the extension $f$.

\begin{proposition}\label{prop:taunsv}
For every weight function $f$ satisfying~\eqref{eq:widesv}, 
$$\limsup_{n\to\infty}\frac{\tau_n f(\tau_n)}{\log n}\le 1.$$
\end{proposition}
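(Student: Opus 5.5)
The plan is to follow the two-phase strategy from the proof of \Cref{prop:taun}, using the monotonicity of $f$ and the regularity in~\eqref{eq:widesv}. The goal is to show that, for every small $\eps>0$, almost surely $N(t)\ge \exp\big((1-3\eps)\,t f(t)\big)$ for all large $t$. This suffices: evaluating at $t=\tau_n$ and using $N(\tau_n)=n$ gives $\tau_nf(\tau_n)\le \log n/(1-3\eps)$ for all large $n$. Since $N$ is non-decreasing, it is enough to prove the bound at integer times $t$ with a summable failure probability and then interpolate. The interpolation needs $f(t+1)/f(t)\to 1$. I would deduce this from monotonicity together with $f(kf(k))/f(k)\to 1$, since $k+1\le kf(k)$ for large $k$.

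\emph{Phase 1, on $[0,\eps t]$.} Let $r_0=r_0(t)\coloneq \lceil\delta t/f(t)\rceil$ with $\delta>0$ small. Because $f$ is non-decreasing, \eqref{eq:widesv} gives $f(r)\ge f(r_0)\ge(1-\eps/4)f(t)$ for every $r\ge r_0$ once $t$ is large. Indeed, $f(\delta t/f(t))/f(t)\to1$ follows from the first limit in~\eqref{eq:widesv} combined with monotonicity, perhaps after comparing $\delta t/f(t)$ with $t/f(t)$ via a second application of~\eqref{eq:widesv}. This is a point to check.

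Phase 1 itself has two steps.
\begin{itemize}
\item On $[0,\eps t/2]$, repeat verbatim the cascade of events $\cD_i$ from the proof of \Cref{prop:taun} for depths $i\le 8$, using only $f\ge f(0)$. With failure probability $\e^{-\Omega(\eps t)}$, this yields at least $t^4$ vertices on depth $8$.
\item On $[\eps t/2,\eps t]$, push the population from depth $8$ to depth $r_0$. Use steps of length $h\coloneq \eps t/(2r_0)\approx \eps f(t)/(2\delta)$, and require at each step at least $t^4$ vertices on the next depth. Each step then produces a Poisson number of children with mean at least $h f(0) t^4\ge 2t^4$, provided $\delta\le \eps f(0)/4$. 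Chernoff's bound (\Cref{lem:chernoff}) therefore gives failure probability at most $\e^{-t^4/8}$ per step.
\end{itemize}
Over $r_0\le t$ steps, the total failure probability is still summable in $t$. The point is that the population never needs to grow in this step: it only needs to stay above $t^4$. This is how I would avoid the difficulty that $h$ grows only very slowly.

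\emph{Phase 2, on $[\eps t,t]$.} Run the event cascade $\cE_i$ from the proof of \Cref{prop:taun} for $N^+_{r_0}$, the number of vertices on depth at least $r_0$. Use the time step $\alpha\approx \eps^2/f(t)$, so that there are $L\approx (1-\eps)tf(t)/\eps^2$ steps, and the target $\cE_i=\{N^+_{r_0}(s'_i)\ge \e^{i(1-\eps)\alpha f(t)}t^4\}$. Every vertex counted by $N^+_{r_0}$ reproduces at rate at least $(1-\eps/4)f(t)$, and all of its children are again counted. The offspring produced during one step is therefore Poisson with mean at least $\alpha(1-\eps/4)f(t)\,\e^{i(1-\eps)\alpha f(t)}t^4$, and $\alpha f(t)\approx\eps^2$ is small. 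The same Chernoff computation as before then gives $\mathbb P(\cE_{i+1}^c\mid\cE_i)\le \e^{-c\eps^4t^4}$. The event $\cE_L$ yields $\log N(t)\ge(1-\eps)^2tf(t)$. Since $L=O(t^2)$, the union bound still gives a summable failure probability.

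I expect the main obstacle to be the bookkeeping around $r_0$. One must ensure that $f(r_0)$ is close to $f(t)$ using only~\eqref{eq:widesv}, and simultaneously that $r_0$ is small enough to be reached within time $\eps t$ with probability close to one. The second limit in~\eqref{eq:widesv} is needed only for the interpolation between integer times.
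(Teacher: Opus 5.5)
Your proposal is correct, but it takes a different route from the paper.

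\textbf{The paper's route.} The paper never builds a large population.
\begin{itemize}
\item It normalises $f(0)=1$ and observes that $\tau_{1,K}$, with $K=\eps t/4$, is dominated by a sum of $K$ independent rate-one exponentials: the chain of first-born vertices, each waiting for its first child. Bernstein's inequality (\Cref{lem:Ber}) then gives $\tau_{1,K}\le 2K$ with probability $1-\e^{-\Omega(\eps t)}$.
\item It couples the subtree of that \emph{single} vertex with a Yule process of rate $f(K)$, using monotonicity of $f$. The size of this process at time $s=(1-\eps/2)t$ is exactly geometric with parameter $\e^{-sf(K)}$, so $\mathbb P(Y(s)\le \e^{(1-\eps)sf(K)})\le \e^{-\eps sf(K)}$ follows in one line.
\item \eqref{eq:widesv} is used only once, to compare $f(\eps t/4)$ with $f(t)$.
\end{itemize}

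\textbf{Your route.} You transplant the two Chernoff cascades of \Cref{prop:taun}. Without an exact law for the growth, you need $t^4$ individuals at depth $r_0$ before concentration takes over. This forces your extra stage pushing a population of size $t^4$ from depth $8$ to depth $r_0$, followed by a second cascade with $O(tf(t)/\eps^2)$ time steps.

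All your steps check out. In particular, $f(\delta t/f(t))/f(t)\to 1$ follows as you suspected: with $u=t/f(t)$, one has $u/f(u)\le \delta t/f(t)$ for large $t$, so two applications of \eqref{eq:widesv} and monotonicity suffice.

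\textbf{Comparison.} Your argument stays within the single framework of \Cref{prop:taun} and uses only Poisson Chernoff bounds, with no exact Yule distribution and no Bernstein inequality. The paper's argument is considerably shorter, because the exact geometric law lets one vertex at linear depth do all the work.

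\textbf{A simplification.} The sublinear choice $r_0=\delta t/f(t)$ is unnecessary. Take $r_0=ct$ with $c\le \eps f(0)/4$. Your step length $h=\eps/(2c)$ is then constant, but it still satisfies $hf(0)\ge 2$. Moreover, $f(r_0)\ge f(t/f(t))$ for large $t$ then follows from a single application of \eqref{eq:widesv}, exactly as for the paper's choice $K=\eps t/4$.
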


We postpone the proof of \Cref{prop:taunsv} and first derive the upper bound in Theorem~\ref{thrm:svf}. 
    
\begin{proof}[Proof of Theorem~\ref{thrm:svf} assuming \Cref{prop:taunsv}]
The lower bound of Theorem~\ref{thrm:svf} already follows from Remark~\ref{rem:lowerslow}. 
We focus on the upper bound. Fix $\eps>0$ small. By~\eqref{eq:widesv}, for all sufficiently large $n$, we have
\[\frac{1+\eps/2}{1+\eps}\le \frac{1}{f(\log n)} f\bigg(\frac{\log n}{f(\log n)}\bigg).\]
By rearranging this inequality and using the monotonicity of $f$, we obtain
\[(1+\eps/2)\log n\le \frac{(1+\eps)\log n}{f(\log n)}f\bigg(\frac{(1+\eps)\log n}{f(\log n)}\bigg).\]
Now, from~\Cref{prop:taunsv} we also know that, for all $n$ large, $\tau_n f(\tau_n)\leq (1+\eps/2)\log n$, and thus
$$
\tau_n f(\tau_n) \le \frac{(1+\eps)\log n}{f(\log n)}f\bigg(\frac{(1+\eps)\log n}{f(\log n)}\bigg).
$$
Using that the function $xf(x)$ is strictly increasing, we conclude that, for all such $n$, we have 
\[\tau_n\leq \frac{(1+\eps)\log n}{f(\log n)}.\]
On the other hand, taking $k=(1+3\eps)\e\log n$ in~\Cref{lem:Bksvf} gives \[\tau_{1,(1+3\eps)\e\log n}\geq\frac{1+2\eps}{1+3\eps}\frac{1}{\e}\frac{(1+3\eps)\e\log n}{f((1+3\eps)\e\log n)}=\frac{(1+2\eps)\log n}{f((1+3\eps)\e\log n)}.\]
As $f$ satisfies~\eqref{eq:widesv}, $f(\log n)/f((1+3\eps)\e \log n)\to 1$. Thus, almost surely, for all large~$n$,
\[\tau_n\le \frac{(1+\eps)\log n}{f(\log n)}\le \frac{(1+2\eps)\log n}{f((1+3\eps)\e\log n)} \le \tau_{1,(1+3\eps)\e\log n}.\]
Since $\eps > 0$ is arbitrarily small and $\cT(\tau_n)$ and $T_n$ have the same distribution by~\eqref{eq:taun}, the latter display proves the upper bound in \Cref{thrm:svf}.
\end{proof} 

    We conclude the section by proving \Cref{prop:taunsv}.

    \begin{proof}[Proof of \Cref{prop:taunsv}]
    As before, we extend the function $f$ to $[0,\infty)$ by linear interpolation between consecutive integers but keep denoting the extension $f$. Instead of proving bounds on $\tau_n$, it will be enough to show that almost surely
    \begin{equation}\label{eq:equiv3.9}
    \liminf_{t\to\infty} \frac{\log(N(t))}{tf(t)} \ge 1.
    \end{equation}
Indeed, as $N(\tau_n)=n$ for every $n\in \N$, \eqref{eq:equiv3.9} implies the desired result. 
\vspace{0.5em}

Fix $\eps\in (0,1)$ small and $t$ large so that $\eps t$ is also large. Set also $K = K(t) \coloneq \eps t / 4$. 
We assume without loss of generality that $f(0) = 1$ so that $f(k) \ge 1$ for all $k \ge 0$ (recall that $f$ is increasing). 
We begin by showing that with high probability the tree attains depth $K$ quickly. 
To do so, we observe that, for each $j\in \N$, $\tau_{1,j}-\tau_{1,j-1}$ is stochastically dominated by an exponential random variable $E_j$ with rate $f(j-1)\geq 1$, which is the time it takes for the first vertex on depth $j-1$ to produce its first child. 
Even further, the variables $\{E_j\}_{j\geq1}$ are independent, so each $\tau_j$ stochastically dominates the sum of $j$ independent exponential variables with rate $1$. Taking $j=K$ and using Bernstein's inequality (\Cref{lem:Ber}) gives
\begin{equation}\label{eq:bound_exp}
\mathbb P(N_{K}(2K)=0)=\mathbb P(\tau_{1,K}>2K)\le \mathbb P(|\tau_{1,K}-K|>K)\le 2\exp\bigg(-\min\bigg\{\frac{K^2}{c_1K},\frac{K}{c_2}\bigg\}\bigg).
\end{equation}
Next, denote by $v_K$ the vertex born on depth $k$ at time $\tau_{1,K}$. 
From this time onwards, we focus on the subtree rooted at $v_K$. 
Recalling that $\min_{i\ge K} f(i) = f(K)$ by monotonicity of $f$, we can (and do) couple this tree with the branching process $\overline{\cT}(\cdot)$ given by the DWT with constant weight function equal to $f(K)$ so that $\overline{\cT}(s) \subseteq \cT(\tau_{1,K} + s)$ for all $s \ge 0$, with $\cT$ being the original DWT. 
Call $Y(s)=|\overline{\cT}(s)|$ and observe that, since the weight function of $\overline\cT$ is constant, $Y(s)$ is a Yule process with rate $f(K)$.
By~\cite[(4.3.4)]{vdHof17} and simple rescaling, $Y(s)$ is a geometric random variable with parameter $p=p(s)\coloneq\exp(-sf(K))$.
Therefore, setting $s=t-2K=(1-\eps/2)t$ and using~\eqref{eq:widesv}, we obtain that 
\[\mathbb P(Y(s)\le \e^{(1-\eps)sf(K)}) =1-(1-p)^{\exp((1-\eps)sf(K))}\le \e^{-\eps sf(K)}\le \e^{-\eps(1-\eps/2)tf(t)/2}.\]
By combining the latter with~\eqref{eq:widesv} and~\eqref{eq:bound_exp}, we obtain that

\begin{align*}
\mathbb P(N(t)\le \e^{(1-3\eps)tf(t)})
&\le \mathbb P(N(t)\le \e^{(1-2\eps)tf(K)})\\
&\le \mathbb P(N_{K}(2K)=0)+\mathbb P(\{N(t)\le \e^{(1-\eps)sf(K)}\}\cap \{N_{K}(2K)\ge 1\})\\
&\le 2\e^{-K/\max\{c_1,c_2\}}+\mathbb P(Y(s)\le \e^{(1-\eps)sf(K)})\le 3\e^{-K/\max\{c_1,c_2\}},
\end{align*}
where the third inequality uses that $N_K(2K)\geq 1$ implies $\tau_{1,K}<2K$ and hence our coupling gives $Y(s)\leq N(s+\tau_{1,K})\leq N(t)$, and the last inequality uses that $t$ is large.
Combined with the monotonicity of each of $N(\cdot)$ and $x\mapsto xf(x)$ and Borel-Cantelli's lemma for $t\in \N$, this is enough to conclude the proof of \eqref{eq:equiv3.9} as in the proof of \Cref{prop:taun}.
\end{proof}

\section{Exponential weight functions}\label{sec:exp_weight_fns}

We turn to Theorem~\ref{thm:main_exp}. 
First, we prove part (ii) for subexponential functions.

\begin{proof}[Proof of \Cref{thm:main_exp}(ii)]
Recall the discrete model from Definition~\ref{def:DWT} and define the function
\[g:n\in \mathbb N\mapsto \frac{f(n)}{\sum_{i=0}^n f(i)}.\]
Then, our assumption on the weight function implies that $g(n) = o(1)$. 
Set $\zeta_0 = 0$ and, for every $k\in \N$, denote by $\zeta_k$ the smallest $n\in \N$ such that $T_n$ has depth $k$; observe that $\zeta_k$ is a discrete analogue of $\tau_{1,k}$ from previous sections.
Recalling the convention that irrelevant rounding is ignored, our task comes down to bounding from above the probability of the event $\zeta_{1,\eps n}\le n$. 

To begin with, for every $i\in \mathbb N$, the probability that the vertex born at step $\zeta_{1,k}+i$ has depth $k+1$ is bounded from above by $p_{k,i} \coloneq \min(ig(k),1)$. 
Indeed, at this step, there is at least one vertex on each of the depths $0,1,\ldots,k$ and at most $i$ vertices on depth $k$.
For all $k\in \N_0$ and $i\in \N$, define the family of independent Bernoulli random variables $(I_{k,i})$ with respective parameters $(p_{k,i})$.
Then, for every $k\in \mathbb N_0$, $\zeta_{k+1} - \zeta_k$ stochastically dominates $\min\{i:I_{k,i} = 1\}$.
Let $\cI = \{\eps n/2,\ldots,\eps n-1\}$ and $\cS$ be the family of subsets of $\cI$ of size $\eps n/4$. Then,
\begin{align*}
&\mathbb P(\zeta_{\eps n}\le n)\le \mathbb P(\exists S\in \cS: \forall k\in S,\, \zeta_{k+1}-\zeta_{k}\le 5/\eps)\le \binom{\eps n/2}{\eps n/4} \max_{S\in \cS} \bigg\{\prod_{k\in S} \mathbb P(\exists i\le 5/\eps: I_{k,i}=1)\bigg\}\\
&\le 2^{\eps n/2} \bigg(\max_{k\in \cI} \bigg\{\sum_{i=1}^{5/\eps} p_{k,i}\bigg\}\bigg)^{\eps n/4}\le 2^{\eps n/2} \bigg(\sum_{i=1}^{5/\eps} i \max_{k\in \cI} g(k)\bigg)^{\eps n/4} \le \bigg(\frac{100}{\eps^2} \max_{k\in \cI} g(k)\bigg)^{\eps n/4} = \e^{-\omega(n)},
\end{align*}
where the last inequality used that $\max_{k\in \cI} g(k) = o(1)$.
\end{proof}

We now turn to the proof of Theorem~\ref{thm:main_exp}(i) for $c > 1$. 
Even though the results presented here apply to all functions satisfying assumption~\eqref{eq:ExpG}, the reader might benefit from keeping in mind the case of pure exponential functions $k\mapsto c^k$ for some $c>1$, for which some of the definitions and proofs become cleaner. 

The proof consists of several steps. The first one is Proposition~\ref{prop:small_tower_ex} below, which is arguably the most technical component. 
Before stating the proposition formally, we first give an intuitive explanation of its meaning. 
Imagine that a large mass of vertices in the tree $T_n$ accumulates at a certain depth $k$ away from the deepest vertices.
Then, this accumulation is likely to result in a rapid gain of mass on depths $k+1, k+2, \ldots$. 
This `wave' of vertices eventually reaches and overtakes the vertices at maximal depth and significantly delays the growth of $d(T_n)$.
Our analysis of exponentially growing functions shows that the above phenomenon is atypical to happen on a large scale: accumulations of mass are rare and well-spaced, which results in a regularly paced growth of the height of the tree.
To properly state this intuition, define the function
\begin{align}\label{eq:Psi}
\Psi: k\in \mathbb N
&\mapsto \min\left\{\ell\in \mathbb N:\frac{\prod_{j=1}^{\ell}f(k+j-1)}{(2f(k-1))^{\ell} (\ell+1)!} \ge 8\right\},
\end{align}
with the convention $\Psi(k)=\infty$ if there is no such $\ell$. This function controls the accumulation on depth $k$: in fact, $k+\Psi(k)$ can be interpreted as a depth which, once reached, the vertices on depth $k$ lose their impact with probability close to 1.
Note that, for the particular choice of $f(k)=c^k$,
\[\Psi(k)=\min\left\{\ell\in \mathbb N:\frac{c^{\frac{\ell(\ell+1)}{2}}}{2^{\ell} (\ell+1)!} \ge 8\right\}.\]
While generally $\Psi$ depends on $k$, we show $\Psi$ is always bounded thanks to assumption~\eqref{eq:ExpG}.

\begin{lemma}\label{lem:mfkf}
For every weight function $f$ satisfying assumption~\eqref{eq:ExpG}, $\Psi$ is a bounded function.
\end{lemma}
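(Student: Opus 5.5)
We need to show there is a uniform $L$ such that, for every $k$, the ratio in \eqref{eq:Psi} is at least $8$ at $\ell=L$. The natural route is to lower bound the ratios $f(k+j-1)/f(k-1)$ uniformly in $k$ using \eqref{eq:ExpG}, and then let the resulting quadratic exponential beat $2^\ell(\ell+1)!$.

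Write $c>1$ for the limit in \eqref{eq:ExpG}. (If $c=1$ is allowed the statement fails already for $f\equiv1$, where the ratio is $1/(2^\ell(\ell+1)!)$, so the lemma is meant for $c>1$, as in Theorem~\ref{thm:main_exp}(i).) Fix $\rho\in(1,c)$, say $\rho=\sqrt c$, and choose $k_0$ such that $f(m+1)/f(m)\ge\rho$ for all $m\ge k_0$.

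For $k-1\ge k_0$ and $j\ge1$ we get $f(k+j-1)\ge \rho^{j}f(k-1)$. Hence
\[\frac{\prod_{j=1}^{\ell}f(k+j-1)}{(2f(k-1))^{\ell}(\ell+1)!}\ \ge\ \frac{\rho^{\ell(\ell+1)/2}}{2^{\ell}(\ell+1)!}.\]
The right-hand side does not depend on $k$. It tends to infinity as $\ell\to\infty$, because $\log$ of it is $\tfrac{\ell^2}{2}\log\rho-O(\ell\log\ell)$. So there is $L_0$ with the right-hand side at least $8$ for $\ell=L_0$, and therefore $\Psi(k)\le L_0$ for all $k\ge k_0+1$.

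It remains to treat the finitely many $k\in\{1,\dots,k_0\}$. For each such $k$ we need the set in \eqref{eq:Psi} to be nonempty, so that $\Psi(k)<\infty$; then the bound is $\max\{L_0,\Psi(1),\dots,\Psi(k_0)\}$. Fix such a $k$. For $\ell$ large, write the product as
\[\prod_{j=1}^{k_0-k+1}f(k+j-1)\cdot\prod_{j>k_0-k+1}^{\ell}f(k+j-1).\]
The first factor is a fixed positive constant $C_k$, since $f>0$. In the second factor every term satisfies $f(k+j-1)\ge \rho^{\,k+j-1-k_0}f(k_0)$. So the ratio is at least $C_k'\rho^{\ell^2/2-O(\ell)}/(2f(k-1))^{\ell}(\ell+1)!$, which still tends to infinity. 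This gives finiteness of $\Psi(k)$ and hence boundedness of $\Psi$.

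No step is a real obstacle. The only points needing care are the uniformity in $k$, which the monotone-ratio bound from $k_0$ onward handles, and the finitely many small $k$, which only need finiteness.
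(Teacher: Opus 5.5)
Your proof is correct and follows essentially the same route as the paper. Both arguments get a uniform bound on $\Psi(k)$ for all large $k$ from an eventual ratio bound $f(m+1)/f(m)\ge\rho>1$ (the paper takes $\rho=(c+1)/2$ where you take $\sqrt{c}$), and then show $\Psi(k)<\infty$ for the finitely many small $k$ by splitting off a fixed initial segment of the product. Your remark that the lemma needs $c>1$ (it fails for $f\equiv 1$) is also right: the paper's proof tacitly uses $c>1$ too, consistent with the lemma being used only for Theorem~\ref{thm:main_exp}(i).
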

\begin{proof}
Denote by $k_0$ the smallest integer such that, for every $k\ge k_0$, each of $f(k)\ge \tfrac{c+1}{2} f(k-1)$ and $f(k-1)\ge \max\{f(i): i\in \{0,1,\ldots,k-1\}\}$ holds.
Then, for every $k\ge k_0$, we have that $\Psi(k)$ is bounded from above by the minimum integer $\ell$ such that 
\[\bigg(\frac{c+1}{2}\bigg)^{\ell(\ell+1)/2}\ge 8\cdot 2^{\ell}(\ell+1)!,\]
where we used that $n! = 2^{o(n^2)}$.
For $k\in [k_0-1]$, by distinguishing the cases $k+j-1 < k_0$ and $k+j-1\ge k_0$, the inequality in~\eqref{eq:Psi} is satisfied when
\[\bigg(\prod_{j=1}^{k_0-k} \frac{f(k+j-1)}{f(k-1)}\bigg)\cdot \bigg(\frac{f(k_0-1)}{f(k-1)}\bigg)^{\ell-k_0+k} \bigg(\prod_{j=k_0-k+1}^{\ell} \frac{f(k+j-1)}{f(k_0-1)}\bigg)\ge 8\cdot 2^\ell (\ell+1)!.\]
Since the second term above is at least 1 and the third term dominates $c^{(\ell-k_0+k)(\ell-k_0+k+1)/2}$, the last inequality holds for every large enough $\ell$. Thus, $\Psi(k)<\infty$ for all $k<k_0$, as desired.
\end{proof}

Next, we define a family of important events appearing in Proposition~\ref{prop:small_tower_ex}. 
To this end, we work in the continuous-time setting introduced in Section~\ref{sec:Cox}. 
Recall that $N_k(t)$ stands for the number of vertices on depth $k$ at time $t$, $\tau_{s,k}$ stands for the first time at which there are $s$ vertices on depth $k$, and $\tau_n$ stands for the first time when the tree attains $n$ vertices.

\begin{definition}\label{def:E_rsn}
For integers $r,s,n\in \mathbb N$, define the events
\[\cE_{s,r,n} \coloneq \{N_{r-1}(\tau_n)\le s\}\cap \{N_r(\tau_n)\ge s\}\cap \bigg(\bigcap_{i=1}^{\Psi(r)} \{N_{r+i}(\tau_n)\le s\}\bigg)\quad \text{and}\quad \cE_{s,r}\coloneq\bigcup_{n=1}^{\infty}\cE_{s,r,n}.\]
\end{definition}
In words, the event $\cE_{s,r,n}$ states that depth $r$ contains at least $s$ vertices of the tree $T_n$ while each of the depths $r-1,r+1,r+2,\ldots,r+\Psi(r)$ contains at most $s$ vertices, see \Cref{fig:Es}. 
Next, we show that the union $\cE_{s,r}$ is a rather improbable event when $s$ is large: the reason is that a large mass of vertices on depth $r$ typically results in a faster growth on depths $r+1,r+2,\ldots$ whilst the upper bound on depth $r-1$ limits the growth on depth $r$ itself. 
The following proposition formalises this intuition by providing an upper bound on $\mathbb P(\cE_{s,r})$ which is uniform over $r$ and decreases exponentially with $s$.   

\begin{center}
    \begin{figure}[h]
    \includegraphics[scale=1.2]{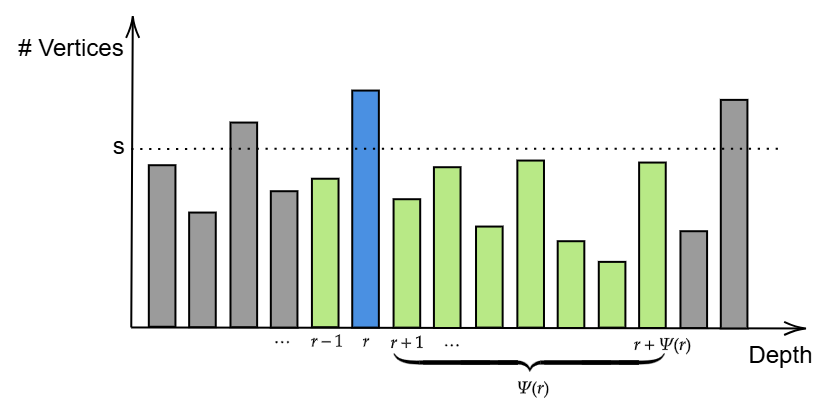}
    \caption{An illustration of the event $\cE_{s,r,n}$. 
    In blue, the load of vertices on depth $r$, which is above the threshold $s$. In green, the previous depth and the subsequent $\Psi(r)$ depths remaining below the threshold $s$.}
    \label{fig:Es}
    \end{figure}
\end{center}

\begin{proposition}\label{prop:small_tower_ex}
Fix a weight function $f$ satisfying assumption~\eqref{eq:ExpG}. Then, there is $\delta = \delta(f) > 0$ such that, for all $r,s\in \mathbb N$ with $s\geq2$,
\[\mathbb P(\cE_{s,r}) \le 2(\max\Psi+2) \e^{-\delta s}\le 6(\max\Psi) \e^{-\delta s}.\]
\end{proposition}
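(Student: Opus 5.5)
The plan is to reduce $\cE_{s,r}$ to an event about the single stopping time $\tau_{s,r}$, and then show that, on that event, depth $r+\Psi(r)$ would almost surely have overtaken $s$ already.

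\textbf{Reduction to the time $\tau_{s,r}$.} On $\cE_{s,r,n}$ we have $N_r(\tau_n)\ge s$, so $\tau_n\ge\tau_{s,r}$. Since every $N_k$ is non-decreasing, the upper bounds $N_{r-1}(\tau_n)\le s$ and $N_{r+i}(\tau_n)\le s$ transfer to the earlier time $\tau_{s,r}$. Hence
\[\cE_{s,r}\subseteq \cF_{s,r}\coloneq\{N_{r-1}(\tau_{s,r})\le s\}\cap\bigcap_{i=1}^{\Psi(r)}\{N_{r+i}(\tau_{s,r})\le s\}.\]
(In fact the two events coincide, taking $n$ with $\tau_n=\tau_{s,r}$.) It therefore suffices to show $\mathbb P(\cF_{s,r})\le 2(\max\Psi+2)\e^{-\delta s}$. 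Write $\Psi=\Psi(r)$, which is bounded by Lemma~\ref{lem:mfkf}, and set
\[u\coloneq \frac{1}{2f(r-1)}.\]

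\textbf{Step 1: depth $r$ needs time to fill up.} Let $\sigma\coloneq\tau_{\lceil s/2\rceil,r}$. Before the first time $N_{r-1}$ exceeds $s$, depth $r$ receives vertices at rate at most $sf(r-1)$. So on $\cF_{s,r}$, the number of depth-$r$ births in $[\sigma,\sigma+u]$ is dominated by a Poisson variable with mean $sf(r-1)u=s/2$. I will use a slightly smaller window, $u'=(1-\eta)u$ with a small constant $\eta>0$, so that the mean becomes $(1-\eta)s/2$. Chernoff (Lemma~\ref{lem:chernoff}) then gives
\[\mathbb P\big(\cF_{s,r}\cap\{\tau_{s,r}\le \sigma+u'\}\big)\le \e^{-\delta_1 s}.\]
Hence, up to an error $\e^{-\delta_1 s}$, on $\cF_{s,r}$ the $\lceil s/2\rceil$ vertices present on depth $r$ at time $\sigma$ have a full window of length $u'$ in which depths $r+1,\dots,r+\Psi$ must all stay at most $s$.

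\textbf{Step 2: the wave reaches depth $r+\Psi$ (main obstacle).} I need a lower-tail bound showing that $s/2$ vertices on depth $r$, run for time $u'$, create more than $s$ vertices on some depth $r+\ell$ with $\ell\le\Psi$, except with probability $\e^{-\delta s}$. The first moment is fine: by the computation of Lemma~\ref{lemma:gensize}, shifted to the subtree process, the expected number of depth-$(r+\ell)$ descendants is
\[\frac{s}{2}\cdot\frac{u'^{\ell}}{\ell!}\prod_{j=1}^{\ell} f(r+j-1)\;\ge\; \frac{s}{2}(1-\eta)^{\ell}(\ell+1)\cdot 8 \quad\text{at } \ell=\Psi,\]
using the definition~\eqref{eq:Psi} of $\Psi$. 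This exceeds $s$ by a constant factor. Concentration is the delicate part, since a single depth can have a Cox process with very small rate.

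I would handle it exactly as in the proof of Proposition~\ref{prop:taun}. Split $[\sigma,\sigma+u']$ into $\ell+1$ equal sub-intervals of length $u'/(\ell+1)$; this is where the factor $(\ell+1)!$ rather than $\ell!$ in~\eqref{eq:Psi} is meant to give room. Define events $\cD_j$ stating that at the end of the $j$-th sub-interval depth $r+j$ holds at least a prescribed fraction of $s\,\prod_{i\le j}\big(f(r+i-1)u'/(\ell+1)\big)/2^{j}$ vertices. Each conditional failure $\mathbb P(\cD_{j+1}^c\mid\cD_j)$ is a Poisson lower tail. The intended target is that, using the factor $2^{\ell}$ in~\eqref{eq:Psi} to absorb the halving at each step, each mean is at least of order $s$, so each failure costs $\e^{-\delta_2 s}$.

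The risk is that the intermediate products $\prod_{i\le j}f(r+i-1)/f(r-1)^j$ could be small for $j<\Psi$ (for instance if $f$ dips). If so, I would stop at the first depth $j$ where the running mean drops below $s$ and treat the few levels involved by a crude union bound, which is where the constant factor $(\max\Psi+2)$ would appear. Alternatively, I would replace $\Psi(r)$ by the minimal $\ell$ beyond which the products are increasing, which Lemma~\ref{lem:mfkf}'s proof provides for large $r$.

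\textbf{Conclusion.} On $\cD_\Psi$, depth $r+\Psi$ exceeds $s$ before time $\sigma+u'\le\tau_{s,r}$, which contradicts $\cF_{s,r}$. Summing the at most $\Psi+2$ failure probabilities from Steps~1 and~2 gives
\[\mathbb P(\cE_{s,r})\le 2(\max\Psi+2)\e^{-\delta s}\quad\text{with }\delta=\min(\delta_1,\delta_2).\]
The constants depend only on $f$ through $\max\Psi$ and the ratios in~\eqref{eq:ExpG}, so the bound is uniform in $r$. Small values of $s\ge 2$ are absorbed by shrinking $\delta$.
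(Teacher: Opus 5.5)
Your reduction to the time $\tau_{s,r}$ and Step~1 are sound. They play the role of the paper's capped process $M_{r-1}=\min\{N_{r-1},s\}$ and of the time $T$ defined by $F_r(T)=(1-2\varepsilon)s$. Your first-moment computation at depth $r+\Psi$ is also correct.

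The gap is the concentration scheme of Step~2: with the budget provided by~\eqref{eq:Psi}, it cannot certify more than $s$ vertices at depth $r+\Psi(r)$. Suppose depth $r+j$ only starts reproducing once the $j$-th sub-interval is over. You start from $\lceil s/2\rceil$ vertices and halve at each step, so the threshold reachable at depth $r+\ell$ is $\frac{s}{2}\prod_{j=1}^{\ell}\frac{f(r+j-1)u'}{2(\ell+1)}$. This is the first moment $\frac{s}{2}\frac{u'^{\ell}}{\ell!}\prod_{j=1}^{\ell}f(r+j-1)$ multiplied by $\ell!/(2(\ell+1))^{\ell}$, which is of order $(2\e)^{-\ell}$ up to polynomial factors.

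At $\ell=\Psi$, the definition~\eqref{eq:Psi} only guarantees that this first moment is at least $4(\Psi+1)(1-\eta)^{\Psi}s$. The factor $2^{\ell}$ in~\eqref{eq:Psi} was already spent on the halved window $u=1/(2f(r-1))$, so it cannot also pay for the halving. The extra factor $\ell+1$ from $(\ell+1)!$ is nowhere near the discretisation loss $(\ell+1)^{\ell}/\ell!$. What you can actually certify is $4(1-\eta)^{\Psi}(\Psi+1)!\,s/(2(\Psi+1))^{\Psi}$, which is already below $s$ for $\Psi=2$. For $f(k)=c^k$ with $c$ close to $1$, where $\Psi$ is large, minimality of $\Psi$ shows the shortfall is exponential in $\Psi$. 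Stopping at an earlier depth $r+\ell$ with $\ell<\Psi$ does not help either.

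Two natural patches also fail. Unequal sub-intervals do not help, since $\prod_j h_j\le(u'/\ell)^{\ell}$ by AM--GM. Replacing the halving by $(1-\eta)$-fractions still leaves the loss $\ell^{\ell}/\ell!$. By contrast, the ``risk'' you flag is harmless. By~\eqref{eq:ExpG} there are constants $0<m\le M$ with $m^j\le f(r+j-1)/f(r-1)\le M^j$, and $\ell\le\max\Psi$, so every intermediate mean is at least $c(f)\,s$.

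The paper avoids discretising time altogether. It controls every level on the whole window with the uniform-in-time inequality of Theorem~\ref{theo: LeGuvel}, using an \emph{absolute} error $\varepsilon s$. It propagates deterministic lower bounds by integration, $F_{r+i}(t)=f(r+i-1)\int_0^t\max\{F_{r+i-1}(u)-\varepsilon s,0\}\,\dd u$, starting from the linear ramp $L_r$ of slope $sf(r-1)$ on a window of length $(1-2\varepsilon)/f(r-1)$. This buys four things:
\begin{itemize}
\item iterated integration keeps the exact factor $1/(\ell+1)!$;
\item the accumulated error is $\varepsilon s$ times a constant depending only on $m$, $M$ and $\max\Psi$, so a small $\varepsilon=\varepsilon(f)$ suffices;
\item the absolute error makes levels with small mean harmless;
\item any level whose bound reaches $2s$ ends the argument.
\end{itemize}

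Within your own set-up there is also a short repair. By the strong Markov property at $\sigma$, the number of depth-$(r+\Psi)$ vertices at time $\sigma+u'$ is at least a sum of $\lceil s/2\rceil$ i.i.d.\ non-negative variables $X_i$. These count the fresh descendants of the depth-$r$ vertices present at $\sigma$. They have mean $\mu\ge 8(\Psi+1)(1-\eta)^{\Psi}$, and $\mathbb E[X_1^2]$ is bounded uniformly in $r$: rescale time by $f(r-1)$ and compare with a Yule process of rate $M^{\max\Psi}$. For non-negative summands the lower-tail bound $\mathbb P(\sum_i X_i\le(1-\theta)n\mu)\le\exp(-\theta^2 n\mu^2/(2\mathbb E[X_1^2]))$ holds. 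It yields $\e^{-\delta s}$ directly from your first-moment computation, with no level-by-level analysis.
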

\begin{proof}
Recall the stopping times $\tau_{j,\ell}$ from \eqref{eq:taus}, $\tau_n$ from \eqref{eq:taus+}, and $\tau_{\infty}=\lim_{n\to \infty} \tau_n$ from \eqref{eq:tauinfty}.
For every $r\ge 1$ and $s\ge 2$, define the event 
\[\cG_{s,r} \coloneq \{\tau_{s,r}<\min\{\tau_{s+1,r-1},\tau_{s+1,r+1},\ldots,\tau_{s+1,r+\Psi(r)}\}\}.\]
Note that $\cE_{s,r} = \cG_{s,r}\cap \{\tau_{s,r} < \tau_\infty\}$, as it could happen that depth $r$ never reaches $s$ vertices before the completion of the DWT. Thus, it is enough to provide an upper bound for $\mathbb P(\cG_{s,r})$.

For the purpose of the analysis, we introduce an auxiliary process $(M_{\ell}(\cdot))_{\ell\in \mathbb N_0}$ defined in the same way as $(N_{\ell}(\cdot))_{\ell\in \mathbb N_0}$ in Section~\ref{sec:Cox}, 
except for the fact that $M_{r-1}(\cdot)$ is stopped once it reaches size $s$, that is, it satisfies $M_{r-1}(\cdot) = \min\{N_{r-1}(\cdot), s\}$. 
Observe that the restriction on $M_{r-1}$ influences the distribution of all subsequent depths. This auxiliary process is interpreted similarly to $(N_\ell(\cdot))_{\ell\in \N_0}$ but the uniformly bounded load on depth $r-1$ is convenient for consequent computations.
Note that, by restricting the number of vertices on depth $r-1$, the probability of the event $\cG_{s,r}$ (with the stopping times $\tau_{j,\ell}$ redefined for the process $(M_{\ell}(\cdot))_{\ell\in \mathbb N_0}$) can only increase since we exclude the possibility of having $\tau_{s+1,r-1}\leq \tau_{s,r}$. 
From now on, we work with $(M_{\ell}(\cdot))_{\ell\in \mathbb N_0}$ instead of $(N_{\ell}(\cdot))_{\ell\in \mathbb N_0}$, that is, we adapt the definition of the events $\cG_{s,r}$, the directing measures $(\Lambda_\ell)_{\ell\in \N}$ from \Cref{sec:Cox} and the random variables $\tau_{j,\ell}$ and $\tau_j$ to the process $(M_{\ell}(\cdot))_{\ell\in \mathbb N_0}$.
For every $r\in \mathbb N$, define $\mathcal{F}_{r-2}$ as the $\sigma$-algebra generated by $(M_\ell(\cdot))_{\ell=0}^{r-2}$. (Observe that $\mathcal{F}_{-1}$ is defined as the trivial $\sigma$-algebra.)
We show the more general result that, for some $\delta = \delta(f,\max \Psi) > 0$ and every $r,s\in \mathbb N$ with $s\ge 2$, almost surely
\begin{equation}\label{lemma_uniformbound}
\condP{\cG_{s,r}}{\mathcal{F}_{r-2}}\leq 2(\max\Psi+2) \e^{-\delta s}.
\end{equation}

To begin with, we note that the distribution of the processes $(M_\ell(\cdot))_{\ell\in [r, \infty)}$ is independent of $(M_\ell(\cdot))_{\ell\in [0,r-2]}$ conditionally on a realisation of $(M_{r-1}(t))_{t\geq 0}$.
In particular,
\[\condP{\cG_{s,r}}{\mathcal{F}_{r-2}}=\condE{\condP{\cG_{s,r}}{(M_{r-1}(t))_{t\geq 0}}}{\mathcal{F}_{r-2}}.\]
Thus, our goal is to obtain a suitable upper bound (which will turn out to be uniform in $r$) on the conditional probability $\condP{\cG_{s,r}}{(M_{r-1}(t))_{t\geq 0}}$. A heuristic version of our argument is as follows: given any trajectory of $M_{r-1}$ (outside of a null probability set), the trajectories of the stochastic processes $M_{r}(\cdot),\,M_{r+1}(\cdot),\ldots,\,M_{r+\Psi(r)}(\cdot)$ are typically concentrated around some deterministic functions $F_r(\cdot)$, $F_{r+1}(\cdot),\ldots,F_{r+\Psi(r)}(\cdot)$ which are constructed recursively from $M_{r-1}(\cdot)$. Using $F_r$ (which approximates $M_r$), we can identify the time $T$ when $F_r$ is close to the target value $s$.

To complete the proof, it suffices to show that at least one of the functions $F_{r+1},\ldots,F_{r+\Psi(r)}$ attains a value significantly larger than $s$ before time $T$. 
In this case, the event $\cG_{s,r}$ can only occur if at least one of the stochastic processes deviates substantially from its deterministic counterpart, an event which will turn out to have an exponentially small probability. 
Note that the exponential growth of $f$ in the interval $[r-1,r+\Psi(r)]$ is essential to prove that this strategy works irrespectively of the chosen trajectory of $M_{r-1}(\cdot)$.

\vspace{0.5em}

As a first step, we address the trajectory of $M_r$. Fix a realisation of $(M_{r-1}(t))_{t\geq 0}$ and call it $F_{r-1}(t)$. 
Observe that, by the construction of the process, all of the $M_{r}(\cdot),\,M_{r+1}(\cdot),\ldots,\,M_{\Psi(r)}(\cdot)$ must be equal to 0 before time $\tau_{1,r-1}$ when generation $r-1$ receives its first vertex. 
To simplify subsequent computations, we (somewhat abusively) identify $\tau_{1,r-1}$ with 0 so that $F_{r-1}(0)=1$. By construction,
\begin{equation}\label{eq:Lambdar}
\Lambda_r[0,t]\coloneq\int_0^tf(r-1)F_{r-1}(u)\dd u
\end{equation}
is deterministic and therefore $M_r$ is a Poisson process whose expected trajectory follows $\Lambda_r[0,\cdot]$. 
To unify notation, we call $F_r(t)=\Lambda_r[0,t]$: this is a continuous and increasing function since $F_{r-1}$ is the trajectory of a positive jump process. 
Keeping in mind that our aim is to control the probability of $\cG_{s,r}$, we define $T=T(F_{r-1},\varepsilon)$ as the unique value such that
\begin{equation}\label{def:defT_poisson}F_r(T)=(1-2\varepsilon)s,\end{equation}
for some $\varepsilon\in(0,0.1)$ depending on $f$ to be fixed later. 
The key idea behind this choice is that $[0,T]$ is not long enough for $M_r$ to reach the value $s$, whereas at least one of the subsequent depths contains at least $s$ vertices before time $T$.
To show that the process $M_{r}$ stays close to $F_r$, we make use of Theorem~\ref{theo: LeGuvel} with $M_r$ and $\dd F_r$ taking the role of the Poisson process and its directing measure, respectively, and with $x=\varepsilon s$, thus obtaining
\[\mathbb P\bigg(\sup_{u\in [0, T]} \left|M_{r}(u)-F_{r}(u)\right|\geq \varepsilon s\,\bigg|\,M_{r-1}=F_{r-1}\bigg)\,\leq\,2\exp\bigg(-F_r(T)\cdot I\bigg(\frac{\varepsilon s}{F_{r}(T)}\bigg)\bigg),\]
where $\varepsilon$ is taken the same as in the definition of $T$ and $I(z)=(1+z)\log(1+z)-z$. 
Using the definition of $T$ in \eqref{def:defT_poisson}, we thus arrive at
\[\mathbb P\bigg(\sup_{u\in [0, T]} \left|M_{r}(u)-F_{r}(u)\right|\geq \varepsilon s\,\bigg|\,M_{r-1}=F_{r-1}\bigg)\,\leq\,2\e^{-I_1 s},\]
where $I_1\coloneq(1-2\varepsilon)I(\varepsilon/(1-2\varepsilon))$ can be checked to be bounded from below by $\delta\coloneq\varepsilon^2/8$ from our choice of $\eps\in (0,0.1)$. 
Now, we define the event $\mathcal{B}_r$ that $M_r$ concentrates around the deterministic trajectory $F_r$, that is,
\[\mathcal{B}_r\coloneq\left\{\sup_{u\in [0, T]} \left|M_{r}(u)-F_{r}(u)\right|\leq \varepsilon s\right\}.\]
Then, on $\mathcal{B}_r$, we necessarily have $M_r(T) \leq F_r(T) + \varepsilon s \leq (1 - \varepsilon)s$, which means that by time $T$ the tree has not yet produced $s$ vertices on depth $r$. 
We can therefore bound $\mathbb P(\mathcal{G}_{s,r})$ by the probability of the slightly more general event that none of the subsequent depths has reached $s+1$ vertices by time $T$:
\begin{align*}
\condP{\cG_{s,r}}{M_{r-1}=F_{r-1}}
&\leq \condP{\cG_{s,r}\cap\mathcal{B}_r}{M_{r-1}=F_{r-1}}+2\e^{-\delta s}\\
&\leq \condP{\{T<\min\{\tau_{s+1,r+1},\ldots,\tau_{s+1,r+\Psi(r)}\}\}\cap\mathcal{B}_r}{M_{r-1}=F_{r-1}}+2\e^{-\delta s}.
\end{align*}
Now, define the event $\mathcal{C}_{r+1}\coloneq\{T<\min\{\tau_{s+1,r+1},\ldots,\tau_{s+1,r+\Psi(r)}\}\}$ and observe that once $M_{r-1}$ and $\varepsilon$ (and thus $T$) are fixed, 
$\mathcal{C}_{r+1}$ depends solely on the trajectories of $M_{r+1},M_{r+2},\ldots,M_{r+\Psi(r)}$ on $[0,T]$, which can be constructed recursively if $\Lambda_{r+1}$ is given on this interval.
Moreover, an immediate coupling argument shows that $\mathcal{C}_{r+1}$ is monotone on $\Lambda_{r+1}$, meaning that for every pair of increasing non-negative functions $G_1, G_2: [0,\infty)\to (0, \infty)$ satisfying $G_1\le G_2$ on $[0,T]$, we have 
\begin{equation}\label{eq:ineq_monotonicity}\condP{\mathcal{C}_{r+1}}{\Lambda_{r+1}[0,\cdot]=G_2(\cdot)}
\leq\condP{\mathcal{C}_{r+1}}{\Lambda_{r+1}[0,\cdot]=G_1(\cdot)}.
\end{equation}
Indeed, if $G_1\leq G_2$, then the trajectory of $M_{r+1}$ constructed using $\dd G_1$ as directing measure is stochastically bounded from above by the trajectory of the same process using $\dd G_2$. 
Since the directing measure of $M_{r+i+1}$ is given by an integral over $M_{r+i}$ for every $i\ge 1$, repeating the argument allows to show that the collection of trajectories $M_{r+1},\ldots,M_{r+\Psi(r)}$ constructed based on $\dd G_1$ is stochastically bounded from above by the same collection using $\dd G_2$, and thus the event $\mathcal{C}_{r+1}$ is less likely for the latter.

\vspace{0.5em}

Recall that $\Lambda_{r+1}$ is constructed from $M_r$ as
\[\Lambda_{r+1}[0,t]=\int_0^tf(r)M_r(u)\dd u,\]
where $M_r$ is random. 
Nevertheless, conditionally on $\mathcal{B}_r$, the trajectory of $M_r$ is at distance at most $\varepsilon s$ from $F_r$, and hence
\[\Lambda_{r+1}[0,t]\geq F_{r+1}(t)\coloneq f(r)\int_0^t \max\{F_r(u)-\varepsilon s,0\} \dd u\]
for all $t\in[0,T]$.
Using the stochastic domination stated in \eqref{eq:ineq_monotonicity} with $G_2=\Lambda_{r+1}[0,\cdot]$ and $G_1=F_{r+1}$, and the independence between $M_{r+1}$ and $M_{r-1}$ conditionally on $\Lambda_{r+1}$, we obtain
\begin{align}
\mathbb P(\cG_{s,r}\mid M_{r-1}=F_{r-1}) 
&\leq \condP{\mathcal{C}_{r+1}\cap\mathcal{B}_r}{M_{r-1}=F_{r-1}}+2\e^{-\delta s}\nonumber\\
&\leq \condP{\mathcal{C}_{r+1}}{\Lambda_{r+1}[0,\cdot]=F_{r+1}(\cdot)}+2\e^{-\delta s}.\label{eq:B_0}
\end{align}
To bound the probability term on the right-hand side of~\eqref{eq:B_0}, we consider two possible scenarios for $F_{r+1}(T)$.

\vspace{0.5em}
\noindent
\textbf{Case 1: $F_{r+1}(T)\geq 2s$.}
In this case, since the trajectory of $M_{r+1}$ typically dominates $F_{r+1}$ over the entire interval $[0,T]$, $M_{r+1}$ reaches values significantly larger than $s$ before time $T$ with high probability. 
To formalise this statement, we use Theorem~\ref{theo: LeGuvel} again to obtain that
\[\condP{\sup_{u\in [0, T]}\left|M_{r+1}(u)-F_{r+1}(u)\right|\geq \varepsilon F_{r+1}(T)}{\Lambda_{r+1}[0,\cdot]=F_{r+1}(\cdot)}\,\leq\,2\e^{-F_{r+1}(T)I(\varepsilon)}\leq 2\e^{-I_2s},\]
where $I_2\coloneq2I(\varepsilon)$ can be checked to be bounded from below by $\delta=\varepsilon^2/8$ (again from our choice $\varepsilon\in(0,0.1)$). Notice however that $\mathcal{C}_{r+1}$ must be contained in the event inside the probability above, since if both $\sup_{u\in [0, T]} |M_{r+1}(u)-F_{r+1}(u)| \le \varepsilon F_{r+1}(T)$ and $F_{r+1}(T)\geq 2s$ hold, then for every $s\geq 2$,
\[M_{r+1}(T)\geq (1-\varepsilon)F_{r+1}(T)\geq (1-\varepsilon)\cdot 2s > s+1.\]
It follows that if $F_{r+1}(T)\geq 2s$ holds, then
\begin{equation}
\condP{\mathcal{C}_{r+1}}{\Lambda_{r+1}[0,\cdot]=F_{r+1}(\cdot)}\leq 2\e^{-\delta s}.
\end{equation}

\noindent
\textbf{Case 2: $F_{r+1}(T) < 2s$.} In this case, the deterministic trajectory $F_{r+1}$ does not reach values large enough to infer that $M_{r+1}$ reaches $s$ vertices before time $T$. 
However, concentration of $M_{r+1}$ around $F_{r+1}$ still holds with high probability, which we use to derive an upper bound for $\condP{\mathcal{C}_{r+1}}{\Lambda_{r+1}[0,\cdot]=F_{r+1}(\cdot)}$ which is identical in structure but only takes into account the trajectories of $M_{r+2},\ldots,M_{r+\Psi(r)}$. 
Analogously to $\mathcal{B}_r$, we define
\[\mathcal{B}_{r+1}\coloneq\left\{\sup_{u\in [0, T]} \left|M_{r+1}(u)-F_{r+1}(u)\right|\leq \varepsilon s\right\}\]
and use Theorem~\ref{theo: LeGuvel} again to obtain
\[\condP{\cB_{r+1}^c}{\Lambda_{r+1}[0,\cdot]=F_{r+1}(\cdot)}\,\leq\,2\e^{-F_{r+1}(T)I(\varepsilon s/F_{r+1}(T))}.\]
To analyse the bound obtained on the right-hand side, we observe that, for every $\alpha>0$, the function $x\in (0, \infty)\mapsto xI(\alpha/x)$ has derivative $\log(1+\alpha/x) - \alpha/x < 0$, so it is decreasing. In our case $\alpha=\varepsilon s$ and $x=F_{r+1}(T)<2s$. Using monotonicity to replace $F_{r+1}(T)$ by $2s$ in the exponent, we arrive at
\[\condP{\cB_{r+1}^c}{\Lambda_{r+1}[0,\cdot]=F_{r+1}(\cdot)}\,\leq\,2\e^{-I_3 s}\]
where $I_3 = 2I(\eps/2)$ is bounded from below by $\delta=\eps^2/8$ from our assumptions on $\varepsilon$. It follows that
\[\condP{\mathcal{C}_{r+1}}{\Lambda_{r+1}[0,\cdot]=F_{r+1}(\cdot)}\,\leq\,\condP{\mathcal{C}_{r+1}\cap\mathcal{B}_{r+1}}{\Lambda_{r+1}[0,\cdot]=F_{r+1}(\cdot)}+2\e^{-\delta s}.\]
As for $\mathcal{B}_r$, on the event $\mathcal{B}_{r+1}$, the trajectory of $M_{r+1}$ is within distance $\varepsilon s$ from $F_{r+1}$, and hence
\[\Lambda_{r+2}[0,t]\geq F_{r+2}(t)\coloneq f(r+1)\int_0^t\max\{F_{r+1}(u)-\varepsilon s,0\}\dd u,\]
for all $t\in[0,T]$. We can thus repeat the argument used to derive \eqref{eq:B_0} and obtain
\[\condP{\mathcal{C}_{r+1}}{\Lambda_{r+1}[0,\cdot]=F_{r+1}(\cdot)}\,\leq\, \condP{\mathcal{C}_{r+2}}{\Lambda_{r+2}[0,\cdot]=F_{r+2}(\cdot)}+2\e^{-\delta s},\]
where $\mathcal{C}_{r+2}$ is defined analogously to $\mathcal{C}_{r+1}$ as $\mathcal{C}_{r+2}\coloneq\{T<\min\{\tau_{s+1,r+2},\ldots,\tau_{s+1,r+\Psi(r)}\}\}$.

Thus far, we know $\condP{\mathcal{C}_{r+1}}{\Lambda_{r+1}[0,\cdot]=F_{r+1}(\cdot)}$ is bounded either by $2\e^{-\delta s}$, if $F_{r+1}(T)\geq 2s$, or by
\[\condP{\mathcal{C}_{r+2}}{\Lambda_{r+2}[0,\cdot]=F_{r+2}(\cdot)}+2\e^{-\delta s},\]
if $F_{r+1}(T)<2s$. 
For every $i\in [\Psi(r)]$, defining $\cC_{r+i} \coloneq \{T<\min\{\tau_{s+1,r+i},\ldots,\tau_{s+1,r+\Psi(r)}\}\}$ and 
\begin{equation}\label{def:iterative_F}
F_{r+i}(t)\coloneq f(r+i-1)\int_0^t\max\{F_{r+i-1}(u)-\varepsilon s,0\} \dd u\qquad\text{for all }t\in[0,T]
\end{equation}
allows us to iterate the argument to show that, for any $i<\Psi(r)$, $\condP{\mathcal{C}_{r+i}}{\Lambda_{r+i}[0,\cdot]=F_{r+i}(\cdot)}$ is bounded either by $2\e^{-\delta s}$, if $F_{r+i}(T)\geq 2s$, or by
\[\condP{\mathcal{C}_{r+i+1}}{\Lambda_{r+i+1}[0,\cdot]=F_{r+i+1}(\cdot)}+2\e^{-\delta s}\]
if $F_{r+i}(T)<2s$.
Iterating this reasoning at most $\Psi(r)$ times and using \eqref{eq:B_0}, we deduce the bound
\begin{equation}\label{eq:small_tower_ex_almostfinalebound}
\mathbb P(\cG_{s,r}\mid M_{r-1}=F_{r-1})\leq 2(\Psi(r)+1)\e^{-\delta s}+\condP{\mathcal{C}_{r+\Psi(r)}}{\Lambda_{r+\Psi(r)}[0,\cdot]=F_{r+\Psi(r)}(\cdot)}.
\end{equation}

This bound almost matches the one in the statement, except for the probability term on the right. 
Notice, however, that from our previous analysis this term would be bounded by $2\e^{-\delta s}$ as soon as $F_{r+\Psi(r)}(T)\geq 2s$, thus giving the result.
The rest of the proof consists in showing that this is indeed the case.

In fact, we will prove something stronger: namely, that we can find $\varepsilon>0$ small, depending on $f$, such that, for all $r$, any choice of $F_{r-1}$, and any $\ell\in \{0,\ldots,\Psi(r)\}$, we have
\begin{equation}\label{eq:Lower_bounds_F}F_{r+\ell}(T)\ge \frac{s}{2} \bigg(\prod_{j=0}^{\ell} \frac{f(r+j-1)}{f(r-1)}\bigg) \frac{(1-2\eps)^{\ell+1}}{(\ell+1)!}.\end{equation} 
If this inequality holds, then the proof of the proposition follows since, in particular,
\[F_{r+\Psi(r)}(T)\ge \frac{s}{2} \bigg(\prod_{j=0}^{\Psi(r)} \frac{f(r+j-1)}{f(r-1)}\bigg) \frac{(1-2\eps)^{\Psi(r)+1}}{(\Psi(r)+1)!}\geq s2^{\Psi(r)+2}(1-2\varepsilon)^{\Psi(r)+1}\geq 2s,\]
where for the second inequality we used the definition of $\Psi(r)$, and for the last one we used the fact that $2-4\varepsilon>1$.

To prove \eqref{eq:Lower_bounds_F}, we define $T_0=T-\frac{1-2\varepsilon}{f(r-1)}$ and the function $L_r(t)$ on $[0,T]$ as
\[L_r(t)
=
\begin{cases}
0,
&\mbox{ if } t\in [0, T_0],\\[2pt]
sf(r-1)\cdot (t-T_0), 
&\mbox{ if }t\in (T_0, T].
\end{cases}\] 
We argue that $L_r(t)\leq F_r(t)$ on $[0,T]$. This statement is trivial on the interval $[0,T_0]$ whereas on the interval $(T_0,T]$, $L_r$ is a straight line whose slope $f(r-1)s$ dominates the derivative of $F_r$ (recall its definition \eqref{eq:Lambdar} and that by construction of $M(\cdot)$, the process $F_{r-1}$ is at most $f(r-1)s$) and $L_r(T)=F_r(T)=(1-2\eps)s$, see Figure~\ref{fig:Lrfr}. Since  $L_r(T)=F_r(T)=(1-2\varepsilon)s$, it must be the case that $L_r\leq F_r$ also on the interval $(T_0,T]$.\\
\begin{figure}[h]
       \centering 
\includegraphics[scale=0.75]{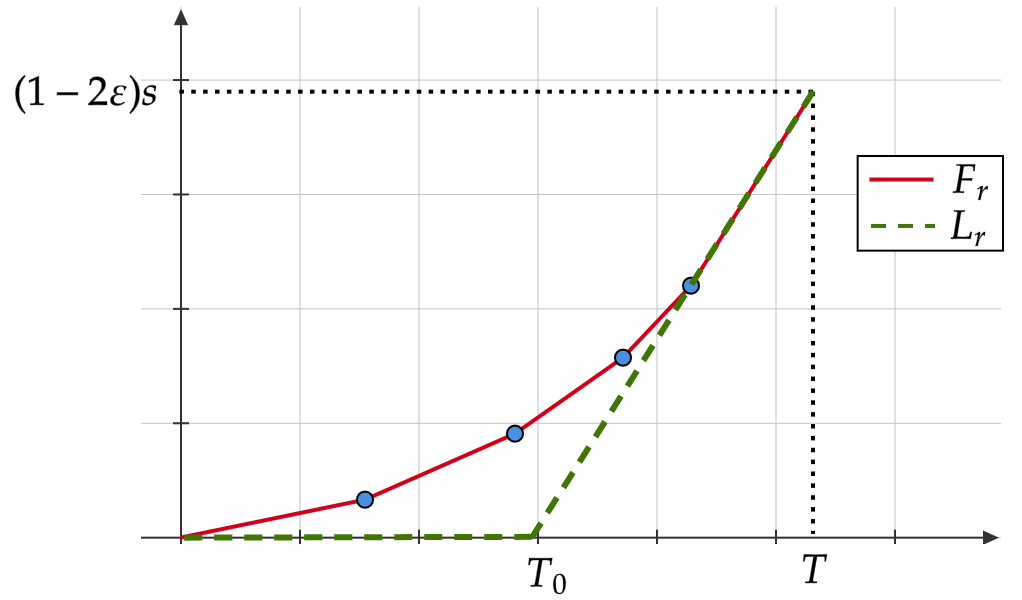}
\caption{A plot of the functions $F_r(t)$ (solid red), and $L_r(t)$ (dashed green). The blue circles represent the times at which $F_{r-1}$ jumps, and hence $F_r$ increases its slope by $f(r-1)$, attaining a maximum slope of $f(r-1)s$. $L_r$, on the other hand, is constructed with one line segment with slope $0$, and another with slope $f(r-1)s$.}\label{fig:Lrfr}
\end{figure}

Given $L_r$, for each $\ell\in [\Psi(r)]$, we recursively define a function $L_{r+\ell}$ on $[0,T]$ by setting
\[L_{r+\ell}(t)=
\begin{cases}
0,
&\mbox{ if } t\in [0, T_0],\\[4pt]
f(r+\ell-1)\int_{T_0}^{t}(L_{r+\ell-1}(u)-\varepsilon s)\dd u, 
&\mbox{ if }t\in (T_0, T].
\end{cases}\]
The purpose of $L_{r+\ell}$ is to serve as a simple and convenient lower bound for $F_{r+\ell}$: indeed, from the inequality $L_r\leq F_r$ and the definition \eqref{def:iterative_F}, it can be checked by immediate induction that, for each $\ell$, we have $L_{r+\ell}\leq F_{r+\ell}$ on $[0,T]$. 
Hence, it is enough to prove that $L_{r+\Psi(r)}(T)\ge 2s$. To do so, observe that, for every $t\in(T_0,T]$ and $\ell\in [\Psi(r)]$, we can explicitly compute 

\[L_{r+\ell}(t)\;=\;s \bigg(\prod_{j=0}^{\ell} f(r+j-1)\bigg) \frac{(t-T_0)^{\ell+1}}{(\ell+1)!} -  \eps s \sum_{i=1}^{\ell} \bigg(\prod_{j=i}^{\ell} f(r+j-1)\bigg) \frac{ (t-T_0)^{\ell-i+1}}{(\ell-i+1)!}.\]

Taking $t=T$, and using the definition of $T_0$ we arrive at
\[L_{r+\ell}(T)\;=\;s \bigg(\prod_{j=0}^{\ell} \frac{f(r+j-1)}{f(r-1)}\bigg) \frac{(1-2\eps)^{\ell+1}}{(\ell+1)!} - \eps s \sum_{i=1}^{\ell} \bigg(\prod_{j=i}^{\ell} \frac{f(r+j-1)}{f(r-1)}\bigg) \frac{ (1-2\eps)^{\ell-i+1}}{(\ell-i+1)!}.\]

Moreover, the second term above is at most
\[
\eps s \max_{i\in [\ell]} \bigg\{\prod_{j=i}^{\ell} \frac{f(r+j-1)}{f(r-1)}\bigg\} \sum_{i=1}^\ell \frac{(1-2\eps)^{\ell-i+1}}{(\ell-i+1)!}\le 3\eps s  \max_{i\in [\ell]} \bigg\{\prod_{j=i}^{\ell} \frac{f(r+j-1)}{f(r-1)}\bigg\}.\]
Hence, \eqref{eq:Lower_bounds_F} follows if we show that there exists $\varepsilon$ small such that, for every $r\in \N$ and $\ell\in [\Psi(r)]$,
\begin{equation}\label{eq:max_L}3\eps  \max_{i\in [\ell]} \bigg\{\prod_{j=i}^{\ell} \frac{f(r+j-1)}{f(r-1)}\bigg\}\leq \frac{1}{2} \bigg(\prod_{j=0}^{\ell} \frac{f(r+j-1)}{f(r-1)}\bigg) \frac{(1-2\eps)^{\ell+1}}{(\ell+1)!}.\end{equation}
To prove \eqref{eq:max_L}, we observe that, from assumption~\eqref{eq:ExpG}, there are constants $m$ and $M$ (depending on $f$) such that, for all $r\in \N$, we have $m\leq f(r)/f(r-1)\leq M$, and hence
\[m^j\leq \frac{f(r+j-1)}{f(r-1)}\leq M^j.\]
It follows that the left-hand side of \eqref{eq:max_L} is bounded from above by $3\eps M^{\ell(\ell+1)/2}$, while the right-hand side is bounded from below by $\frac{m^{\ell(\ell+1)/2}}{2^{\ell+2}(\ell+1)!}$. Noticing that $\ell\leq\Psi(r)\leq\max\Psi$, we can find $\varepsilon$ small enough (depending on $m$, $M$, and $\max\Psi$, all of which depend solely on $f$) so that \eqref{eq:max_L} holds, and the result then follows.
\end{proof}

Recall that, on the event $\cE_{s,r,n}$, there are at least $s$ vertices in $T_n$ on depth $r$ while each of the depths $r-1$, $r+1,\ldots,r+\Psi(r)$ contains at most $s$ vertices.
In that case, we say that there is an \textit{unusual accumulation of size $s$ on depth $r$}. Moreover, we say that the unusual accumulation has size \emph{exactly $s$} if $N_r(\tau_n)=s$.
Noticing that the global maxima of the function $r\mapsto N_r(\tau_n)$ are depths with unusual accumulation leads to the following result.

\begin{corollary}\label{cor:small levels_ex}
Fix a weight function $f$ satisfying assumption~\eqref{eq:ExpG}, and $\delta=\delta(f) > 0$ defined as in Proposition~\ref{prop:small_tower_ex}.
Then, with high probability, there are no more than $(2\log n)/\delta$ vertices on any depth in $T_n$.
\end{corollary}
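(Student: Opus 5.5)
The plan is to use that a global maximum of $r\mapsto N_r(\tau_n)$ is a depth carrying an unusual accumulation. Then we take a union bound over the possible depths, with Proposition~\ref{prop:small_tower_ex} bounding each term. Set $s\coloneq \lceil (2\log n)/\delta\rceil$, where $\delta$ comes from Proposition~\ref{prop:small_tower_ex}.

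\textbf{Step 1.} Suppose that some depth of $T_n$ contains more than $(2\log n)/\delta$ vertices, and let $r$ be a depth maximising $N_r(\tau_n)$. The root is alone on depth $0$, so for $n$ large we have $r\ge 1$. By maximality, each of the depths $r-1,r+1,\ldots,r+\Psi(r)$ holds at most $N_r(\tau_n)$ vertices. This is not quite $\cE_{s,r,n}$, because the definition uses the threshold $s$ itself and not $N_r(\tau_n)$. We fix this by passing to the discrete time at which depth $r$ first reaches $s$ vertices. Let $m\le n$ be the first index with $N_r(\tau_m)=s$; it exists because depth counts increase by one at a time and $N_r(\tau_n)\ge s$. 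Each other depth $r'$ is monotone in time and satisfies $N_{r'}(\tau_n)\le N_r(\tau_n)$. I still need $N_{r'}(\tau_m)\le s$, and here only the maximality at time $n$ is available. The cleaner route is to work directly with the event $\cG_{s,r}$ from the proof of Proposition~\ref{prop:small_tower_ex}, which concerns the first time depth $r$ reaches $s$, and to take $s$ equal to $\max_{r'}N_{r'}(\tau_n)$. At the first time $\tau$ when some depth reaches $\max_{r'}N_{r'}(\tau_n)=:S$, the depth $r$ that does so has all other depths at most $S$ (in fact at most $S-1$ or $S$), so $\cE_{S,r}$ holds. Applying this with the value $S\ge s$ would force a union over $S$. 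The simplest fix is to look at the first time $\tau'$ when some depth reaches exactly $s$ vertices. At that moment every other depth has at most $s$ vertices, because none has reached $s+1$ yet and counts grow by one. Hence $\cE_{s,r}$ holds for that depth $r$.

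\textbf{Step 2.} Control the range of $r$. In $T_n$ there are at most $n$ depths, so $r\le n$. A union bound together with Proposition~\ref{prop:small_tower_ex} gives
\[\mathbb P\Big(\max_{r'}N_{r'}(\tau_n)\ge s\Big)\le \sum_{r=1}^{n}\mathbb P\big(\cE_{s,r}\cap\{\text{depth } r\text{ reaches } s\text{ before }\tau_n\}\big)\le n\cdot 6(\max\Psi)\e^{-\delta s}\le 6(\max\Psi) n^{-1}\to 0.\]
Here $\max\Psi<\infty$ by Lemma~\ref{lem:mfkf}, and $s\ge 2$ for $n$ large. The sum over $r$ must stop at $n$, since $\cE_{s,r}$ itself is not restricted to times before $\tau_n$. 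This causes no difficulty, because the accumulation at $\tau'\le \tau_n$ sits on a depth at most $n-1$.

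\textbf{Main obstacle.} The only delicate point is Step 1. A global maximum at time $\tau_n$ witnesses $\cE_{S,r}$ only for $S=\max_{r'}N_{r'}(\tau_n)$, not for the fixed threshold $s$. The first-passage-time argument avoids summing over $S$: at the first moment any depth attains $s$, that depth carries an unusual accumulation of size exactly $s$. If one preferred to sum over $S\ge s$ instead, the geometric tail $\sum_{S\ge s}\e^{-\delta S}=O(\e^{-\delta s})$ would give the same conclusion.
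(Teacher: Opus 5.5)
Your argument is correct and is essentially the paper's proof: you find a depth carrying an unusual accumulation, take a union bound over $r\in[n]$, and apply Proposition~\ref{prop:small_tower_ex}. The only difference is the witness. The paper uses the global maximum at time $\tau_n$, which certifies $\cE_{S,r,n}$ with $S=\max_k N_k(\tau_n)$, and then sums the geometric tail over $S\ge (2\log n)/\delta$ (the alternative you mention at the end); your first time at which some depth reaches exactly $s$ instead certifies $\cE_{s,r}$ at the fixed level $s$, so no summation over $S$ is needed.
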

\begin{proof}
For all $r\in [n]$ and $s\geq 2$, conditionally on the event $N_r(\tau_n) = \max_{k\in [n]} N_k(\tau_n) = s$, the event $\cE_{s,r,n}\subseteq\cE_{s,r}$ is necessarily satisfied.
Hence, by Proposition~\ref{prop:small_tower_ex} and a union bound,
\begin{equation}\label{eq:maxXr}
\P{\max_{k\in [n]} N_k(\tau_n)=s}\leq \P{\exists r\in [n]:\cE_{s,r}}\leq 6(\max\Psi) \e^{-\delta s}n.
\end{equation}
Define $s_0\coloneq (2\log n)/\delta$. Summing over all $s\geq s_0$, we obtain
\[\mathbb P\bigg(\max_{k\in [n]} N_k(\tau_n)\ge s_0\bigg)\le 6(\max \Psi) \frac{\e^{-\delta s_0} n}{1-\e^{-\delta}}\le \frac{6(\max \Psi)}{(1-\e^{-\delta})n},\]
as desired.
\end{proof}

Corollary~\ref{cor:small levels_ex} shows that the largest number of vertices found on a single depth of $T_n$ is typically of order $\log n$, thus showing that the height of $T_n$ is of order at least $n/\log n$.
To strengthen this weaker lower bound to a linear one, we need to show that depths containing many vertices are rare. 
Our strategy is to use Proposition~\ref{prop:small_tower_ex} to control the number of depths $r' \in [n]$ on which the function $r \mapsto N_r(\tau_n)$ attains a sort of `local maximum', and then to show that $N_r(\tau_n)$ reaches comparable values only within a small neighbourhood of each such depth. 
We begin with the former. Fix integers $n,m,s\in \mathbb N$ with $s\geq 2$ and $m\leq n$ and, for each $r\in[m]$, define
\[\cH_{s,r,n}\coloneq\{N_r(\tau_n)=s\}\cap\cE_{s,r,n}\quad\text{and}\quad Y_n(m,s)\coloneq|\{r\in[m]:\cH_{s,r,n}\text{ holds}\}|.\]
In words, $\cH_{s,r,n}$ is the event that there is an unusual accumulation of size exactly $s$ on depth $r$, and $Y_n(m,s)$ corresponds to the number of depths with unusual accumulation of size exactly $s$ among the first $m$ depths of $T_n$. Notice that a direct application of Proposition~\ref{prop:small_tower_ex} gives
\[\E{Y_n(m,s)}\leq\sum_{r=1}^{m}\P{\cE_{s,r,n}}\leq 6m(\max\Psi)\e^{-\delta s},\]
which is a small fraction of the considered $m$ depths when $s$ is large. The next lemma estimates the probability that $Y_n(m,s)$ exceeds this bound.

\begin{lemma}\label{lem:conc Y_s_ex}
Fix a weight function $f$ satisfying assumption~\eqref{eq:ExpG}, and $\delta$ as in Proposition~\ref{prop:small_tower_ex}.
Then, there is $s_0=s_0(f)$ such that, for every $s\ge s_0$, $x > 0$ and $n\geq m\ge 4s$, we have
\begin{equation}\label{eq:bound_Yn}\P{Y_n(m,s) - 6m(\max\Psi)\e^{-\delta s}\ge x}\leq \widetilde\Psi\cdot \exp\left(-\frac{x^2}{2\widetilde\Psi^2(6m\e^{-\delta s}+x/3\widetilde\Psi)}\right),\end{equation}
where $\widetilde \Psi: = \max\Psi + 3.$ 
\end{lemma}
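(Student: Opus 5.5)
Our plan is to split the depths in $[m]$ into $\widetilde\Psi$ residue classes modulo $\widetilde\Psi$. Within one class, the events $\cH_{s,r,n}$ will behave like a sequence dominated by independent Bernoulli variables with parameter $6(\max\Psi)\e^{-\delta s}$, or better $2(\max\Psi+2)\e^{-\delta s}$. Concretely, write
\[Y_n(m,s)=\sum_{j=0}^{\widetilde\Psi-1} Y^{(j)},\qquad Y^{(j)}\coloneq|\{r\in[m]: r\equiv j \bmod \widetilde\Psi,\ \cH_{s,r,n}\text{ holds}\}|.\]
The event $\{Y_n(m,s)-6m(\max\Psi)\e^{-\delta s}\ge x\}$ implies that some $Y^{(j)}$ exceeds its share $6(m/\widetilde\Psi)(\max\Psi)\e^{-\delta s}+x/\widetilde\Psi$. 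Taking a union bound over $j$ produces the prefactor $\widetilde\Psi$, and the displayed exponent is exactly the second Chernoff bound of \Cref{lem:chernoff}, with deviation $t=x/\widetilde\Psi$ and mean of order $6m\e^{-\delta s}/\widetilde\Psi$ times $\max\Psi$. So it suffices to show that each $Y^{(j)}$ is stochastically dominated by a binomial variable $\mathrm{Bin}(\lceil m/\widetilde\Psi\rceil, p)$ with $p\le 6(\max\Psi)\e^{-\delta s}$. The mild slack in the constants absorbs the rounding of $m/\widetilde\Psi$, and $m\ge 4s$ with $s\ge s_0$ is used there.

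The domination comes from the conditional bound \eqref{lemma_uniformbound} proved inside Proposition~\ref{prop:small_tower_ex}. For depths $r_1<r_2<\dots$ in one residue class we have $r_{i+1}-r_i=\widetilde\Psi\ge \Psi(r_i)+3$. Hence the event $\cG_{s,r_i}$ (and so $\cH_{s,r_i,n}\subseteq\cE_{s,r_i}$) is measurable with respect to the processes on depths $r_i-1,\dots,r_i+\Psi(r_i)$, all of which are at most $r_{i+1}-2$. Consequently the indicators of $\cE_{s,r_1},\dots,\cE_{s,r_i}$ are $\cF_{r_{i+1}-2}$-measurable. Applying \eqref{lemma_uniformbound} at each step gives
\[\P{\cG_{s,r_{i+1}}\mid \cF_{r_{i+1}-2}}\le 2(\max\Psi+2)\e^{-\delta s}\]
uniformly, and a standard sequential coupling then yields domination by i.i.d.\ Bernoulli variables.

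The main obstacle is that \eqref{lemma_uniformbound} was established for the modified process $M$, in which depth $r-1$ is capped at $s$. We must therefore check that the bound transfers to the original process $N$ conditionally on $\cF_{r-2}$ for $N$. This holds because, given the depths up to $r-2$, the capping only increases the probability of $\cG_{s,r}$, exactly as argued in the proof. A second point to watch is that we need a bound on $\cG_{s,r}$, a $\tau_\infty$-free event, rather than on $\cH_{s,r,n}$ directly. Since $\cH_{s,r,n}\subseteq \cE_{s,r}\subseteq\cG_{s,r}$, it is enough to dominate the count of $r$ for which $\cG_{s,r}$ holds. That count involves no dependence on $n$, which keeps the filtration argument clean.
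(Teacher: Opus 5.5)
Your proposal is correct and follows essentially the same route as the paper. You split $[m]$ into residue classes modulo $\widetilde\Psi$, use the conditional bound \eqref{lemma_uniformbound} (transferred from the capped process $M$ to $N$) to dominate each class count by a binomial with parameter at most $6(\max\Psi)\e^{-\delta s}$, and finish with the Chernoff bound of \Cref{lem:chernoff} and a union bound over the $\widetilde\Psi$ classes.

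Your choice to count the $\tau_\infty$-free events $\cG_{s,r}\supseteq\cH_{s,r,n}$ is cleaner than the paper's claim that $\cH_{s,r',n}$ itself is $\cF_{r-2}$-measurable. That claim fails, since $\tau_n$ depends on all depths. The same objection applies to your intermediate sentence calling the indicators of $\cE_{s,r_i}$ $\cF_{r_{i+1}-2}$-measurable, because $\cE_{s,r_i}$ involves $\tau_\infty$. Your last paragraph correctly replaces them by $\cG_{s,r_i}$.
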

\begin{proof}
Partition the set $\{1,2,\ldots,m\}$ into $\tilde \Psi$ subsets $\{S_i\}_{i=1}^{\tilde \Psi}$ of the form 
\[S_i = \{i,i+\widetilde\Psi,i+2\widetilde\Psi,i+3\widetilde\Psi,\ldots\}.\] 
Define $Z_n^i(m,s)$ as the number of integers $r\in S_i$ such that $\cH_{s,r,n}$ holds. It follows that $Y_n(m,s)$ is the sum of $(Z_n^i(m,s))_{i=1}^{\widetilde\Psi}$, and hence 
\begin{equation}\label{eq:prob Z_i}
\P{Y_n(m,s) - 6m(\max\Psi)\e^{-\delta s}\geq x}\leq \sum_{i=1}^{\tilde \Psi}\mathbb P\big(Z^i_n(m,s) - 6m(\max\Psi)\e^{-\delta s}/\widetilde\Psi \geq x/\widetilde\Psi\big).
\end{equation}

\begin{figure}[h]
        \centering 
\includegraphics[scale=1.2]{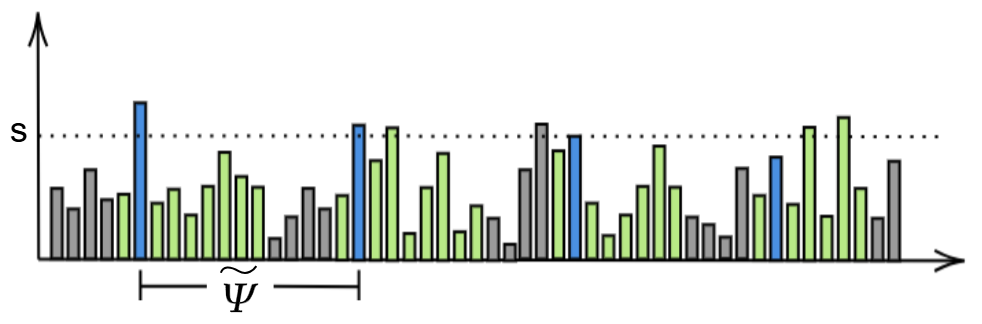}
\caption{In this figure, we see the profile of vertices on each depth of $T_n$. The depths used to construct $Z^i_n(m,s)$ are in blue. For each of them, one needs to check if the number of vertices on this depth is exactly $s$, and if $\cE_{s,r,n}$ is satisfied. In this example, only the third blue depth (from left to right) satisfies both requirements.}
\end{figure}

Now, fix $i\in\{1,\ldots,\widetilde\Psi\}$ and $r\in S_i$ with $r>\tilde{\Psi}$. For every other $r'\in S_i$ with $r'<r$, we have $r'+\tilde{\Psi}\leq r$ and hence the event $\cH_{s,r',n}$ is $\mathcal{F}_{r'+\widetilde\Psi-2}\subseteq\mathcal{F}_{r-2}$-measurable (recall that $\cF_{r'+\widetilde\Psi-2}$ is the $\sigma$-algebra generated by the depths $0,1,\ldots, r'+\widetilde\Psi-2$). It then follows from~\eqref{lemma_uniformbound} that, for any $S'\subseteq S_i\cap\{1,2,\ldots,r-1\}$, we must have
\[\P{\cH_{s,r,n}\,\bigg|\bigcap_{r'\in S'}\cH_{s,r',n}}\leq 2(\max\Psi+2)\e^{-\delta s}\le 6(\max\Psi)\e^{-\delta s}.\]
In particular, a repeated application of the latter inequality gives that, for every $S'\subseteq S_i$,
\[\P{\bigcap_{r\in S'}\cH_{s,r,n}}\leq (6(\max\Psi)\e^{-\delta s})^{|S'|}.\]
As a result, $Z_i$ is stochastically dominated by a binomial random variable $Z_i'$ with parameters $m/\widetilde \Psi$ and $\min\{6(\max\Psi)\e^{-\delta s},1\}$. 
Hence, for every $s$ such that $6(\max\Psi)\e^{-\delta s}<1$, by Chernoff's bound (\Cref{lem:chernoff}), each of the probabilities on the right-hand side of~\eqref{eq:prob Z_i} is at most
\begin{align*}
\exp\bigg(-\frac{(x/\widetilde\Psi)^2}{2(6m\e^{-\delta s}+x/3\widetilde\Psi)}\bigg).
\end{align*}
The lemma follows by using~\eqref{eq:prob Z_i}.
\end{proof}

Lemma~\ref{lem:conc Y_s_ex} provides the control needed on the number of unusual accumulations of $T_n$.
To control the effect of these unusual accumulations on other depths, we provide a simple algorithm which, given a realisation of $T_n$ and a starting value $r$, outputs a close position $r'$ in $T_n$ with an unusual accumulation.

\begin{enumerate}
    \item[(1)] Place a walker at position $\textbf{r}=r$ and set $\textbf{s}=N_{\textbf{r}}(\tau_n)$.
    \item[(2)] The walker checks if $\cE_{\textbf{s},\textbf{r},n}$ holds; if it does, we terminate the algorithm and return the position $\textbf{r}$ of the walker.
    \item[(3)] If $\cE_{\textbf{s},\textbf{r},n}$ does not hold, then there is some depth among $\textbf{r}-1$, $\textbf{r}+1, \ldots, \textbf{r}+\Psi(\textbf{r})$ with at least $\textbf{s}+1$ vertices.
    Find the depth $r'\in \{\textbf{r}-1, \textbf{r}+1, \ldots, \textbf{r}+\Psi(\textbf{r})\}$ which maximises $N_{(\cdot)}(\tau_n)$ (if there is a tie, choose the smallest possible $r'$). Then, update $\textbf{r}=r'$ and $\textbf{s}=N_{r'}(\tau_n)$.
    \item[(4)] Repeat steps (2)--(3) until the algorithm terminates.
\end{enumerate}
A key property of the above algorithm is presented in the next lemma.

\begin{lemma}\label{lem:cover_ex} 
For every $r\in \mathbb N$ and every realisation of $T_n$, the algorithm starting from depth $r$ terminates and its final output $r'$ satisfies $|r'-r|\leq \max \Psi\cdot (N_{r'}(\tau_n)-N_r(\tau_n))$.
\end{lemma}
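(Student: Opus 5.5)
The plan is a monotonicity argument along the walker's trajectory. Let $r_0=r$, and let $r_1,r_2,\ldots$ be the successive positions of the walker, with loads $s_j\coloneq N_{r_j}(\tau_n)$. The key observation is that whenever step (3) is executed, the walker moves to a depth carrying strictly more vertices, so $s_{j+1}\ge s_j+1$.

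To justify this, suppose $\cE_{s_j,r_j,n}$ fails. By the choice $\textbf{s}=N_{\textbf{r}}(\tau_n)$, the condition $\{N_{r_j}(\tau_n)\ge s_j\}$ in Definition~\ref{def:E_rsn} holds automatically. The failure must therefore come from one of the depths $r_j-1,r_j+1,\ldots,r_j+\Psi(r_j)$ containing at least $s_j+1$ vertices. The maximiser $r_{j+1}$ chosen in step (3) then satisfies $s_{j+1}\ge s_j+1$.

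Termination follows from this strict increase. The loads $s_j$ are integers bounded by $n$, since $T_n$ has $n$ vertices. Hence the walker makes at most $n$ moves, and step (2) eventually returns some $r'=r_J$.

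For the distance bound, note that each move changes the position by at most $\max\{1,\Psi(r_j)\}\le\max\Psi$, because $r_{j+1}\in\{r_j-1,r_j+1,\ldots,r_j+\Psi(r_j)\}$ and $\Psi\ge1$ is bounded by Lemma~\ref{lem:mfkf}. By the triangle inequality, $|r'-r|\le J\max\Psi$. The strict increase gives $J\le s_J-s_0=N_{r'}(\tau_n)-N_r(\tau_n)$, which is the claimed inequality.

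There are only minor edge cases to check. First, if $r$ has $N_r(\tau_n)=0$ or the walker sits at depth $1$, then depth $r-1\ge0$ is still defined, with $N_0\equiv1$. Second, $\cE_{s,r,n}$ was defined for $s,r\in\N$, and at $s=0$ the definition should be read literally. Third, the case $J=0$ gives the trivial bound $0\le0$.

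I do not expect a real obstacle. The only point needing care is the observation that failure of $\cE_{\textbf{s},\textbf{r},n}$ can only come from a neighbouring depth exceeding $\textbf{s}$, and this is exactly what forces strict growth of the load.
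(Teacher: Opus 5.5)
Your proposal is correct and follows essentially the same argument as the paper: each move strictly increases the load $N_{\mathbf{r}}(\tau_n)$, which gives termination and bounds the number of moves by $N_{r'}(\tau_n)-N_r(\tau_n)$. Since each move has length at most $\max\Psi$, the claimed distance bound follows, and you simply spell out the details (including the edge cases) that the paper leaves implicit.
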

\begin{proof}
The algorithm clearly terminates since $T_n$ is finite and every step increases the value of $N_\textbf{r}(\tau_n)$ by at least $1$.
Moreover, at every step, the position of the walker changes by at most $\max\Psi$, which implies the inequality.
\end{proof}

We say that a depth $r$ with $s\in \mathbb N$ vertices is \emph{covered} by $r'$ if, when initiated at $\textbf{r} = r$, the previous algorithm outputs $\textbf{r} = r'$ upon termination. From the previous lemma, we deduce the following conclusions:
\begin{itemize}
    \item Since the algorithm always terminates, every depth $r\in[n]$ is covered by some $r'\in[n]$ (which may be itself).
    \item If $r$ is covered by $r'$, then $N_r(\tau_n)\leq N_{r'}(\tau_n)$.
    \item An unusual accumulation of size $s$ on depth $r$ covers itself, at most $(\max \Psi)s$ depths before it, and at most $(\max \Psi)s$ depths after it, so in total it covers at most $2(\max\Psi) s+1$ depths. 
\end{itemize}

\begin{figure}[h]
        \centering 
\includegraphics[scale=1.4]{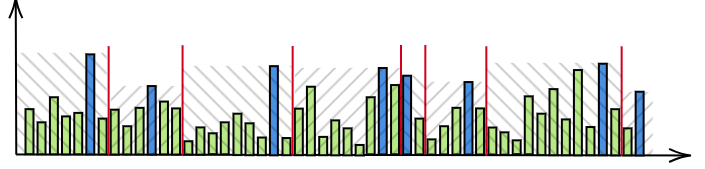}
\caption{A realisation of $T_n$ with $\max \Psi = 7$, where all depths with an unusual accumulation are shown in blue. Each such depth defines a covering region (dashed in grey), and bounded by red vertical lines.}
\end{figure}

Notice that Lemma~\ref{lem:conc Y_s_ex} controls the number and size of unusual accumulations, while Lemma~\ref{lem:cover_ex} allows to control the amount of vertices at other depths. With these results in hand, we are ready to prove Theorem~\ref{thm:main_exp}.

\begin{proof}[Proof of Theorem~\ref{thm:main_exp}]
Set $m = c_1n$ for a small constant $c_1 > 0$ to be fixed later. 
We construct an event which holds with high probability and implies $d(T_n) > m$. To this end, define 
\[\cC(m)\coloneq\{r\in[n]:\exists r'\in[m],r\text{ is covered by }r'\},\]
the set of depths covered by unusual accumulations located within $[m]$. Define also $C(m)\coloneq\sum_{r\in\cC(m)}N_r(\tau_n)$, the total number of vertices within this set. 
Observe that the event $C(m)<n$ means that there is a non-empty depth $r$ covered by some $r'\notin [m]$, thus implying $d(T_n)>m$. 
Recall from the discussion following Lemma~\ref{lem:cover_ex} that each unusual accumulation of size exactly $s$ covers at most $2(\max\Psi)s+1$ depths (including itself), each having at most $s$ vertices. Summing over all unusual accumulations within $[m]$ of different sizes gives the upper bound
\[
C(m)\leq\sum_{s = 1}^{\infty} Y_n(m,s) s(2(\max\Psi)s+1)\le 3\sum_{s = 1}^{\infty} s^2 (\max\Psi)\cdot Y_n(m,s),\]
where $Y_n(m,s)$ is as in Lemma~\ref{lem:conc Y_s_ex}. We can further improve this bound by observing that Corollary~\ref{cor:small levels_ex} states that, with high probability, 
no depth contains more than $(2\log n)/\delta$ vertices at time $n$. Thus, with high probability,
\begin{equation}\label{eq:sum:Y-s}
C(m)\leq\ 3\sum_{s = 1}^{(2\log n)/\delta} s^2 (\max\Psi)\cdot Y_n(m,s).    
\end{equation}

We now bound the values of $Y_n(m,s)$ by using Lemma~\ref{lem:conc Y_s_ex}. 
To this end, recall that there is $s_0=s_0(f)$ such that, for any $s\geq s_0$, the inequality in \eqref{eq:bound_Yn} holds for all $x>0$. For $s\geq s_0$ fixed, we choose $x=14\widetilde\Psi\max\{m\e^{-\delta s},\log(n)\}$ so that the probability on the right-hand side of \eqref{eq:bound_Yn} can be bounded as 
\[\widetilde\Psi\cdot \exp\left(-\frac{x^2}{2\widetilde\Psi^2(6m\e^{-\delta s}+x/3\widetilde\Psi)}\right)\leq \widetilde\Psi\cdot\exp\left(-\frac{x}{2\widetilde\Psi}\right) = o(n^{-2}),\]
where the first inequality used that $2\widetilde\Psi^2(6m\e^{-\delta s}+x/3\widetilde\Psi)\le 2\widetilde\Psi x$.
Taking a union bound over all $s\in [2\log(n)/\delta]$, we obtain that, with probability $1-o(n^{-1})$, for all such $s$, we have 
\[Y_n(m,s)\leq\begin{cases}20\widetilde\Psi m\e^{-\delta s}, &\mbox{ if }s_0\leq s\leq \delta^{-1}\log\left(m/\log n\right),\\[2pt]20\widetilde\Psi \log(n), &\mbox{ if }\delta^{-1}\log\left(m/\log n\right)\leq s\leq 2\delta^{-1}\log n.\end{cases}\]

Calling $s_1\coloneq\delta^{-1}\log\left(m/\log n\right)$ and $s_2\coloneq2\delta^{-1}\log n$ and observing that $Y_n(m,s)$ is always bounded by $m$, we arrive at

\begin{align*}
C(m)&\leq\ 3\widetilde\Psi\left[\sum_{s = 1}^{s_0} s^2m+\sum_{s = s_0+1}^{s_1} s^2\cdot 20\widetilde\Psi m\e^{-\delta s}+\sum_{s = s_1+1}^{s_2} s^2\cdot  20\widetilde\Psi\log(n)\right]\\[3pt]&\leq3\widetilde\Psi\left[ s_0^3m+20\widetilde\Psi\left(\sum_{s = 1}^{\infty} s^2\e^{-\delta s}\right)m+ 20\widetilde\Psi \left(\frac{2\log n}{\delta}\right)^3\log(n)\right]=C_1m+C_2(\log n)^4,
\end{align*}
where the constants $C_1$ and $C_2$ are absolute and do not depend on $m$ or $n$. 
Hence, taking $m=c_1n$ with $c_1 C_1 < 1$ gives $C(m) < n$ for all large enough $n$, thus concluding the proof.
\end{proof}

We conclude this section with a brief discussion of Remark~\ref{rem:subexp} on how to extend our results to certain growth functions $f$ that do not satisfy Assumption~\eqref{eq:ExpG}. Notice that these functions only play a role in the proof of Proposition~\ref{prop:small_tower_ex} and are not used afterwards, so that one can try to weaken Assumption~\eqref{eq:ExpG} such that Proposition~\ref{prop:small_tower_ex} still holds. It turns out that for this proposition to work, we only need the function $\Psi(r)$ to be finite for all $r$, even if it is not bounded. In this case, the structure of the proof still applies, although it becomes heavier notation-wise, and more care is needed at certain points. The resulting bound is of the form
\[\P{\cE_{r,s}} \le 5\Psi(r)\e^{-\delta(r) s},\]
where $\delta$ is no longer a constant depending on $f$, but rather an explicit function of $r$ (and $f$). All intermediate results following Proposition~\ref {prop:small_tower_ex} as well as the structure of the proof of Theorem~\ref{thm:main_exp} can be adapted to include the new dependency of $\delta$ on $r$, although the statements become increasingly more obscure at each step, until concluding the main theorem, which in its adapted version states that with probability $1-n^{-\Omega(1)}$ it holds that
\[d(T_n)\geq m(n),\]
for some function $m(n)$ satisfying an intricate set of conditions. As pointed out in \eqref{eq:weakbound}, for the choice $f(k)=\e^{k(\log(k+2))^{-\alpha}}$ we have that one can choose $\nu>0$ such that $m(n)=n\e^{-\nu\log^{\alpha}(n)\log^2\log(n)}$ satisfies these conditions. In particular showing that for $0<\alpha<1$ the growth of the depth is at least close to linear. The particular choice of $m(n)$ in this case is optimal for the set of conditions, and we also believe that our techniques cannot be improved to obtain much better lower bounds, thus suggesting some possibly interesting behavior at $\alpha=1$. 

\section{Super-exponential weight functions}\label{sec:supexp}

In this section, we prove \Cref{thrm:supexp}(i)--(ii). The proof of both parts uses a coupling result (\Cref{cl:couple}) and a delicate moment computation presented in \Cref{lem:tausv2} and \Cref{prop:sec5_moments}. We start with a simple but very useful claim that allows us to relate the number of vertices at a given level with exponential random variables of suitable rates:

\begin{claim}\label{cl:couple}
There is a coupling of the processes $(N_n(\cdot))_{n=1}^{\infty}$ and a sequence of independent exponential random variables $(E_n)_{n=0}^{\infty}$ with rates $(f(n))_{n=0}^{\infty}$, respectively, such that each of the following holds:
\begin{enumerate}
    \item[\emph{(i)}] almost surely, for every $n\ge 0$, $E_n\ge \tau_{1,n+1}-\tau_{1,n}$,
    \item[\emph{(ii)}] for every $n\ge 1$, $(N_j(\cdot))_{j\le n}$ is independent from the collection $(E_i)_{i=n}^{\infty}$. 
\end{enumerate}
\end{claim}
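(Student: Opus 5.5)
The idea is to let $E_n$ be the waiting time, after $\tau_{1,n}$, until the \emph{first} vertex born on depth $n$ produces its first child. That child sits on depth $n+1$, so $\tau_{1,n+1}\le \tau_{1,n}+E_n$, which is (i). The remaining work is to construct $E_n$ so that its law is exactly exponential with rate $f(n)$ and so that (ii) holds.

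\textbf{Construction.} We work inside the Ulam--Harris construction of \Cref{sec:Poisson}. There each vertex $v$ carries an independent Poisson process $\xi^v$ of rate $f(\duh(v))$, and the Cox-process description of \Cref{sec:Cox} is recovered by setting $N_n(t)=|\{v:\duh(v)=n,\ \mathfrak b(v)\le t\}|$. Let $v_n$ denote the first-born vertex on depth $n$, breaking ties by any fixed measurable rule; ties occur with probability $0$. Define $E_n\coloneq\sigma^{v_n}_1$, the first point of $\xi^{v_n}$, with $E_0=\sigma^{\varnothing}_1$. Then $\mathfrak b(v_n1)=\tau_{1,n}+E_n$. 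Since $v_n1$ has depth $n+1$, we get $\tau_{1,n+1}\le \tau_{1,n}+E_n$, which proves (i).

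\textbf{Distribution and independence.} The identity of $v_n$ is determined by the Poisson processes $(\xi^w)_{\duh(w)\le n-1}$, and $\mathfrak b(v_n)$ is determined by them as well. Consider the $\sigma$-algebra $\cG_n$ generated by $(\xi^w)_{\duh(w)\le n-1}$. Conditionally on $\cG_n$, $v_n$ is a fixed vertex, so $E_n$ is the first point of a Poisson process of rate $f(n)$ that is independent of $\cG_n$. Hence $E_n\sim\mathrm{Exp}(f(n))$ and $E_n$ is independent of $\cG_n$.

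For the joint statement, condition instead on the information $\cG_n^+$ generated by $(\xi^w)_{\duh(w)\le n-1}$ together with all $\xi^w$ with $\duh(w)=n$, $w\ne v_n$. This is slightly delicate because $v_n$ is random. The cleanest way is to reveal the processes depth by depth: reveal depths $<n$ first, which fixes $v_n$, and then $E_n$ is a fresh variable. Iterating this over $n, n+1, \dots$ shows that $E_n,E_{n+1},\dots$ are mutually independent with the stated rates.

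\textbf{Property (ii).} The processes $(N_j(\cdot))_{j\le n}$ are functions of $(\xi^w)_{\duh(w)\le n-1}$ only, since $N_j$ counts births on depth $j$, which are driven by parents on depth $j-1$. By the sequential revealing argument, the collection $(E_i)_{i\ge n}$ consists of first points of processes $\xi^{v_i}$ with $\duh(v_i)=i\ge n$. Conditionally on $\cG_n$, these are i.i.d.-type fresh exponentials.

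\textbf{Main obstacle.} The one genuinely subtle point is that $v_i$ for $i>n$ depends on $\xi^{v_n}$ and on other depth-$\ge n$ processes, so $E_{i}$ and $E_n$ are not obviously independent. I would handle this with a strong-Markov-type argument. Conditionally on $\cG_i$, which contains $E_n,\dots,E_{i-1}$ and $(N_j)_{j\le n}$, the variable $E_i$ is the first point of a process indexed by a $\cG_i$-measurable vertex of depth $i$. That process is independent of $\cG_i$, so $E_i\sim\mathrm{Exp}(f(i))$ independently of $\cG_i$.

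A product formula then finishes the argument: for bounded measurable $h$ and $g_i$,
\[\E{h\big((N_j)_{j\le n}\big)\prod_{i=n}^{K}g_i(E_i)}=\E{h\big((N_j)_{j\le n}\big)}\prod_{i=n}^K\E{g_i(E_i)}.\]
This is obtained by conditioning successively on $\cG_K,\cG_{K-1},\dots,\cG_n$. It gives independence and the correct marginals simultaneously, and hence (ii).
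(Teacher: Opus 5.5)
Your proposal is correct and follows the paper's route exactly. As in the paper, $E_n$ is the waiting time for the first-born depth-$n$ vertex to produce its first child, (i) holds because that child lies on depth $n+1$, and independence comes from distinct vertices reproducing independently; your successive conditioning on $\mathcal{G}_K,\dots,\mathcal{G}_n$ (with $v_i$, $E_n,\dots,E_{i-1}$ and $(N_j)_{j\le n}$ all $\mathcal{G}_i$-measurable while $\xi^{v_i}$ is independent of $\mathcal{G}_i$) is a rigorous version of the paper's one-line justification of independence and of (ii).
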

\begin{proof}
Denote by $E_n$ the time needed by the vertex born on depth $n$ at time $\tau_{1,n}$ to produce its first child. We justify that the random variables $(E_n)_{n=0}^{\infty}$ satisfy the properties required in the claim. 
They clearly have the desired distributions, and their independence comes from the fact that different vertices in the DWT tree produce offspring independently of each other. 
The latter argument also justifies (ii).
Finally, as the child of every vertex on depth $n$ is a vertex on depth $n+1$, (i) also holds.
\end{proof}

Next, we work with $(E_n)_{n=0}^{\infty}$ as defined in \Cref{cl:couple}.
For every $n\ge 0$, we define $F_n \coloneq \sum_{i=n}^{\infty} E_i$. Observe that Claim~\ref{cl:couple} yields the inequality $F_n \geq \tau_{1,\infty}-\tau_{1,n}$ almost surely for every $n$. 
It then follows that these random variables can be used to control the number of vertices at each level of the tree at time $\tau_{\infty}$ (which equals $\tau_{1,\infty}$ by Lemma~\ref{lemma:explray}). 
Moreover, since each $E_k$ follows an exponential distribution with an explicit rate, expressions involving the collection of random variables $(F_i)_{i\ge 0}$ become tractable from a computational standpoint. 
This can be seen in the next lemma where we derive bounds for the joint moments of $(F_i)_{i\ge 0}$. The proof involves lengthy combinatorial arguments and is therefore deferred to \Cref{sec:appendix}.

\begin{lemma}\label{lem:tausv2}
Fix a weight function $f$ satisfying Assumption~\eqref{ass:D}, and recall the random variables $(E_n)_{n=0}^{\infty}$ and $(F_n)_{n=0}^{\infty}$ from the last paragraph.
For every $n\in\N$, define $G_n\coloneq f(n-1)F_n$.

\begin{enumerate}
\item[\emph{(a)}]
There are $k_0=k_0(f)\ge 1$ and $C=C(f)>0$ such that each of the following holds:
\begin{enumerate}
    \item[\emph{(a1)}] For all $k\le k_0$ and $\ell\in\N$ with $k<\ell$,
    \[\E{G_kG_{\ell}}\leq C\frac{f(\ell-1)}{f(\ell)}.\]
    \item[\emph{(a2)}] For all $k,\ell\in\N$ with $k_0<k<\ell$,
    \[\E{G_kG_{\ell}}\leq \frac{f(k-1)}{f(k)}\frac{f(\ell-1)}{f(\ell)}\left(1+C\frac{f(k)}{f(k+1)}+C\frac{f(\ell)}{f(\ell+1)}\right).\]
    \end{enumerate}
    \item[\emph{(b)}] For every integer $d\in\N$, there exists a constant $C_d=C_d(f)>0$ such that, for every finite integer set $B\subseteq\N$ with $|B| \geq d$, every sequence $(a_j)_{j\in B}$ with values in $\{1,2\}$, and every subset $I\subseteq B$ with $|I|=d$, we have 
    \[\E{\prod_{j\in B} G_j^{a_j}}\leq C_d2^{-|B|}\prod_{j\in I}\left(\frac{f(j-1)}{f(j)}\right)^{a_{j}}. \]
\end{enumerate}

\end{lemma}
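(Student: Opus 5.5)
We work directly with the explicit structure $G_n=f(n-1)F_n=f(n-1)\sum_{i\ge n}E_i$, where the $E_i$ are independent and $E_i\sim\mathrm{Exp}(f(i))$. Write $\rho_i\coloneq f(i-1)/f(i)$, so that $\rho_i\to0$ by Assumption~\eqref{ass:D}. Then
\[
\E{G_n}=f(n-1)\sum_{i\ge n}\frac{1}{f(i)}=\rho_n\big(1+\rho_{n+1}+\rho_{n+1}\rho_{n+2}+\cdots\big).
\]
Since $\rho_i\le 1/2$ for $i$ large, this is $\rho_n(1+O(\rho_{n+1}))$. Choose $k_0$ so that $\rho_i\le 1/4$ for all $i\ge k_0$. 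For every $n$ we have $\E{G_n}=O(1)$, and the same holds for bounded moments of $G_n$.

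\textbf{Part (a).} For $k<\ell$, split $F_k=F_k^{<}+F_\ell$, where $F_k^{<}=\sum_{i=k}^{\ell-1}E_i$ is independent of $F_\ell$. This gives
\[
\E{G_kG_\ell}=f(k-1)f(\ell-1)\Big(\E{F_k^{<}}\,\E{F_\ell}+\E{F_\ell^2}\Big).
\]

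The second moment satisfies
\[
\E{F_\ell^2}=\operatorname{Var}(F_\ell)+\E{F_\ell}^2=\sum_{i\ge\ell}f(i)^{-2}+\E{F_\ell}^2=O\big(f(\ell)^{-2}\big).
\]
So $f(k-1)f(\ell-1)\E{F_\ell^2}=O\big(\rho_\ell\cdot f(k-1)/f(\ell)\big)$. The factor $f(k-1)/f(\ell)=\rho_k\rho_{k+1}\cdots\rho_\ell$ is at most $\rho_\ell\rho_{k+1}$-type small for $k>k_0$, and is at most a constant times $\rho_\ell$ for $k\le k_0$, because the finitely many initial ratios are bounded.

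The first term equals $\E{G_k^{<}}\,\E{G_\ell}$, where $G_k^{<}\coloneq f(k-1)F_k^{<}$. Expanding the sums gives $\E{G_k^{<}}\le\rho_k(1+C\rho_{k+1})$ and $\E{G_\ell}\le\rho_\ell(1+C\rho_{\ell+1})$. Multiplying out yields (a2). The cross term $\rho_k\rho_{k+1}\rho_\ell$ is absorbed into $C\rho_k\rho_\ell\rho_{k+1}$, and the $\E{F_\ell^2}$ contribution is $O(\rho_k\rho_{k+1}\rho_\ell^2)$, which is also absorbed. For $k\le k_0$, only the crude bound $\E{G_k^{<}}=O(1)$ is needed, which gives (a1).

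\textbf{Part (b).} Order $B=\{j_1<\dots<j_m\}$ and decompose each $F_{j_t}$ into the independent blocks $D_t\coloneq\sum_{i=j_t}^{j_{t+1}-1}E_i$, with $D_m\coloneq F_{j_m}$, so that $F_{j_t}=D_t+D_{t+1}+\dots+D_m$. Expanding $\prod_t G_{j_t}^{a_{j_t}}$ produces a sum over assignments: each factor $G_{j_t}$ chooses one block $D_u$ with $u\ge t$. Each monomial factorises by independence into products of moments $\E{D_u^p}\le p!\,(C/f(j_u))^p$, valid since $D_u$ is dominated by a sum of exponentials whose rates grow geometrically.

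A factor $f(j_t-1)$ paired with the block $D_u$ contributes $f(j_t-1)/f(j_u)\le\rho_{j_t}\cdot 4^{-(j_u-j_t)}$ once the indices exceed $k_0$. So every factor contributes at most its own $\rho_{j_t}$ times a geometric penalty for reaching further. Summing over the choices $u\ge t$ gives, for each factor, a total weight $\rho_{j_t}\cdot\tfrac{4}{3}$. Using $\rho\le1/4$, this yields roughly $(\tfrac13)^{|B|}\le 2^{-|B|}$ for most factors.

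For the $d$ factors indexed by $I$, we keep the full $\rho_{j}^{a_j}$ factor. For the other factors we spend $\rho_j\le 1/4$ to obtain the $2^{-|B|}$ decay. The moments of order up to $2|B|$ contribute $p!$ factors. Because the exponents are at most $2$, however, each block receives at most $2m$ factors, and combinatorially only boundedly many factors land on a common block with non-negligible weight.

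The indices $j\le k_0$ (finitely many) and the $d$ special indices contribute a constant $C_d$.

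The main obstacle is this bookkeeping in (b). The $p!$ from high moments of a single block $D_u$, when many factors pile onto the same block, must be beaten by the geometric penalties $4^{-(j_u-j_t)}$. If a direct count fails, I would first try an inductive argument: peel off the smallest index $j_1$, and write $G_{j_1}=f(j_1-1)E_{j_1}+\rho_{j_1}\,\rho_{j_1+1}\cdots G_{j_2}$-type terms via $F_{j_1}=D_1+F_{j_2}$. Using independence of $D_1$, this gives a recursion $\E{\prod}\le(\text{const}\cdot\rho_{j_1})^{a_{j_1}}\,\E{\prod_{t\ge2}G_{j_t}^{a'_{j_t}}}$ with the exponent on $G_{j_2}$ temporarily raised to at most $4$. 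I would control that raised exponent by Hölder's inequality and the uniform moment bounds above.
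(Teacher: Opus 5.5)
Part (a) is fine. Splitting $F_k=F_k^{<}+F_\ell$ into independent pieces gives an exact formula. Your bounds $\mathbb{E}[G_k^{<}]\le\rho_k(1+C\rho_{k+1})$, $\mathbb{E}[G_\ell]\le\rho_\ell(1+C\rho_{\ell+1})$ and $f(k-1)f(\ell-1)\mathbb{E}[F_\ell^2]=O(\rho_\ell f(k-1)/f(\ell))=O(\rho_k\rho_{k+1}\rho_\ell)$ then yield (a1) and (a2), a slightly more direct version of the paper's computation. Your stated $O(\rho_k\rho_{k+1}\rho_\ell^2)$ is wrong for $\ell=k+1$, but this is harmless.

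The gap is in (b), exactly at the step you flag, and that step is the entire content of (b).
\begin{itemize}
\item In the block expansion, the assignments are coupled through the weight $\prod_u p_u!$ coming from $\mathbb{E}[D_u^{p_u}]$. So you cannot sum over $u\ge t$ factor by factor to get $\tfrac43\rho_{j_t}$.
\item Observing that a block receives at most $2m$ copies only bounds $\prod_u p_u!$ by $(2m)!$. This cannot be absorbed, because $\prod_{j\in B}\rho_j$ need not decay superexponentially in $|B|$ (take $\rho_j\to0$ slowly and $B$ an interval).
\item Your sentence about ``only boundedly many factors with non-negligible weight'' is precisely the estimate that must be proved, namely $\sum_{\text{assignments}}\prod_c 4^{-d_c}\prod_u p_u!\le K^{|B|}$ with $d_c=u_c-t_c$.
\item Your fallback does not supply this estimate. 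After peeling $j_1$, the factor $G_{j_2}$ carries exponent up to $4$; after peeling $j_2$, $G_{j_3}$ carries exponent up to $6$, and so on. So the raise is not temporary.
\item Using H\"older's inequality to bring an exponent back down multiplies all remaining exponents, which again produces factorial growth in $|B|$.
\end{itemize}

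What is missing is a mechanism by which the factorials are paid for by the geometric penalties. The paper gets this from the conditional bound of Proposition~\ref{prop:appendix_partb}:
\[
\mathbb{E}[G_k^p\mid\mathcal{F}^\infty_{k+h}]\le p!\,\rho_k^p(1+C\rho_{k+1})\sum_{t\le p}\frac{1}{t!}\Big(\frac{f(k)}{f(k+h-1)}\Big)^tG_{k+h}^t.
\]
The $1/t!$ cancels the $(t+a)!$ produced at the next index, leaving $\binom{a+t}{t}(D\rho_j)^t$ for a constant $D$. The cascading exponents are then summed as negative binomial series, at a cost of only $8^{|B|}$.

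Your block expansion can also be completed directly by a rank argument. Order the copies landing on block $u$ by increasing $d_c$. Since each index contributes at most two copies, the $r$-th of them satisfies $r\le 2d_c+2$. Hence $p_u!\le\prod_{c\to u}(2d_c+2)$, and the sum factorises:
\[
\sum\prod_c(2d_c+2)4^{-d_c}\le(32/9)^{\sum_j a_j}.
\]

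Either way one obtains $\mathbb{E}[\prod_{j\in B}G_j^{a_j}]\le K^{|B|}\prod_{j\in B}\rho_j^{a_j}$, after treating the finitely many indices below $k_0$ separately. Spending $\rho_j$ for $j\notin I$ once $K\rho_j\le\frac12$ (with finitely many exceptions absorbed into $C_d$) then yields (b).
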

\begin{remark}
    Part (b) of \Cref{lem:tausv2} allows to find tractable upper bounds for arbitrary (but finite) products of variables of the form $G_k$ and $G_k^2$. By ignoring correlations, a naive approach would be to assume an approximation of the form
    \[\E{\prod_{j\in B} G_j^{a_j}}\leq C\prod_{j\in B} \E{G_j^{a_j}},\]
    for some constant $C$ independent of the set $B$. Unfortunately, the correlations between the random variables do not allow for such an inequality, giving instead an inequality of the form
    \[\E{\prod_{j\in B} G_j^{a_j}}\leq M(B)\prod_{j\in B} \E{G_j^{a_j}},\]
    where $M$ is some unbounded function. The key idea of the lemma is to find a bound that holds uniformly over $B$ by keeping only a given number of factors on the right-hand side, which is enough for our applications. 
\end{remark}

We finally state the following minor technical lemma regarding the asymptotic growth of partial sums, whose proof we leave to the reader.

\begin{lemma}\label{lemma:sums}
    Let $(c_j)_{j\in\N}$ and $(\eps_j)_{j\in\N}$ be two sequences of non-negative real numbers and let $M_1,M_2=M_1(m),M_2(m)\in\N$ with $M_1\leq M_2$ for all $m$ large be such that 
    \be 
    \lim_{j\to\infty}\eps_j=0\qquad\text{and}\qquad \lim_{m\to\infty} \sum_{j=M_1(m)}^{M_2(m)} c_j=\infty. 
    \ee 
    Then, as $m\to\infty$, 
    \be 
    \sum_{j=M_1(m)}^{M_2(m)} c_j\eps_j = o\Bigg(\sum_{j=M_1(m)}^{M_2(m)} c_j\Bigg).
    \ee 
\end{lemma}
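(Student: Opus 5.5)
This is a Toeplitz/Cesàro-type averaging argument: the terms with large index carry vanishing weight $\eps_j$, and the terms with small index contribute only a bounded amount, negligible against a diverging total. Write $S(m)\coloneq\sum_{j=M_1(m)}^{M_2(m)} c_j$; by hypothesis $S(m)\to\infty$. We must show that $S(m)^{-1}\sum_{j=M_1(m)}^{M_2(m)} c_j\eps_j\to 0$.

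Fix $\eta>0$. Since $\eps_j\to 0$, there is $J=J(\eta)$ such that $\eps_j\le \eta$ for all $j>J$. Split the sum according to whether $j\le J$ or $j>J$.

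For the indices $j>J$, the tail part satisfies
\[
\sum_{\substack{M_1\le j\le M_2\\ j>J}} c_j\eps_j\le \eta\sum_{\substack{M_1\le j\le M_2\\ j>J}} c_j\le \eta\, S(m),
\]
using only that the $c_j$ are non-negative.

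For the indices $j\le J$, the head part is at most $A_J\coloneq\sum_{j=1}^{J} c_j\eps_j$. This is a fixed finite constant, independent of $m$, and it is an upper bound no matter where the window $[M_1(m),M_2(m)]$ lies. Note that no assumption on $M_1(m)$ tending to infinity is needed here.

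Combining the two parts gives
\[
\frac{1}{S(m)}\sum_{j=M_1(m)}^{M_2(m)} c_j\eps_j\le \eta+\frac{A_J}{S(m)}.
\]
Since $S(m)\to\infty$, the limsup of the left-hand side is at most $\eta$. As $\eta>0$ was arbitrary, the ratio tends to $0$, which is the claim.

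There is no genuine obstacle here. The only points to check are that non-negativity of the $c_j$ lets us bound the tail part by the full sum $S(m)$, and that the head part is uniformly bounded in $m$.
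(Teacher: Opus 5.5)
Your proof is correct and complete. The paper gives no proof of this lemma and leaves it to the reader; your argument (choose $J$ with $\eps_j\le\eta$ for $j>J$, bound the tail by $\eta S(m)$, bound the head by the $m$-independent constant $\sum_{j\le J}c_j\eps_j$, then let $S(m)\to\infty$ and $\eta\to 0$) is the standard one, and you are right that non-negativity makes the head bound uniform over all windows, so no condition on $M_1(m)$ is needed.
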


We are now ready to prove \Cref{thrm:supexp}. 

\begin{proof}[Proof of \Cref{thrm:supexp} assuming \Cref{lem:tausv2}] We start with part (i). Define $Z_n=n-d(T_n)$ for every $n\in \N$ and observe that the sequence $(Z_n)_{n\in \N}$ is (deterministically) non-decreasing.
In particular, $(Z_n)_{n\in \N}$ converges almost surely to $Z_{\infty} \coloneq \sup_{n\in \N} Z_n$, which can be written as
\[Z_{\infty}=\sum_{k=1}^{\infty}(N_{k}(\tau_{\infty})-1).\]
Observe that \Cref{thrm:supexp}(i) follows as long as $Z_{\infty}$ is a.s.\ finite.
We use the continuous-time interpretation of the DWT from \Cref{sec:Cox} to show that $Z_{\infty}$ is an integrable random variable, i.e.\ that the sum of $(N_k(\tau_\infty)-1)_{k\in\N}$ has finite mean. As almost surely $\tau_\infty\leq \tau_{1,k}+F_k$ for all~$k\in \N$,
\be \label{eq:Nkeq}
\E{N_k(\tau_{\infty})-1}=\E{N_k(\tau_{\infty})-N_k{(\tau_{1,k})}}\leq \E{N_k(\tau_{1,k}+F_k)-N_k{(\tau_{1,k})}}.
\ee 
The above expression on the right-hand side has the advantage that, thanks to \Cref{cl:couple}, $F_k$ is independent of $N_k$ and $\tau_{1,k}$. Hence, we are interested in studying
\begin{equation}\label{eq:sec5_defh}
\condE{N_k(\tau_{1,k}+F_k)-N_k{(\tau_{1,k})}}{\mathcal{F}_{k}^{\infty}}
\end{equation}
where $\mathcal{F}_{k}^{\infty}$ denotes the $\sigma$-algebra generated by the random variables $(E_i)_{i=k}^{\infty}$. 
For convenience of notation, we introduce the variables
\be\label{eq:Hk}
H_k\coloneq N_k(\tau_{1,k}+F_k)-N_k(\tau_{1,k})=N_k(\tau_{1,k}+F_k)-1
\ee 
with the convention $H_0=0$, so that the expression in \eqref{eq:sec5_defh} becomes $\condE{H_k}{\mathcal{F}_{k}^{\infty}}$. 
We begin by observing that, given the trajectory of $N_{k-1}$, $N_k$ is a Poisson point process with directing measure $N_{k-1}(s)\dd s$, so a direct use of the strong Markov property yields
\be\label{eq:HkasNk}
\condE{H_k}{\mathcal{F}_{k}^{\infty}}=\condE{\int_{\tau_{1,k}}^{\tau_{1,k}+F_k}f(k-1)N_{k-1}(s)\dd s}{\mathcal{F}_{k}^{\infty}}.
\ee 
Recall that almost surely $\tau_{1,k}\leq \tau_{1,k-1}+E_{k-1}$ by Lemma~\ref{cl:couple}, and also that $F_k+E_{k-1}=F_{k-1}$ is $\cF_{k-1}^{\infty}$-measurable. Since $N_{k-1}$ is increasing, shifting the integration interval in \eqref{eq:HkasNk} to the right on the real line can only increase the value of the integral. Hence, 
\begin{equation}\label{eq:new5.5}
\condE{H_k}{\mathcal{F}_{k}^{\infty}}\leq\condE{\condE{\int_{\tau_{1,k-1}+E_{k-1}}^{\tau_{1,k-1}+E_{k-1}+F_k}f(k-1)N_{k-1}(s)\dd s}{\mathcal{F}_{k-1}^{\infty}}}{\mathcal{F}_{k}^{\infty}}.
\end{equation}
Using that $N_{k-1}$ is increasing, we have that~\eqref{eq:new5.5} is at most
\begin{equation}\label{eq:sec5_fix1}
\begin{split}
&\condE{\condE{f(k-1)F_{k}(H_{k-1}+1)}{\mathcal{F}_{k-1}^{\infty}}}{\mathcal{F}_{k}^{\infty}}\\
&\hspace{10em}\leq \condE{f(k-1)F_{k}\condE{H_{k-1}}{\mathcal{F}_{k-1}^{\infty}}}{\mathcal{F}_{k}^{\infty}}+f(k-1)F_k.
\end{split}
\end{equation}
At this point, we recall the notation $G_k=f(k-1)F_k$ from \Cref{lem:tausv2}, and introduce
\begin{equation}\label{eq:sec5_defPi}
\Pi_{i,k} \coloneq f(k-1)F_kf(k-2)F_{k-1}\cdots f(i-1)F_i=G_kG_{k-1}\cdots G_i,
\end{equation}
so that iterating \eqref{eq:HkasNk}--\eqref{eq:sec5_fix1} until reaching $H_0=0$ and taking expected values leads to
\begin{equation}\label{ec:sec5_finalh}\E{H_k}\leq\sum_{i=1}^{k}\E{\Pi_{i,k}}.\end{equation}
An application of \Cref{lem:tausv2}(b) with $d=1$, $B=\{i,\ldots,k\}$, $I=\{k\}$, and $a_i=\cdots =a_k=1$ yields
\[\E{\Pi_{i,k}}\leq C_12^{i-k-1}\frac{f(k-1)}{f(k)}.\]
Combined with~\eqref{ec:sec5_finalh},  we thus get \[\mathbb E[H_k] \le C_1 \frac{f(k-1)}{f(k)}\sum_{i=1}^k2^{i-k-1} \le C_1\frac{f(k-1)}{f(k)}.\]
Since $\E{N_k(\tau_\infty)-1}\leq \E{H_k}$ by~\eqref{eq:Nkeq}, the assumption that $I_n=\sum_{j=0}^n \frac{f(j)}{f(j+1)}$ converges shows that the sum of $(N_k(\tau_{\infty})-1)_{k=1}^{\infty}$ has finite mean, as desired.

\vspace{1em}

We now move  to the proof of \Cref{thrm:supexp}(ii), that is, we want to show that if $I_n=\omega(1)$, then $\frac{n-d(T_n)}{I_n}\toinp 1$. Our proof relies on obtaining lower and upper bounds on $d(T_n)$ using concentration and second moment arguments. 
The lower and the upper bounds are derived separately.

\vspace{0.5em}
\noindent
\textbf{Upper bound.} We show that, for any $\varepsilon>0$, with high probability $d(T_n)<m_n \coloneq n-(1-\varepsilon)I_n$. Note that $m_n=(1-o(1))n$, by applying Lemma~\ref{lemma:sums} to the sum $I_n$, with $c_j=1, \eps_j=f(j-1)/f(j)$, and $M_1=1, M_2=n+1$. We work with the Cox process interpretation of the model introduced in Section~\ref{sec:Cox}, and define
\[d(t)\coloneq\sup\{k\in\N,\,N_k(t)>0\},\]
which in this context corresponds to the depth of the tree, that is, $d(\tau_n)\overset d= d(T_n)$. Then,
\begin{equation}\label{eq:sec5_equivalence}n-d(\tau_n)=1+\sum_{k=1}^{d(\tau_n)}(N_k(\tau_n)-1).\end{equation}
To control the sum on the right-hand side, for each $k \ge 1$, we introduce the random variable $J_k$ defined as the indicator of the event ${\tau_{2,k}<\tau_{1,k+1}}$. 
Since $N_k(\cdot)$ is increasing, for each $k\leq d(\tau_n)-1$ such that $J_k=1$, it must hold that $N_k(\tau_n)\geq 2$, so in particular
\begin{equation}\label{eq:new5.10}
\sum_{k=1}^{d(\tau_n)}(N_k(\tau_n)-1)\,\geq\,\sum_{k=1}^{d(\tau_n)-1}J_k\ge \bigg(\sum_{k=1}^{d(\tau_n)}J_k\bigg)-1.
\end{equation}
Now, fix any $\varepsilon>0$. By combining \eqref{eq:sec5_equivalence} and \eqref{eq:new5.10} and recalling $m_n=n-(1-\eps)I_n$,
we obtain
\begin{equation}\label{eq:sec5_Jk}
\P{d(\tau_n)\geq n-(1-\varepsilon)I_n}\le \P{\sum_{k=1}^{m_n}J_k\leq (1-\varepsilon)I_n}.
\end{equation}
We now address the probability on the right-hand side by making two observations. 
First, given any realisation of the processes $N_1,\ldots,N_{k-1}$ until time $\tau_{1,k}$, within the time interval $[\tau_{1,k},\min\{\tau_{2,k},\tau_{1,k+1}\}]$, it holds that $N_{k-1}\geq 1$ and $N_k\equiv1$. Thus, it directly follows that, almost surely,
\begin{align*}
\condP{J_k=1}{(N_1(t),\ldots,N_{k-1}(t))_{t=0}^{\tau_{1,k}}}
&\geq \frac{f(k-1)}{f(k-1)+f(k)},\\
\Longrightarrow\quad \condP{J_k=1}{J_1,\ldots,J_{k-1}}&\geq \frac{f(k-1)}{f(k-1)+f(k)},
\end{align*}
where we used that $J_1,\ldots,J_{k-1}$ are measurable in terms of the mentioned trajectories. As a result, the sequence $(J_k)_{k=1}^{\infty}$ stochastically dominates a sequence $(J'_k)_{k=1}^{\infty}$ of independent Bernoulli random variables with $\P{J_k'=1}=\frac{f(k-1)}{f(k-1)+f(k)}$ for each $k$.
In particular, the latter sequence satisfies
\begin{equation}\label{eq:expJ'}
\mathbb{E}\left[\sum_{k=1}^{m_n} J'_k\right]=\sum_{k=1}^{m_n} \frac{f(k-1)}{f(k-1)+f(k)}=\sum_{k=1}^{m_n}\frac{f(k-1)}{f(k)}-\sum_{k=1}^{m_n}\frac{f(k-1)}{f(k)}\frac{f(k-1)}{f(k-1)+f(k)}.
\end{equation} 
The first sum on the right-hand side equals $I_{m_n-1}$. By Assumption~\eqref{ass:D}, we can apply Lemma~\ref{lemma:sums} to the second sum on the right-hand side,  with $c_j=f(j-1)/f(j)$,  $\eps_j=f(j-1)/(f(j-1)+f(j))$, and $M_1=1$ and $M_2=m_n$. This yields 
\be 
\mathbb{E}\left[\sum_{k=1}^{m_n} J'_k\right]=(1-o(1))I_{m_n-1}.
\ee 
We then write 
\be 
I_{m_n-1}=I_n-\sum_{k=m_n+1}^{n+1} \frac{f(k-1)}{f(k)},
\ee 
and we again use Lemma~\ref{lemma:sums}, this time with $c_j=1$, $\eps_j=f(j-1)/f(j)$, and $M_1=m_n+1$ and $M_2=n+1$ (where we note that $M_1\leq M_2$ for all $n$ large, since $M_2-M_1=n-m_n=(1-\eps)I_n=\omega(1)$), to finally obtain 
\be 
\mathbb{E}\left[\sum_{k=1}^{m_n} J'_k\right]=(1-o(1))I_n.
\ee 
Indeed, by Assumption~\eqref{ass:D}, there exists $k_0\in\N$ such that $f(k-1)\leq \eps f(k)/4$ for all $k\geq k_0$, implying that 
\[\frac{f(k-1)}{f(k)}\leq \bigg(1+\frac{\varepsilon}{4}\bigg)\frac{f(k-1)}{f(k-1)+f(k)}\qquad\text{for all}\qquad k\geq k_0.\] 
Moreover, by Assumption~\eqref{ass:D} we also have $m_n=\omega(1)$ and thus $I_{m_n}=\omega(1)$. Hence, for large $n$,
\begin{align*}
I_n = \sum_{k=1}^{n+1} \frac{f(k-1)}{f(k)}
&\le \sum_{k=m_n+1}^{n+1} \frac{f(k-1)}{f(k)}
+\bigg(1+\frac{\eps}{4}\bigg)\sum_{k=1}^{m_n}\frac{f(k-1)}{f(k-1)+f(k)}\\
&\leq\frac{(n+1-m_n)\eps}{4}+\bigg(1+\frac{\varepsilon}{4}\bigg) I_{m_n-1}\le \frac{\eps}{4} I_n + \bigg(1+\frac{\varepsilon}{4}\bigg) I_{m_n-1}.
\end{align*}
Furthermore, using again that $m_n=\omega(1)$ and the latter chain of inequalities,
$$
\sum_{k=1}^{m_n}\frac{f(k-1)}{f(k-1)+f(k)}\geq \bigg(1+\frac{\varepsilon}{3}\bigg)^{-1}I_{m_n-1} \ge 
\bigg(1+\frac{\varepsilon}{3}\bigg)^{-1}\bigg(1+\frac{\varepsilon}{4}\bigg)^{-1}\bigg(1-\frac{\eps}{4}\bigg)I_{n} \ge \bigg(1-\frac{6\eps}{7}\bigg)I_n.
$$
Thus, by Chernoff's bound, 
\begin{equation*}
\mathbb P\bigg(\sum_{k=1}^{m_n} J_k\le (1-\eps)I_n\bigg) \le \mathbb P\bigg(\sum_{k=1}^{m_n} J'_k\le (1-\eps)I_n\bigg) = \exp(-\Omega(\eps^2 I_n)) = o(1).
\end{equation*}
As the latter holds for every $\eps > 0$, this ensures the desired upper bound on $d(T_n)$.

\vspace{0.5em}
\noindent
\textbf{Lower bound.} We show that, for any $\varepsilon>0$, with high probability $d(T_n)>n-(1+\varepsilon)I_n$. We work again with the Cox process interpretation of the model introduced in Section~\ref{sec:Cox}, and begin the proof with a computation similar to the one in \eqref{eq:sec5_Jk}, namely
\begin{align*}
\P{d(\tau_n)\leq n-(1+\varepsilon)I_n}
&=\P{1+\sum_{k=1}^{d(\tau_n)}(N_k(\tau_n)-1)\geq (1+\varepsilon)I_n}\\
&\leq \P{1+\sum_{k=1}^{n}(N_k(\tau_{1,k}+F_k)-1)\geq (1+\varepsilon)I_n}\\&= \P{\sum_{k=1}^{n}H_k\geq (1+\varepsilon)I_n-1},
\end{align*}
where the first equality follows form \eqref{eq:sec5_equivalence}, the inequality follows from the almost sure bounds $d(\tau_n)\leq n$ and $\tau_n\leq\tau_{\infty}\leq \tau_{1,k}+F_k$, and the second equality uses the definition of the variables $H_k$ in~\eqref{eq:Hk}. 
The result will follow from Chebyshev's inequality together with the following proposition.

\begin{proposition}\label{prop:sec5_moments}
Assuming assumption~\eqref{ass:D} and $I_n=\omega(1)$, each of the following holds:
\begin{enumerate}
    \item[\emph{(i)}] $\displaystyle \E{\sum_{k=1}^n H_k} \geq (1+o(1)) I_n$.
    \item[\emph{(ii)}] $\displaystyle \E{\bigg(\sum_{k=1}^n H_k\bigg)^2} \leq (1+o(1)) I_n^2$.
\end{enumerate}
\end{proposition}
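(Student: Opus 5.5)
For part (i), I would bound $\E{H_k}$ from below directly. Conditionally on $\mathcal{F}_k^\infty$ and the trajectory of $N_{k-1}$, the variable $H_k$ is Poisson with mean $\int_{\tau_{1,k}}^{\tau_{1,k}+F_k} f(k-1)N_{k-1}(s)\,\dd s$. Since $N_{k-1}(s)\ge 1$ for $s\ge \tau_{1,k}$, this mean is at least $f(k-1)F_k=G_k$. Hence $\E{H_k}\ge \E{G_k}=f(k-1)\sum_{i\ge k}1/f(i)$, and this is at least $f(k-1)/f(k)$. Summing over $k\in[n]$ gives $\E{\sum_k H_k}\ge \sum_{k=1}^n f(k-1)/f(k)=I_n-f(n)/f(n+1)+f(0)/f(1)\cdot 0$. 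Up to bounded boundary terms this equals $I_n$, and since $I_n=\omega(1)$ we get $(1+o(1))I_n$.

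For part (ii), I would expand the square as
\[\sum_k\E{H_k^2}+2\sum_{k<\ell}\E{H_kH_\ell}.\]
The goal is to bound the diagonal by $o(I_n^2)$ and the off-diagonal by $(1+o(1))I_n^2$.

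\emph{Diagonal.} I would iterate the recursion \eqref{eq:HkasNk}--\eqref{eq:sec5_fix1} at the level of second moments. The conditional second moment of a Poisson variable is $\lambda^2+\lambda$. Bounding $\lambda\le G_k(H_{k-1}+1)$ as in \eqref{eq:sec5_fix1} gives
\[\E{H_k^2}\lesssim \E{G_k^2(H_{k-1}+1)^2}+\E{G_k(H_{k-1}+1)}.\]
Unrolling this produces sums of terms $\E{\prod_{j\in B}G_j^{a_j}}$ with $a_j\in\{1,2\}$ and $B$ an interval ending at $k$. Applying \Cref{lem:tausv2}(b) with $d=1$ and $I=\{k\}$ gives
\[\E{H_k^2}\le C\,f(k-1)/f(k)\sum_i 2^{i-k}\cdot(\text{combinatorial factor}).\]
This requires controlling the number of terms, perhaps by bounding $(H_{k-1}+1)^2\le 2H_{k-1}^2+2$. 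The result is $\E{H_k^2}=O(f(k-1)/f(k))$, so the diagonal is $O(I_n)=o(I_n^2)$.

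\emph{Off-diagonal.} For $k<\ell$, I would unroll $H_\ell$ down to level $k$ and $H_k$ down to level $1$. This yields terms of the form $\E{\Pi_{i,\ell}\Pi_{j,k}}$, where indices in the overlap appear squared. The leading contribution should be the term $\E{G_kG_\ell}$, with all other products carrying extra factors summable as geometric series by \Cref{lem:tausv2}(b) with $d=2$ and $I=\{k,\ell\}$. Those extra terms are bounded by $C2^{-(\text{extra length})}\frac{f(k-1)}{f(k)}\frac{f(\ell-1)}{f(\ell)}$, but they need an additional small factor to be negligible. I would obtain it by keeping one more index in $I$ (taking $d=3$) whenever the product contains an index $j\ne k,\ell$: the ratio $f(j-1)/f(j)\to 0$ by \eqref{ass:D}, and the sum of such small terms is $o(I_n^2)$ by \Cref{lemma:sums}. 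For the main term, \Cref{lem:tausv2}(a2) gives, for $k_0<k<\ell$,
\[\E{G_kG_\ell}\le \frac{f(k-1)}{f(k)}\frac{f(\ell-1)}{f(\ell)}(1+o(1)),\]
again using \Cref{lemma:sums} to absorb the $C f(k)/f(k+1)$ corrections. The finitely many $k\le k_0$ contribute $O(I_n)$ by (a1). Summing, $2\sum_{k<\ell}\E{H_kH_\ell}\le(1+o(1))I_n^2$.

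The main obstacle is the off-diagonal unrolling. The conditional Poisson structure only gives $\E{H_kH_\ell\mid\cdot}$ through the nested integrals, so I must check that the shift argument of \eqref{eq:new5.5} still works jointly. That is, the coupling used for $H_\ell$ must remain valid while conditioning on the lower levels that determine $H_k$, using \Cref{cl:couple}(ii). I also need to make sure that the number of expansion terms grows at most like $2^{|B|}$, so that the $2^{-|B|}$ in \Cref{lem:tausv2}(b) controls it.
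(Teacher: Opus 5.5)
\textbf{Part (i).} Your argument is correct, and it is shorter than the paper's. The paper lower-bounds $H_k\ge J_k$, where $J_k$ is the indicator of $\{\tau_{2,k}<\tau_{1,k+1}\}$. It then uses $\P{J_k=1}\ge f(k-1)/(f(k-1)+f(k))$ and needs \Cref{lemma:sums} to compare $\sum_k f(k-1)/(f(k-1)+f(k))$ with $I_n$; this simply recycles the $J_k$ from the upper bound on $d(T_n)$. You instead observe that the conditional Poisson mean is at least $G_k$, because $N_{k-1}\ge 1$ on $[\tau_{1,k},\infty)$. This gives $\E{H_k}\ge f(k-1)\sum_{i\ge k}1/f(i)\ge f(k-1)/f(k)$, hence $\E{\sum_{k\le n}H_k}\ge I_{n-1}=I_n-f(n)/f(n+1)$ exactly, with no appeal to \Cref{lemma:sums}. (Your displayed boundary term is garbled, but the conclusion is right.)

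\textbf{Part (ii): the diagonal step.} Your architecture is the paper's: the diagonal via \Cref{lem:tausv2}(b) with $d=1$, the main cross term $\E{G_kG_\ell}$ via (a1)/(a2), and the remaining cross terms via (b) with $d=2,3$. However, the device you suggest for the diagonal breaks it.

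With $(H_{k-1}+1)^2\le 2H_{k-1}^2+2$, the quadratic recursion picks up a factor $2$ at every step. So $\E{G_i^2G_{i+1}^2\cdots G_k^2}$ enters with coefficient $2^{k-i}$, while \Cref{lem:tausv2}(b) only bounds it by $C_1 2^{-(k-i+1)}(f(k-1)/f(k))^2$. The geometric decay is cancelled, and summing over $i$ yields only $\E{H_k^2}\lesssim k\,(f(k-1)/f(k))^2$. That is not $O(f(k-1)/f(k))$. Moreover, $\sum_{k\le n}k\,(f(k-1)/f(k))^2$ need not be $o(I_n^2)$: for $f(k)/f(k-1)=\log(k+2)$ it is asymptotically $I_n^2/2$.

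For the same reason, your criterion that the number of expansion terms grow ``at most like $2^{|B|}$'' is not sufficient. The growth must be subexponential in $|B|$, because there are about $k$ intervals $B=[i,k]$ to sum over.

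The paper avoids this by expanding exactly: $N_{k-1}(\tau_{1,k-1}+F_{k-1})^2=H_{k-1}^2+2H_{k-1}+1$. Then $H_{k-1}^2$ is propagated with coefficient $1$, and the factor $2$ sits only on the linear term $G_k^2H_{k-1}$. Linear terms are unrolled via \eqref{eq:new5.5}--\eqref{eq:sec5_fix1}, again with coefficient $1$ per step. This gives $\E{H_k^2}\le\sum_{i\le k}\E{b_i\Pi_{i+1,k}^2}$ with $b_i=G_i^2(2H_{i-1}+1)+H_i$, i.e.\ only $O(k-j)$ products per interval $[j,k]$, and $\sum_j(k-j+1)2^{-(k-j)}=O(1)$. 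The same exact expansion is needed for the $H_k^2\Pi_{k+1,\ell}$ terms in the cross-term unrolling.

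\textbf{Part (ii): cross terms.} The extra index you keep in $I$ must be $k-1$ or $\ell-1$. One of them is present whenever the product contains an index other than $k,\ell$, and this choice makes the extra ratio tend to $0$ with $k$ or $\ell$; an arbitrary extra index gives no decay. The terms with $k=\ell-1$ form a single sum and are $O(I_n)$ outright.

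\textbf{The obstacle you flag.} The paper carries out the joint unrolling exactly as you intend: it conditions on $\cF_\ell^\infty$ and on levels $k,\dots,\ell-1$ and repeats \eqref{eq:HkasNk}--\eqref{eq:new5.5}. Be aware, though, that \Cref{cl:couple}(ii) alone does not justify this. The variable $H_k$ depends on $E_{\ell-1}$ through $F_k$, and $E_{\ell-1}$ is correlated with $N_\ell$ given $N_{\ell-1}$ (recall $\tau_{1,\ell}\le\tau_{1,\ell-1}+E_{\ell-1}$).

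If you also condition on $E_{\ell-1}$, you can check that the conditional mean of $H_\ell$ is at most $G_\ell(2H_{\ell-1}+1)$, rather than $G_\ell(H_{\ell-1}+1)$. The extra factor $2$ only multiplies the lower-order (longer) products. It can be absorbed by running the proof of \Cref{lem:tausv2}(b) with $4^{-|B|}$ in place of $2^{-|B|}$, which that proof allows.
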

We delay the proof of Proposition \ref{prop:sec5_moments} and use it to conclude the proof of \Cref{thrm:supexp}(ii) first. 
Given $\varepsilon>0$, Proposition \ref{prop:sec5_moments}(ii) implies that $(1+\eps/2)I_n-1\geq \E{\sum_{k=1}^nH_k}$ for all large $n$, so 
\begin{align*}\P{d(\tau_n)\leq n-(1+\varepsilon)I_n }&\leq \P{\sum_{k=1}^nH_k\geq (1+\varepsilon)I_n-1}\leq\P{\sum_{k=1}^nH_k\geq \E{\sum_{k=1}^nH_k}+\frac\eps2I_n},\end{align*}
and the probability on the right-hand side converges to zero by combining Chebyshev's inequality and the two parts of Proposition \ref{prop:sec5_moments}. As the latter holds for every $\eps > 0$, this ensures the desired lower bound on $d(\tau_n)$, or equivalently on $d(T_n)$. 
\end{proof}

We conclude this section with the proof of Proposition~\ref{prop:sec5_moments}.

\begin{proof}[Proof of Proposition \ref{prop:sec5_moments} assuming \Cref{lem:tausv2}] 
Recall the variables $J_k$ from the proof of the upper bound on $d(T_n)$, for which we also have
\[\sum_{k=1}^{n}H_k\,\geq\,\sum_{k=1}^{n}J_k.\]
Using the previous analysis of the $J_k$ variables we then arrive at
\[\mathbb E\bigg[\sum_{k=1}^{n} H_k\bigg]\ge \mathbb E\bigg[\sum_{k=1}^{n} J_k\bigg]\ge \sum_{k=1}^{n} \frac{f(k-1)}{f(k-1)+f(k)} = (1+o(1))I_n,\]
where the last equality uses Assumption~\eqref{ass:D}, Lemma~\ref{lemma:sums} (as applied  to~\eqref{eq:expJ'}, but with $M_2=n$),  and that $I_n=\omega(1)$. This gives the bound in (i). We now move to the proof of (ii), that is, we want to show that the expression
\begin{equation}\label{ec:sec5_opensum}\E{\bigg(\sum_{k=1}^nH_k\bigg)^2}=\sum_{k=1}^n\E{ H_k^2}\,+\,2\sum_{\ell=2}^n\sum_{k=1}^{\ell-1}\E{ H_kH_{\ell}}
\end{equation}
is at most $(1+o(1)) I_n^2$. 
We first bound $\E{H_k^2}$ for a fixed value of $k$, similarly to what was done for $\E{H_k}$ in~\eqref{eq:sec5_fix1}--\eqref{ec:sec5_finalh}. Similarly to that case, we analyse $\condE{H_k^2}{\mathcal{F}_k^{\infty}}$, where again $\mathcal{F}_k^{\infty}$ denotes the $\sigma$-algebra generated by the random variables $(E_i)_{i=k}^{\infty}$. As in \Cref{thrm:supexp}(i), conditioning on $N_{k-1}$ (and on $F_k$) and using the strong Markov property gives that $H_k$ is Poisson distributed with parameter $\lambda\coloneq\int_{\tau_{1,k}}^{\tau_{1,k}+F_k}f(k-1)N_{k-1}(s)\dd s$, so that 
\[\condE{H_k^2}{\mathcal{F}_k^{\infty}}=\condE{\lambda^2}{\mathcal{F}_k^{\infty}}+\condE{\lambda}{\mathcal{F}_k^{\infty}}.\]
For $\condE{\lambda^2}{\mathcal{F}_k^{\infty}}$, we use that $\tau_{1,k}\leq \tau_{1,k-1}+E_{k-1}$ by Lemma~\ref{cl:couple} and the fact that $N_{k-1}$ is increasing in the same way as in the analysis of $\condE{H_k}{\mathcal{F}_k^{\infty}}$ to obtain
\begin{align*}
\condE{\lambda^2}{\mathcal{F}_k^{\infty}}
&\leq \condE{\condE{\left(\int_{\tau_{1,k-1}+E_{k-1}}^{\tau_{1,k-1}+E_{k-1}+F_k}f(k-1)N_{k-1}(s)\dd s\right)^2}{\mathcal{F}_{k-1}^{\infty}}}{\mathcal{F}_k^{\infty}}\\[2pt]
&\leq \condE{\condE{(f(k-1) F_k N_{k-1}(\tau_{1,k-1}+F_{k-1}))^2}{\mathcal{F}_{k-1}^{\infty}}}{\mathcal{F}_k^{\infty}}.
\end{align*}
Using the identity $x^2=(x-1)^2+2(x-1)+1$ with $x=N_{k-1}(\tau_{1,k-1}+F_{k-1})$ and the fact that $N_{k-1}(\tau_{1,k-1})=1$ shows that
\[N_{k-1}(\tau_{1,k-1}+F_{k-1})^2=H_{k-1}^2+2H_{k-1}+1,\]
which further yields
\[\condE{\lambda^2}{\mathcal{F}_k^{\infty}}\leq\condE{\condE{(f(k-1)F_k)^2(H_{k-1}^2+2H_{k-1}+1)}{\mathcal{F}_{k-1}^{\infty}}}{\mathcal{F}_k^{\infty}}.\]
Further, since $\condE{\lambda}{\mathcal{F}_k^{\infty}}=\condE{H_k}{\mathcal{F}_{k}^{\infty}}$ and $F_k$ is measurable with respect to $\mathcal{F}_{k-1}^{\infty}\supseteq\mathcal{F}_{k}^{\infty}$,
\begin{equation}\label{eq:sec5_fix2}
\condE{H_k^2}{\mathcal{F}_{k}^{\infty}}\leq \condE{(f(k-1)F_k)^2\condE{H_{k-1}^2}{\mathcal{F}_{k-1}^{\infty}}}{\mathcal{F}_{k}^{\infty}}+\condE{\condE{b_{k}}{\mathcal{F}_{k-1}^{\infty}}}{\mathcal{F}_k^{\infty}} 
\end{equation}
where 
\[b_{k}\coloneq (f(k-1)F_k)^2(2H_{k-1}+1)+H_k.\]
Iterating \eqref{eq:sec5_fix2} and recalling the inclusion $\mathcal{F}_k^{\infty} \subseteq \mathcal{F}_{k-1}^{\infty}\subseteq \ldots\subseteq \mathcal{F}_0^{\infty}$, the fact that $H_0\equiv0$ and the notation $\Pi_{i,k}=f(k-1)F_kf(k-2)F_{k-1}\cdots f(i-1)F_i$ (with $\Pi_{k+1,k}=1$ by convention), 
we obtain 
\begin{equation}\label{eq:sec5_prefinalg}\condE{H_k^2}{\mathcal{F}_{k}^{{\infty}}}\leq \sum_{i=1}^{k}   \condE{b_i\Pi_{i+1,k}^2}{\mathcal{F}_{k}^{{\infty}}},\end{equation}
and taking expectations on both sides yields
\begin{equation*}
\E{H_k^2}\leq \sum_{i=1}^{k}   \E{b_i\Pi_{i+1,k}^2} = \sum_{i=1}^{k}   \E{2H_{i-1}\Pi_{i,k}^2}+\sum_{i=1}^{k}   \E{\Pi_{i,k}^2}+\sum_{i=1}^{k}   \E{H_i\Pi_{i+1,k}^2}.
\end{equation*}
Further, using~\eqref{eq:new5.5}--\eqref{eq:sec5_fix1} and that $\Pi_{i+1,k}$ is $\cF_i^{\infty}$-measurable, we obtain that
\begin{equation}\label{eq:new5.16}
\begin{split}
\hspace{-0.5em}\mathbb E[H_i\Pi_{i+1,k}^2] = \mathbb E[\mathbb E[H_i\mid \cF_i^{\infty}] \Pi_{i+1,k}^2]
&\le \mathbb E[\mathbb E[\mathbb E[H_{i-1}\mid \cF_{i-1}^{\infty}]\mid \cF_i^{\infty}]\Pi_{i,i}\Pi_{i+1,k}^2]+\mathbb E[\Pi_{i,i}\Pi_{i+1,k}^2]\\
&\le \mathbb E[H_{i-1} \Pi_{i,i}\Pi_{i+1,k}^2]+\mathbb E[\Pi_{i,i}\Pi_{i+1,k}^2].
\end{split}
\end{equation}
Iterating the latter argument, and using that $H_0\equiv 0$ and $\Pi_{j,i}\Pi_{i+1,k}=\Pi_{j,k}$ for all $j\le i<k$ gives
\begin{equation}\label{eq:new5.17}
\mathbb E[H_i\Pi_{i+1,k}^2]\le \sum_{j=1}^i \mathbb E[\Pi_{j,i}\Pi_{i+1,k}^2].    
\end{equation}
Substituting the previous inequality into \eqref{eq:sec5_prefinalg} shows that
\begin{equation}\label{leq:sec5_finalH2}\E{H_k^2}\leq 2\sum_{i=2}^{k}\sum_{j=1}^{i-1}\E{\Pi_{j,i-1}\Pi_{i,k}^2}+\sum_{i=1}^{k}\E{\Pi_{i,k}^2}+\sum_{i=1}^{k}\sum_{j=1}^{i}\E{\Pi_{j,i}\Pi_{i+1,k}^2}.
\end{equation}
Recalling the notation $G_k=f(k-1)F_k$ introduced in \Cref{lem:tausv2}, we observe that all these terms correspond to expressions of the form
$\prod_{j=i}^k G_j^{a_j}$, with $a_j\in\{1,2\}$. Moreover, the last factor in each term is $G_k^2$, the sole exception being the last sum in \eqref{leq:sec5_finalH2} which,
in the case $i=k$, yields a term of the form
$\sum_{j=1}^k \E{\Pi_{j,k}}$, whose last factor is $G_k$. Hence, applying \Cref{lem:tausv2}(b) with $d=1$ and $I=\{k\}$, gives
\[\E{H_k^2}\leq C_1\left(\left(\frac{f(k-1)}{f(k)}\right)^2+\left(\frac{f(k-1)}{f(k)}\right)\right)\left(2\sum_{i=2}^{k}\sum_{j=1}^{i-1}2^{j-k-1}+\sum_{i=1}^{k}2^{i-k-1}+\sum_{i=1}^{k}\sum_{j=1}^{i}2^{j-k-1}\right).\]
As each of the three sums on the right is uniformly bounded as a function of $k$, it follows that there is an absolute constant $C>0$ 
such that
\[\E{H_k^2}\leq C\left[\left(\frac{f(k-1)}{f(k)}\right)^2+\left(\frac{f(k-1)}{f(k)}\right)\right]\]
and, in particular, \begin{equation}\label{eq:boundsquared}
\sum_{k=1}^n\E{H_k^2}=O\left(\left(\sum_{k=1}^n\left(\frac{f(k)}{f(k+1)}\right)^2\right)+I_n\right)=O(I_n)=o\left(I_n^2\right),
\end{equation}
where we use Lemma~\ref{lemma:sums} with $c_j=\eps_j=f(j)/f(j+1)$, and $M_1=1$ and $M_2=n$ to bound the sum from above. 

We now turn to estimating the terms $\E{H_kH_{\ell}}$ with $1\leq k<\ell$ appearing in~\eqref{ec:sec5_opensum}. 
We first analyse $\condE{H_kH_{\ell}}{\mathcal{F}_{\ell}^{\infty}}$ by following arguments already used in the study of $\condE{H_k}{\mathcal{F}_k^{\infty}}$. 
More precisely, conditioning on $N_{k},N_{k+1},\ldots,N_{\ell-1}$, we use the strong Markov property as well as the fact that $N_{\ell-1}$ is increasing as in~\eqref{eq:HkasNk}--\eqref{eq:new5.5} to obtain

\begin{align*}
    \condE{H_kH_{\ell}}{\mathcal{F}_{\ell}^{\infty}}&=\condE{ H_k\int_{\tau_{1,\ell}}^{\tau_{1,\ell}+F_{\ell}}f(\ell-1)N_{\ell-1}(s)ds}{\mathcal{F}_{\ell}^{\infty}}\\&\leq \condE{ H_kf(\ell-1)F_{\ell}}{\mathcal{F}_{\ell}^{\infty}}+\condE{ H_kf(\ell-1)F_{\ell}H_{\ell-1}}{\mathcal{F}_{\ell}^{\infty}}.
\end{align*}
By iterating this procedure, we arrive at the upper bound
\begin{align*}
\condE{H_kH_{\ell}}{\mathcal{F}_{\ell}^{\infty}}
&\leq \bigg(\sum_{j=k+1}^{\ell}\condE{H_k\Pi_{j,\ell}}{\mathcal{F}_{\ell}^{\infty}}\bigg)+\condE{H_k^2\Pi_{k+1,\ell}}{\mathcal{F}_{\ell}^{\infty}}\\
&= \bigg(\sum_{j=k+1}^{\ell}\condE{\condE{H_k}{\mathcal{F}_{k}^{\infty}}\Pi_{j,\ell}}{\mathcal{F}_{\ell}^{\infty}}\bigg)+\condE{\condE{H_k^2}{\mathcal{F}_{k}^{\infty}}{\Pi_{k+1,\ell}}}{\mathcal{F}_{\ell}^{\infty}},
\end{align*}
where the last equality used that $(\Pi_{j,\ell})_{j=k+1}^{\ell}$ are all $\cF_k^{\infty}$-measurable and $\cF_\ell^\infty \subseteq \cF_k^\infty$ as $k<\ell$. 
Now, repeating the argument from~\eqref{eq:HkasNk}--\eqref{eq:sec5_fix1} for the first term, the argument from~\eqref{eq:sec5_fix2}--\eqref{leq:sec5_finalH2} for the second term and taking expectations on both sides leads to
\begin{align}\label{eq:appendix_crossed}\E{H_kH_\ell}\leq{}& \sum_{i=1}^k\sum_{j=k+1}^{\ell}\E{\Pi_{i,k}\Pi_{j,\ell}}+2\sum_{i=2}^{k}\sum_{j=1}^{i-1}\E{\Pi_{j,i-1}\Pi_{i,k}^2\Pi_{k+1,\ell}} \\ \nonumber&+\sum_{i=1}^{k}\E{\Pi_{i,k}^2\Pi_{k+1,\ell}}+\sum_{i=1}^{k}\sum_{j=1}^{i}\E{\Pi_{j,i}\Pi_{i+1,k}^2\Pi_{k+1,\ell}}.\end{align}
Taking the sum over $\ell$ from $2$ to $n$ and then over $k$ from $1$ to $\ell-1$ gives
\be\label{eq:doubleH}
\sum_{\ell=2}^n\sum_{k=1}^{\ell-1}\E{H_kH_\ell}\leq \sum_{\ell=2}^n\sum_{k=1}^{\ell-1} \left(\frac{f(k-1)}{f(k)}\frac{f(\ell-1)}{f(\ell)}+\delta_{k,\ell}\right),
\ee 
where $\delta_{k,\ell}$ is defined as
\begin{align*}
\delta_{k,\ell}&\coloneq\underbrace{\E{G_kG_{\ell}}-\frac{f(k-1)}{f(k)}\frac{f(\ell-1)}{f(\ell)}}_{\coloneq\delta^{(1)}_{k,\ell}}+\underbrace{\sum_{i=1}^{k-1}\E{\Pi_{i,k}G_{\ell}}+\sum_{i=1}^k\sum_{j=k+1}^{\ell-1}\E{\Pi_{i,k}\Pi_{j,\ell}}+\sum_{j=1}^{k}\E{\Pi_{j,\ell}}}_{\coloneq\delta^{(2)}_{k,\ell}}\\&\hspace{1cm}+\underbrace{2\sum_{i=2}^{k}\sum_{j=1}^{i-1}\E{\Pi_{j,i-1}\Pi_{i,k}^2\Pi_{k+1,\ell}}+\sum_{i=1}^{k}\E{\Pi_{i,k}^2\Pi_{k+1,\ell}}+\sum_{i=1}^{k-1}\sum_{j=1}^{i}\E{\Pi_{j,i}\Pi_{i+1,k}^2\Pi_{k+1,\ell}}}_{\coloneq\delta^{(3)}_{k,\ell}}.
\end{align*}
Note that the term $\delta^{(3)}_{k,\ell}$ represents the last three sums in~\eqref{eq:appendix_crossed} except for the part for $i=k$ and $j\in [k]$ of the last double sum, which appears as the last term in $\delta^{(2)}_{k,\ell}$, and the rest of $\delta^{(1)}_{k,\ell}+\delta^{(2)}_{k,\ell}$ corresponds to a decomposition of the first double sum. We observe that all terms in the definition of $\delta_{k,\ell}$ can be expressed as products of the type
$\prod_{j\in B} G_j^{a_j}$ for suitable choices of $B$ and sequences
$(a_j)_{j\in B}$ taking values in $\{1,2\}$. The remainder of the proof then consists in applying \Cref{lem:tausv2}(b) to show that 
\begin{equation}\label{eq:5.4bgoal}
\sum_{\ell=2}^{n}\sum_{k=1}^{\ell-1}\delta_{k,\ell} = o(I_n^2).  
\end{equation}

Note that this is enough to conclude since 
\be \label{eq:In2bound}
\sum_{\ell=2}^n \sum_{k=1}^{\ell-1}\frac{f(k-1)}{f(k)}\frac{f(\ell-1)}{f(\ell)}\leq \sum_{\ell=1}^{n+1}\sum_{k=1}^\ell \frac{f(k-1)}{f(k)}\frac{f(\ell-1)}{f(\ell)}=\bigg(\frac12+o(1)\bigg)I_n^2,
\ee 
which together with~\eqref{eq:doubleH} shows that 
\[2\sum_{\ell=2}^n \sum_{k=1}^{\ell-1}\E{H_kH_\ell}\leq (1+o(1))I_n^2.\]
Combined with~\eqref{ec:sec5_opensum} and~\eqref{eq:boundsquared}, this yields \Cref{prop:sec5_moments}(b).

To show~\eqref{eq:5.4bgoal} observe that, by applying \Cref{lem:tausv2}(a1) (when $k\le k_0$) and \Cref{lem:tausv2}(a2) (when $k > k_0$) to $\delta^{(1)}_{k,\ell}$, we obtain respectively
\begin{equation}\label{eq:delta1}
\delta_{k,\ell}^{(1)}\le C\frac{f(\ell-1)}{f(\ell)}\qquad\text{and}\qquad\delta_{k,\ell}^{(1)}\leq C\frac{f(k-1)}{f(k)}\frac{f(\ell-1)}{f(\ell)}\left(\frac{f(k)}{f(k+1)}+\frac{f(\ell)}{f(\ell+1)}\right).
\end{equation}

As a next step, we show that $\sum_{\ell=2}^{n}\delta_{\ell-1,\ell}=o(I_n^2)$ since the bounds for $\delta_{k,\ell}$ for $k=\ell-1$ have to be treated slightly differently from the general case. 
In the case $k=\ell-1$, we observe that each of the products $\prod_{j\in B} G_j^{a_j}$ appearing within expectations in $\delta_{\ell-1,\ell}^{(2)}$ and $\delta_{\ell-1,\ell}^{(3)}$ contains a factor $G_{\ell}$, 
and either a factor $G_{\ell-1}$ (in the case of $\delta_{\ell-1,\ell}^{(2)}$), or a factor $G_{\ell-1}^2$ (in the case of $\delta_{\ell-1,\ell}^{(3)}$). 
Thus, applying \Cref{lem:tausv2}(b) with $d=2$ and $I=\{\ell-1,\ell\}$ shows that, for some constant $C_2'=C_2'(f)$,
{\small\begin{align*}
    \delta_{\ell-1,\ell}^{(2)}&\leq C_2\frac{f(\ell-2)}{f(\ell-1)}\frac{f(\ell-1)}{f(\ell)}\left[\sum_{i=1}^{\ell-2}2^{i-\ell-1}+\sum_{j=1}^{\ell-1}2^{j-\ell-1}\right]\leq C_2'\frac{f(\ell-2)}{f(\ell-1)}\frac{f(\ell-1)}{f(\ell)},\\
    \delta_{\ell-1,\ell}^{(3)}&\leq C_2\bigg(\frac{f(\ell-2)}{f(\ell-1)}\bigg)^2\frac{f(\ell-1)}{f(\ell)}\left[2\sum_{i=1}^{\ell-1}\sum_{j=1}^{i-1}2^{j-\ell-1}+\sum_{i=1}^{\ell-1}2^{i-\ell-1}+\sum_{i=1}^{\ell-2}\sum_{j=1}^{i}2^{j-\ell-1}\right]\leq C_2'\frac{f(\ell-2)}{f(\ell-1)}\frac{f(\ell-1)}{f(\ell)},
\end{align*}}
where we used that the sums in the square brackets are uniformly bounded in $\ell$ and $f(n-1) = o(f(n))$.
Using further that $f(n-1)/f(n)$ is bounded, we deduce from~\eqref{eq:delta1} that, for large enough $C_2'$,
\[\delta_{\ell-1,\ell}^{(1)}\leq C_2'\frac{f(\ell-2)}{f(\ell-1)}\frac{f(\ell-1)}{f(\ell)}.\]
Hence, using all three bounds, we arrive at
\[\delta_{\ell-1,\ell}\leq 3C_2'\frac{f(\ell-2)}{f(\ell-1)}\frac{f(\ell-1)}{f(\ell)},\]
which yields
\[\sum_{\ell=2}^{n}\delta_{\ell-1,\ell}\leq 3C_2'\sum_{\ell=2}^n\frac{f(\ell-2)}{f(\ell-1)}\frac{f(\ell-1)}{f(\ell)}=o(I_n)=o(I_n^2),\]
where we use Lemma~\ref{lemma:sums} with $c_j=f(j-1)/f(j)$, $\eps_j=c_{j-1}$, and $M_1=2, M_2=n$ to bound the sum from above. We thus conclude that the contribution of the terms of the form $\delta_{\ell-1,\ell}$ is negligible. 
Next, we assume that $\ell\geq k+2$ and study the contributions coming from $\delta^{(1)}_{k,\ell}$, $\delta^{(2)}_{k,\ell}$ and $\delta^{(3)}_{k,\ell}$ separately. 

For the contribution coming from $\delta_{k,\ell}^{(1)}$, we directly obtain from~\eqref{eq:delta1} that
\begin{align*}
\sum_{k=1}^{n-2}\sum_{\ell=k+2}^{n} \delta_{k,\ell}^{(1)}
&\leq Ck_0 \sum_{\ell=1}^n \frac{f(\ell-1)}{f(\ell)} + C\sum_{k=k_0+1}^{n-2}
\sum_{\ell=k+2}^{n}
\frac{f(k-1)}{f(k)}\frac{f(\ell-1)}{f(\ell)}\Big(\frac{f(k)}{f(k+1)}+\frac{f(\ell)}{f(\ell+1)}\Big)\\
&=O(I_n)+o\bigg(\sum_{\ell=2}^{n}\sum_{k=1}^{\ell-2}\frac{f(k-1)}{f(k)}\frac{f(\ell-1)}{f(\ell)}\bigg),    
\end{align*}
and the right-hand side is $o(I_n^2)$ by~\eqref{eq:In2bound}.
 
    For the contribution coming from $\delta_{k,\ell}^{(2)}$, we observe first that the first sum 
     \[\sum_{i=1}^{k-1}\E{\Pi_{i,k}G_{\ell}}\]
    is either zero (if $k=1$), or otherwise contains the factor $G_{k-1} G_{k} G_{\ell}$. Applying \Cref{lem:tausv2}(b) with $d=3$ and $I=\{k-1,k,\ell\}$ gives
     \[\sum_{i=1}^{k-1}\E{\Pi_{i,k}G_{\ell}}\leq C_3 \frac{f(k-2)}{f(k-1)}\frac{f(k-1)}{f(k)}\frac{f(\ell-1)}{f(\ell)}\sum_{i=1}^{k-1}2^{i-k-2}\leq C_3'\frac{f(k-2)}{f(k-1)}\frac{f(k-1)}{f(k)}\frac{f(\ell-1)}{f(\ell)}.\]
     Similarly, as $k\leq \ell-2$, each of the products appearing in the other sums in $\delta_{k,\ell}^{(2)}$ contains the factor $G_kG_{\ell-1}G_{\ell}$. By \Cref{lem:tausv2}(b) with $d=3$ and $I=\{k,\ell-1,\ell\}$, the remainder of $\delta_{k,\ell}^{(2)}$ is at most
     \[C_3\frac{f(k-1)}{f(k)}\frac{f(\ell-2)}{f(\ell-1)}\frac{f(\ell-1)}{f(\ell)}\bigg[\sum_{i=1}^k\sum_{j=k+1}^{\ell-1}2^{i-k+j-\ell-2}+\sum_{j=1}^{k}2^{j-\ell-1}\bigg]\leq C_3'\frac{f(k-1)}{f(k)}\frac{f(\ell-2)}{f(\ell-1)}\frac{f(\ell-1)}{f(\ell)}.\]
     By summing these bounds over $k, \ell$, we obtain
     \begin{align*}
     \sum_{\ell=2}^{n}\sum_{k=1}^{\ell-2}\delta_{k,\ell}^{(2)}&\leq C_3'\sum_{\ell=2}^{n}\sum_{k=1}^{\ell-2}\frac{f(k-1)}{f(k)}\frac{f(\ell-1)}{f(\ell)}\bigg(\frac{f(k-2)}{f(k-1)}
     +\frac{f(\ell-2)}{f(\ell-1)}\bigg)\\
     &=o\left(\sum_{\ell=2}^{n}\sum_{k=1}^{\ell-2}\frac{f(k-1)}{f(k)}\frac{f(\ell-1)}{f(\ell)}\right),    
     \end{align*}
     and the final expression is again of the order $o(I_n^2)$. 
    
     Lastly, for $\delta_{k,\ell}^{(3)
     }$, using that $k\leq\ell-2$, each of the products in $\delta_{k,\ell}^{(3)}$ contains the factor $G_k^2G_{\ell-1}G_{\ell}$ (if $k=1$, the first and the third sums are zero). By \Cref{lem:tausv2}(b) with $d=3$ and $I=\{k,\ell-1,\ell\}$,
     \begin{align*}
     \delta_{k,\ell}^{(3)}
     &\leq C_3\left(\frac{f(k-1)}{f(k)}\right)^2\frac{f(\ell-2)}{f(\ell-1)}\frac{f(\ell-1)}{f(\ell)}\left[2\sum_{i=1}^{k}\sum_{j=1}^{i-1}2^{j-\ell-1}+\sum_{i=1}^{k}2^{i-\ell-1}+\sum_{i=1}^{k-1}\sum_{j=1}^{i}2^{j-\ell-1}\right]\\
     &\leq C_3'\left(\frac{f(k-1)}{f(k)}\right)^2\frac{f(\ell-2)}{f(\ell-1)}\frac{f(\ell-1)}{f(\ell)}.
     \end{align*}
     Once again, by summing these bounds over $k,\ell$, we obtain
     \[\sum_{\ell=2}^{n}\sum_{k=1}^{\ell-2}\delta_{k,\ell}^{(3)}\leq C_3'\sum_{\ell=2}^{n}\sum_{k=1}^{\ell-2}\frac{f(k-1)}{f(k)}\frac{f(\ell-1)}{f(\ell)}\Big(\frac{f(k-1)}{f(k)}\frac{f(\ell-2)}{f(\ell-1)}\Big)=o\bigg(\sum_{\ell=2}^{n}\sum_{k=1}^{\ell-2}\frac{f(k-1)}{f(k)}\frac{f(\ell-1)}{f(\ell)}\bigg),\]
     which again is $o(I_n^2)$ by~\eqref{eq:In2bound}. 
This shows~\eqref{eq:5.4bgoal} and completes the proof.
\end{proof}

\subsection{\texorpdfstring{Proof of Lemma \ref{lem:tausv2}}{Proof of Lemma 5.2}}\label{sec:appendix}
We conclude the section by proving Lemma~\ref{lem:tausv2}.
Recall the variables $G_k=f(k-1)F_k$ and the fact that the exponential variable $E_k$ with rate $f(k)$ has moments $\E{E_k^n}=n!/f(k)^n$ for every $n\in\N_0$. We proceed by establishing a couple of intermediate estimates which allow us to control the expectations in the statement of the lemma. 
\begin{proposition}\label{prop:appendix_upperF}
For every weight function $f$ satisfying Assumption~\eqref{ass:D}, there is $k_0=k_0(f)\ge 1$ and $C=C(f)>0$ such that, for every $k\ge k_0$ and $\ell\in\N_0$, as well as for every $k<k_0$ and $\ell\leq 2(k_0+1)$,
\begin{equation}\label{eq:appendix_boundF}\E{G_k^\ell}\leq \ell!\left(\frac{f(k-1)}{f(k)}\right)^{\ell}\left(1+C\frac{f(k)}{f(k+1)}\right).\end{equation}
\end{proposition}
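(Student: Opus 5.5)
Write $G_k = f(k-1)F_k = a_k\,(X_0 + Y)$, where $a_k \coloneq f(k-1)/f(k)$, $X_0 \coloneq f(k)E_k$ is a rate-one exponential, and $Y \coloneq \sum_{i\ge k+1} f(k)E_i$ is independent of $X_0$. Then $\E{G_k^\ell} = a_k^\ell\,\E{(X_0+Y)^\ell}$, and it suffices to show
\[
\E{(X_0+Y)^\ell}\le \ell!\,\bigl(1+C\,f(k)/f(k+1)\bigr).
\]
The binomial expansion together with $\E{X_0^j}=j!$ gives
\[
\E{(X_0+Y)^\ell}=\ell!\sum_{m=0}^{\ell}\frac{\E{Y^m}}{m!}.
\]
So the task reduces to bounding $\sum_{m\ge 1}\E{Y^m}/m!$ by $C\,f(k)/f(k+1)$, uniformly in $\ell$ when $k\ge k_0$.

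Next, $Y$ is a sum of independent exponentials with rates $\lambda_i\coloneq f(i)/f(k)$, $i\ge k+1$. Hence $\E{\e^{\theta Y}}=\prod_{i\ge k+1}(1-\theta/\lambda_i)^{-1}$ for $\theta<\lambda_{k+1}$. Since $\E{Y^m}/m!\le \E{Y^m}\theta^m/m!$ when $\theta\ge 1$, we get
\[
\sum_{m\ge 1}\frac{\E{Y^m}}{m!}\le \E{\e^{Y}}-1\le \exp\Bigl(\sum_{i\ge k+1}-\log(1-1/\lambda_i)\Bigr)-1 .
\]
By Assumption~\eqref{ass:D} choose $k_0$ such that $f(i)/f(i+1)\le 1/4$ for all $i\ge k_0-1$. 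For $k\ge k_0$ this gives $1/\lambda_i=f(k)/f(i)\le 4^{-(i-k-1)}\,f(k)/f(k+1)\le 1/4$. Using $-\log(1-x)\le 2x$ for $x\le 1/2$, the exponent is at most $2\sum_{j\ge0}4^{-j}\,f(k)/f(k+1)\le 3\,f(k)/f(k+1)\le 3/4$. Then $\e^{x}-1\le 2x$ on $[0,3/4]$ yields the bound $6\,f(k)/f(k+1)$, uniform in $\ell$.

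For $k<k_0$ with $\ell\le 2(k_0+1)$, there are finitely many pairs $(k,\ell)$ to handle. For each pair, $\E{(X_0+Y)^\ell}<\infty$, since all moments of $F_k$ are finite. Indeed, $F_k\le F_{k_0}+\sum_{i=k}^{k_0-1}E_i$, and the previous paragraph controls the exponential moments of $F_{k_0}$, via $f(k_0-1)F_{k_0}$. The ratio $\E{(X_0+Y)^\ell}/\ell!-1$ is then finite. It is divided by $f(k)/f(k+1)>0$, which is a fixed positive number, so enlarging $C$ to the maximum over these finitely many cases covers them.

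The main obstacle is the uniformity in $\ell$ for $k\ge k_0$, which the series bound via $\E{\e^{Y}}$ handles. Care is needed that $\lambda_{k+1}>1$, which is what the choice of $k_0$ ensures. Everything else is routine.
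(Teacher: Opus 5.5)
Your proof is correct, and it reaches the uniform-in-$\ell$ bound by a different mechanism than the paper.

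The starting identity is the same in both. The paper expands $F_k^\ell$ multinomially to get $\E{G_k^\ell}=\ell!\,(f(k-1)/f(k))^\ell A_{k,\ell}$, with
\[
A_{k,\ell}=\sum_{(y_k,\ldots)_\ell}\prod_{j\ge k}\bigl(f(k)/f(j)\bigr)^{y_j}.
\]
This $A_{k,\ell}$ is exactly your $\sum_{m=0}^{\ell}\E{Y^m}/m!$.

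From there the routes differ. The paper proves the recursion $A_{k,\ell}=\sum_{j\ge k}(f(k)/f(j))^\ell A_{j,\ell-1}$ and inducts on $\ell$. The inductive bound $A_{j,\ell-1}\le 3/2$ gives $A_{k,\ell}\le A_{k,\ell-1}+2(f(k)/f(k+1))^\ell$, and summing the resulting geometric series yields $C=3$. You instead dominate the partial sum by the full series
\[
\E{\e^{Y}}=\prod_{i>k}\bigl(1-f(k)/f(i)\bigr)^{-1}
\]
and estimate the product directly. This removes the induction entirely, makes the uniformity in $\ell$ automatic, and gives an explicit $C=6$. Your check that $1/\lambda_i\le 1/4$ for every $i>k$ is what makes the moment generating function at $\theta=1$ legitimate. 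Strictly, the condition needed is $\theta<\inf_{i>k}\lambda_i$, not just $\theta<\lambda_{k+1}$, but your estimate covers this.

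For $k<k_0$, the paper gets finiteness of the finitely many $A_{k,\ell}$ from the recursion. You get it from the finiteness of all moments of $F_{k_0}$ plus finitely many exponentials. Both work. The restriction $\ell\le 2(k_0+1)$ is genuinely needed there: if $f(i)\le f(k)$ for some $i>k$, then $\E{\e^{Y}}=\infty$ and $A_{k,\ell}\to\infty$ as $\ell\to\infty$.
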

\begin{proof}
Fix $k_0$ such that $f(k-1)\leq 0.1f(k)$ for all $k\geq k_0$ (such $k_0$ is ensured by Assumption~\eqref{ass:D}).
For positive integers $k,\ell$, define $(y_k,\ldots)_\ell\coloneq \{(y_i)_{i=k}^\infty \colon \sum_{i=k}^\infty y_i=\ell\}$ .
As $F_k=\sum_{j=k}^{\infty} E_j$, we have
\[F_k^{\ell}=\left(\sum_{j=k}^{\infty} E_j\right)^{\ell}=\sum_{(y_k,\ldots)_{\ell}} \frac{\ell!}{y_k!y_{k+1}!\cdots}\prod_{j=k}^\infty E_j^{y_j}.\]
Multiplying by $f(k-1)^{\ell}$ and taking expectations yields
\begin{align*}\E{G_k^{\ell}}&=\sum_{(y_k,\ldots)_{\ell}} \frac{\ell!f(k-1)^{\ell}}{y_k!y_{k+1}!\cdots}\prod_{j=k}^\infty \E{E_j^{y_j}}=\sum_{(y_k,\ldots)_{\ell}} \frac{\ell!f(k-1)^{\ell}}{y_k!y_{k+1}!\cdots}\prod_{j=k}^\infty \frac{y_j!}{f(j)^{y_j}}\\&=\sum_{(y_k,\ldots)_{\ell}} \ell!f(k-1)^{\ell}\prod_{j=k}^\infty \frac{1}{f(j)^{y_j}}=\ell!\left(\frac{f(k-1)}{f(k)}\right)^{\ell}\sum_{(y_k,\ldots)_{\ell}} \prod_{j=k}^\infty \left(\frac{f(k)}{f(j)}\right)^{y_j},\end{align*}
where in the last equality uses that $\sum_{j=k}^{\infty}y_j=\ell$.  Further, set
\[A_{k,\ell}\coloneq  \sum_{(y_k,\ldots)_\ell}\prod_{j= k}^{\infty}\bigg(\frac{f(k)}{f(j)}\bigg)^{y_j}.\]
Our goal is to prove that there exists a large $C=C(f)>0$ such that, for all $k\ge k_0$ and $\ell\ge 0$, as well as for all $k<k_0$ and all $\ell\leq 2(k_0+1)$, 
\begin{equation}\label{eq:goalAkl}
A_{k,\ell}\leq 1+C\frac{f(k)}{f(k+1)}.    
\end{equation}
To this end, we first show that
\begin{equation} \label{eq:appendix_Aeq}
A_{k,\ell} = \sum_{j=k}^{\infty} \bigg(\frac{f(k)}{f(j)}\bigg)^\ell A_{j,\ell-1}.
\end{equation} 
Indeed, for any sequence $(y_k,\ldots)_\ell$, denote by $r$ the index of the first non-zero entry. 
That is, $y_k=y_{k+1}=\cdots =y_{r-1}=0$ and $y_r\ge 1$: we denote this family of sequences by $(y_k,\ldots)_{\ell,r}$. 
By splitting the sum over all sequences $(y_k,\ldots)_\ell$ into sums over $(y_k,\ldots)_{\ell,r}$ with $r\ge k$, we obtain
\[
A_{k,\ell}=\sum_{r=k}^\infty \sum_{(y_k,\ldots)_{\ell,r}} \prod_{j=k}^\infty \bigg(\frac{f(k)}{f(j)}\bigg)^{y_j}=\sum_{r=k}^\infty \sum_{(y_k,\ldots)_{\ell,r}} \prod_{j=r}^\infty \bigg(\frac{f(k)}{f(j)}\bigg)^{y_j}. 
\]
Now, for fixed $r$ and any sequence in $(y_k,\ldots)_{\ell,r}$, we define a new sequence $y'_r,y'_{r+1},\ldots$ by setting $y'_r = y_r - 1$ and
$y'_j = y_j$ for all $j \neq r$. Observe that these sequences are precisely those counted in $(y'_r,\ldots)_{\ell-1}$.
Using this observation and the fact that $\sum_{j=r}^{\infty} y_j = \ell$, we deduce that
{\small\[
A_{k,\ell}=\sum_{r=k}^\infty \sum_{(y_k,\ldots)_{\ell,r}} \left(\frac{f(k)}{f(r)}\right)^{\ell}\prod_{j=r}^\infty \bigg(\frac{f(r)}{f(j)}\bigg)^{y_j}=\sum_{r=k}^\infty \sum_{(y_r',\ldots )_{\ell-1}} \left(\frac{f(k)}{f(r)}\right)^{\ell} \prod_{j=r}^\infty \bigg(\frac{f(r)}{f(j)}\bigg)^{y_j'}=\sum_{r=k}^\infty \left(\frac{f(k)}{f(r)}\right)^{\ell} A_{r,\ell-1}, 
\]}
which shows~\eqref{eq:appendix_Aeq}.
We now move to the proof of \eqref{eq:goalAkl} when $k\geq k_0$ and $\ell\in\N_0$. We observe that, by our choice of $k_0$, for all $\ell\geq 1$ and $k\geq k_0$, it holds that
\begin{equation}\label{eq:appendix_aux1} 
\sum_{j=1}^\ell \bigg(\frac{f(k)}{f(k+1)}\bigg)^j\leq\frac{10}{9}\frac{f(k)}{f(k+1)}\qquad\text{ and }\qquad
\sum_{j=k+1}^\infty \bigg(\frac{f(k)}{f(j)}\bigg)^{\ell} \leq  \frac{10}{9}\bigg(\frac{f(k)}{f(k+1)}\bigg)^\ell.
\end{equation}
We prove \eqref{eq:appendix_boundF} by induction on $\ell$.
For $\ell=0$, observe that $A_{k,0}=1$ for every $k\geq k_0$. 
Then, fix $\ell\in \N$ and assume that \eqref{eq:appendix_Aeq} holds for every $A_{k,j}$ with $k\geq k_0$ and $j\leq\ell-1$, so in particular every such term is bounded by $3/2$. Using \eqref{eq:appendix_Aeq} and \eqref{eq:appendix_aux1}, we obtain the bound
\[A_{k,\ell}=A_{k,\ell-1}+\sum_{j=k+1}^{\infty} \bigg(\frac{f(k)}{f(j)}\bigg)^\ell A_{j,\ell-1}\leq A_{k,\ell-1}+\frac{3}{2}\sum_{j=k+1}^{\infty} \bigg(\frac{f(k)}{f(j)}\bigg)^\ell\leq A_{k,\ell-1}+2\bigg(\frac{f(k)}{f(k+1)}\bigg)^\ell.\]
By iterating this procedure, we obtain
\[ 
A_{k,\ell}\leq A_{k,\ell-1}+2 \Big(\frac{f(k)}{f(k+1)}\Big)^\ell\leq \cdots \leq A_{k,0}+2\sum_{j=1}^\ell \Big(\frac{f(k)}{f(k+1)}\Big)^j=1+2\sum_{j=1}^\ell \Big(\frac{f(k)}{f(k+1)}\Big)^j,
\] 
so using \eqref{eq:appendix_aux1} once again gives
\[A_{k,\ell}\leq 1+3\frac{f(k)}{f(k+1)},\]
which yields~\eqref{eq:goalAkl} when $C\geq 3$, as desired. To obtain the upper bound in~\eqref{eq:goalAkl} when $k<k_0$ and $\ell\leq 2(k_0+1)$, we observe that $A_{j,0}=1$ for any $j\in\N_0$  and that Assumption~\eqref{ass:D} is satisfied, so that the recursion in~\eqref{eq:appendix_Aeq} yields that 
\be 
A\coloneq \sup_{k<k_0, \ell\leq 2(k_0+1)}A_{k,\ell}<\infty.
\ee 
We can thus choose $C$ sufficiently large such that the inequality in~\eqref{eq:goalAkl} is satisfied for all $k\leq k_0$ and $\ell\leq 2(k_0+1)$.
\end{proof}

Recalling the $\sigma$-algebra $\mathcal{F}_j^{\infty}$ generated by the random variables $E_j,E_{j+1},\ldots$,
the following proposition relies on the previous one to provide a tractable bound for expressions of the form $\condE{F_k}{\mathcal{F}_j^{\infty}}$ with $k<j$, which in particular yield the first statement of Lemma~\ref{lem:tausv2}.

\begin{proposition}\label{prop:appendix_partb}
For $k_0$ and $C$ as in \Cref{prop:appendix_upperF} and all positive integers $k\ge k_0$ and $\ell,h\in\N$, as well as all $k<k_0$, $\ell\leq 2(k_0+1)$, and $h\in\N$, 
\begin{equation}\label{eq:appendix_partbsimpler}\condE{G_k^\ell}{\mathcal{F}_{k+h}^{\infty}}\leq \ell!\left(\frac{f(k-1)}{f(k)}\right)^{\ell}\left(1+C\frac{f(k)}{f(k+1)}\right)\sum_{j=0}^{\ell}\frac{1}{j!}\left(\frac{f(k)}{f(k+h-1)}\right)^{j}G_{k+h}^j.
\end{equation}
\end{proposition}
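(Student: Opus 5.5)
We split $F_k$ at index $k+h$: write $F_k = S + F_{k+h}$ with $S\coloneq\sum_{i=k}^{k+h-1}E_i$. The sum $S$ is independent of $\mathcal{F}_{k+h}^{\infty}$, while $F_{k+h}$ is measurable with respect to it. By the binomial theorem,
\[
\condE{G_k^\ell}{\mathcal{F}_{k+h}^{\infty}}=f(k-1)^\ell\sum_{j=0}^{\ell}\binom{\ell}{j}\E{S^{\ell-j}}F_{k+h}^{j}.
\]
Next we rewrite $F_{k+h}^j$ in terms of $G_{k+h}$. Since $f(k-1)^jF_{k+h}^j = \big(f(k-1)/f(k+h-1)\big)^jG_{k+h}^j$, the factor $\big(f(k-1)/f(k+h-1)\big)^{j}$ splits as $\big(f(k-1)/f(k)\big)^j\big(f(k)/f(k+h-1)\big)^j$. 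This yields exactly the last factor of \eqref{eq:appendix_partbsimpler}, together with a prefactor $\big(f(k-1)/f(k)\big)^j$.

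It therefore suffices to prove, for each $0\le j\le \ell$,
\[
\binom{\ell}{j} f(k-1)^{\ell-j}\E{S^{\ell-j}}\le \frac{\ell!}{j!}\Big(\frac{f(k-1)}{f(k)}\Big)^{\ell-j}\Big(1+C\frac{f(k)}{f(k+1)}\Big).
\]
The left side equals $\frac{\ell!}{j!(\ell-j)!}f(k-1)^{\ell-j}\E{S^{\ell-j}}$. Since $S\le F_k$ pointwise and all quantities are nonnegative, we have $\E{S^{m}}\le \E{F_k^{m}}$ for $m=\ell-j$. Proposition~\ref{prop:appendix_upperF} then gives
\[
f(k-1)^m\E{F_k^m}=\E{G_k^m}\le m!\Big(\frac{f(k-1)}{f(k)}\Big)^m\Big(1+C\frac{f(k)}{f(k+1)}\Big).
\]
The $m!$ cancels the $(\ell-j)!$, which is exactly what is needed. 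For $k<k_0$ we use $m\le \ell\le 2(k_0+1)$, which is within the range where Proposition~\ref{prop:appendix_upperF} applies. For $m=0$ the bound is trivial, since $1\le 1+C f(k)/f(k+1)$.

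Summing over $j$ gives \eqref{eq:appendix_partbsimpler}. The only mildly delicate points are matching the ranges of $(k,\ell)$ with those of Proposition~\ref{prop:appendix_upperF} and checking that the powers of $f(k-1)/f(k)$ combine to exactly $\ell$. Both checks are routine.
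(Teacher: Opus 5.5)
Your proof is correct and is essentially the paper's own argument. You split $F_k=F_{k+h}+\sum_{i=k}^{k+h-1}E_i$ and expand binomially, using that the partial sum is independent of $\mathcal{F}_{k+h}^{\infty}$. You then dominate $\E{S^{\ell-j}}$ by $\E{F_k^{\ell-j}}$, apply Proposition~\ref{prop:appendix_upperF}, and rewrite $f(k-1)^jF_{k+h}^j$ in terms of $G_{k+h}^j$. Your explicit bookkeeping of the factorials, the powers of $f(k-1)/f(k)$, and the admissible ranges of $(k,\ell)$ is exactly what the paper's closing ``rearranging'' step leaves implicit.
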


\begin{proof}
    Recall that $F_{k}=F_{k+h}+\sum_{j=k}^{k+h-1}E_j$. By combining the fact that $F_{k+h}$ is $\mathcal{F}_{k+h}^{\infty}$-measurable and the independence of the variables $(E_j)_{j=1}^{\infty}$, expanding $F_k^{\ell}$ gives
    \begin{equation}\label{eq:cond_form}
    \condE{F_k^\ell}{\mathcal{F}_{k+h}^{\infty}}=\sum_{j=0}^{\ell}\binom{\ell}{j}F_{k+h}^j\E{\left(\sum_{j=k}^{k+h-1}E_j\right)^{\ell-j}}\leq\sum_{j=0}^{\ell}\binom{\ell}{j}F_{k+h}^j\E{F_k^{\ell-j}},    
    \end{equation}
    where the inequality follows from the observation that $\sum_{j=k}^{k+h-1}E_j\leq F_k$. Multiplying both sides by $f(k-1)^{\ell}$ and applying \eqref{eq:appendix_boundF} to the right hand side gives
    \begin{align*}\condE{G_k^\ell}{\mathcal{F}_{k+h}^{\infty}}&\leq\sum_{j=0}^{\ell}\binom{\ell}{j}f(k-1)^{j}\E{G_k^{\ell-j}}F_{k+h}^j\\&\leq \sum_{j=0}^{\ell}\binom{\ell}{j}f(k-1)^{j}(\ell-j)!\left(\frac{f(k-1)}{f(k)}\right)^{\ell-j}\left(1+C\frac{f(k)}{f(k+1)}\right)F_{k+h}^j,\end{align*}
    so \eqref{eq:appendix_partbsimpler} follows by rearranging the terms in the last sum and using that $G_{k+h}=f(k+h-1) F_{k+h}$. 
\end{proof}

We are now ready to prove Lemma~\ref{lem:tausv2}, which we split into two parts. 

\begin{proof}[Proof of Lemma~\ref{lem:tausv2}(a1) and (a2)]
    For part (a1) of \Cref{lem:tausv2}, by using \eqref{eq:cond_form} and the fact that $F_k^0 = 1$, we have 
    \begin{align*}
    &\E{G_kG_{\ell}}=\E{\condE{G_k}{\mathcal{F}_{\ell}^{\infty}}G_{\ell}}
    \leq \E{f(k-1)(F_{\ell}+\mathbb E[F_k])G_{\ell}}\\
    &= f(k-1)f(\ell-1)\mathbb E[F_{\ell}^2] + f(k-1)\bigg(\sum_{i=k}^{\infty} \frac{1}{f(i)}\bigg)\mathbb E[f(\ell-1)F_\ell]\\
    &\leq f(k-1)f(\ell-1)\bigg(\sum_{i=\ell}^{\infty} \mathbb E[E_i^2] + 2\sum_{\ell\le i<j} \mathbb E[E_iE_j]\bigg)+f(k-1)\bigg(\sum_{i=k}^{\infty} \frac{1}{f(i)}\bigg)f(\ell-1)\bigg(\sum_{i=\ell}^{\infty} \frac{1}{f(i)}\bigg).
    \end{align*}
    Choosing $k\le k_0$, a large enough $K=K(f,k_0)$ such that $\sum_{i=\ell}^\infty 1/f(i)\le K\min\{1,1/f(\ell)\}$ for every $\ell\in \N_0$ and $2\sup_{i\leq k_0}f(i-1)\leq K$, and using that $\mathbb E[E_i^2] = 2/f(i)^2$ and $\mathbb E[E_iE_j] = 1/(f(i)f(j))$ yields
    \[f(k-1)f(\ell-1)\bigg(\sum_{i=\ell}^{\infty} \mathbb E[E_i^2] + 2\sum_{\ell\le i<j} \mathbb E[E_iE_j]\bigg)\le f(k-1)f(\ell-1)\cdot 2\bigg(\sum_{i=\ell}^{\infty} \frac{1}{f(i)}\bigg)^2\le K^3\frac{f(\ell-1)}{f(\ell)}\]
    and
    \[f(k-1)\bigg(\sum_{i=k}^{\infty} \frac{1}{f(i)}\bigg)f(\ell-1)\bigg(\sum_{i=\ell}^{\infty} \frac{1}{f(i)}\bigg)\le K^2\cdot f(\ell-1) \frac{K}{f(\ell)}\le K^3 \frac{f(\ell-1)}{f(\ell)}.\]
    Choosing $C=2K^3$ finishes the proof of part (a1). 
    
    For part (a2), fix $k,\ell\in\N$ with $k_0\le k<\ell$. By applying \eqref{eq:appendix_partbsimpler} and using that $F_k^0 = 1$, we obtain
    \begin{align*}\E{G_kG_{\ell}}&=\E{\condE{G_k}{\mathcal{F}_{\ell}^{\infty}}G_{\ell}}\leq \frac{f(k-1)}{f(k)}\left(1+C\frac{f(k)}{f(k+1)}\right)\E{\left(1+\frac{f(k)}{f(\ell-1)}G_{\ell}\right)G_{\ell}}\\
    &=\frac{f(k-1)}{f(k)}\left(1+C\frac{f(k)}{f(k+1)}\right)\left(\E{G_{\ell}}+\frac{f(k)}{f(\ell-1)}\E{G_{\ell}^2}\right)\\
    &\leq \frac{f(k-1)}{f(k)}\frac{f(\ell-1)}{f(\ell)}\left(1+2\frac{f(k)}{f(\ell)}\right)\left(1+C\frac{f(k)}{f(k+1)}\right)\left(1+C\frac{f(\ell)}{f(\ell+1)}\right),\end{align*}
    where the last inequality follows from applying \eqref{eq:appendix_boundF} to $\E{G_k}$ and $\E{G_k^2}$, and factorising common terms. 
    Finally, \Cref{lem:tausv2}(a2) follows by expanding the products, using that the expression $f(k)/f(\ell)$ is uniformly bounded over all integers $k<\ell$ and adapting the constant $C$.
\end{proof}

We conclude by proving  Lemma~\ref{lem:tausv2}(b).

\begin{proof}[Proof of Lemma~\ref{lem:tausv2}(b)] 
Fix a positive integer $d$, a finite set $B\subseteq\N$ with $|B|\geq d$, and a sequence $(a_j)_{j\in B}$ with values in $\{1,2\}$. 
We recall that our goal is to find a constant $C_d=C_d(f)$ such that
\begin{equation}\label{eq:appendix_maingoal}\E{\prod_{j\in B} G_j^{a_j}}\leq C_d2^{-|B|}\prod_{j\in I}\left(\frac{f(j-1)}{f(j)}\right)^{a_{j}}.\end{equation}
If $d=1$ and $B=\{k\}$ for some $k\in\N$, the result follows by \Cref{prop:appendix_upperF}. Suppose $r\coloneq|B|\geq  d\geq 2$, and denote the elements of $B$ by  $j_1 < \cdots < j_r$. We distinguish between indices that are smaller than $k_0$ on the one hand side and at least $k_0$ on the other side, and we note that there are at most $k_0$ many indices among the $j_1, \ldots, j_r$ that are smaller than $k_0$.  Let us suppose that $j_i<k_0$ for all $i<R$ and that $j_i\geq k_0$ for all $R\leq i\leq r$, for some $R\in\{1,\ldots, \min\{r,k_0\}\}$.
For all $k\le r$, define $B_k\coloneq B\setminus\{j_1,\ldots,j_k\}$.
Taking conditional expectations with respect to $\mathcal{F}_{j_2}^{\infty}$ and directly applying \Cref{prop:appendix_partb} gives 
\begin{align*}\E{\prod_{j\in B} G_j^{a_j}}&=\E{\condE{G_{j_1}^{a_{j_1}}}{\mathcal{F}_{j_2}^{\infty}}\prod_{j\in B_1} G_j^{a_j}}\\&\leq a_{j_1}!\left(\frac{f(j_1-1)}{f(j_1)}\right)^{a_{j_1}}\Big(1+C\frac{f(j_1)}{f(j_1+1)}\Big)\sum_{t_2=0}^{a_{j_1}}\frac{1}{t_2!}\left(\frac{f(j_1)}{f(j_2-1)}\right)^{t_2}\E{G_{j_2}^{t_2}\prod_{j\in B_1} G_j^{a_j}}.
\end{align*}
Since $a_{j_1}\leq 2$ and as  $\frac{f(j_1)}{f(j_2-1)}$ and $1+C\frac{f(j_1)}{f(j_1+1)}$ are uniformly bounded from above by a constant $D =D(f)$   over all $j_1<j_2$  due to  Assumption~\eqref{ass:D}, we deduce that
\begin{equation}\label{eq:appendix_lemma1}\E{\prod_{j\in B} G_j^{a_j}}\leq 2 D\left(\frac{f(j_1-1)}{f(j_1)}\right)^{a_{j_1}}\sum_{t_2=0}^{a_{j_1}}\frac{D^{t_2}}{t_2!}\E{G_{j_2}^{t_2}\prod_{j\in B_1} G_j^{a_j}}.
\end{equation}
The expected value on the right-hand side now contains a term $G_{j_2}^{t_2+a_{j_2}}$. The exponent $t_2+a_{j_2}$ is at most $a_{j_1}+a_{j_2}\leq 4$. To analyse the expectations in the last expression, we again take conditional expectations with respect to $\mathcal{F}_{j_3}^{\infty}$ and apply \Cref{prop:appendix_partb}, as in~\eqref{eq:appendix_lemma1}, to obtain
\begin{align*}\E{G_{j_2}^{t_2}\prod_{j\in B_1} G_j^{a_j}}&=\E{\condE{G_{j_2}^{t_2+a_{j_2}}}{\mathcal{F}_{j_3}^{\infty}}\prod_{j\in B_2} G_j^{a_j}}\\&\leq (t_2+a_{j_2})!D\left(\frac{f(j_2-1)}{f(j_2)}\right)^{t_2+a_{j_2}}\,\sum_{t_3=0}^{t_2+a_{j_2}}\frac{D^{t_3}}{t_3!}\E{G_{j_3}^{t_3}\prod_{j\in B_2} G_j^{a_j}}.\end{align*}
Again, we can bound Inserting this last bound into \eqref{eq:appendix_lemma1}, using that $\frac{(t_2+a_{j_2})!}{t_2!}=a_{j_2}!\binom{t_2+a_{j_2}}{t_2}$, and grouping terms gives
{\footnotesize\[\E{\prod_{j\in B} G_j^{a_j}}\leq (2 D)^2\prod_{k=1}^2\left(\frac{f(j_k-1)}{f(j_k)}\right)^{a_{j_k}}\sum_{t_2=0}^{a_{j_1}}\binom{t_2+a_{j_2}}{t_2}\left[D\frac{f(j_2-1)}{f(j_2)}\right]^{t_2}\sum_{t_3=0}^{t_2+a_{j_2}}\frac{D^{t_3}}{t_3!}\E{G_{j_3}^{t_3}\prod_{j\in B_2} G_j^{a_j}}.\]}

We now iterate this over all  indices $j_i$ with $j<R$ (so that $j_i<k_0$). Here, it is crucial that  the exponents of the random variables $G_j$ with $j\in B$ such that $j<k_0$ are at most $2(k_0+1)$, so that \Cref{prop:appendix_partb} can be applied. This is true, since the exponents increase at each step of the iteration by at most $2$, and we iterate at most $k_0$ times, as there are at most $k_0$ indices $j_i$ less than $k_0$. We thus arrive at

    {\footnotesize\be \label{eq:intersmallindex}
    \E{\prod_{j\in B} G_j^{a_j}}\leq (2 D)^{R-1}\prod_{k=1}^{R-1}\left(\frac{f(j_k-1)}{f(j_k)}\right)^{a_{j_k}}\!\!\!\!\!\!\!\sum_{(t_{2},\ldots,t_R)}\hspace{-0.3em} \bigg[\prod_{k=2}^{R-1} \binom{a_k+t_k}{t_k} \bigg(D \frac{f(j_k-1)}{f(j_k)}\bigg)^{t_k}\bigg]\frac{D^{t_R}}{t_R!}\E{G_{j_R}^{t_R}\!\!\!\!\prod_{j\in B_{R-1}}\!\!\!\!G_j^{a_j}},\ee}
    
where the sum ranges over all non-negative integer tuples $(t_{2},\ldots,t_R)$ such that $0\leq t_{2}\leq a_{j_1}$ and, for each $3\leq k\leq R$,
\be\label{eq:tk}0\leq t_k\leq a_{j_{k-1}}+t_{k-1}.\ee
We now bound the expected value on the right-hand side in the same manner, but use that all indices $j_i$ with $i\geq R$ in the set $B_{R-1}$ are all at least $k_0$, so that we do not have to worry about the fact that the exponents of the random variables $G_j$ can become arbitrarily large, as we can still apply Proposition~\ref{prop:appendix_partb}. We can thus continue the iteration to obtain

{\footnotesize\[\E{\prod_{j\in B} G_j^{a_j}}\leq (2 A)^{r-1}\prod_{k=1}^{r-1}\left(\frac{f(j_k-1)}{f(j_k)}\right)^{a_{j_k}}\sum_{(t_{2},\ldots,t_r)}\hspace{-0.3em} \bigg[\prod_{k=2}^{r-1} \binom{a_k+t_k}{t_k} \bigg(D\frac{f(j_k-1)}{f(j_k)}\bigg)^{t_k}\bigg]\frac{D^{t_r}}{t_r!}\E{G_{j_r}^{t_r+a_{j_r}}},\]}

where  again the sum ranges over all non-negative integer tuples $(t_{2},\ldots,t_r)$ such that $0\leq t_{2}\leq a_{j_1}$ and, $t_k$ satisfies~\eqref{eq:tk} for all $3\leq k\leq r$. Applying \Cref{prop:appendix_upperF} to the last expectation gives
\[\frac{D^{t_r}}{t_r!}\E{G_{j_r}^{t_r+a_{j_r}}}\leq \frac{D^{t_r}}{t_r!}(t_r+a_{j_r})!\left(\frac{f(j_r-1)}{f(j_r)}\right)^{t_r+a_{j_r}}A,\]
which, inserted in the previous bounds, finally yields
\begin{equation}\label{eq:appendix_precise_bound}
\E{\prod_{j\in B} G_j^{a_j}}\leq (2 A)^{r}\prod_{k=1}^{r}\left(\frac{f(j_k-1)}{f(j_k)}\right)^{a_{j_k}}\sum_{(t_{2},\ldots,t_r)}\hspace{-0.3em} \bigg[\prod_{k=2}^{r} \binom{a_k+t_k}{t_k} \bigg(D \frac{f(j_k-1)}{f(j_k)}\bigg)^{t_k}\bigg].
\end{equation}
Our goal is to further bound the expression
\begin{equation}\label{eq:appendix_bigpart}\sum_{(t_{2},\ldots,t_r)}\hspace{-0.3em} \bigg[\prod_{k=2}^{r} \binom{a_k+t_k}{t_k} \bigg(D\frac{f(j_k-1)}{f(j_k)}\bigg)^{t_k}\bigg]\end{equation}
in \eqref{eq:appendix_precise_bound}. 
To do so, note that Assumption~\eqref{ass:D} implies the existence of some $m_0=m_0(f)\in\N$ such that, for all $j\geq m_0$, $D\frac{f(j-1)}{f(j)}\leq\frac{1}{2}$. Suppose that $|B\cap [m_0]\}| = \ell\le m_0$. Then, for each $k\leq\ell$, the corresponding index $t_k$ appearing in the sum satisfies
\[t_k\leq a_{j_{k-1}}+t_{k-1}\leq a_{j_{k-1}}+a_{j_{k-2}}+t_{k-2}\leq\cdots\leq\sum_{i=1}^{k-1}a_{j_{i-1}}\leq 2m_0,\]
and hence, for each such $k$,
\[\binom{a_k+t_k}{t_k} \bigg(D\frac{f(j_k-1)}{f(j_k)}\bigg)^{t_k}\leq \binom{2m_0+2}{2m_0} \bigg(\max\bigg\{D\frac{f(j_k-1)}{f(j_k)}, 1\bigg\}\bigg)^{2m_0}.\]
Moreover, since $j_k\leq m_0$, the whole term can be bounded by some constant $D'=D'(f)$. 
Thus, each index in $[m_0]$ contributes a uniformly bounded factor to \eqref{eq:appendix_bigpart}. 
As there are at most $m_0$ such indices, their total contribution can be absorbed into the constant $C_d$ in \eqref{eq:appendix_maingoal} by multiplying it by $D'^{m_0}$. 
Therefore, without loss of generality, we assume henceforth that $B$ has no elements in $[m_0]$. Under this assumption, we further bound \eqref{eq:appendix_bigpart} by allowing each $t_k$ to range over $\N_0$; note that this allows us to change order of the sum and the product, and it suffices to bound
\[\prod_{k=2}^{r}\sum_{t_{k}=0}^{\infty} \binom{a_{j_k}+t_k}{t_k} \bigg(D\frac{f(j_k-1)}{f(j_k)}\bigg)^{t_k}.\]
To this end, we use the identity
\[\sum_{m=0}^{\infty} \binom{b+m}{m} x^m=\frac{1}{(1-x)^{b+1}}\qquad \text{for all}\qquad b\in \N_0,\, x\in [0,1)\]
which follows from the probability density function of a negative binomial distribution with  $b+1$ successful trials  and success probability $1-x$. Since $D\tfrac{f(j_k-1)}{f(j_k)}\le \tfrac{1}{2}$,
\[\sum_{(t_{2},\ldots,t_r)}\hspace{-0.3em} \bigg[\prod_{k=2}^{r} \binom{a_k+t_k}{t_k} \bigg(D\frac{f(j_k-1)}{f(j_k)}\bigg)^{t_k}\bigg]\leq \prod_{k=2}^{r}\left(1-D\frac{f(j_k-1)}{f(j_k)}\right)^{-a_{j_k}-1}\leq\prod_{k=2}^r2^{a_{j_k}+1}\leq 8^r,\]
where we used that $a_j\le 2$ for the last inequality.
Inserting this back into \eqref{eq:appendix_precise_bound} gives
\[\E{\prod_{j\in B} G_j^{a_j}}\leq (16 A)^{|B|}\prod_{j\in B}\left(\frac{f(j-1)}{f(j)}\right)^{a_{j}}.\]
Fix now $I\subseteq B$ with $|I|=d$, and write the right-hand side as
\[2^{-|B|}\prod_{j\in I}\left(\frac{f(j-1)}{f(j)}\right)^{a_{j}}\cdot\left[(32A)^{d}\prod_{j\in B\setminus I}32A\left(\frac{f(j-1)}{f(j)}\right)^{a_{j}}\right].\]
To conclude the proof, we only need to find a bound for the expression within the square brackets which is independent of $B,I$ and the sequence $a_j$. 
To do so, note that $\lim_{j\to\infty}\frac{f(j-1)}{f(j)}=0$, so there is $j_0 = j_0(f)$ such that $32Af(j-1)/f(j)\leq 1$ for all $j\geq j_0$. We therefore have
\[\prod_{j\in B\setminus I} 32 A\left(\frac{f(j-1)}{f(j)}\right)^{a_{j}}\leq \prod_{j\in (B\setminus I)\cap [j_0]}32 A\left(\frac{f(j-1)}{f(j)}\right)^{a_{j}}\leq \prod_{j=1}^{j_0}\max\bigg\{32A\bigg(\frac{f(j-1)}{f(j)}\bigg)^{2},1\bigg\},\]
which depends on $f$ alone. 
Multiplying this last bound by $(32A)^{d}$ and $D_3^{m_0}$ gives the required constant $C_d$ in the statement of the lemma and finishes the proof.
\end{proof}

\vspace{1em}
\noindent
\textbf{Acknowledgements.} Lichev was supported by the Austrian Science Fund (FWF)
grant number 10.55776/ESP624. Linker was supported by the ANID-FONDECYT regular grant 1252012. Lodewijks was financially supported by the CMM-Postdoc Visiting Programme and has received funding from the European Union’s Horizon 2022 research and innovation programme under
the Marie Sk\l{}odowska-Curie grant agreement no.\ 101108569. Mitsche was supported by ANID-FONDECYT regular grant 1220174. Linker would also like to thank his second baby for waiting with her birth before the paper was finished. For open access purposes, the authors have applied a CC BY public copyright license to any author-accepted manuscript version arising from this submission.

\bibliographystyle{abbrv}
\bibliography{ref}

\providecommand{\vander}{van der }
\begin{thebibliography}{10}

\bibitem{Ald18}
D.~Aldous.
\newblock Random partitions of the plane via {P}oissonian coloring and a
  self-similar process of coalescing planar partitions.
\newblock {\em The Annals of Probability}, 46(4):2000--2037, 2018.

\bibitem{AS16}
N.~Alon and J.~Spencer.
\newblock {\em The probabilistic method}.
\newblock John Wiley \& Sons, 2016.

\bibitem{Omeretal}
O.~Angel, S.~Bhamidi, S.~Donderwinkel, N.~Maitra, and A.~Sakanaveeti.
\newblock Evolution of recursive trees with limited memory.
\newblock {\em arXiv preprint at arXiv:2510.18856}, 2025.

\bibitem{BBCS23}
A.-L. Basdevant, G.~Blanc, N.~Curien, and A.~Singh.
\newblock Fractal properties of the frontier in poissonian coloring, 2023.

\bibitem{BB-RKK23a}
{\'E}.~Bellin, A.~Blanc-Renaudie, E.~Kammerer, and I.~Kortchemski.
\newblock Uniform attachment with freezing.
\newblock {\em The {A}nnals of {A}pplied {P}robability}, 35(4):2882--2922,
  2025.

\bibitem{BB-RKK23b}
{\'E}.~Bellin, A.~Blanc-Renaudie, E.~Kammerer, and I.~Kortchemski.
\newblock Uniform attachment with freezing: {S}caling limits.
\newblock In {\em Annales de l'{I}nstitut {H}enri {P}oincare ({B})
  {P}robabilites et statistiques}, volume~61, pages 2679--2708. Institut Henri
  Poincar{\'e}, 2025.

\bibitem{Big76}
J.~D. Biggins.
\newblock The first- and last-birth problems for a multitype age-dependent
  branching process.
\newblock {\em Advances in Applied Probability}, 8(3):446--459, 1976.

\bibitem{BMMS24}
A.~Brandenberger, C.~Marcussen, E.~Mossel, and M.~Sudan.
\newblock Finding the root in random nearest neighbor trees, 2024.

\bibitem{Cas25}
J.~Casse.
\newblock Siblings in {$d$}-dimensional nearest neighbour trees.
\newblock {\em Bernoulli}, 31(2):1300--1324, 2025.

\bibitem{Drm09}
M.~Drmota.
\newblock {\em Random trees: an interplay between combinatorics and
  probability}.
\newblock Springer, 2009.

\bibitem{FM10}
Q.~Feng and H.~M. Mahmoud.
\newblock On the variety of shapes on the fringe of a random recursive tree.
\newblock {\em Journal of applied probability}, 47(1):191--200, 2010.

\bibitem{FMP08}
Q.~Feng, H.~M. Mahmoud, and A.~Panholzer.
\newblock Phase changes in subtree varieties in random recursive and binary
  search trees.
\newblock {\em SIAM Journal on Discrete Mathematics}, 22(1):160--184, 2008.

\bibitem{Fuc08}
M.~Fuchs.
\newblock Subtree sizes in recursive trees and binary search trees:
  Berry--esseen bounds and poisson approximations.
\newblock {\em Combinatorics, Probability and Computing}, 17(5):661--680, 2008.

\bibitem{Gol19}
M.~Golosovsky.
\newblock {\em Citation analysis and dynamics of citation networks}.
\newblock Springer, 2019.

\bibitem{vdH17}
R.~{\vander H}ofstad.
\newblock {\em Random graphs and complex networks. {V}olume {O}ne.}, volume~43.
\newblock Cambridge university press, 2017.

\bibitem{vdHof17}
R.~{\vander H}ofstad.
\newblock Stochastic processes on random graphs.
\newblock {\em Lecture notes for the 47th Summer School in Probability
  Saint-Flour}, 2017, 2017.

\bibitem{vdH24}
R.~{\vander H}ofstad.
\newblock {\em Random graphs and complex networks. {V}olume {T}wo.}, volume~54.
\newblock Cambridge university press, 2024.

\bibitem{JLR11}
S.~Janson, T.~Luczak, and A.~Rucinski.
\newblock {\em Random graphs}.
\newblock John Wiley \& Sons, 2011.

\bibitem{King75}
J.~F.~C. Kingman.
\newblock The first birth problem for an age-dependent branching process.
\newblock {\em The Annals of Probability}, pages 790--801, 1975.

\bibitem{Kom16}
J.~Komj{\'a}thy.
\newblock Explosive {C}rump-{M}ode-{J}agers branching processes.
\newblock {\em arXiv preprint arXiv:1602.01657}, 2016.

\bibitem{Kroll}
M.~Kroll.
\newblock Concentration inequalities for poisson point processes with
  application to adaptive intensity estimation.
\newblock {\em Stochastics}, pages 1--13, feb 2022.

\bibitem{LeGuvel2021}
R.~Le~Guével.
\newblock Exponential inequalities for the supremum of some counting processes
  and their square martingales.
\newblock {\em Comptes Rendus. Mathématique}, 359(8):969–982, Oct. 2021.

\bibitem{LeckMitWor20}
K.~Leckey, D.~Mitsche, and N.~Wormald.
\newblock The height of depth-weighted random recursive trees.
\newblock {\em Random Structures \& Algorithms}, 56(3):851--866, 2020.

\bibitem{LM24}
L.~Lichev and D.~Mitsche.
\newblock New results for the random nearest neighbor tree.
\newblock {\em Probability Theory and Related Fields}, 189(1):229--279, 2024.

\bibitem{Lod22}
B.~Lodewijks.
\newblock On joint properties of vertices with a given degree or label in the
  random recursive tree.
\newblock {\em Electronic Journal of Probability}, 27:1--45, 2022.

\bibitem{Mon26}
C.~M{\"o}nch.
\newblock Monotonicity of depth constants in general preferential attachment
  trees.
\newblock {\em arXiv preprint arXiv:2602.14741}, 2026.

\bibitem{Ner81}
O.~Nerman.
\newblock On the convergence of supercritical general ({C}-{M}-{J}) branching
  processes.
\newblock {\em Z. Wahrsch. Verw. Gebiete}, 57(3):365--395, 1981.

\bibitem{NBW11}
M.~Newman, A.-L. Barab{\'a}si, and D.~J. Watts.
\newblock {\em The structure and dynamics of networks}.
\newblock Princeton university press, 2011.

\bibitem{Nor98}
J.~R. Norris.
\newblock {\em Markov chains}.
\newblock Number~2. Cambridge university press, 1998.

\bibitem{PW08}
M.~D. Penrose and A.~R. Wade.
\newblock Limit theory for the random on-line nearest-neighbor graph.
\newblock {\em Random Structures \& Algorithms}, 32(2):125--156, 2008.

\bibitem{Pit94}
B.~Pittel.
\newblock Note on the heights of random recursive trees and random m-ary search
  trees.
\newblock {\em Random Structures \& Algorithms}, 5(2):337--347, 1994.

\bibitem{ReynaudBouret2003AdaptiveEO}
P.~Reynaud-Bouret.
\newblock Adaptive estimation of the intensity of inhomogeneous poisson
  processes via concentration inequalities.
\newblock {\em Probability Theory and Related Fields}, 126:103--153, 2003.

\bibitem{Rey06}
P.~Reynaud-Bouret.
\newblock Compensator and exponential inequalities for some suprema of counting
  processes.
\newblock {\em Statistics \& Probability Letters}, 76(14):1514--1521, 2006.

\bibitem{Ste04}
M.~Steele.
\newblock The {P}aley-{Z}ygmund argument and three variations.
\newblock {\em Course note}, 2004.

\bibitem{Tra25}
T.~Trauthwein.
\newblock Quantitative {CLT}s on the {P}oisson space via {S}korohod estimates
  and $p$-{P}oincar{\'e} inequalities.
\newblock {\em The Annals of Applied Probability}, 35(3):1716--1754, 2025.

\bibitem{Ver18}
R.~Vershynin.
\newblock {\em High-dimensional probability: An introduction with applications
  in data science}, volume~47.
\newblock Cambridge university press, 2018.

\bibitem{Wad09}
A.~R. Wade.
\newblock Asymptotic theory for the multidimensional random on-line
  nearest-neighbour graph.
\newblock {\em Stochastic Processes and their Applications}, 119(6):1889--1911,
  2009.

\end{thebibliography}

\end{document}